\UseRawInputEncoding
\documentclass[12pt,reqno]{amsart}
\usepackage[english]{babel}
\usepackage{amsmath,amsthm,amssymb,mathrsfs}
\usepackage{array,cite,enumitem,xcolor}
\usepackage[left=2.9cm,right=2.9cm,top=2.8cm,bottom=2.8cm]{geometry}
\usepackage[colorlinks=true,urlcolor=blue,
citecolor=red,linkcolor=blue,linktocpage,pdfpagelabels,
bookmarksnumbered,bookmarksopen,backref=page]{hyperref}

\renewcommand*{\backrefalt}[4]{%
  \ifcase #1\relax
  \else\space #2%
  \fi}

\numberwithin{equation}{section}

\theoremstyle{plain}
\newtheorem{theorem}{Theorem}[section]
\newtheorem{lemma}[theorem]{Lemma}
\newtheorem{corollary}[theorem]{Corollary}
\newtheorem{proposition}[theorem]{Proposition}
\theoremstyle{definition}

\theoremstyle{remark}
\newtheorem{remark}[theorem]{Remark}

\newcommand{\N}{\mathbb N}
\newcommand{\R}{\mathbb R}

\newcommand{\norm}[1]{\left\|#1\right\|}
\newcommand{\cK}{\mathcal K}
\newcommand{\cC}{\mathcal C}
\newcommand{\cS}{\mathcal S}
\title[Spectral-edge continuation, crossover, and stability]
{Spectral-Edge Continuation, Dimensional Crossover, and Endpoint Stability for Partially Confined Nonlinear Schr\"odinger Equations}
\author[X.~X.~Zhong]{Xuexiu Zhong}

\address{South China Research Center for Applied Mathematics and Interdisciplinary
Studies \& School of Mathematical Sciences, South China Normal University,
Guangzhou 510631, P.~R. China}
\email{zhongxuexiu1989@163.com}
\thanks{Xuexiu Zhong was supported by the NSFC (No.~12271184), the Young
Top-notch Talent Project of Guangdong Province (2024TQ08A725), and the Guangzhou
Basic and Applied Basic Research Foundation (2024A04J10001).}
\makeatletter
\@namedef{subjclassname@2020}{%
  \textup{2020} Mathematics Subject Classification}
\makeatother

\subjclass[2020]{35J20, 35B09, 35B35, 35B40, 35Q55, 47H10.}
\date{}
\keywords{Spectral-edge continuation, dimensional crossover, partial harmonic confinement, endpoint asymptotics, orbital stability, mass slope, global continuum, normalized standing waves.}

\begin{document}
\begin{abstract}
We study the global geometry and endpoint dynamics of positive solutions of
$$
 -\Delta u+|y|^2u+\lambda u=g(u),
 \qquad (y,z)\in\mathbb R^d\times\mathbb R^m,
 \qquad m=N-d,
$$
when the harmonic potential confines only the $y$-variables.  Coercivity is lost at the lower spectral threshold $\lambda=-d$.  Under subcritical asymptotic conditions on $g$ at zero and infinity, a zero-mass Liouville condition, and a strict Hamiltonian sign condition, we construct a connected component of centered positive solutions whose frequency projection is the full interval $(-d,+\infty)$.

The continuum joins two asymptotically different geometries.  As
$\lambda+d\to0^+$, an anisotropic rescaling selects the lowest confined harmonic mode and an $m$-dimensional scalar ground state in the free variables. As $\lambda\to+\infty$, an isotropic blow-up yields the $N$-dimensional scalar ground state.  Thus the same component undergoes a dimensional crossover from effective dimension $m$ to dimension $N$.  These endpoint profiles and their
mass laws hold for every sequence of centered positive solutions, rather than only along the distinguished component.

The proof combines Gaussian--exponential localization on compact frequency strips, a Lyapunov--Schmidt reduction at the lower edge, compact spectral perturbation at high frequency, and Leray--Schauder continuation.  At both ends we prove uniqueness and differentiability of the centered branch, determine the Morse indices and symmetry kernels of the real and imaginary linearizations in the full energy space, and obtain sharp derivatives of the mass.  Under the standard Hamiltonian-flow hypotheses, the resulting slope criterion classifies orbital stability and instability away from the two mass-critical powers.  Finally, the mass map on the global component, combined with endpoint control on the full centered solution set, yields existence, nonexistence, and multiplicity for normalized standing waves and explains the simultaneous occurrence of the critical exponents $2+4/m$ and $2+4/N$.
\end{abstract}

\maketitle

\section{Introduction}
Let $1\le d\le N-1$, set $m=N-d$, and write
$x=(y,z)\in\mathbb{R}^d\times\mathbb{R}^m$.  We study the global set of positive
solutions of
\begin{equation}\label{E:fixed-lambda}
        -\Delta u+|y|^2u+\lambda u=g(u),\qquad 0<u\in X,
\end{equation}
where
\begin{equation}\label{E:Xdef}
 X:=\left\{u\in H^1(\mathbb{R}^N):\int_{\mathbb{R}^N}|y|^2u^2\,\mathrm dx<\infty\right\},
 \qquad
 \|u\|_X^2:=\int_{\mathbb{R}^N}(|\nabla u|^2+|y|^2u^2+u^2)\,\mathrm dx .
\end{equation}
Our goals are to continue positive states throughout the natural frequency
range, identify their profiles and linearized spectra at both ends, and relate
the two endpoint geometries to orbital stability.  The resulting
fixed-frequency theory is then applied to the normalized problem
\begin{equation}\label{E:normalized}
 -\Delta u+|y|^2u+\lambda u=g(u),\qquad 0<u\in X,
 \qquad \int_{\mathbb{R}^N}u^2\,\mathrm dx=a>0,
\end{equation}
where $\lambda$ is the Lagrange multiplier and the constraint is encoded by the mass map on the positive-solution set of \eqref{E:fixed-lambda}.

Equation \eqref{E:fixed-lambda} is the stationary equation for standing waves in a partially harmonic trap; see \cite{AntonelliCarlesDrumondSilva,BBJV}.  The first $d$ directions are confined, whereas the remaining $m=N-d$ directions are translation invariant.  From the dynamical viewpoint, long-time resonant effects generated by partial harmonic trapping were analyzed in \cite{HaniThomann}.  In three dimensions the cubic choices $(d,m)=(2,1)$ and $(1,2)$ model, respectively, cigar- and disk-shaped geometries; see Remark~\ref{R:physical-crossover}.  Rigorous dimension-reduction results under strong anisotropic confinement are available both for effective nonlinear Schr\"odinger equations \cite{BenAbdallahMehatsSchmeiserWeishaeupl} and at the many-body ground-state level \cite{LiebSeiringerYngvason,SchneeYngvason}.
This anisotropy is already visible in
$$
 H:=-\Delta+|y|^2=(-\Delta_y+|y|^2)-\Delta_z,
 \qquad \sigma(H)=\sigma_c(H)=[d,+\infty)
$$
\cite[Proposition~2.1]{CWZZ}.  Positive finite-energy solutions require $\lambda\ge-d$ \cite[Theorem~1.1(i)]{CWZZ}, while the quadratic form of $H+\lambda$ is coercive precisely for $\lambda>-d$.  At $\lambda=-d$, the zero-mass Liouville results of \cite[Theorem~1.1(ii)--(iii)]{CWZZ} exclude solutions under the hypotheses recalled in Corollary~\ref{C:normalized}.  The threshold modes are generated by
\begin{equation}\label{E:phi0}
 \phi_0(y):=\pi^{-d/4}e^{-|y|^2/2},\qquad
 (-\Delta_y+|y|^2)\phi_0=d\phi_0,\qquad \|\phi_0\|_2=1 .
\end{equation}
Thus the lower endpoint is the bottom of continuous spectrum, rather than an isolated eigenvalue of the full operator.

The broader variational and dynamical background includes orbital stability by
constrained minimization \cite{CazenaveLions}, sharp interpolation estimates
and mass-critical thresholds \cite{Weinstein}, and the abstract
slope--index theory of Grillakis, Shatah and Strauss
\cite{GrillakisShatahStraussI,GrillakisShatahStraussII}.  External potentials
break part of the translation symmetry and require the neutral modes to be
identified before that theory can be applied; compare \cite{Stuart}.  In the
partially confined setting, scattering, constrained variational methods,
orbital stability, uniqueness, and multiplicity have been studied in
\cite{AntonelliCarlesDrumondSilva,BBJV,Jia,JinHong,WeiWu,JeanjeanSong,SongYan,ShanShuaiWu}.
Related endpoint questions for three-dimensional pure-power ground states are discussed in the preprint of Song and Hajaiej \cite{SongHajaiej}. Related endpoint questions for three-dimensional pure-power ground states are discussed in the preprint of Song and Hajaiej \cite{SongHajaiej}. Here the primary object is instead the global geometry of a connected component in $(\lambda,u)$-space, together with sequence-independent endpoint control of the full centered positive-solution set.  The global-branch strategy is related to \cite{BartschZhongZou,JZZ} and, at a foundational level, to Rabinowitz's global bifurcation theorem \cite{Rabinowitz}.  More recent continuation theory on unbounded cylinders also highlights loss of compactness as a central global alternative; see \cite{ChenWalshWheeler}.  In the present problem three features must be treated simultaneously: translations remain possible in $z$; the lower endpoint is a continuous-spectrum threshold; and the two endpoints require different scalings.  Projection onto the lowest harmonic mode near $\lambda=-d$ produces an $m$-dimensional equation, whereas
the potential disappears in the high-frequency blow-up and the limit is $N$-dimensional.

We use the threshold obstruction, zero-mass Liouville criteria, and block-symmetry theorem of \cite{CWZZ}.  Building on the global-branch strategy of \cite{BartschZhongZou,JZZ}, we establish the following results:
\begin{enumerate}[label=(\roman*)]
\item a compact-strip localization theorem combining Gaussian confinement in $y$ with a Hamiltonian exclusion of nonzero plateaus in $z$;
\item sequence-independent asymptotics at both endpoints of the full centered positive-solution set, under anisotropic and isotropic rescalings, respectively;
\item a uniform complementary-mode resolvent theorem, a Lyapunov--Schmidt construction, and a Schur-complement proof of centered nondegeneracy and Morse index one at the lower spectral edge;
\item a connected component $\mathcal C$ with $P_1(\mathcal C)=(-d,+\infty)$, anchored by local degree $-1$;
\item high-frequency uniqueness, uniform centered invertibility, and a compact spectral-perturbation argument which determines how the confined and free translation modes split;
\item differentiable mass laws at both ends and, under explicit dynamical hypotheses, an orbital-stability classification based on the full real--imaginary linearized spectrum;
\item existence, nonexistence, and multiplicity statements for \eqref{E:normalized}, with existence read from $\mathcal{C}$ and nonexistence from the full centered solution set $\mathcal{S}$.
\end{enumerate}
Control of the full centered solution set $\mathcal{S}$ is needed for the nonexistence conclusions, since continuation identifies only one connected component.

We briefly indicate the proof.  Centering the block-radial solutions supplied by \cite[Theorem~1.3]{CWZZ} removes translation in $z$.  Radial compactness handles $m\geq2$, while monotonicity prevents loss of mass when $m=1$.  The harmonic potential and the strict sign $\frac12g(s)s-G(s)>0$ then give, on compact frequency strips,
$$
 u(y,z)\le C e^{-\theta|y|^2/2-\sigma|z|}.
$$
This estimate yields compactness in a Gaussian--exponentially weighted centered space, the common setting for degree and continuation.

Write $\mu=\lambda+d$.  At the lower edge,
$$
\begin{cases}
 \mu^{-1/(\alpha-2)}u(y,\zeta/\sqrt\mu)
 \longrightarrow \phi_0(y)V_\alpha(\zeta)~\hbox{in}~H^1(\mathbb{R}^N),\\
 -\Delta V_\alpha+V_\alpha=c_\alpha V_\alpha^{\alpha-1}~\hbox{in}~\mathbb{R}^m,\\
 0<V_\alpha\in H_{\mathrm{rad}}^{1}(\mathbb{R}^m),
 \end{cases}
$$
where $c_\alpha:=\mu_0\int_{\mathbb{R}^d}\phi_0^\alpha\mathrm{d}y$.  The zero-mass Liouville condition first forces the unscaled amplitude to vanish.  At high frequency, isotropic rescaling yields the unique positive radial $H^1$-solution of $-\Delta W_\beta+W_\beta=\mu_\infty W_\beta^{\beta-1}$ in $\mathbb{R}^N$.  Uniformly over the full centered solution set,
$$
 \|u\|_2^2\sim
 \begin{cases}
 (\lambda+d)^{\frac2{\alpha-2}-\frac m2}\|V_\alpha\|_{L^2(\mathbb{R}^m)}^{2},
     &\lambda+d\to0^+,\\
 \lambda^{\frac2{\beta-2}-\frac N2}\|W_\beta\|_{L^2(\mathbb{R}^N)}^{2},
     &\lambda\to+\infty.
 \end{cases}
$$
These laws exhibit the two critical exponents $2+4/m$ and $2+4/N$, namely the mass-critical exponents associated with the two limiting NLS scalings; compare \cite{Weinstein}.

The lower-edge product profile is next corrected by Lyapunov--Schmidt
reduction.  A normalized-difference argument gives local uniqueness, and a
second reduction for the linearization gives centered nondegeneracy and Morse
index one.  Matching the nonzero weighted-space and energy-space spectra yields
local degree $-1$.  Compactness on every bounded frequency strip then prevents
the component from terminating in the interior, so its projection is
$(-d,+\infty)$.  At high frequency, the universal profile convergence and the
nondegeneracy of $W_\beta$ give uniqueness and a uniform inverse in the
centered form space.  A separate spectral-compactness lemma preserves the
negative and near-zero spectral clusters; the exact $z$-translations remain
neutral, whereas the broken $y$-translations move to the positive side.  The
differentiated Lyapunov--Schmidt equations and the high-frequency inverse then
give the two mass slopes and the endpoint stability classification.  Finally,
continuity of the mass on $\mathcal{C}$ gives existence and multiplicity of positive normalized solutions for \eqref{E:normalized}, whereas nonexistence follows from universal endpoint
limits on $\mathcal{S}$ and compact-strip bounds.

Section~\ref{sec:assumption-results} states the hypotheses and the main
results.  Sections~\ref{sec:preliminaries}--\ref{sec:Asymptotics-high}
establish compactness and the universal endpoint limits.
Section~\ref{sec:LUTD} constructs the local lower-edge branch and computes its
degree, and Section~\ref{sec:global-conti} performs the global continuation.
Section~\ref{sec:endpoint-stability} proves high-frequency uniqueness, the
endpoint spectral and mass-slope results, and orbital stability or instability.
Section~\ref{sec:normalized-solution} treats prescribed mass as an application,
including exact multiplicity in the two diagonal endpoint cases.

\section{Analytical framework and main fixed-frequency results}\label{sec:assumption-results}

\subsection{Structural assumptions on the nonlinearity}

Let
$$2_k^*:=\begin{cases}
        \frac{2k}{k-2},& k\geq3,\\
        +\infty, & k=1,2.
        \end{cases}
        \qquad\hbox{and}\qquad
        G(s):=\int_0^s g(t)\,\mathrm{d}t.
$$
We impose the following assumptions on the nonlinearity.

\begin{enumerate}[label=$\mathbf{(G_{\arabic*})}$]
    \item $g\in C^1([0,+\infty),\R)$ satisfies $g(0)=g'(0)=0$ and $g(s)>0$ for $s>0$. \label{gcon1}
    \item \label{gcon2} There exist constants
$$2<\alpha<2_m^*,\qquad 2<\beta<2_N^*,\qquad \mu_0>0,
        \qquad \mu_\infty>0,$$
such that
\begin{equation}\label{E:gsmalllarge}
        \lim_{s\to0^+}\frac{g(s)}{s^{\alpha-1}}=\mu_0,
        \qquad
        \lim_{s\to+\infty}\frac{g(s)}{s^{\beta-1}}=\mu_\infty,
\end{equation}
and the same limits hold after differentiating, namely
\begin{equation}\label{E:gprimelimits}
        \lim_{s\to0^+}\frac{g'(s)}{s^{\alpha-2}}=\mu_0(\alpha-1),
        \qquad
        \lim_{s\to+\infty}\frac{g'(s)}{s^{\beta-2}}=\mu_\infty(\beta-1).
\end{equation}
    \item \label{gcon3} The zero-mass equation
\begin{equation}\label{E:zero-mass}
        (H-d)u=g(u),
\end{equation}
has no nontrivial block-radial bounded nonnegative classical solution satisfying the following locally uniform growth condition: for every compact $K\subset\mathbb R^m$, there are $C_K>0$ and $\eta_K\in[0,\frac12)$ such that
$$
 \sup_{z\in K}\bigl(|u(y,z)|+|\nabla_yu(y,z)|+|\Delta_yu(y,z)|\bigr)
 \le C_Ke^{\eta_K|y|^2},\qquad y\in\mathbb R^d.
$$
    \item \label{gcon4} The Hamiltonian density has the strict sign
    \begin{equation}\label{E:positive-Hamiltonian-density}
        \frac12g(s)s-G(s)>0\qquad\hbox{for every }s>0.
    \end{equation}
\end{enumerate}
In \ref{gcon3} no integrability in $z$ is assumed; the locally uniform growth condition is satisfied by the limits in Lemma~\ref{lem:polynomial-growth-zero-mass-limit}.  Assumptions
\ref{gcon1}--\ref{gcon2} determine the endpoint scalings and suffice for the high-frequency limit and the local construction.  Assumption~\ref{gcon3} is used only to exclude a nonzero lower-edge limit (Lemma \ref{L:lower-vanishing}), whereas \ref{gcon4} is used only in the compact-strip Hamiltonian argument (Proposition~\ref{prop:uniform-z-tail}) and its compactness consequences.  Thus the two assumptions serve different roles in the argument.

The elliptic and continuation results below require only the hypotheses just
listed.  For the dynamical conclusions we state separately the following
standard Hamiltonian-flow assumption.  Let $X_{\mathbb C}$ be the
complexification of $X$ and set
\begin{equation}\label{E:gauge-covariant-nonlinearity}
 \mathfrak g(\psi):=
 \begin{cases}
  g(|\psi|)\psi/|\psi|,&\psi\ne0,\\
  0,&\psi=0.
 \end{cases}
\end{equation}
We shall say that \ref{condition-D} holds if
\begin{enumerate}[label=$\mathbf{(D)}$]
\item \label{condition-D} the Cauchy problem
\begin{equation}\label{E:dynamical-assumption-flow}
 i\partial_t\psi=H\psi-\mathfrak g(\psi),\qquad \psi(0)\in X_{\mathbb C},
\end{equation}
is locally well posed in $X_{\mathbb C}$ with continuous dependence and the
usual continuation criterion, conserves the mass $M$ and energy $E$ defined by
\begin{equation}\label{E:mass-energy-dynamics}
 M(\psi):=\int_{\mathbb{R}^N}|\psi|^2\mathrm{d}x,~
 E(\psi):=\frac{1}{2}\int_{\mathbb{R}^N}\bigl(|\nabla\psi|^2+|y|^2|\psi|^2\bigr)\mathrm{d}x
           -\int_{\mathbb{R}^N} G(|\psi|)\mathrm{d}x,
\end{equation}
has $C^2$ action functionals near the standing-wave orbits considered
below, and satisfies the flow regularity required by the instability
theorem in \cite{GrillakisShatahStraussII}.
\end{enumerate}
This assumption is deliberately separated from \ref{gcon1}--\ref{gcon4}: the latter are elliptic hypotheses and, for a general $C^1$ function $g$, do not by themselves encode a complete Cauchy
theory.

\begin{remark}[On the dynamical assumption]\label{R:dynamical-assumption}
For pure powers, and more generally whenever the associated gauge-covariant
Nemytskii map satisfies the standard energy-subcritical Strichartz estimates,
\ref{condition-D} follows from the usual partially confined Cauchy theory; see
\cite{AntonelliCarlesDrumondSilva,BBJV}.  All existence, uniqueness,
continuation, endpoint-asymptotic, and spectral statements of this paper are
independent of \ref{condition-D}.  Only the final passage from the slope and spectral
information to orbital stability or instability uses it.
\end{remark}

Under assumptions \ref{gcon1} and \ref{gcon2}, there exists $C>0$ such that
\begin{equation}\label{E:growth}
        |g'(s)|\leq C\bigl(1+s^{\beta-2}\bigr),\qquad s\geq 0.
\end{equation}

\begin{remark}\label{remark:20260616-1112}
By \cite[Theorem~3.3]{CWZZ}, assumption \ref{gcon3} is automatic when
$g(s)\ge cs^q$ for all $s>0$, with $c>0$, provided either $m=N-d\le2$ and
$q>1$, or $m>2$ and
$$
        1<q\leq \frac{m}{m-2}.
$$
Further criteria under integrability assumptions appear in
\cite[Theorem~1.1(ii)--(iii)]{CWZZ}.
\end{remark}

\begin{remark}[Model nonlinearities]
The hypotheses include both pure-power and genuinely double-power models. The pure-power choice $g(s)=\kappa s^{p-1}$ corresponds to $\alpha=\beta=p$; the preceding criterion applies with $q=p-1$. For example, let
$$g(s)=\kappa_0s^{\alpha-1}+\kappa_\infty s^{\beta-1},\qquad \kappa_0,\kappa_\infty>0,$$
where $2<\alpha<\beta<2_N^*$. Then \ref{gcon1}--\ref{gcon2} hold with endpoint coefficients
$\mu_0=\kappa_0$ and $\mu_\infty=\kappa_\infty$, while
$$
 \frac{1}{2}g(s)s-G(s)
 =\kappa_0\Bigl(\frac{1}{2}-\frac1\alpha\Bigr)s^\alpha
  +\kappa_\infty\Bigl(\frac{1}{2}-\frac1\beta\Bigr)s^\beta>0
 \qquad(s>0),
$$
so \ref{gcon4} also holds.  Assumption~\ref{gcon3} follows from the preceding remark whenever $m\le2$, or whenever $m>2$ and $\alpha-1\le m/(m-2)$.  Thus the assumptions allow the small- and large-amplitude powers, and hence the two endpoint scalings, to be genuinely different.
\end{remark}

\subsection{Symmetry, centering, and the compactness class}

We first verify the decay required by the symmetry theorem quoted below.

\begin{lemma}[Decay of finite-energy solutions]\label{L:individual-decay}
Assume \ref{gcon1}--\ref{gcon2} and $\lambda>-d$.  Every positive classical solution $u\in X$ of \eqref{E:fixed-lambda} belongs to
$L^\infty(\mathbb{R}^N)\cap C_0(\mathbb{R}^N)$.
\end{lemma}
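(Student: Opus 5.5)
The plan is to prove boundedness by Moser iteration (or Brezis--Kato) and then obtain decay at infinity from uniform local elliptic estimates combined with the $L^2$-smallness of $u$ on far-away unit balls.

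\emph{Step 1: $L^\infty$ bound.} Write the equation as $-\Delta u+|y|^2u+\lambda u=a(x)u$ with $a(x):=g(u)/u$. By \ref{gcon1}--\ref{gcon2} and \eqref{E:growth} we have $0\le g(s)\le C(s^{\alpha-1}+s^{\beta-1})\le C'(s+s^{\beta-1})$. Hence $|a|\le C'(1+u^{\beta-2})$, and since $u\in H^1(\mathbb{R}^N)\subset L^{2_N^*}$ (or every $L^q$ when $N\le2$) and $\beta<2_N^*$, the term $u^{\beta-2}$ lies in $L^{N/2}+L^\infty$. The term $|y|^2u\ge0$ has a favorable sign: testing with $u\min(u,L)^{2k}$, it gives a nonnegative contribution on the left and can be discarded. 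The term $\lambda u$ with $\lambda>-d$ possibly negative is absorbed into the $L^\infty$ part of the coefficient. The Brezis--Kato argument then yields $u\in L^q$ for all $q<\infty$. After that, $a\in L^{r}_{\mathrm{loc}}$ uniformly for some $r>N/2$, and local Moser iteration (De Giorgi--Nash--Moser for subsolutions of $-\Delta u\le Cu+Cu^{\beta-1}$, discarding $|y|^2u\ge0$ and using $u>0$) gives
$$\sup_{B_1(x_0)}u\le C\bigl(\|u\|_{L^2(B_2(x_0))}+\cdots\bigr),$$
with $C$ independent of $x_0$. Thus $u\in L^\infty$.

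\emph{Step 2: decay.} Since $u>0$ satisfies $-\Delta u\le c(x)u$ with $c:=a+|\lambda|\in L^\infty$ (by Step 1), the local maximum principle gives $\sup_{B_1(x_0)}u\le C\|u\|_{L^2(B_2(x_0))}$, uniformly in $x_0$. Since $u\in L^2(\mathbb{R}^N)$, the right side tends to $0$ as $|x_0|\to\infty$. Continuity holds because $u$ is classical. Hence $u\in C_0(\mathbb{R}^N)$.

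The main obstacle is the uniformity of the estimates in the presence of the unbounded potential. The key point is that $|y|^2u\ge0$ is a favorable term for subsolution estimates and can simply be dropped, so no growth of the potential enters any constant. Keeping the subcritical exponent $\beta<2_N^*$ throughout ensures the Brezis--Kato hypothesis. Only $H^1$-regularity and the positive sign of $u$ are used; the weight in $X$ is not needed.
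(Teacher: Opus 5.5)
Your argument is correct and is essentially the paper's proof: drop $|y|^2u\ge0$, treat $u$ as a nonnegative subsolution of $-\Delta u\le a(x)u$ with $\|a\|_{L^q(B_2(x_0))}$ bounded uniformly in $x_0$ for some $q>N/2$, apply the local estimate $\sup_{B_1(x_0)}u\le C\|u\|_{L^2(B_2(x_0))}$ with $C$ independent of $x_0$, and conclude from $u\in L^2$. The Brezis--Kato step is unnecessary: for $N\ge3$ the strict inequality $\beta<2_N^*$ already gives $u^{\beta-2}\in L^{2_N^*/(\beta-2)}$ with $2_N^*/(\beta-2)>N/2$, which is exactly what the paper uses to run the local estimate directly. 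Also, your intermediate bound $s^{\alpha-1}+s^{\beta-1}\le C(s+s^{\beta-1})$ fails for large $s$ when $\alpha>\beta$, which the hypotheses allow; the bound $g(s)\le C(s+s^{\beta-1})$ that you actually need still follows from \eqref{E:growth}.
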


\begin{proof}
Set $\lambda_-:=\max\{-\lambda,0\}$.  By \ref{gcon1}--\ref{gcon2},
$$
 -\Delta u\le a(x)u,\qquad
 0\leq a(x):=\lambda_-+\frac{g(u(x))}{u(x)}
 \leq C(1+u^{\beta-2}).
$$
Since $u\in H^1(\mathbb{R}^N)$ and $\beta<2_N^*$, the norms $\|a\|_{L^q(B_2(x_0))}$ are bounded independently of $x_0$ for some $q>\max\{N/2,1\}$.  The standard local estimate for nonnegative subsolutions therefore yields
$$\|u\|_{L^\infty(B_1(x_0))}\leq C_u\|u\|_{L^2(B_2(x_0))},\qquad x_0\in\mathbb{R}^N,$$
with $C_u$ independent of $x_0$; see also the subsolution arguments in \cite[proofs of Theorems~2.3 and~3.2]{BrezisLieb}. Since $u\in L^2$, this proves both boundedness and uniform decay at infinity.
\end{proof}

We shall use the following theorem from \cite[Theorem~1.3]{CWZZ}.

\noindent\textbf{Theorem A.}~
Assume $\lambda>-d$ and $g\in C^1([0,+\infty),[0,+\infty))$ with
$g'(0)=0$.  Every positive solution $u\in C^2(\mathbb{R}^N)\cap X$ of \eqref{E:fixed-lambda} satisfying $\lim_{|x|\to\infty}u(x)=0$ has the following properties:
\begin{enumerate}[label=(\roman*)]
    \item it is radially symmetric and decreasing with respect to the first $d$ confined variables $y=(x_1,\ldots,x_d)$;
    \item it is radially symmetric and decreasing with respect to the remaining $m=N-d$ free variables $z=(x_{d+1},\ldots,x_N)$, up to a translation in the $z$-hyperplane.
\end{enumerate}

The potential makes bounded subsets of $X$ tight in $y$, since
\begin{equation}\label{eq:chebixuefu}
\int_{|y|>R} u^2 \,\mathrm{d}x \leq R^{-2} \int_{\mathbb{R}^N} |y|^2 u^2 \,\mathrm{d}x.
\end{equation}
Translations in $z$ remain.  By Lemma~\ref{L:individual-decay} and Theorem~A, we may center every positive solution at $z=0$ and introduce
\begin{equation}\label{E:Xr}
X_r:=\{u\in X:u(y,z)=U(|y|,|z|)\}.
\end{equation}
The compactness mechanism differs for $m\ge2$ and $m=1$; see
Lemma~\ref{L:embedding}.  If $m\ge2$, radial compactness in the free variables prevents translations, and it suffices to use the positivity cone
\begin{equation}\label{E:cone}
        \mathcal{K}:=\{u\in X_r:u\ge0\}.
\end{equation}
If $m=1$, this radial compactness is lost.  In this case we use the closed monotone cone
\begin{equation}\label{E:cone-m-one}
        \mathcal{K}:=\{u\in X_r:u\ge0,
        \ u \text{ is nonincreasing in } |y| \text{ and in } |z|\}.
\end{equation}

Thus $\mathcal{K}$ is dimension-dependent only in the preliminary compactness argument.  Since $g$ need not be monotone, the resolvent need not preserve the cone when $m=1$. Accordingly, after proving \eqref{eq:20260616-1731-global}, we use in every dimension
the same Gaussian--exponentially weighted centered space, on which the positive-part resolvent is completely continuous.

\subsection{The centered positive-solution set}

Let
\begin{equation}\label{E:solution-set}
        \cS:=\{(\lambda,u)\in(-d,+\infty)\times\cK:
        u>0\text{ solves }\eqref{E:fixed-lambda}\}.
\end{equation}

Given a lower frequency
$\lambda_0>-d$, choose $\theta\in(0,1)$ and $\tau>0$ so that
$$
 \lambda_0+\theta d>0,
 \qquad
 \tau<\sqrt{\lambda_0+\theta d}.
$$
Writing $s^+:=\max\{s,0\}$ and $g_+(s):=g(s^+)$, we use the centered Banach space
$$
\begin{aligned}
 \mathcal X_{\theta,\tau}
 &:=\left\{u\in X_r\cap C_0(\mathbb{R}^N):
 e^{\frac\theta2|y|^2+\tau|z|}u\in L^\infty(\mathbb{R}^N)\right\},
 \\
 \|u\|_{\mathcal X_{\theta,\tau}}
 &:=\|u\|_X+
 \left\|e^{\frac\theta2|y|^2+\tau|z|}u\right\|_\infty,
\end{aligned}
$$
and, for $\lambda\ge\lambda_0$, the positive-part resolvent map
$$
 T_\lambda^+(u):=(H+\lambda)^{-1}g_+(u).
$$
Here $(H+\lambda)^{-1}:X'\to X$ is the form resolvent of
Lemma~\ref{L:form-resolvent}.  Membership of solutions in
$\mathcal X_{\theta,\tau}$ will follow from localization, rather than being assumed.

\begin{theorem}[Uniform localization and compactness]
\label{T:compactness}
Assume \ref{gcon1} and \ref{gcon2}, and let
$$
    [\Lambda_1,\Lambda_2]\subset\subset(-d,+\infty).
$$
Then every solution $(\lambda,u)\in\cS$ with
$\lambda\in[\Lambda_1,\Lambda_2]$ satisfies
\begin{equation}\label{E:Linftybound}
    \norm{u}_{L^\infty(\R^N)}
    \le C(\Lambda_1,\Lambda_2).
\end{equation}

Moreover, for every $m\ge1$, every $\theta\in(0,1)$, and every
$\tau>0$ satisfying
$$
    \Lambda_1+\theta d>0,
    \qquad
    \tau<\sqrt{\Lambda_1+\theta d},
$$
the positive-part fixed-point map $T_\lambda^+$ is completely
continuous on bounded subsets of the weighted centered space
$\mathcal X_{\theta,\tau}$, uniformly for
$\lambda\in[\Lambda_1,\Lambda_2]$. Every nonzero fixed point of
$T_\lambda^+$ is a centered positive solution of
\eqref{E:fixed-lambda}.

If, in addition, \ref{gcon4} holds, then the set
$$
    \mathcal U_{\Lambda_1}^{\Lambda_2}
    :=
    \bigl\{u:(\lambda,u)\in\cS,\
    \lambda\in[\Lambda_1,\Lambda_2]\bigr\}
$$
is compact in $X_r$.  Every centered positive solution
in the strip belongs to $\mathcal X_{\theta,\tau}$; consequently,
the nonzero fixed points of $T_\lambda^+$ are precisely the centered positive solutions in the strip.
\end{theorem}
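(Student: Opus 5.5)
The proof has three parts, matching the three assertions.

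\emph{Uniform $L^\infty$ bound.} I would argue by blow-up. Suppose $(\lambda_n,u_n)\in\cS$ with $\lambda_n\in[\Lambda_1,\Lambda_2]$ and $M_n:=\|u_n\|_\infty\to\infty$. By Lemma~\ref{L:individual-decay} and Theorem~A, the maximum is attained at the origin. Set
$$
v_n(x):=M_n^{-1}u_n\bigl(M_n^{-(\beta-2)/2}x\bigr).
$$
The rescaled potential $M_n^{-(\beta-2)}\bigl(|M_n^{-(\beta-2)/2}y|^2+\lambda_n\bigr)$ tends to $0$ locally uniformly, and by \eqref{E:gsmalllarge} the rescaled nonlinearity tends to $\mu_\infty v^{\beta-1}$ locally uniformly. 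Local elliptic estimates then give $v_n\to v$ in $C^1_{\rm loc}$. The limit satisfies $-\Delta v=\mu_\infty v^{\beta-1}$ in $\R^N$ with $0\le v\le1$ and $v(0)=1$. Since $\beta<2_N^*$, the Gidas--Spruck Liouville theorem gives a contradiction. The potential term is nonnegative only after absorbing $\lambda_n$, but it is lower order after the scaling, so it causes no difficulty.

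\emph{Complete continuity of $T_\lambda^+$ on $\mathcal X_{\theta,\tau}$.} Take $u$ bounded in $\mathcal X_{\theta,\tau}$. Then $g_+(u)\le C|u|^{\alpha-1}$, which has weighted decay $e^{-(\alpha-1)(\theta|y|^2/2+\tau|z|)}$, a strictly faster rate. I would use a comparison (supersolution) function $w=e^{-\theta|y|^2/2-\tau|z|}$ away from the origin, regularized in $|z|$ near $z=0$. A direct computation gives
$$
(H+\lambda)w\ge\bigl((1-\theta^2)|y|^2+\theta d+\lambda-\tau^2\bigr)w-(\text{terms supported near } z=0),
$$
and the coefficient is positive precisely because $\Lambda_1+\theta d>\tau^2$. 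The maximum principle for the form resolvent then gives $|(H+\lambda)^{-1}f|\le C\|e^{\theta|y|^2/2+\tau|z|}f\|_\infty\, w$. Compactness comes from two ingredients. Local $W^{2,p}$ estimates give equicontinuity on balls. The extra decay factor makes the tails uniformly small in the weighted sup norm: the image decays like the faster rate, and a truncation plus Arzel\`a--Ascoli argument handles this. Compactness in $X$ follows from the embedding $X_r\cap\{\text{tight}\}$ (Lemma~\ref{L:embedding}), or directly from the weighted decay together with the $H^1$ resolvent bound. Uniformity in $\lambda$ follows from resolvent continuity. If $u=T^+_\lambda u\ne0$, testing with $u^-$ gives $u\ge0$, and the strong maximum principle gives $u>0$.

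\emph{Compactness of $\mathcal U_{\Lambda_1}^{\Lambda_2}$ under \ref{gcon4}.} This is the main obstacle: one must show uniform decay in $z$ when $m=1$, where radial compactness fails and mass could spread along a plateau. The key is a uniform tail estimate (Proposition~\ref{prop:uniform-z-tail}). Suppose the solutions did not decay uniformly. A translated subsequence in $z$ would converge, by the $L^\infty$ bound and monotonicity, to a $z$-independent nonzero limit $\bar u(y)$ solving $-\Delta_y\bar u+|y|^2\bar u+\lambda\bar u=g(\bar u)$. For $m=1$, a Hamiltonian identity would then be applied: multiply by $\partial_z u$ and integrate over $y$. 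This shows that $\int_{\R^d}\bigl(\tfrac12|\partial_zu|^2-\tfrac12|\nabla_yu|^2-\tfrac12(|y|^2+\lambda)u^2+G(u)\bigr)\mathrm dy$ is constant in $z$ and equal to $0$ at $z=\infty$. Evaluating on the plateau limit and combining with the Pohozaev-type identity in $y$ for $\bar u$ forces $\int\bigl(\tfrac12g(\bar u)\bar u-G(\bar u)\bigr)\mathrm dy\le0$. This contradicts \ref{gcon4}. For $m\ge2$ the same plateau exclusion follows from radial decay.

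With uniform decay $u\le Ce^{-\sigma|z|}$ established, the Gaussian bound in $y$ follows from the comparison above. This places every solution in $\mathcal X_{\theta,\tau}$ with uniform norm, so each is a fixed point of $T^+_\lambda$. Compactness of $\mathcal U$ then follows from the complete continuity already proved, together with closedness, since the limits are nonzero by the lower bound $\|u\|_\infty\ge c$, which comes from $g(s)/s\to0$.
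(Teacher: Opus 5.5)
You have the paper's architecture: a blow-up plus Liouville argument for the amplitude bound, a barrier/Kato estimate for the resolvent exploiting the faster decay $e^{-(\alpha-1)(\theta|y|^2/2+\tau|z|)}$ of $g_+(u)$, and a Hamiltonian exclusion of $z$-plateaus using \ref{gcon4}. The genuine gap is the sentence ``for $m\ge2$ the same plateau exclusion follows from radial decay''. Radial (Strauss-type) compactness, as in Lemma~\ref{L:embedding}, applies only to sequences bounded in $X$. No uniform $L^2$ or $X$ bound on $\mathcal U_{\Lambda_1}^{\Lambda_2}$ is available at that stage: in the paper it is Corollary~\ref{cor:20260616-2057}, which is derived \emph{from} the uniform exponential decay you are trying to prove. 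Consider solutions nonincreasing in $|z|$ that stay above $\varepsilon_0$ on $\{|y|\le1,\ |z|\le R_n\}$ with $R_n\to\infty$. They are compatible with the $L^\infty$ bound and with monotonicity; only their mass, of order $R_n^m$, diverges, and nothing you have proved forbids that. So \ref{gcon4} must enter for every $m$. Your Hamiltonian argument does extend. For each $u_n$ your $\mathscr H_n(r)$ now satisfies $\mathscr H_n'(r)=-\frac{m-1}{r}\int_{\R^d}(\partial_rU_n)^2\,\mathrm dy\le0$ and $\mathscr H_n(r)\to0$ as $r\to\infty$, hence $\mathscr H_n\ge0$, which is exactly the sign you need. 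Alternatively, as in Proposition~\ref{prop:uniform-z-tail}, translate to $r_n\to\infty$: the curvature term $\frac{m-1}{r_n+\tau}\partial_\tau$ disappears, and Lemma~\ref{L:common-cylinder-Hamiltonian} handles the translation-invariant limit for all $m$ at once.

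Even for $m=1$ the plateau step needs repair. The translated limit $V$ is not automatically $z$-independent. It is a bounded solution on the cylinder, nonincreasing in $\tau$, and possibly a front. The stationary plateau is $V_-=\lim_{\tau\to-\infty}V$, which is nontrivial because $V_-(0)\ge\varepsilon_0$. The limit inherits the Hamiltonian as $\mathscr H_V\equiv0$ (or $\mathscr H_V\ge0$ when $m\ge2$), and it is constant along the translation-invariant limit equation. It must be evaluated at $\tau\to-\infty$, using $\|V_\tau(\cdot,\tau)\|_{L^2(\R^d)}\to0$. Passing these $y$-integrals to the limit requires the uniform Gaussian envelope of Proposition~\ref{prop:subGaussian-y-decay}. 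That envelope uses no \ref{gcon4}, so it must precede the $z$-decay, not follow it as in your last paragraph. The identity converting $J(V_-)$ into $\int_{\R^d}\bigl(\tfrac12g(V_-)V_--G(V_-)\bigr)\mathrm dy$ is the Nehari identity (test the stationary equation by $V_-$), not a Pohozaev identity.

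Smaller points:
\begin{itemize}
\item In the resolvent step the barrier must use slightly larger exponents $\theta_1>\theta$, $\tau_1>\tau$ with $\tau_1^2<\Lambda_1+\theta_1d$, which the strict inequality permits. In general the output cannot decay at the full rate $(\alpha-1)$ times the weight.
\item Gidas--Spruck covers only $N\ge3$; for $N=1,2$ you need a separate Liouville theorem.
\item For $m=1$, a nonzero fixed point lies in $\cK$ only after Theorem~A supplies monotonicity in $|z|$ about the center $0$.
\end{itemize}
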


The limiting equations are as follows.  Define
\begin{equation}\label{E:c-alpha}
        c_\alpha:=\mu_0\int_{\R^d}\phi_0(y)^\alpha\mathrm{d}y.
\end{equation}
Let $V_\alpha$ be the unique positive radial solution of
\begin{equation}\label{E:Valpha}
        -\Delta_z V+V=c_\alpha V^{\alpha-1},
        \qquad V\in H^1(\R^m),\quad V>0,
\end{equation}
and let $W_\beta$ be the unique positive radial solution of
\begin{equation}\label{E:Wbeta}
        -\Delta W+W=\mu_\infty W^{\beta-1},
        \qquad W\in H^1(\R^N),\quad W>0.
\end{equation}
Existence of these scalar ground states is covered by the classical scalar-field
theory of \cite{BerestyckiLions}, while uniqueness follows from \cite{Kwong}; their
nondegeneracy is a separate spectral result, for which we use
\cite[Lemma~2.1]{ChangGustafsonNakanishiTsai}.

\begin{theorem}[Universal endpoint asymptotics and dimensional crossover]\label{T:asymptotics}
Assume \ref{gcon1} and \ref{gcon2}, and let $(\lambda_n,u_n)\in\cS$.

\smallskip
\noindent\textbf{(i) The lower spectral edge.}
Assume in addition \ref{gcon3}, and suppose
$\mu_n:=\lambda_n+d\to0^+$.  Define
\begin{equation}\label{E:lower-rescale}
        \widetilde{u}_n(y,\zeta):=
        \mu_n^{-1/(\alpha-2)}u_n\bigl(y,\zeta/\sqrt{\mu_n}\bigr),
        \qquad (y,\zeta)\in\R^d\times\R^m.
\end{equation}
Then the whole sequence satisfies
\begin{equation}\label{eq:20260618-1027}
\widetilde{u}_n\to \phi_0(y)V_\alpha(\zeta)\qquad\hbox{strongly in}~H^1(\mathbb{R}^d\times\mathbb{R}^m).
\end{equation}
Moreover, if we write
$$\widetilde{u}_n=\phi_0(y)v_n(\zeta)+r_n,\qquad\int_{\mathbb{R}^d}r_n(y,\zeta)\phi_0(y)\,\mathrm{d}y=0,
$$
then
\begin{equation}\label{eq:20260618-1030}
v_n\to V_\alpha\qquad\hbox{in}~
W^{2,q}_{\mathrm{loc}}(\mathbb{R}^m)\cap H^1(\mathbb{R}^m)
\quad\hbox{for every }1<q<+\infty,
\end{equation}
and
\begin{equation}\label{eq:20260618-1031}
r_n\to0\qquad\hbox{strongly in}~H^1(\mathbb{R}^N).
\end{equation}
Consequently,
\begin{equation}\label{E:lower-mass}
        \norm{u_n}_2^2
        =\mu_n^{\frac{2}{\alpha-2}-\frac m2}
        \left(\norm{V_\alpha}_{L^2(\R^m)}^2+o(1)\right).
\end{equation}
In particular,
\begin{equation}\label{E:lower-limit}
        \norm{u_n}_2^2\to
        \begin{cases}
        0, & \alpha<2+4/m,\\
        \norm{V_\alpha}_{2}^{2}, & \alpha=2+4/m,\\
        +\infty, & \alpha>2+4/m.
        \end{cases}
\end{equation}

\smallskip
\noindent\textbf{(ii) The high-frequency end.}
Suppose $\lambda_n\to+\infty$.  Define
\begin{equation}\label{E:high-rescale}
        \widehat{u}_n(\xi):=\lambda_n^{-1/(\beta-2)}
        u_n(\xi/\sqrt{\lambda_n}),
        \qquad \xi\in\R^N.
\end{equation}
Then the whole sequence satisfies
\begin{equation}\label{E:high-convergence}
        \widehat{u}_n\to W_\beta\qquad \hbox{in}~C_{\mathrm{loc}}^{2}(\mathbb{R}^N)\cap H^1(\mathbb{R}^N)\cap L^q(\mathbb{R}^N)~\hbox{for every}~1\leq q<+\infty.
\end{equation}
 Consequently,
\begin{equation}\label{E:high-mass}
        \norm{u_n}_2^2
        =\lambda_n^{\frac{2}{\beta-2}-\frac N2}
        \left(\norm{W_\beta}_{L^2(\R^N)}^2+o(1)\right),
\end{equation}
and hence
\begin{equation}\label{E:high-limit}
        \norm{u_n}_2^2\to
        \begin{cases}
        +\infty, & \beta<2+4/N,\\
        \norm{W_\beta}_{2}^{2}, & \beta=2+4/N,\\
        0, & \beta>2+4/N.
        \end{cases}
\end{equation}
\end{theorem}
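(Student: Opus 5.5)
We treat the two endpoints separately. In each case the plan is: obtain uniform a priori bounds for the rescaled functions, pass to a limit along subsequences, identify the limit through the relevant uniqueness result, and conclude that the whole sequence converges.

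\textbf{High-frequency end.} Set $\widehat u_n$ as in \eqref{E:high-rescale}. It solves
$$-\Delta\widehat u_n+\lambda_n^{-2}|\eta|^2\widehat u_n+\widehat u_n=\widehat g_n(\widehat u_n),\qquad \widehat g_n(s):=\lambda_n^{-(\beta-1)/(\beta-2)}g\bigl(\lambda_n^{1/(\beta-2)}s\bigr),$$
where $\eta=y\sqrt{\lambda_n}$ denotes the rescaled confined variable. By \ref{gcon2} and \eqref{E:growth}, $\widehat g_n(s)\to\mu_\infty s^{\beta-1}$ locally uniformly, in $C^1$ on compacts of $(0,\infty)$, with $|\widehat g_n(s)|\le C(\lambda_n^{-1}s+s^{\beta-1})$.
\begin{enumerate}[label=(\arabic*)]
\item \emph{$L^\infty$ bound.} We argue by blow-up. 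Since $\widehat u_n$ is block-radial and decreasing by Theorem~A, its maximum sits at the origin. If $M_n:=\widehat u_n(0)\to\infty$, a further rescaling produces a positive bounded solution of $-\Delta w=\mu_\infty w^{\beta-1}$ in $\R^N$, or on a half-space-free domain. Since $\beta<2_N^*$, this contradicts the Gidas--Spruck Liouville theorem.
\item \emph{Nondegeneracy.} The maximum satisfies $\widehat u_n(0)\ge c>0$; otherwise $\widehat g_n(s)/s\to0$ would contradict $-\Delta+1>0$.
\item \emph{$H^1$ bound and decay.} Testing the equation and using the Pohozaev identity bounds $\|\widehat u_n\|_{H^1}$. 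Monotonicity, together with the uniform comparison $-\Delta\widehat u_n+\tfrac12\widehat u_n\le0$ where $\widehat u_n$ is small, gives uniform exponential decay.
\item \emph{Identification of the limit.} Elliptic estimates give $C^2_{loc}$ convergence to a positive radial $H^1$ solution of \eqref{E:Wbeta}. By \cite{Kwong} this limit is $W_\beta$, so the whole sequence converges.
\item \emph{Mass law.} The uniform decay upgrades the convergence to $L^q$ and $H^1$. Then \eqref{E:high-mass} follows by changing variables, and \eqref{E:high-limit} is arithmetic.
\end{enumerate}

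\textbf{Lower spectral edge.} This is the main obstacle. First we must show $\|u_n\|_\infty\to0$. Suppose instead $u_n(0)\ge c$ along a subsequence. Theorem~\ref{T:compactness} cannot be used, since its constants degenerate as $\lambda\to-d$. Instead we would prove an $L^\infty$ bound by blow-up as above, using $\beta<2_N^*$, which is uniform because the confinement is only a lower-order term after blow-up. We would also prove a Gaussian bound in $y$ with rate $\eta<\tfrac12$ via a comparison with $e^{-\theta|y|^2/2}$ on $\{|y|\ge R\}$, locally in $z$; this is Lemma~\ref{lem:polynomial-growth-zero-mass-limit} in the paper. Local limits in $z$ then give a nontrivial bounded nonnegative block-radial solution of \eqref{E:zero-mass} with the stated growth. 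Condition \ref{gcon3} forbids this, so $u_n\to0$ uniformly; this is Lemma~\ref{L:lower-vanishing}.

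Now rescale. The function $\widetilde u_n$ solves
$$(-\Delta_y+|y|^2-d)\widetilde u_n+\mu_n(-\Delta_\zeta\widetilde u_n+\widetilde u_n)=\mu_n\,\widetilde g_n(\widetilde u_n),$$
where $\widetilde g_n(s)\to\mu_0s^{\alpha-1}$ because $u_n\to0$. We project onto $\phi_0$: write $\widetilde u_n=\phi_0v_n+r_n$. The complementary part satisfies the spectral gap $-\Delta_y+|y|^2-d\ge2$ on $\phi_0^\perp$, so testing the equation with $r_n$ gives
$$\|r_n\|_{H^1}^2\lesssim\mu_n\bigl(\|\widetilde g_n(\widetilde u_n)\|\,\|r_n\|\bigr)\to0,$$
provided $\widetilde u_n$ is bounded. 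For that bound we need a rescaled $L^\infty$ and energy estimate. We would obtain it by a Nehari/Pohozaev identity in $\zeta$ combined with the Gagliardo--Nirenberg inequality in $\R^m$, using $\alpha<2_m^*$ and a blow-up argument on $v_n$.

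The equation for $v_n$ is then
$$-\Delta v_n+v_n=\int\widetilde g_n(\widetilde u_n)\phi_0\,dy\ \longrightarrow\ c_\alpha v^{\alpha-1}.$$
Since $v_n$ is radial and decreasing, it converges in $W^{2,q}_{loc}$. Nonvanishing follows from the Nehari identity, and uniqueness \cite{Kwong} identifies the limit as $V_\alpha$. Tightness in $\zeta$ comes from monotonicity and uniform decay, which gives $H^1$ convergence; this yields \eqref{eq:20260618-1027}--\eqref{eq:20260618-1031}. The mass formula \eqref{E:lower-mass} is the Jacobian $\mu_n^{2/(\alpha-2)}\mu_n^{-m/2}$.
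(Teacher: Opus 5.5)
There is a genuine gap, and it occurs at both endpoints: nothing in your outline rules out loss of compactness at spatial infinity uniformly in $n$.

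At high frequency you assert a uniform $H^1$ bound from testing the equation and the Pohozaev identity. You then derive uniform exponential decay from monotonicity plus the comparison $-\Delta\widehat u_n+\tfrac12\widehat u_n\le0$ ``where $\widehat u_n$ is small.'' At the lower edge you invoke Nehari/Pohozaev with Gagliardo--Nirenberg for an energy bound, and ``monotonicity and uniform decay'' for tightness in $\zeta$. Neither step works.

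The Nehari and Pohozaev identities are linear relations among $\|\nabla u\|_2^2$, $\|u\|_2^2$, the potential term and the nonlinear integrals. Bounding the latter by $\|u\|_\infty^{\beta-2}\|u\|_2^2$ keeps everything homogeneous. These identities fix ratios, not size, and combined with Gagliardo--Nirenberg they give only lower bounds.

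Monotonicity does not prevent block-radially decreasing profiles from developing plateaus of unbounded length. So the region where the profile is small need not contain a fixed exterior domain, and your comparison has no uniform starting radius.

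This is not a technicality. The constant $c_\alpha^{-1/(\alpha-2)}$ is a bounded, radial, nonincreasing solution of $-\Delta v+v=c_\alpha v^{\alpha-1}$. At high frequency, $\mu_\infty^{-1/(\beta-2)}$ or a $Z$-independent $d$-dimensional ground state are admissible local limits. Without a tail estimate you therefore cannot invoke Kwong's uniqueness, which needs decay and $H^1$ membership. Likewise, your test of the $Q$-equation against $r_n$ presupposes a global $L^2$ bound on $\widetilde u_n$, which is equivalent to the mass law you are trying to prove.

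The missing ingredient is the Hamiltonian exclusion of fronts. Suppose the profile stays above $\varepsilon_0$ at radii $R_n\to\infty$. The paper translates to the radius $r_n\ge R_n$ where the profile first drops to a fixed small level $\varepsilon$. Passing to the limit gives a monotone front from a nontrivial stationary state $V_-$ to $V_+\equiv0$; the vanishing of $V_+$ is forced by the smallness of $\varepsilon$. Conservation of $\frac12\|V_\tau\|_2^2-J(V)$ then contradicts $J(V_-)>0$. This is Lemma~\ref{lem:uniform-tail-vn}, via Lemma~\ref{L:common-cylinder-Hamiltonian} with $k=0$, and Lemma~\ref{lem:uniform-smallness-high-frequency}, via Lemma~\ref{L:localized-Hamiltonian-exclusion}.

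Two further points are needed to run this argument. At the lower edge, the $Q$-component must vanish on the translated windows. The paper proves this locally in $\zeta$ (Lemma~\ref{L:projected-Q-compactness}) precisely so that no global $L^2$ bound is needed. At high frequency, a plateau in the confined direction must first be shown to satisfy $r_n/\lambda_n=O(1)$ by a test-function argument, so that the limiting potential $1+\ell^2$ is finite.

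Only after this uniform smallness does comparison yield the uniform exponential envelopes. Those envelopes give the global $L^2$, $H^1$ and $L^q$ convergence, the identification of the limits, and the mass laws.
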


\begin{theorem}[Local branch and spectral information at $\lambda=-d$]\label{T:local-degree}
Assume \ref{gcon1} and \ref{gcon2}.  Then a centered positive branch exists for all
$0<\lambda+d$ sufficiently small, and its rescaled profile converges to $\phi_0V_\alpha$ as in \eqref{E:lower-rescale}--\eqref{eq:20260618-1027}.
On its interval of construction, the Lyapunov--Schmidt branch is continuous
in $\lambda$ and $C^1$ in the rescaled variables.  If, in addition, \ref{gcon3} holds, there exists $\mu_*>0$ such that, for every $0<\lambda+d<\mu_*$, equation \eqref{E:fixed-lambda} has exactly one centered positive solution, equivalently exactly one nonzero solution in $\mathcal{K}$.  Denote it by $u_\lambda$.  The centered linearized operator
$$
 \mathcal{L}_\lambda=H+\lambda-g'(u_\lambda)\quad\hbox{on }X_r
$$
has trivial kernel and Morse index one.

Moreover, fix any $\lambda_0\in(-d,-d+\mu_*)$, choose $\theta\in(0,1)$ with
$\lambda_0+\theta d>0$, and take $0<\tau<\sqrt{\lambda_0+\theta d}$.  In the
Gaussian--exponentially weighted centered space $\mathcal X_{\theta,\tau}$ defined in
\eqref{E:weighted-space}, $u_{\lambda_0}$ is an isolated fixed point of the positive-part
map $T_{\lambda_0}^+$.  Writing $\deg_{LS}$ for the Leray--Schauder degree, for every sufficiently small isolating neighborhood
$\mathcal O_{\lambda_0}\subset\mathcal X_{\theta,\tau}$,
\begin{equation}\label{E:weighted-degree-local}
 \deg_{LS}(I-T_{\lambda_0}^+,\mathcal O_{\lambda_0},0)=-1\ne0.
\end{equation}
This formulation and degree value apply for every $m\ge1$.
\end{theorem}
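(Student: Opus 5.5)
The proof will run in the anisotropic variables of \eqref{E:lower-rescale}, with $\mu=\lambda+d$. Writing $u(y,z)=\mu^{1/(\alpha-2)}w(y,\sqrt\mu z)$, equation \eqref{E:fixed-lambda} becomes
$$
 (-\Delta_y+|y|^2-d)w+\mu(-\Delta_\zeta+1)w=\mu^{-\frac{\alpha-1}{\alpha-2}}g\bigl(\mu^{\frac1{\alpha-2}}w\bigr)=:\mu f_\mu(w),
$$
where $f_\mu(w)\to\mu_0 w^{\alpha-1}$ in $C^1$ on bounded sets by \ref{gcon2}. I will decompose $w=\phi_0(y)v(\zeta)+r$ with $r\perp\phi_0$ in $L^2_y$, and let $Q$ denote the complementary projection.

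\textbf{Step 1 (reduction).} On $\operatorname{Ran}Q$ the operator $-\Delta_y+|y|^2-d\ge 2$, so $A_\mu:=Q[(-\Delta_y+|y|^2-d)+\mu(-\Delta_\zeta+1)]Q$ is invertible with norm $O(1)$, uniformly in $\mu$. This is the uniform complementary-mode resolvent bound, and it must be proved in a rescaled norm in which $\zeta$-derivatives are weighted by $\sqrt\mu$. The equation then splits as
$$
 A_\mu r=\mu Qf_\mu(\phi_0v+r),\qquad (-\Delta_\zeta+1)v=\langle f_\mu(\phi_0v+r),\phi_0\rangle_{L^2_y}.
$$
By the contraction principle, the first equation gives $r=r_\mu(v)=O(\mu)$, $C^1$ in $v$, on a neighbourhood of $V_\alpha$ in $H^1_{\rm rad}(\R^m)$. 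Since $\alpha<2^*_m$, the $L^\alpha$ bounds needed for the Nemytskii map hold in the mixed space. The reduced equation is then $F(\mu,v)=0$ with $F(0,v)=-\Delta v+v-c_\alpha v^{\alpha-1}$. Nondegeneracy of $V_\alpha$ in $H^1_{\rm rad}$ \cite[Lemma~2.1]{ChangGustafsonNakanishiTsai} and the implicit function theorem give a continuous branch $v_\mu\to V_\alpha$ that is $C^1$ in the rescaled variables. Positivity of $u$ comes from $v_\mu>0$ together with a maximum principle argument, and this gives existence and \eqref{eq:20260618-1027}.

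\textbf{Step 2 (uniqueness under \ref{gcon3}).} By Theorem~\ref{T:asymptotics}(i), every centered positive solution with $\mu$ small has rescaled components $(v_n,r_n)$ that converge to $(V_\alpha,0)$. Hence they lie in the neighbourhood where the reduction is a bijection, and so they coincide with the branch. The same conclusion can be phrased as a normalized-difference argument: if two solutions differed, the normalized difference would converge to a nonzero element of the radial kernel of $-\Delta+1-c_\alpha(\alpha-1)V_\alpha^{\alpha-2}$, which is a contradiction.

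\textbf{Step 3 (spectrum).} I will run the same Schur-complement splitting on $\mathcal L_\lambda$, rescaled to $\mu^{-1}\mathcal L$. The $Q$-block is $\ge 2/\mu$, which is large and positive. The Schur complement tends to $L_+=-\Delta+1-c_\alpha(\alpha-1)V_\alpha^{\alpha-2}$ on $H^1_{\rm rad}(\R^m)$, which is invertible with exactly one negative eigenvalue. By the min-max principle and the continuity of the Schur complement, the kernel of $\mathcal L_\lambda$ on $X_r$ is trivial and its Morse index is one. I expect this step to be the main obstacle. The perturbation has to be uniform with the degenerate $\zeta$-scaling, and the difficulty is controlling the off-diagonal terms $Q g'(u)\phi_0$ to show that they are $o(1)$ relative to both blocks. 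I would first try a quadratic-form estimate: weight the $\phi_0$ component by $\mu$ and the $Q$ component by $1$, then apply Cauchy--Schwarz.

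\textbf{Step 4 (degree).} Theorem~\ref{T:compactness} shows that $u_{\lambda_0}$ lies in $\mathcal X_{\theta,\tau}$ and that $T^+_{\lambda_0}$ is compact there. By uniqueness, $u_{\lambda_0}$ is an isolated fixed point. Nondegeneracy makes $I-DT^+_{\lambda_0}=(H+\lambda_0)^{-1}\mathcal L_{\lambda_0}$ invertible on $\mathcal X_{\theta,\tau}$. Eigenfunctions of $DT$ with eigenvalues greater than $1$ are exponentially localized, so they belong to both $\mathcal X_{\theta,\tau}$ and $X_r$. They also correspond exactly to the negative directions of $\mathcal L_{\lambda_0}$, since $(H+\lambda)^{-1}g'(u)\varphi=s\varphi$ with $s>1$ if and only if $\mathcal L\varphi=(1-1/s)(H+\lambda)\varphi$. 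Counting these eigenvalues with their multiplicities gives one, so the Leray--Schauder index formula yields degree $(-1)^1=-1$.
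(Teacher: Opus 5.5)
Your architecture is the paper's: reduction onto the lowest oscillator mode, the implicit function theorem at the radially nondegenerate $V_\alpha$, normalized-difference uniqueness, a Schur complement for the centered Morse index, and counting the eigenvalues of $DT^+_{\lambda_0}$ larger than $1$. However, Step~1 does not close in the space you propose.

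You claim that, because $\alpha<2^*_m$, the $L^\alpha$ bounds for the Nemytskii map hold in the mixed space. This is true for $\phi_0v$, but not for the complementary part $r$. After the substitution $\zeta=\sqrt\mu\,\xi$, the energy space of your $Q$-block is an $H^1(\mathbb{R}^N)$-type space, and it embeds in $L^\alpha$ only for $\alpha\le 2^*_N$. Hypothesis \ref{gcon2} allows $2^*_N<\alpha<2^*_m$ whenever $N\ge3$, and then necessarily $\beta<\alpha$. For example, take $(N,d,m)=(3,2,1)$ and $g(s)=s^{\alpha-1}/(1+s^{\alpha-\beta})$ with $\alpha=8$, $\beta=4$.

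In that range two things fail. First, the limiting map $w\mapsto\mu_0(w^+)^{\alpha-1}$ is not defined on your space. Second, for $\mu>0$ the bounds on $f_\mu$ there come only from its large-amplitude growth $\mu^{-(\alpha-\beta)/(\alpha-2)}s^{\beta-1}$, whose constants blow up as $\mu\to0$. So neither a uniform contraction for $r_\mu(v)$ nor the $C^1$ convergence of the reduced map to $v\mapsto-\Delta v+v-c_\alpha v^{\alpha-1}$ can be established.

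The missing ingredient is uniform-in-$\mu$ $L^\infty$ control of the complementary component. The paper works in $H^1_{\rm rad}(\mathbb{R}^m)\cap L^\infty$ for $v$ and in $\{r\in X_r\cap L^\infty: Pr=0\}$ for $r$. On the resulting bounded range, $f_\mu\to\mu_0 s^{\alpha-1}$ in $C^1$. The key estimate, in your notation, is $\|A_\mu^{-1}Qh\|_\infty\le C\|h\|_\infty$ uniformly for $0<\mu\le1$. The paper proves it by blow-up. After rescaling $\zeta$, a failing sequence produces a bounded nonzero solution of $(H_y-d-\Delta_\xi+\mu_*)R=0$ with $QR=R$. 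This is excluded by expanding in Hermite functions and applying a Fourier-transform Liouville argument to each coefficient. The $O(1)$ $L^2$ bound for $A_\mu^{-1}$ that you call the uniform complementary-mode resolvent bound is the elementary part.

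Once this $L^\infty$ estimate is available, Step~3 is not the obstacle you expect. Uniform convergence $U_\mu\to\phi_0V_\alpha$ gives $a_\mu\to a_0$ in $L^\infty$. Hence the off-diagonal blocks of the rescaled operator are bounded on $L^2$, the $Q$-block satisfies $\|D_\mu^{-1}\|=O(\mu)$, and the Schur complement equals $L_\alpha$ plus an $O(\mu)+o(1)$ operator-norm error. No weighted Cauchy--Schwarz is needed.

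There are three smaller points. In Step~2, once $L^\infty$ is built into the spaces, the ``lies in the reduction neighbourhood'' version needs $L^\infty$-smallness of $r_n$, which Theorem~\ref{T:asymptotics}(i) does not supply. Rely on the normalized-difference version instead, which is the paper's route. In Step~4, Theorem~\ref{T:compactness} places solutions in $\mathcal{X}_{\theta,\tau}$ only under \ref{gcon4}, which is not assumed here, so use a fixed-solution comparison for $u_{\lambda_0}$. Also, because $\mathcal{X}_{\theta,\tau}$ is not a Hilbert space, you must rule out Jordan chains for eigenvalues larger than $1$. This follows from the self-adjointness of $K_{\lambda_0}$ in the $(H+\lambda_0)$-inner product on $X_r$.
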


\begin{theorem}[Global spectral-edge continuum]\label{T:global}
Assume \ref{gcon1}, \ref{gcon2}, \ref{gcon3}, and \ref{gcon4}.  Let $P_1:(-d,+\infty)\times X_r\to(-d,+\infty)$ be the projection
$P_1(\lambda,u)=\lambda$.  There exists a connected component $\mathcal{C}\subset\mathcal{S}$ such that
\begin{equation}\label{E:projection}
        P_1(\mathcal{C})=(-d,+\infty).
\end{equation}
The component $\mathcal{C}$ contains the local branch $\{(\lambda,u_\lambda):-d<\lambda<-d+\mu_*\}$ from Theorem \ref{T:local-degree}.
In view of Theorem~\ref{T:asymptotics}, it connects, after the corresponding rescalings, the $m$-dimensional lower-edge profile $\phi_0V_\alpha$ to the $N$-dimensional high-frequency profile $W_\beta$.
\end{theorem}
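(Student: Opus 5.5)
The plan is a Rabinowitz/Leray--Schauder continuation argument run on compact frequency strips and exhausted in $\lambda$. Fix $\lambda_0\in(-d,-d+\mu_*)$ as in Theorem~\ref{T:local-degree}. Let $\mathcal C$ be the connected component of $\cS$ containing $(\lambda_0,u_{\lambda_0})$. By uniqueness in Theorem~\ref{T:local-degree}, $\cS\cap\bigl((-d,-d+\mu_*)\times X_r\bigr)$ is exactly the graph $\{(\lambda,u_\lambda)\}$, and this graph is connected because the Lyapunov--Schmidt branch is continuous in $\lambda$. Hence $\mathcal C$ contains the whole local branch and $P_1(\mathcal C)\supset(-d,\lambda_0]$. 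Since $\mathcal C$ is connected, $P_1(\mathcal C)$ is an interval, so it remains to show that $P_1(\mathcal C)$ is unbounded above.

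Suppose instead that $\sup P_1(\mathcal C)<\Lambda_2<\infty$. Work on the strip $J=[\lambda_0,\Lambda_2]$ with $\theta,\tau$ chosen for $\Lambda_1=\lambda_0$, so that all solutions lie in $\mathcal X_{\theta,\tau}$.

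\begin{itemize}
\item By Theorem~\ref{T:compactness}, which uses \ref{gcon4}, the set $\cS_J:=\cS\cap(J\times X_r)$ is compact.
\item The nonzero fixed points of $T_\lambda^+$ for $\lambda\in J$ are exactly $\cS_J$, and $T^+$ is completely continuous, uniformly in $\lambda\in J$.
\item The norms in $X_r$ and $\mathcal X_{\theta,\tau}$ must be compared on $\cS_J$. The uniform weighted bounds from localization give compactness in $\mathcal X_{\theta,\tau}$ as well, since uniform Gaussian--exponential decay with a slightly larger exponent, combined with $X$-convergence and elliptic estimates, gives convergence of the weighted sup-norm.
\item Zero is isolated from $\cS_J$. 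If $u_n\to0$ with $\lambda_n\in J$, testing the equation with $u_n$ and using coercivity of $H+\lambda$ for $\lambda\ge\lambda_0>-d$ together with $g(s)=o(s)$ gives a contradiction. So $\cS_J$ stays a positive distance from $0$.
\end{itemize}

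Next comes the standard Whyburn separation step. Let $\mathcal C_J:=\mathcal C\cap(J\times X)$. By compactness, there are disjoint compact sets $A\supset\mathcal C_J$ and $B$ with $\cS_J=A\cup B$. I would then build a bounded open set $\mathcal O\subset J\times\mathcal X_{\theta,\tau}$ with the following properties:
\begin{itemize}
\item $\mathcal O$ contains $A$, avoids $B$ and $0$, and has $\partial\mathcal O\cap\cS_J=\emptyset$;
\item the slice $\mathcal O_{\Lambda_2}$ is empty, which is possible because no point of $\mathcal C$ has $\lambda=\Lambda_2$;
\item the slice $\mathcal O_{\lambda_0}$ is a small isolating neighborhood of $u_{\lambda_0}$.
\end{itemize}
The last property uses that the only solution at $\lambda_0$ is $u_{\lambda_0}$, by uniqueness in Theorem~\ref{T:local-degree}. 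Homotopy invariance of the Leray--Schauder degree along $\lambda\in J$ then gives
$$\deg_{LS}(I-T_{\lambda_0}^+,\mathcal O_{\lambda_0},0)=\deg_{LS}(I-T_{\Lambda_2}^+,\mathcal O_{\Lambda_2},0)=0,$$
which contradicts \eqref{E:weighted-degree-local}. Hence $P_1(\mathcal C)=(-d,+\infty)$. The crossover statement then follows directly from Theorem~\ref{T:asymptotics}, applied to sequences in $\mathcal C$ with $\lambda\to-d$ and with $\lambda\to\infty$.

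The main obstacle is constructing $\mathcal O$ so that its $\lambda_0$-slice coincides with the isolating neighborhood on which the degree is $-1$. The construction is $\mathcal O=\bigcup_{p\in A}B_\delta(p)$ with $\delta<\tfrac12\mathrm{dist}(A,B\cup\{0\})$, and the subtlety is that $\mathcal O$ may meet $\lambda=\lambda_0$ in a larger set than intended. This is harmless: excision applies because $u_{\lambda_0}$ is the unique fixed point in that slice, so any isolating neighborhood gives the same degree. A secondary technical point is equivalence of the $X_r$ and weighted topologies on $\cS_J$, so that connectedness in $X_r$ transfers to the degree setting; I would establish this from uniform decay with slack in the weights.
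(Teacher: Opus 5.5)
Your proposal is correct and is essentially the paper's argument. The paper applies the Leray--Schauder continuation principle (Lemma~\ref{L:strip-crossing-continuation}, quoted from Ambrosetti--Arcoya) on an annulus in $\mathcal X_{\theta,\tau}$ for every $\Lambda>\lambda_0$, and uses local uniqueness at $\lambda_0$ to place each crossing continuum in $\mathcal C$; you instead prove that principle in place, by Whyburn separation and generalized homotopy invariance inside a contradiction argument. The one step to state precisely is that Whyburn's lemma is applied to the closed sets $\mathcal C\cap\cS_J$ and $\cS_J\cap(\{\Lambda_2\}\times X_r)$, which is legitimate because every component of $\cS_J$ meeting $\mathcal C$ lies in $\mathcal C$; this, and not merely the fact that $\mathcal C$ misses $\lambda=\Lambda_2$, is what keeps $A$, and hence $\mathcal O$, away from the terminal slice.
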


\begin{theorem}[Endpoint branches, mass slopes, and orbital stability]
\label{T:endpoint-stability}
Assume \ref{gcon1}--\ref{gcon4}.  There are constants
$\mu_{\mathrm s}>0$ and $\Lambda_{\mathrm s}>0$ with the following
properties.

\smallskip
\noindent\textbf{(i) Lower edge.}
For $0<\mu=\lambda+d<\mu_{\mathrm s}$, the centered positive solution
$u_\lambda$ of Theorem~\ref{T:local-degree} is a $C^1$ branch and
\begin{equation}\label{E:main-lower-mass-slope}
 \frac{\mathrm d}{\mathrm d\lambda}\|u_\lambda\|_2^2
 =\mu^{\kappa_--1}
   \left(\kappa_-\|V_\alpha\|_{L^2(\mathbb R^m)}^2+o(1)\right),
 \qquad
 \kappa_-:=\frac2{\alpha-2}-\frac m2 .
\end{equation}

\smallskip
\noindent\textbf{(ii) High-frequency end.}
For every $\lambda>\Lambda_{\mathrm s}$, equation
\eqref{E:fixed-lambda} has exactly one centered positive solution, denoted
again by $u_\lambda$.  This family is $C^1$ in $\lambda$, and
\begin{equation}\label{E:main-high-mass-slope}
 \frac{\mathrm d}{\mathrm d\lambda}\|u_\lambda\|_2^2
 =\lambda^{\kappa_+-1}
   \left(\kappa_+\|W_\beta\|_{L^2(\mathbb R^N)}^2+o(1)\right),
 \qquad
 \kappa_+:=\frac2{\beta-2}-\frac N2 .
\end{equation}
At either end the real linearized operator has one negative eigenvalue and,
on the full energy space,
\begin{equation}\label{E:main-full-kernels}
 \ker\bigl(H+\lambda-g'(u_\lambda)\bigr)
 =\operatorname{span}\{\partial_{z_1}u_\lambda,\ldots,
                         \partial_{z_m}u_\lambda\}.
\end{equation}
The imaginary linearized operator $H+\lambda-\frac{g(u_\lambda)}{u_\lambda}$ is nonnegative and has kernel $\operatorname{span}\{u_\lambda\}$.

If, in addition, \ref{condition-D} holds, consider the gauge-covariant nonlinear
Schr\"odinger flow
\begin{equation}\label{E:main-dynamical-NLS}
 i\partial_t\psi=H\psi-\mathfrak g(\psi),
 \qquad \mathfrak g\ \hbox{as in }\eqref{E:gauge-covariant-nonlinearity}.
\end{equation}
Then $e^{i\lambda t}u_\lambda$ is orbitally stable in the complex energy
space, modulo phase and translations in $z$, in the following
endpoint ranges:
\begin{align*}
 &\lambda+d\ \hbox{small and } \alpha<2+\frac4m,
 &&\lambda\ \hbox{large and } \beta<2+\frac4N.
\end{align*}
It is orbitally unstable in the corresponding supercritical ranges
$\alpha>2+4/m$ and $\beta>2+4/N$.  At
$\alpha=2+4/m$ or $\beta=2+4/N$, the leading slope vanishes and no
stability conclusion is asserted without a higher-order expansion.
\end{theorem}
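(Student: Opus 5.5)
The plan is to treat the two ends separately. At each end I will obtain a $C^1$ branch, compute the mass derivative by differentiating the rescaled equation, identify the full-space kernels and Morse indices, and then feed this into the Grillakis--Shatah--Strauss slope criterion.

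\textbf{(i) Lower edge.} I would work in the rescaled variables of \eqref{E:lower-rescale}. There the Lyapunov--Schmidt branch of Theorem~\ref{T:local-degree} is $C^1$, and it is the unique centered solution for $\mu$ small. Write $\widetilde u_\mu=\phi_0 v_\mu+r_\mu$ with $v_\mu$ solving a reduced equation
$$
-\Delta_\zeta v+v=c_\alpha v^{\alpha-1}+\mathcal N(\mu,v).
$$
Here the correction $\mathcal N$ is $C^1$ in $(\mu,v)$ with $\partial_\mu\mathcal N$ bounded by $\mu^{\epsilon-1}$ times $o(1)$. I would obtain this from \eqref{E:gprimelimits} and the uniform complementary-mode resolvent, whose complementary gap is of order $\mu^{-1}$ in the rescaled variables.

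The key scaling identity is
$$
\|u_\lambda\|_2^2=\mu^{\kappa_-}\|\widetilde u_\mu\|_2^2 .
$$
Differentiating it gives
$$
\frac{\mathrm d}{\mathrm d\lambda}\|u_\lambda\|_2^2=\mu^{\kappa_--1}\left(\kappa_-\|\widetilde u_\mu\|_2^2+2\mu\langle\widetilde u_\mu,\partial_\mu\widetilde u_\mu\rangle\right).
$$
It then suffices to show $\mu\,\partial_\mu\widetilde u_\mu\to0$ in $L^2$. This follows by differentiating the reduced equation and inverting the nondegenerate radial linearization of $V_\alpha$ (the Schur-complement step of Theorem~\ref{T:local-degree}). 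The $\mu$-derivative of the correction is $o(\mu^{-1})$, so $\mu\,\partial_\mu v_\mu\to0$. The same bound for $r_\mu$ comes from the complementary resolvent.

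\textbf{(ii) High frequency.} Theorem~\ref{T:asymptotics}(ii) holds for every sequence in $\mathcal S$. So for $\lambda$ large every centered solution, after the rescaling \eqref{E:high-rescale}, is close to $W_\beta$. In rescaled variables the equation reads
$$
-\Delta w+\lambda^{-2}|\eta|^2w+w=f_\lambda(w),\qquad \eta\ \text{the rescaled }y\text{-variable},
$$
with $f_\lambda(s):=\lambda^{-(\beta-1)/(\beta-2)}g(\lambda^{1/(\beta-2)}s)\to\mu_\infty s^{\beta-1}$ in $C^1$ locally uniformly. I want a uniform inverse of the radial linearization
$$
-\Delta+\lambda^{-2}|\eta|^2+1-f_\lambda'(w)
$$
on the centered form space. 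I would argue by contradiction: a sequence of normalized near-kernel elements converges, using the confinement and the exponential decay of $W_\beta$, to a radial kernel element of $-\Delta+1-\mu_\infty(\beta-1)W_\beta^{\beta-2}$. Nondegeneracy of $W_\beta$ rules this out.

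The uniform inverse then gives two things. First, uniqueness, via the normalized difference of two solutions. Second, the implicit function theorem gives a $C^1$ family with $\lambda\,\partial_\lambda w_\lambda\to0$, since the $\lambda$-derivative of the equation is small after multiplication by $\lambda$. The slope formula \eqref{E:main-high-mass-slope} then follows exactly as in (i).

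\textbf{Spectra, kernels, and stability.} In the full space $X$ I would split into angular sectors and use compact spectral perturbation at each end.
\begin{itemize}
\item \emph{Lower edge.} The linearization, conjugated by the rescaling, is close to $(-\Delta_y+|y|^2-d)\otimes1$ plus $\mu$ times the $m$-dimensional linearization at $V_\alpha$. The latter has one negative eigenvalue and kernel spanned by $\partial_{\zeta_j}V_\alpha$. The other harmonic modes sit at gap at least $2$ (unscaled), and the modes $\phi_0\otimes$ (nonradial in $\zeta$) reproduce $\ker=\operatorname{span}\{\partial_{z_j}u_\lambda\}$.
\item \emph{High frequency.} The limit kernel is $\operatorname{span}\{\partial_{\xi_k}W_\beta\}$. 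The $z$-directions stay exactly neutral because $\partial_{z_j}u_\lambda$ is an exact kernel element. For the $y$-directions I would compute the perturbed eigenvalue via $\langle[\,|y|^2,\partial_{y_i}]\ldots\rangle$. Equivalently, since $\partial_{y_i}u_\lambda$ satisfies
$$
\mathcal L\,\partial_{y_i}u_\lambda=-2y_iu_\lambda,
$$
the quadratic form is $-2\langle y_iu_\lambda,\partial_{y_i}u_\lambda\rangle=\|u_\lambda\|_2^2>0$. So the eigenvalue moves positive.
\item \emph{Imaginary part.} $H+\lambda-g(u)/u$ has the positive ground state $u_\lambda$ in its kernel, hence it is nonnegative with simple kernel.
\end{itemize}
Given \ref{condition-D}, the GSS theorem with symmetry group phase $\times\,\mathbb R^m$ (one negative direction, kernel exactly the symmetry orbit) classifies stability by the sign of $\frac{\mathrm d}{\mathrm d\lambda}\|u_\lambda\|_2^2$, i.e.\ the sign of $\kappa_\mp$. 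That sign is exactly the stated exponent comparison.

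I expect the main obstacle to be the lower-edge full-space spectral analysis together with the derivative bound $\mu\,\partial_\mu\widetilde u\to0$. The continuous spectrum starts at the threshold, so the complementary-mode estimates must be uniform while the $\phi_0$-component only has an $O(\mu)$ gap. I would handle this by working with the Schur complement in rescaled variables throughout.
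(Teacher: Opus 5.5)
Your lower-edge slope argument follows the paper's route: differentiate the reduced equation, invert the radially nondegenerate $L_\alpha$, and use the $O(\mu)$ complementary resolvent and its $\mu$-derivative. So do your high-frequency uniqueness, $C^1$ continuation and slope computation, and your lower-edge Schur reduction, where the $m$ exact $z$-modes exhaust the rank-$m$ near-zero cluster.

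\textbf{The gap: confined translations at high frequency.} From $\langle\mathcal L\,\partial_{y_i}u_\lambda,\partial_{y_i}u_\lambda\rangle=\|u_\lambda\|_2^2>0$ you conclude that the corresponding eigenvalue ``moves positive''. This does not follow. The vector $\partial_{y_i}u_\lambda$ is not an eigenvector, and its coupling to the rest of the spectrum is of the same order as the diagonal entry. In your rescaled variables, with $\mathcal A_\lambda:=-\Delta+\lambda^{-2}|\eta|^2+1-f_\lambda'(w_\lambda)$,
\[
 \mathcal A_\lambda\partial_{\eta_i}w_\lambda=-2\lambda^{-2}\eta_iw_\lambda,
 \qquad
 \langle\mathcal A_\lambda\partial_{\eta_i}w_\lambda,\partial_{\eta_j}w_\lambda\rangle=\lambda^{-2}\delta_{ij}\|w_\lambda\|_2^2 .
\]
Thus the residual and the Rayleigh quotient are both $O(\lambda^{-2})$, and the span of the $\partial_{\eta_i}w_\lambda$ is not invariant. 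Positivity of the form on this non-invariant $d$-dimensional space does not fix the sign of the $d$ near-zero eigenvalues. You must show that the off-diagonal coupling enters only at second order, $O(\lambda^{-4})$. That needs a uniformly bounded inverse of $\mathcal A_\lambda$ compressed to the orthogonal complement of the whole translation space. This block is \emph{not} positive, since it carries the negative eigenvalue, so its uniform invertibility and its index one must be proved separately. This is exactly what the paper's Feshbach--Schur reduction in Proposition~\ref{P:full-endpoint-spectrum} does. No such issue arises at the lower edge, because there $\partial_{y_i}u_\lambda$ lies in the $Q$-block, whose gap is $2/\mu$.

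\textbf{A short repair using the identity you already have.} By Lemma~\ref{L:high-spectral-compactness}, the near-zero cluster of $\mathcal A_\lambda$ has rank $N$ and contains the $m$ exact $Z$-modes. Let $e$ be a normalized eigenfunction in this cluster, with eigenvalue $\nu$, orthogonal to those modes. Then
\[
 \nu\,\langle e,\partial_{\eta_i}w_\lambda\rangle
 =\langle e,\mathcal A_\lambda\partial_{\eta_i}w_\lambda\rangle
 =-2\lambda^{-2}\langle e,\eta_iw_\lambda\rangle .
\]
After extraction, $e\to\sum_ic_i\partial_{\eta_i}W_\beta$ in $L^2$ with $c\ne0$. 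Using $\langle\partial_{\eta_i}W_\beta,\eta_jW_\beta\rangle=-\tfrac12\delta_{ij}\|W_\beta\|_2^2$ and choosing $i$ with $c_i\ne0$, you get
\[
 \lambda^2\nu\to\|W_\beta\|_2^2/\|\partial_{\eta_1}W_\beta\|_2^2>0 .
\]

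\textbf{Two smaller points.}
\begin{enumerate}
\item The uniform inverse gives uniqueness and local continuation at high frequency, but not existence for every large $\lambda$. Take existence from Theorem~\ref{T:global}, as the paper does.
\item For the instability half, the GSS instability theorem does not directly accommodate the extra translation kernel. As in the paper, restrict to the flow-invariant block-radial subspace, where the kernel reduces to $\operatorname{span}\{iu_\lambda\}$, and then compare the two orbital distances.
\end{enumerate}
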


\begin{remark}[Physical interpretation of the dimensional crossover]
\label{R:physical-crossover}
Theorem~\ref{T:asymptotics} describes two distinct endpoint geometries.
With $\mu=\lambda+d$, the lower-edge profile is
$$
 u(y,z)\sim\mu^{1/(\alpha-2)}
 \phi_0(y)V_\alpha(\sqrt\mu\,z),
$$
where the asymptotic notation refers to the rescaled convergence in that
theorem.  The confined variables remain on the oscillator scale, while
the characteristic scale in the free variables is $\mu^{-1/2}$;
the complementary harmonic modes vanish after rescaling.
For the three-dimensional cubic model, $(d,m)=(2,1)$ and $(1,2)$ thus
give the quasi-one- and quasi-two-dimensional regimes of cigar- and
disk-shaped traps, respectively; see
\cite{GorlitzEtAl,SalasnichParolaReatto}.
Related dimension-reduction results in the strong-confinement regime
appear in
\cite{BenAbdallahMehatsSchmeiserWeishaeupl,LiebSeiringerYngvason,SchneeYngvason}.

At high frequency the profile is instead
$$
 u(x)\sim\lambda^{1/(\beta-2)}W_\beta(\sqrt\lambda\,x),
$$
so all spatial directions have the scale $\lambda^{-1/2}$ and the limiting
profile is $N$-dimensional.  The two endpoint geometries account for the
different mass-critical exponents $2+4/m$ and $2+4/N$ in
Theorems~\ref{T:asymptotics} and~\ref{T:endpoint-stability};
the stability conclusions additionally require \ref{condition-D}.
\end{remark}

\section{Spectral and compactness preliminaries}\label{sec:preliminaries}

\subsection{Spectrum and coercivity}
Let
$$H_y:=-\Delta_y+|y|^2,\qquad H=H_y-\Delta_z.$$
The harmonic oscillator satisfies
\begin{equation}\label{E:spectrum-Hy}
\sigma(H_y)=\{d+2k:k\in\N\cup\{0\}\}.
\end{equation}
Since $\sigma(-\Delta_z)=\sigma_c(-\Delta_z)=[0,+\infty)$, the tensor-sum spectral theorem \cite[Corollary, Page 301]{Reed-Simon-1980} gives
\begin{equation}\label{E:spectrum-H}
\sigma(H)=\sigma_c(H)=[d,+\infty).
\end{equation}
Therefore, for $\lambda>-d$,
\begin{equation}\label{E:coercive1}
\int_{\mathbb{R}^N}\bigl(|\nabla u|^2+|y|^2u^2+\lambda u^2\bigr)\mathrm{d}x\geq (d+\lambda)\norm{u}_2^2.
\end{equation}
Moreover,
\begin{equation}\label{E:coercive2}
\begin{aligned}
&\min\left\{\frac{d+1}{d+\lambda},1\right\}\int_{\mathbb{R}^N}\bigl(|\nabla u|^2+|y|^2u^2+\lambda u^2\bigr)\mathrm{d}x\\
\leq& \norm{u}_X^2
\leq \max\left\{1, \frac{d+1}{d+\lambda}\right\}\int_{\mathbb{R}^N}\bigl(|\nabla u|^2+|y|^2u^2+\lambda u^2\bigr)\mathrm{d}x.
\end{aligned}
\end{equation}

The preceding form estimates also give the parameter dependence of the linear resolvent that will be used repeatedly below.  We record it once in the natural $X$--$X'$ framework.

\begin{lemma}[Uniform resolvent bounds and norm continuity]
\label{L:form-resolvent}
Let $J:X\to X'$ be the canonical embedding defined by
\begin{equation}\label{E:canonical-embedding}
 \langle Ju,v\rangle_{X',X}:=
 \int_{\mathbb{R}^N}uv\,\mathrm{d}x,
 \qquad u,v\in X.
\end{equation}
For every $\lambda>-d$, let $A_\lambda:X\to X'$ be the form realization of $H+\lambda$, namely
\begin{equation}\label{E:form-realization}
 \langle A_\lambda u,v\rangle_{X',X}
 :=
 \int_{\mathbb{R}^N}
 \bigl(\nabla u\cdot\nabla v+|y|^2uv+\lambda uv\bigr)\,\mathrm dx.
\end{equation}
Then $A_\lambda$ is a topological isomorphism from $X$ onto $X'$, and its inverse
$$
 R_\lambda:=A_\lambda^{-1}=(H+\lambda)^{-1}
 \in\mathcal L(X',X)
$$
satisfies
\begin{equation}\label{E:form-resolvent-bound}
 \|R_\lambda\|_{\mathcal L(X',X)}
 \leq
 \max\left\{1,\frac{d+1}{d+\lambda}\right\}.
\end{equation}
Moreover, for every $\lambda,\nu>-d$, the first resolvent identity in this form setting is
\begin{equation}\label{E:first-form-resolvent-identity}
 R_\lambda-R_\nu
 =(\nu-\lambda)R_\lambda J R_\nu.
\end{equation}
Consequently, for every compact interval $I\subset\subset(-d,+\infty)$, there exists $C_I>0$ such that
\begin{equation}\label{E:uniform-form-resolvent-continuity}
 \sup_{\lambda\in I}\|R_\lambda\|_{\mathcal L(X',X)}\le C_I,
 \qquad
 \|R_\lambda-R_\nu\|_{\mathcal L(X',X)}
 \leq C_I|\lambda-\nu|
 \quad\hbox{for all }\lambda,\nu\in I.
\end{equation}
In particular, $\lambda\mapsto R_\lambda$ is locally Lipschitz continuous from $(-d,+\infty)$ into $\mathcal{L}(X',X)$.
\end{lemma}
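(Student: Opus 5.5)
The plan is to prove Lemma~\ref{L:form-resolvent} by Lax--Milgram, with the explicit constant coming from \eqref{E:coercive2}. Then I will verify the resolvent identity by pairing both sides against a test function, and derive the Lipschitz bound from the identity.

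\textbf{Isomorphism and the bound \eqref{E:form-resolvent-bound}.} The bilinear form $a_\lambda(u,v):=\langle A_\lambda u,v\rangle_{X',X}$ is symmetric and bounded on $X$, since $|\lambda|\,|\int uv|\le|\lambda|\|u\|_X\|v\|_X$. By \eqref{E:coercive2} it is coercive:
$$a_\lambda(u,u)\ge c_\lambda\|u\|_X^2,\qquad c_\lambda:=\min\Bigl\{1,\frac{d+\lambda}{d+1}\Bigr\}.$$
This is the reciprocal of the factor $\max\{1,\frac{d+1}{d+\lambda}\}$ in the upper inequality there. Lax--Milgram then shows that $A_\lambda$ is a bijection from $X$ onto $X'$. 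For $f\in X'$ and $u=R_\lambda f$,
$$c_\lambda\|u\|_X^2\le a_\lambda(u,u)=\langle f,u\rangle\le\|f\|_{X'}\|u\|_X,$$
so $\|R_\lambda\|\le c_\lambda^{-1}$, which is exactly \eqref{E:form-resolvent-bound}. Boundedness of $A_\lambda$ and of its inverse gives the topological isomorphism.

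\textbf{Resolvent identity \eqref{E:first-form-resolvent-identity}.} Directly from \eqref{E:form-realization}, $A_\nu-A_\lambda=(\nu-\lambda)J$ as operators from $X$ to $X'$. Hence
$$R_\lambda-R_\nu=R_\lambda(A_\nu-A_\lambda)R_\nu=(\nu-\lambda)R_\lambda JR_\nu.$$
To justify this, apply $A_\lambda$ to both sides and use $A_\lambda R_\lambda=I_{X'}$ and $R_\nu A_\nu=I_X$.

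\textbf{Uniform bounds \eqref{E:uniform-form-resolvent-continuity}.} Let $I\subset\subset(-d,\infty)$. Then $d+\lambda\ge\delta_I>0$ on $I$, so $C_I':=\max\{1,(d+1)/\delta_I\}$ bounds $\|R_\lambda\|$ uniformly for $\lambda\in I$. The embedding satisfies $\|J\|_{\mathcal L(X,X')}\le1$, because $|\int uv|\le\|u\|_2\|v\|_2\le\|u\|_X\|v\|_X$. The identity then gives
$$\|R_\lambda-R_\nu\|\le|\lambda-\nu|\,(C_I')^2.$$
Taking $C_I:=(C_I')^2\ge C_I'$ yields both estimates, and local Lipschitz continuity follows.

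There is no real obstacle. The only point needing care is that the coercivity constant extracted from \eqref{E:coercive2} matches the stated constant, including the regime $\lambda$ large, where $\min\{\cdot,\cdot\}=1$.
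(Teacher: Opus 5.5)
Your proposal is correct and matches the paper's proof essentially step for step. Like the paper, you get the coercivity constant $c_\lambda=\min\{1,(d+\lambda)/(d+1)\}$ from \eqref{E:coercive2} and apply Lax--Milgram, derive the resolvent identity from $A_\nu-A_\lambda=(\nu-\lambda)J$, and obtain the Lipschitz bound by taking norms with $\|J\|\le1$. One small slip: the direct verification $R_\lambda(A_\nu-A_\lambda)R_\nu=R_\lambda-R_\nu$ uses $A_\nu R_\nu=I_{X'}$ and $R_\lambda A_\lambda=I_X$, not $R_\nu A_\nu=I_X$ as you wrote, but both hold because the operators are isomorphisms.
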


\begin{proof}
The canonical embedding $J$ is bounded.  By \eqref{E:coercive2},
\begin{equation}\label{E:form-coercivity-constant}
 \langle A_\lambda u,u\rangle
 \ge c_\lambda\|u\|_X^2,
 \qquad c_\lambda:=\min\left\{1,\frac{d+\lambda}{d+1}\right\}>0.
\end{equation}
Lax--Milgram theorem gives the isomorphism and $\|R_\lambda\|\le c_\lambda^{-1}$.  Since $A_\nu-A_\lambda=(\nu-\lambda)J$, the resolvent identity gives \eqref{E:first-form-resolvent-identity}.  Taking norms and using the positive minimum of $c_\lambda$ on $I$ proves the remaining assertions.
\end{proof}

\smallskip
\noindent\emph{Spectral-edge caveat.}
The uniformity in Lemma~\ref{L:form-resolvent} requires the frequency set to stay away from $-d$.  Indeed, the coercivity constant in \eqref{E:form-coercivity-constant} degenerates as $\lambda+d\to0^+$.  This is why the lower spectral edge is treated by projection and rescaling rather than by a uniform bound for $R_\lambda$.

We shall use the standard interior $W^{2,p}$ estimate
\cite[Theorem~9.11]{GilbargTrudinger}, the Sobolev embeddings
\cite[Theorem~7.26]{GilbargTrudinger}, and, when the right-hand side is
locally H\"older continuous, the interior Schauder estimate
\cite[Theorem~6.2]{GilbargTrudinger}.

\subsection{Compact embeddings in the symmetric cone}

\begin{lemma}[Compact embedding]\label{L:embedding}
For every $2<q<2_N^*$, the embedding $X\cap\mathcal{K}\hookrightarrow L^q(\mathbb{R}^N)$ is compact.  Equivalently, every $X$-bounded sequence in $\mathcal K$ has a subsequence converging strongly in all such $L^q$ spaces.
\end{lemma}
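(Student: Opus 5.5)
The plan is to take an $X$-bounded sequence $(u_n)\subset\mathcal K$ and first extract a weak limit in $X$. Then I would show that no $L^q$-mass is lost, in three pieces: bounded regions, large $|y|$, and large $|z|$. Strong $L^q$ convergence for $2<q<2_N^*$ then follows by interpolation between $L^2$ tightness and the uniform $H^1$ bound in $L^{2_N^*}$, or in any $L^r$ with $q<r<2_N^*$ when $N\le2$.

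\emph{Step 1 (local compactness and $y$-tightness).} By Rellich, $u_n\to u$ strongly in $L^2(B_R)$ for every $R$, after passing to a subsequence. By \eqref{eq:chebixuefu}, $\sup_n\int_{|y|>R}u_n^2\le CR^{-2}$. It therefore suffices to control the region $\{|y|\le R,\ |z|>\rho\}$ with $\rho\to\infty$. Since this control cannot hold in $L^2$ when $m=1$ (the $L^2$ mass may spread out in $z$), I aim directly for $L^q$ smallness with $q>2$.

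\emph{Step 2, case $m\ge2$ (Strauss-type decay in $z$).} Here $u_n(y,\cdot)$ is radial in $z\in\mathbb R^m$. I would apply Strauss's radial lemma in the $z$-variables to $w_n(z):=\|u_n(\cdot,z)\|_{H^1(\mathbb R^d)}$, or more cleanly use the Lions-type compactness for functions invariant under $O(m)$ acting on $z$. The latter gives vanishing of $\sup_{\xi}\int_{B_1(\xi)}|u_n-u|^2$ once combined with the $y$-tightness. For the Lions argument, any unit ball centered at $(y_0,z_0)$ with $|z_0|$ large has about $|z_0|^{m-1}$ disjoint rotated copies, which forces small local mass. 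Lions' lemma then yields $u_n-u\to0$ in $L^q$. This is a standard argument in the style of Lions's symmetric compactness.

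\emph{Step 3, case $m=1$ (monotonicity).} Here $u_n$ is even and nonincreasing in $|z|$, and also nonincreasing in $|y|$. For $|z|\ge\rho$ and any $y$, monotonicity gives
\[
u_n(y,z)^2\le \frac1{|z|}\int_0^{|z|}u_n(y,t)^2\,dt\le \frac{1}{2|z|}\int_{\mathbb R}u_n(y,t)^2\,dt .
\]
Monotonicity in $|y|$ similarly gives a pointwise bound $u_n(y,z)^2\le C|y|^{-d}\int u_n(\cdot,z)^2$. Combining the two yields a pointwise bound away from the axes. Instead, I would argue as follows. For the tail region, write $\int_{|z|>\rho}u_n^q\le \sup_{|z|>\rho}\|u_n(\cdot,z)\|_{L^\infty_y}^{q-2}\int u_n^2$. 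It therefore suffices to show $\sup_{|z|>\rho}\|u_n(\cdot,z)\|_\infty\to0$ uniformly in $n$. Since $u_n$ is maximal at $y=0$ for each $z$, the sup equals $u_n(0,z)\le u_n(0,\rho)$. To bound $u_n(0,\rho)$, I use that $u_n\ge u_n(0,\rho)$ on nothing large directly. The correct route is to bound an averaged quantity: $\int_{|y|<1}\int_{|z|<\rho}u_n^q\ge c\rho\, \bar u_n(\rho)^q$, where $\bar u_n$ is the value at $(y,z)$ with $|y|=1,|z|=\rho$. The left side is bounded by Sobolev. This gives smallness of $u_n$ on $\{|y|\ge1,|z|\ge\rho\}$ of order $\rho^{-1/q}$, and the strip $|y|<1$ is handled the same way using $|y|\ge\delta$ and a small $L^q$ contribution from $|y|<\delta$ bounded via Hölder and Sobolev.

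\emph{Main obstacle.} The main obstacle is Step 3: when $m=1$, converting monotonicity into a uniform pointwise or $L^q$ tail bound. It requires care near the axis $y=0$, where monotonicity in $|y|$ gives no lower volume. I would handle this by the splitting into $|y|<\delta$ (small via Hölder with a higher Sobolev exponent) and $|y|\ge\delta$ (a pointwise bound via monotonicity in both variables against the uniform $L^q$ bound, with constant $C(\delta)\rho^{-1/q}$).
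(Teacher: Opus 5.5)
Your treatment of $m\ge2$ (Lions' lemma plus roughly $|z_0|^{m-1}$ disjoint rotated copies of a unit ball) is the paper's argument. For $m=1$ you take a genuinely different route, and it works once one step is made precise.

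\emph{Comparison with the paper for $m=1$.} The paper does not leave the $L^2$ framework there. What fails for $m=1$ is global $L^2$ tightness, but Lions' lemma only needs the local condition $\sup_\xi\int_{B_1(\xi)}|u_n-u|^2\to0$. Suppose a unit ball centered at $(a_n,b_n)$, with $a_n$ bounded and $b_n\to\infty$, carried $u_n$-mass at least $c_0$. Monotonicity in $|z|$ transfers this lower bound to each of the $\lfloor (b_n-1)/2\rfloor$ disjoint slabs $B^d_{R_y}\times[2(j-1),2j]$, contradicting the uniform $L^2$ bound. That proof is shorter and handles $m=1$ and $m\ge2$ in one framework. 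It never meets the axis $y=0$, since only monotonicity in $|z|$ and $L^2$ mass are used. Your direct route, in exchange, produces explicit uniform $L^q$ tail bounds, which the contradiction argument does not.

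\emph{The step to fix.} The near-axis piece $\{|y|<\delta,\ |z|>\rho\}$ has infinite measure, so H\"older plus Sobolev cannot be applied to it as stated. Set $F_n(y):=\|u_n(y,\cdot)\|_{L^2(\mathbb R)}$. Then $\|F_n\|_{L^2(\mathbb R^d)}=\|u_n\|_2$, and Cauchy--Schwarz gives $|\nabla F_n|\le\|\nabla_yu_n(y,\cdot)\|_{L^2(\mathbb R)}$, so $(F_n)$ is bounded in $H^1(\mathbb R^d)$.

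With this, both the $\delta$-splitting and monotonicity in $|y|$ become unnecessary. Your first displayed inequality reads $u_n(y,z)^2\le F_n(y)^2/(2|z|)$, whence
\[
\int_{\mathbb R^d}\int_{|z|>\rho}u_n^q\,\mathrm dz\,\mathrm dy\le(2\rho)^{-\frac{q-2}{2}}\int_{\mathbb R^d}F_n^q\,\mathrm dy\le C\rho^{-\frac{q-2}{2}},
\]
because $2<q<2_N^*\le2_d^*$. The weak limit $u$ is a single $L^q$ function, so its own tail needs no uniformity.

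Drop the sup-norm detour from the final write-up. For $d\ge2$ the value $u_n(0,\rho)$ need not be bounded uniformly in $n$. For example, take $u_n=f_n(y)g(z)$ with $g$ even and nonincreasing in $|z|$, and $f_n$ radially decreasing, bounded in $H^1(\mathbb R^d)$ and in $|y|$-weighted $L^2$, with $f_n(0)\to\infty$.
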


\begin{proof}
After taking a subsequence, $u_n\rightharpoonup u$ in $X$, $u_n\to u$ a.e., and $u_n\to u$ locally in $L^q$ for $2\le q<2_N^*$.  For $w_n:=u_n-u$, Lions' lemma \cite[Lemma~1.21]{Willem} reduces the conclusion to
\begin{equation}\label{eq:20260620-1201}
\lim_{n\to\infty}\sup_{\xi\in\mathbb{R}^N}\int_{B_1(\xi)} |w_n|^2\,\mathrm{d}x=0,
\end{equation}
which implies $u_n\to u$ in $L^q(\mathbb{R}^N)$ for every $2<q<2_N^*$.

If \eqref{eq:20260620-1201} fails, there are $\delta>0$ and
$\xi_n=(a_n,b_n)$ with $|\xi_n|\to\infty$ such that the corresponding ball integrals are at least $\delta$.  The uniform $y$-moment bound forces $(a_n)$ to remain bounded (see \eqref{eq:chebixuefu}), so $|b_n|\to\infty$.  Since $u\in L^2$, after decreasing the
constant we obtain
\begin{equation}\label{eq:20260620-1224}
 \int_{B_1(a_n,b_n)}u_n^2\,\mathrm dx\ge c_0>0.
\end{equation}

If $m\geq2$, radiality in $z$ produces $k_n\geq c|b_n|^{m-1}\to\infty$ disjoint rotates of the ball in \eqref{eq:20260620-1224}, each carrying mass at least $c_0$.  This contradicts the uniform $L^2$ bound.

If $m=1$, take $b_n\to+\infty$ and choose $R_y$ with $B_1(a_n)\subset B_{R_y}^d$.  The slab $B_{R_y}^d\times(b_n-1,b_n+1)$ carries mass at least $c_0$.
Monotonicity in $|z|$ gives the same lower bound on every slab
$B_{R_y}^d\times[2(j-1),2j]$, $1\le j\le\lfloor(b_n-1)/2\rfloor$, again a contradiction.  This proves \eqref{eq:20260620-1201}.
\end{proof}

\begin{remark}
For $m=1$ this is the mechanism of \cite[Proposition~2.1]{JeanjeanSong}, here also covering $d=1$.  For $m\ge2$, radiality in $z$ suffices and no monotonicity is required.
\end{remark}

\begin{lemma}\label{lemma:20260622-0829}
Assume \ref{gcon1} and \ref{gcon2}.  Let $\{u_n\}\subset\mathcal K$ be bounded in $X$ and suppose, after passing to a subsequence, that $u_n\rightharpoonup u$ in $X$.  Then $g(u_n)\to g(u)$ in $X'$.
\end{lemma}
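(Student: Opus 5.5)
We argue by duality: $g(u_n)\to g(u)$ in $X'$ means
$$
\sup_{\|\varphi\|_X\le1}\int_{\mathbb R^N}\bigl(g(u_n)-g(u)\bigr)\varphi\,\mathrm dx\to0 .
$$
First, $g(u_n)$ and $g(u)$ define elements of $X'$ at all. By \eqref{E:growth} and $g(0)=0$ (together with \ref{gcon2} near zero, which gives $|g(s)|\le Cs^{\alpha-1}$ for $s\le1$), we have
$$
|g(s)-g(t)|\le C\bigl(|s|^{\alpha-2}+|t|^{\alpha-2}+|s|^{\beta-2}+|t|^{\beta-2}\bigr)|s-t|
$$
for $s,t\ge0$. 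This uses the mean value theorem with $|g'(\xi)|\le C(\xi^{\alpha-2}+\xi^{\beta-2})$, valid for all $\xi\ge0$: near $0$ it follows from \eqref{E:gprimelimits}, and for $\xi$ bounded away from $0$ it follows from \eqref{E:growth}.

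Next apply H\"older's inequality with exponents $q/(q-2)$, $q$, $q$ for $q\in\{\alpha,\beta\}$:
$$
\int|g(u_n)-g(u)||\varphi|\le C\sum_{q\in\{\alpha,\beta\}}\bigl(\|u_n\|_q^{q-2}+\|u\|_q^{q-2}\bigr)\|u_n-u\|_q\|\varphi\|_q .
$$
The embedding $X\hookrightarrow L^q$ is continuous, so $\|\varphi\|_q\le C\|\varphi\|_X$, and $\|u_n\|_q$ is bounded. It therefore suffices that $u_n\to u$ strongly in $L^\alpha\cap L^\beta$.

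This is the one delicate point, and Lemma~\ref{L:embedding} supplies it. Since $\beta<2_N^*$ directly, we only need $2<\alpha<2_N^*$. This holds because $2_m^*\ge2_N^*$ fails in general: $\alpha<2_m^*$ does not imply $\alpha<2_N^*$. So if $\alpha\ge2_N^*$, the term with exponent $\alpha$ must be handled differently.

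In that case we exploit $g(s)\sim\mu_0s^{\alpha-1}$ near zero only where $u$ is small, and replace the pure bound by the mixed bound
$$
|g'(\xi)|\le C\bigl(\xi^{\gamma-2}+\xi^{\beta-2}\bigr)\qquad\text{with }\gamma:=\min\{\alpha,\beta\}.
$$
This is legitimate because $\xi^{\alpha-2}\le\xi^{\gamma-2}$ for $\xi\le1$. Then both exponents $\gamma,\beta$ lie in $(2,2_N^*)$, and the argument above goes through with $\gamma$ in place of $\alpha$.

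Since the limit $u$ identified in Lemma~\ref{L:embedding} coincides with the weak limit, we conclude $\|g(u_n)-g(u)\|_{X'}\to0$ along the subsequence. The main obstacle is this strong $L^q$ convergence, which is exactly the content of Lemma~\ref{L:embedding} on the cone $\mathcal K$; the remaining steps are routine Nemytskii estimates.
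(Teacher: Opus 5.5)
Your argument is correct; it differs from the paper's only in how the part of $g'$ near zero is handled.

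The paper uses $g'(0)=0$ and \eqref{E:growth} to write $|g'(s)|\le\varepsilon+C_\varepsilon s^{\beta-2}$. The $\varepsilon$-term is then bounded by $C\varepsilon$ using only the $L^2$ bound on $u_n-u$, so no strong $L^2$ convergence is needed, which Lemma~\ref{L:embedding} does not provide. The $\beta$-term tends to zero by strong $L^\beta$ convergence, and letting $\varepsilon\downarrow0$ finishes.

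You instead use the small-amplitude power law from \ref{gcon2} to get the two-power bound $|g'(\xi)|\le C(\xi^{\gamma-2}+\xi^{\beta-2})$, with $\gamma=\min\{\alpha,\beta\}\in(2,2_N^*)$. You then need strong convergence in $L^\gamma$ and $L^\beta$, and Lemma~\ref{L:embedding} supplies both.

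Each route has an advantage. Yours gives an explicit modulus, $\|g(u_n)-g(u)\|_{X'}\le C(\|u_n-u\|_\gamma+\|u_n-u\|_\beta)$. The paper's never touches the exponent $\alpha$, so it avoids asking where $\alpha$ lies relative to $2_N^*$.

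One sentence needs correcting. The inequality $2_m^*\ge 2_N^*$ always holds, since $m<N$, and that is exactly why $\alpha<2_m^*$ does not give $\alpha<2_N^*$. Replacing $\alpha$ by $\gamma$ is the right fix, and you could have used $\gamma$ from the start.

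Finally, the weak limit $u$ is fixed, so the subsequence principle upgrades the $L^q$ convergence from Lemma~\ref{L:embedding} to the whole sequence. The qualifier ``along the subsequence'' is therefore unnecessary.
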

\begin{proof}
Under the assumptions \ref{gcon1} and \ref{gcon2}, for every $\varepsilon>0$ there is $C_\varepsilon>0$ such that
$$|g'(s)|\leq \varepsilon+C_\varepsilon |s|^{\beta-2},\qquad s\geq 0.$$
By the mean value theorem,
$$|g(a)-g(b)|
        \leq \varepsilon |a-b|+C_\varepsilon (|a|^{\beta-2}+|b|^{\beta-2})|a-b|,
        \qquad a,b\geq0.$$
Thus, for every $\varphi\in X$ with $\norm{\varphi}_X\le1$,
$$
\begin{aligned}
\left|\int_{\mathbb{R}^N}(g(u_n)-g(u))\varphi\,\mathrm{d}x\right|
&\leq C\varepsilon \norm{u_n-u}_2\norm{\varphi}_2 \\
&\quad + C_\varepsilon
\left\|(|u_n|^{\beta-2}+|u|^{\beta-2})(u_n-u)\right\|_{\beta'}\norm{\varphi}_\beta\\
&\leq C\varepsilon +C_\varepsilon \left\|(|u_n|^{\beta-2}+|u|^{\beta-2})(u_n-u)\right\|_{\beta'}.
\end{aligned}
$$
By Lemma \ref{L:embedding}, $u_n\to u$ in $L^\beta(\mathbb{R}^N)$; hence H\"older's inequality gives
$$\left\|(|u_n|^{\beta-2}+|u|^{\beta-2})(u_n-u)\right\|_{\beta'}\to 0.$$
By the arbitrariness of $\varepsilon>0$, we obtain
$$\lim_{n\to\infty}\|g(u_n)-g(u)\|_{X'}=\lim_{n\to \infty}\sup_{\|\varphi\|_X\leq 1}\left|\int_{\mathbb{R}^N}(g(u_n)-g(u))\varphi\,\mathrm{d}x\right|=0.$$
Hence, $g(u_n)\to g(u)$ in $X'$.
\end{proof}

\subsection{\texorpdfstring{$C^1$}{C1} limits of the rescaled nonlinearities}
\begin{lemma}[Uniform rescaled $C^1$ limits]\label{lemma:20260618-1129}
Assume \ref{gcon1} and \ref{gcon2}.  For $a>0$ define
$$
 F_a^{(0)}(s):=\frac{g(as)}{a^{\alpha-1}},
 \qquad
 F_a^{(\infty)}(s):=\frac{g(as)}{a^{\beta-1}},
 \qquad s\ge0.
$$
When the scaling regime is unambiguous, we suppress the superscript and write simply $F_a$.
Then, for every $C>0$, the following assertions hold.
\begin{enumerate}[label=(\roman*)]
\item As $a\to0^+$,
\begin{align}
 &\sup_{0<s\leq C}
 \frac{|F_a^{(0)}(s)-\mu_0s^{\alpha-1}|}{s^{\alpha-1}}
 \longrightarrow0, \label{eq:20260622-0819}\\
 &\sup_{0<s\leq C}
 \frac{|(F_a^{(0)})'(s)-\mu_0(\alpha-1)s^{\alpha-2}|}{s^{\alpha-2}}
 \longrightarrow0. \label{E:rescaled-derivative-small}
\end{align}
In particular, for some $a_*>0$ and $C_1>0$,
\begin{equation}\label{eq:20260622-1306}
 0\leq F_a^{(0)}(s)\leq C_1s^{\alpha-1},\qquad
 |(F_a^{(0)})'(s)|\leq C_1s^{\alpha-2}
\end{equation}
for $0<a<a_*$ and $0\le s\le C$.
\item As $a\to+\infty$,
\begin{align}
 &\sup_{0<s\le C}
 \frac{|F_a^{(\infty)}(s)-\mu_\infty s^{\beta-1}|}{s}
 \longrightarrow0,
 &&\label{eq:20260618-1130}\\
 &(F_a^{(\infty)})'(s)\longrightarrow
 \mu_\infty(\beta-1)s^{\beta-2},
 &&\text{uniformly for }s\in[0,C]. \label{E:rescaled-derivative-large}
\end{align}
In particular, $F_a^{(\infty)}(s)\to\mu_\infty s^{\beta-1}$ uniformly
on $[0,C]$.
\end{enumerate}
The same conclusions hold for the positive-part extensions on $[-C,C]$,
with each limiting power $s^\gamma$ replaced by $(s^+)^\gamma$ and
with the error norms displayed above unchanged.  On $s\le0$, both
the extended nonlinearities and their derivatives vanish.
\end{lemma}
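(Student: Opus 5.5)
The plan is to rewrite each rescaled quantity as a ratio in the variable $t=as$ and then apply \eqref{E:gsmalllarge}--\eqref{E:gprimelimits} directly. For (i), with $t=as$ we have
$$
 \frac{F_a^{(0)}(s)}{s^{\alpha-1}}=\frac{g(t)}{t^{\alpha-1}},\qquad
 \frac{(F_a^{(0)})'(s)}{s^{\alpha-2}}=\frac{a\,g'(as)}{a^{\alpha-1}s^{\alpha-2}}=\frac{g'(t)}{t^{\alpha-2}}.
$$
If $0<s\le C$ then $0<t\le aC$, so the two suprema in \eqref{eq:20260622-0819}--\eqref{E:rescaled-derivative-small} equal
$$
 \sup_{0<t\le aC}\Bigl|\frac{g(t)}{t^{\alpha-1}}-\mu_0\Bigr|,\qquad
 \sup_{0<t\le aC}\Bigl|\frac{g'(t)}{t^{\alpha-2}}-\mu_0(\alpha-1)\Bigr|.
$$
Both tend to $0$ as $a\to0^+$ by the small-$s$ limits in \ref{gcon2}. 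The bound \eqref{eq:20260622-1306} then follows by choosing $a_*$ so that both suprema are at most $1$, taking $C_1=\mu_0(\alpha-1)+\mu_0+1$, and recalling that $g\ge0$ by \ref{gcon1}.

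For (ii) the substitution is the same, $t=as$, but the difficulty is that $t$ now ranges over all of $(0,aC]$, including small $t$, where the large-$t$ asymptotics says nothing. This mixed range is the main step, and I would handle it by splitting at a threshold. Fix $\varepsilon>0$ and choose $T$ so that for $t\ge T$ we have $|g(t)/t^{\beta-1}-\mu_\infty|<\varepsilon$ and $|g'(t)/t^{\beta-2}-\mu_\infty(\beta-1)|<\varepsilon$.

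On the region $s\ge T/a$, the rescaling gives
$$
 |F_a^{(\infty)}(s)-\mu_\infty s^{\beta-1}|=s^{\beta-1}\Bigl|\frac{g(t)}{t^{\beta-1}}-\mu_\infty\Bigr|\le\varepsilon C^{\beta-2}s,
$$
and similarly $|(F_a^{(\infty)})'(s)-\mu_\infty(\beta-1)s^{\beta-2}|\le\varepsilon C^{\beta-2}$.

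On the region $0<s<T/a$, set $M_T:=\sup_{[0,T]}|g'|$. Since $g(0)=0$, we have $|g(t)|\le M_Tt$, and hence
$$
 \frac{|F_a^{(\infty)}(s)|}{s}=\frac{|g(t)|}{a^{\beta-2}t}\le M_Ta^{2-\beta},\qquad
 \frac{\mu_\infty s^{\beta-1}}{s}\le\mu_\infty(T/a)^{\beta-2}.
$$
Both bounds tend to $0$ as $a\to\infty$ because $\beta>2$. For the derivative on this region,
$$
 |(F_a^{(\infty)})'(s)|=a^{2-\beta}|g'(t)|\le M_Ta^{2-\beta}\to0,\qquad
 \mu_\infty(\beta-1)s^{\beta-2}\le C'(T/a)^{\beta-2}\to0.
$$
Combining the two regions and letting $\varepsilon\to0$ yields \eqref{eq:20260618-1130} and \eqref{E:rescaled-derivative-large}. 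Uniform convergence of $F_a^{(\infty)}$ itself follows by multiplying the weighted error by $s\le C$.

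For the positive-part extensions, note that on $s\le0$ the extended function $g_+(as)=g(0)=0$ and the limiting power $(s^+)^\gamma$ is also $0$. The derivative of the extension vanishes for $s<0$. At $s=0$ it also vanishes and is continuous there, because $g'(0)=0$, which makes $g_+\in C^1$. Therefore the error quantities are identically zero on $[-C,0]$, and the estimates on $[-C,C]$ reduce to those already proved on $(0,C]$.
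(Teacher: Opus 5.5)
Your proof is correct and follows the paper's argument. For (i) you use the substitution $t=as$ and the limits at the origin; for (ii) you split at a fixed threshold in $t$, using the asymptotics at infinity above it and $\sup_{[0,T]}|g'|$ with the decay $a^{2-\beta}$ below it.

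The only minor difference is in the weighted bound for $F_a^{(\infty)}$ itself. The paper obtains it by integrating the derivative error from $0$, using $F_a^{(\infty)}(0)=0$, so it needs only the derivative limit at infinity. You instead split the function estimate separately and also invoke the limit of $g(t)/t^{\beta-1}$.
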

\begin{proof}
For $0<s\leq C$,
$$
 \frac{F_a^{(0)}(s)}{s^{\alpha-1}}
 =\frac{g(as)}{(as)^{\alpha-1}},
 \qquad
 \frac{(F_a^{(0)})'(s)}{s^{\alpha-2}}
 =\frac{g'(as)}{(as)^{\alpha-2}}.
$$
Since $0<as\le aC$, the two limits at the origin in
\eqref{E:gsmalllarge}--\eqref{E:gprimelimits} are uniform on this shrinking interval. This proves \eqref{eq:20260622-0819} and
\eqref{E:rescaled-derivative-small}; the bounds \eqref{eq:20260622-1306} follow at once.

For the large-scale limits, fix $\varepsilon>0$.  By
\eqref{E:gprimelimits}, there is $R>0$ such that
$$
 |g'(t)-\mu_\infty(\beta-1)t^{\beta-2}|
 \le\varepsilon t^{\beta-2},\qquad t\ge R.
$$
Splitting at $as=R$ and using continuity of $g'$ on $[0,R]$ gives
$$
 e_a(C):=\sup_{0\leq s\leq C}
 |(F_a^{(\infty)})'(s)-\mu_\infty(\beta-1)s^{\beta-2}|
 \leq\max\{\varepsilon C^{\beta-2},C_Ra^{2-\beta}\}.
$$
Letting $a\to\infty$ and then $\varepsilon\downarrow0$ proves
\eqref{E:rescaled-derivative-large}.  Since $F_a^{(\infty)}(0)=0$,
for $0<s\le C$ we also have
$$
 \frac{|F_a^{(\infty)}(s)-\mu_\infty s^{\beta-1}|}{s}
 \le\frac1s\int_0^s
 |(F_a^{(\infty)})'(t)-\mu_\infty(\beta-1)t^{\beta-2}|
 \,\mathrm{d}t
 \leq e_a(C),
$$
which proves \eqref{eq:20260618-1130}.  Finally, $g_+\in C^1(\mathbb{R})$ by \ref{gcon1}, and the positive-part assertions follow immediately.
\end{proof}

\subsection{Reusable comparison and compactness principles}

The localization arguments below use the same positive-part comparison in
several different scalings.  We record the mechanism once, including the
explicit barriers that will be needed later.

Let $\Omega\subset\mathbb{R}^N$ be a Lipschitz domain, possibly unbounded,
and let $\mathscr L=-\Delta+W(x)$, where
$W\in L^\infty_{\rm loc}(\Omega)$ is real and bounded below.
Writing $W^\pm=\max\{\pm W,0\}$, define the Dirichlet form domain by
$$
 \mathscr{V}_0(\Omega):=
 \left\{\varphi\in H_0^1(\Omega):
 \int_\Omega W^+\varphi^2\mathrm{d}x<\infty\right\}.
$$
If $W\ge-b$ with $b\ge0$, then
$\int_\Omega W^-\varphi^2\le b\|\varphi\|_2^2$ for every
$\varphi\in H_0^1(\Omega)$, so the negative potential term is already
integrable.  We shall use the coercivity condition
\begin{equation}\label{E:common-barrier-coercivity}
 \int_\Omega\bigl(|\nabla\varphi|^2+W\varphi^2\bigr)\mathrm{d}x
 \ge c_0\|\varphi\|_{L^2(\Omega)}^2,
 \qquad \varphi\in\mathscr{V}_0(\Omega),
\end{equation}
for some $c_0>0$.

The barriers used below satisfy the following elementary identities and inequalities.
\begin{itemize}
\item For $0<\theta<1$,
\begin{equation}\label{E:common-Gaussian-identity}
 (-\Delta_y+|y|^2)e^{-\theta|y|^2/2}
 =\bigl[\theta d+(1-\theta^2)|y|^2\bigr]e^{-\theta|y|^2/2}.
\end{equation}
\item
Let $0<\theta<1$, $\tau>0$, and
$\langle z\rangle=(1+|z|^2)^{1/2}$.  For
\begin{equation}\label{E:common-product-barrier}
 \Phi_{\theta,\tau}(y,z)
 :=e^{-\theta|y|^2/2-\tau\langle z\rangle}
\end{equation}
 the relations $|\nabla_z\langle z\rangle|\le1$ and
$\Delta_z\langle z\rangle\geq0$ imply
\begin{equation}\label{E:common-product-barrier-identity}
 (H+\lambda-\eta)\Phi_{\theta,\tau}
 \ge\bigl[\lambda+\theta d-\eta-\tau^2
 +(1-\theta^2)|y|^2\bigr]\Phi_{\theta,\tau}.
\end{equation}
\item For $a,R>0$ and $0<\sigma<\sqrt a$,
\begin{equation}\label{eq:20260910-0954}
 (-\Delta+a)e^{-\sigma(|x|-R)}
 =\left(a-\sigma^2+\frac{(N-1)\sigma}{|x|}\right)
 e^{-\sigma(|x|-R)}\ge0,\qquad |x|>R.
\end{equation}
\end{itemize}

\begin{lemma}[Exterior comparison and explicit barriers]
\label{L:common-exterior-barrier}
With $\Omega$, $\mathscr L$, and $\mathscr{V}_0(\Omega)$ as above, assume \eqref{E:common-barrier-coercivity}. Let $v,\Phi\ge0$ have locally finite form energy up to the boundary:
$$
 \int_{\Omega\cap B_L}
 \bigl(|\nabla s|^2+(1+W^+)s^2\bigr)\mathrm{d}x<\infty
 \qquad (s=v,\Phi;\ L>0).
$$
Suppose that, for some $C\ge0$,
$$
 \mathscr L(v-C\Phi)\le0\quad\hbox{in }\mathcal D'(\Omega),
 \qquad v\le C\Phi\quad\hbox{on }\partial\Omega.
$$
If, in addition,
$$
 h:=(v-C\Phi)^+\in\mathscr{V}_0(\Omega),
$$
then $v\le C\Phi$ in $\Omega$.  Boundary inequalities are understood in
the trace sense: they give zero trace for $h$, but on an unbounded domain
do not by themselves imply its global form-domain membership.
In particular, the differential inequality holds when
$\mathscr Lv\le0\le\mathscr L\Phi$.
For a signed or complex-valued function, apply the statement to its modulus
when the corresponding distributional inequality and form-domain
condition hold.

In parts (i) and (iii) below, the condition
$(v-C\Phi)^+\in\mathscr{V}_0(\Omega)$ need not be assumed:
bounded nonnegative subsolutions suffice, with the same local regularity
and boundary interpretation.  This uses the positive lower bound for
$W$ on the respective comparison domains; the final part of the proof
provides admissible tests on bounded truncations.

The following consequences will be used repeatedly.
\begin{enumerate}[label=(\roman*)]
\item Fix $0<\theta<1$ and let $\lambda$ and $K$ range in fixed bounded sets.
Every bounded nonnegative subsolution $u$ satisfying
$$
 (H+\lambda-K)u\le0\quad\hbox{in }\mathbb R^N,
 \qquad 0\le u\le M,
$$
obeys
\begin{equation}\label{E:common-Gaussian-conclusion}
 u(y,z)\le C_\theta M e^{-\theta|y|^2/2},
\end{equation}
where $C_\theta$ depends only on $\theta$, $d$, and the prescribed
parameter bounds.
\item Fix $0<\theta<1$, $\tau>0$, and $\eta\in\mathbb R$.
For $R>0$, set $\Omega_R:=\mathbb{R}^d\times\{|z|>R\}$. Define
$$\Psi_R(y,z):=e^{-\theta|y|^2/2-\tau(|z|-R)}.$$
If $\delta_\eta:=\lambda+\theta d-\eta-\tau^2>0$, then
$\Psi_R$ is a supersolution of $H+\lambda-\eta$ on $\Omega_R$.
In particular, if $0\leq v\in X$ and $B\geq0$ satisfy
$$
 (H+\lambda-\eta)v\leq0\quad\hbox{in }\Omega_R,
 \qquad
 v(y,z)\leq B e^{-\theta|y|^2/2}\quad\hbox{on }|z|=R,
$$
then $v\le B\Psi_R$ on $\Omega_R$.  With $\Phi_{\theta,\tau}$ instead of
$\Psi_R$, the same boundary data are dominated by
$B e^{\tau\sqrt{1+R^2}}\Phi_{\theta,\tau}$.

For the inhomogeneous resolvent, assume
$\delta_0:=\lambda+\theta d-\tau^2>0$ and
$A_f:=\|e^{\theta|y|^2/2+\tau|z|}f\|_\infty<\infty$.
Then $f\in L^2$, $\lambda>-d$, and comparison on the whole space yields
\begin{equation}\label{E:common-weighted-resolvent-estimate}
 |(H+\lambda)^{-1}f|
 \le \frac{e^\tau}{\delta_0}
 \|e^{\theta|y|^2/2+\tau|z|}f\|_\infty
 e^{-\theta|y|^2/2-\tau\langle z\rangle}.
\end{equation}
The constant is uniform for $\lambda\in[\lambda_0,\Lambda]$ whenever
$\lambda_0+\theta d-\tau^2>0$.

\item Let $a,R>0$ and $0<\sigma<\sqrt a$.  If $v$ is a bounded nonnegative
subsolution of $-\Delta v+av\le0$ on $\{|x|>R\}$ and
$v\le C$ on $|x|=R$ for some $C\ge0$, then
\begin{equation}\label{E:common-Euclidean-barrier}
 v(x)\le C e^{-\sigma(|x|-R)},\qquad |x|\ge R.
\end{equation}
\end{enumerate}
\end{lemma}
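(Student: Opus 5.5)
The core of the argument is a weak maximum principle obtained by testing with $h=(v-C\Phi)^+$. Set $w:=v-C\Phi$. The distributional inequality $\mathscr L w\le0$, the zero trace of $h$ on $\partial\Omega$, and $h\in\mathscr V_0(\Omega)$ allow, after a density argument, testing against $h$. Here one approximates $h$ in the form norm by compactly supported functions in $\mathscr V_0$, which is legitimate because $v,\Phi$ have locally finite form energy. Testing gives
$$\int_\Omega\bigl(\nabla w\cdot\nabla h+Wwh\bigr)\,\mathrm dx\le0 .$$
Since $\nabla w=\nabla h$ and $wh=h^2$ on $\{w>0\}$, the left side equals the form of $h$. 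The coercivity \eqref{E:common-barrier-coercivity} then gives $c_0\|h\|_2^2\le0$, so $h=0$. If $\mathscr L v\le0\le\mathscr L\Phi$, the differential inequality for $w$ follows by linearity. For a signed or complex $v$, one applies the same argument to $|v|$, using Kato's inequality where needed.

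For the version without the assumption $h\in\mathscr V_0$ (parts (i) and (iii)), I would use bounded truncations. Let $\chi_L$ be a cutoff supported in $B_{2L}$, equal to $1$ on $B_L$, with $|\nabla\chi_L|\le C/L$, and test with $\chi_L^2h$, which lies in the form domain by local finiteness. Since $W\ge w_0>0$ on the comparison domain, we get
$$\int\chi_L^2\bigl(|\nabla h|^2+w_0h^2\bigr)\le C\int|\nabla\chi_L|^2h^2\le CL^{-2}\int_{B_{2L}\setminus B_L}h^2 .$$
The main obstacle is to close this estimate when $h$ is merely bounded, since then $\int_{B_{2L}}h^2\sim L^N$ need not be small. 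I would first try exponential cutoffs $\chi=e^{-\epsilon|x|}$ with $\epsilon^2<w_0$. These absorb the gradient error into $w_0$, give $\int e^{-2\epsilon|x|}h^2=0$, and the boundedness of $h$ makes every integral finite. I expect this to be the delicate step.

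The consequences then reduce to the three barrier computations.

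\textbf{(i)} Take $\Phi=e^{-\theta|y|^2/2}$. By \eqref{E:common-Gaussian-identity}, $(H+\lambda-K)\Phi\ge0$ on $\Omega=\{|y|>R_0\}\times\mathbb R^m$ once $(1-\theta^2)R_0^2>|\lambda|+|K|+1$. On this set $W=|y|^2+\lambda-K\ge1$. On $|y|=R_0$ we have $u\le M\le Me^{\theta R_0^2/2}\Phi$. The truncated comparison then yields \eqref{E:common-Gaussian-conclusion} with $C_\theta=e^{\theta R_0^2/2}$, and the same bound holds trivially for $|y|\le R_0$.

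\textbf{(ii)} The supersolution property of $\Psi_R$ follows from the same computation as \eqref{E:common-product-barrier-identity}, using $\Delta_z|z|\ge0$. Coercivity on $\Omega_R$ with $c_0=\lambda+d-\eta$ (positive because $\delta_\eta>0$) comes from the spectral bound $H_y\ge d$. The condition $(v-B\Psi_R)^+\in\mathscr V_0$ holds because $0\le(v-B\Psi_R)^+\le v\in X$. The statement with $\Phi_{\theta,\tau}$ follows from $\Psi_R\le e^{\tau\sqrt{1+R^2}}\Phi_{\theta,\tau}$, using $\langle z\rangle-|z|\le1$ and $R\le\langle R\rangle$. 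For the resolvent estimate, $|f|\le A_fe^{-\tau|z|}e^{-\theta|y|^2/2}\le A_fe^{\tau}\Phi_{\theta,\tau}$, so $f\in L^2$, and \eqref{E:common-product-barrier-identity} with $\eta=0$ gives $(H+\lambda)\bigl(\frac{e^\tau A_f}{\delta_0}\Phi_{\theta,\tau}\bigr)\ge|f|$. Set $u=(H+\lambda)^{-1}f\in X$ and apply the whole-space comparison to $\pm u$. Here $(\pm u-C\Phi)^+\le|u|\in X$, and coercivity is \eqref{E:coercive1}; $\lambda>-d$ holds because $\delta_0>0$ forces $\lambda>-\theta d\ge -d$. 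Uniformity in $\lambda$ follows since $\delta_0$ is increasing in $\lambda$.

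\textbf{(iii)} Apply the truncated comparison on $\{|x|>R\}$ with $W=a>0$ and the barrier \eqref{eq:20260910-0954}.
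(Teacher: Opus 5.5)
Your proposal is correct. Its skeleton matches the paper: test with $h=(v-C\Phi)^+$ against the coercive form, then use the three explicit barriers. It departs from the paper in two places.

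\textbf{Bounded subsolutions without $h\in\mathscr V_0(\Omega)$.} The paper uses a growing supersolution. With $W\ge c>0$ and $0<\gamma<\sqrt c$, the function $B_\gamma=\sum_j\cosh(\gamma x_j)$ satisfies $\mathscr L B_\gamma\ge0$. Hence $(v-C\Phi-\varepsilon B_\gamma)^+$ vanishes on $\Omega\cap\partial B_L$ for large $L$, and it is an admissible test on the bounded set $\Omega\cap B_L$. One then lets $L\to\infty$ and $\varepsilon\to0$.

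Your weight $e^{-\epsilon|x|}$ with $\epsilon<\sqrt{w_0}$ is the dual, Agmon-type form of the same mechanism. It needs only $\int e^{-2\epsilon|x|}h^2<\infty$ rather than pointwise bounds on large spheres, so it also covers subexponentially growing subsolutions. The paper's argument, by contrast, stays entirely inside the weak maximum principle on bounded domains.

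To make your step rigorous, test with $\eta_L^2e^{-2\epsilon|x|}h$, where $\eta_L$ is a cutoff with $|\nabla\eta_L|\le C/L$. The identity $\nabla w\cdot\nabla(\psi^2h)=|\nabla(\psi h)|^2-h^2|\nabla\psi|^2$ then gives
\[
\int_\Omega\bigl(W-(1+\delta)\epsilon^2\bigr)\eta_L^2e^{-2\epsilon|x|}h^2\,\mathrm dx
\le C_\delta L^{-2}\|h\|_\infty^2\int_{\mathbb R^N}e^{-2\epsilon|x|}\,\mathrm dx .
\]
This way you never need $\int e^{-2\epsilon|x|}|\nabla h|^2<\infty$, which boundedness of $h$ alone does not provide. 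The same computation with weight $1$ also justifies your density step in the core comparison, where $\nabla w$ need not lie in $L^2(\Omega)$.

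\textbf{The resolvent bound.} Applying the comparison to $u$ and $-u$ separately, with the same barrier, avoids the Kato inequality the paper uses. The cost is that it is limited to real-valued data, which is the setting in which the estimate is used later.

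\textbf{Two small repairs.}
\begin{enumerate}
\item In (ii), the bound $0\le(v-B\Psi_R)^+\le v$ controls only the zeroth-order and weighted terms. The gradient bound also needs $\nabla\Psi_R\in L^2(\Omega_R)$, which holds because $\Psi_R$ has finite energy on $\Omega_R$.
\item The inequalities you cite, $\langle z\rangle-|z|\le1$ and $R\le\langle R\rangle$, only give $\Psi_R\le e^{\tau(R+1)}\Phi_{\theta,\tau}$. Since $R+1>\sqrt{1+R^2}$ for $R>0$, this is weaker than the stated constant $e^{\tau\sqrt{1+R^2}}$. Use instead that $r\mapsto\sqrt{1+r^2}-r$ is decreasing, which gives $R+\langle z\rangle-|z|\le\sqrt{1+R^2}$ for $|z|\ge R$.
\end{enumerate}
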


\begin{proof}
Set $q=v-C\Phi$ and $h=q^+\in\mathscr{V}_0(\Omega)$.
The standard positive-part truncation of the weak inequality
$\mathscr{L}q\le0$ gives
$$
 \mathscr{L}h\le0\quad\hbox{in }\mathcal D'(\Omega).
$$
Since $h\in\mathscr{V}_0(\Omega)$, the weak form is continuous in the
form norm.  Nonnegative approximation in this norm therefore permits
$h$ itself as a test.  Using \eqref{E:common-barrier-coercivity}, we obtain
$$
 0\geq\int_\Omega\bigl(|\nabla h|^2+Wh^2\bigr)\mathrm{d}x
 \ge c_0\|h\|_{L^2(\Omega)}^2.
$$
Thus $h=0$, proving the comparison under the global form-domain assumption.

For (i), choose $R_y>0$ sufficiently large that, uniformly over the
prescribed parameter sets,
$$
 |y|^2+\lambda-K\ge1,
 \qquad \theta d+\lambda-K+(1-\theta^2)|y|^2\ge0
 \quad\hbox{for }|y|\ge R_y.
$$
With $\mathscr{L}=H+\lambda-K$ and $\Omega=\{|y|>R_y\}$,
\eqref{E:common-Gaussian-identity} gives
$$
 \begin{cases}
 \mathscr{L}u\le0\le\mathscr{L}(e^{-\theta|y|^2/2})
     &\text{in }\Omega,\\
 u\le M=M e^{\theta R_y^2/2}e^{-\theta|y|^2/2}
     &\text{on }\partial\Omega.
 \end{cases}
$$
The bounded-subsolution comparison established at the end of the proof
therefore gives
$$
 u(y,z)\le M e^{\theta R_y^2/2}e^{-\theta|y|^2/2}
 \qquad (|y|\ge R_y).
$$
The same bound holds for $|y|\le R_y$ by $u\le M$, proving
\eqref{E:common-Gaussian-conclusion} with
$C_\theta=e^{\theta R_y^2/2}$.

For (ii), on $\Omega_R$, direct computation gives
$$
 (H+\lambda-\eta)\Psi_R
 =\left[\delta_\eta+(1-\theta^2)|y|^2
       +\frac{(m-1)\tau}{|z|}\right]\Psi_R\geq0.
$$
With $\mathscr{L}=H+\lambda-\eta$, the hypotheses give
$$
 \begin{cases}
 \mathscr{L}v\le0\le\mathscr{L}\Psi_R &\text{in }\Omega_R,\\
 v(y,z)\le B e^{-\theta|y|^2/2}=B\Psi_R(y,z)
     &\text{on }\partial\Omega_R.
 \end{cases}
$$
Since $v\in X$ and $\Psi_R$ has finite energy on $\Omega_R$, their
positive difference $(v-B\Psi_R)^+$ has finite form energy; its zero
trace places it in $\mathscr{V}_0(\Omega_R)$.
Extension by zero and the oscillator inequality give the coercivity
constant
$$d+\lambda-\eta=\delta_\eta+(1-\theta)d+\tau^2>0.$$
Thus the energy comparison applies and gives $v\le B\Psi_R$.
Since $r\mapsto\sqrt{1+r^2}-r$ is decreasing,
$$\Psi_R(y,z)
 =e^{\tau(R+\langle z\rangle-|z|)}\Phi_{\theta,\tau}(y,z)
 \le e^{\tau\sqrt{1+R^2}}\Phi_{\theta,\tau}(y,z)
 \qquad (|z|\geq R).$$

To prove \eqref{E:common-weighted-resolvent-estimate}, put
$w=(H+\lambda)^{-1}f\in X=\mathscr{V}_0(\mathbb{R}^N)$.  Kato's distributional inequality
\cite[Lemma~A, p.~138]{Kato1972} gives
$$
 (H+\lambda)|w|\le |f|
 \le A_f e^{\tau(\langle z\rangle-|z|)}\Phi_{\theta,\tau}
 \le A_f e^\tau\Phi_{\theta,\tau}.
$$
Here $f,\Delta w\in L^1_{\rm loc}$, as required for this application.
By \eqref{E:common-product-barrier-identity} with $\eta=0$,
$$
 (H+\lambda)\Phi_{\theta,\tau}
 \ge\delta_0\Phi_{\theta,\tau}\quad\hbox{in }\mathbb R^N.
$$
Hence
$$
 w_*:=\frac{A_f e^\tau}{\delta_0}\Phi_{\theta,\tau}\in X,
 \qquad (H+\lambda)w_*\ge |f|,
$$
and consequently
$$
 (H+\lambda)(|w|-w_*)\le0
 \quad\hbox{in }\mathcal D'(\mathbb R^N).
$$
The Sobolev chain rule gives $h=(|w|-w_*)^+\in H^1(\mathbb R^N)$.
Since $0\le h\le |w|$,
$$
 \int_{\mathbb R^N}|y|^2h^2\mathrm{d}x
 \leq\int_{\mathbb R^N}|y|^2|w|^2\mathrm{d}x<\infty.
$$
Thus $h\in X=\mathscr{V}_0(\mathbb R^N)$ is an admissible test in
the form sense.  Testing the difference inequality gives
$$
 0\geq\int_{\mathbb R^N}
 \bigl(|\nabla h|^2+(|y|^2+\lambda)h^2\bigr)\mathrm{d}x
 \geq(d+\lambda)\|h\|_2^2.
$$

Since $d+\lambda>0$, this yields $h=0$ and proves the asserted bound.
For the uniform version, replace $\delta_0$ in the denominator by
$\lambda_0+\theta d-\tau^2$.

For (iii), take $\mathscr{L}=-\Delta+a$, $\Omega=\{|x|>R\}$, and
$\Phi(x)=e^{-\sigma(|x|-R)}$.  By \eqref{eq:20260910-0954},
$$
 \begin{cases}
 \mathscr{L}v\le0\le\mathscr{L}\Phi &\text{in }\Omega,\\
 v\le C=C\Phi &\text{on }\partial\Omega.
 \end{cases}
$$
If $(v-C\Phi)^+\in\mathscr{V}_0(\Omega)$, the energy comparison already
proves \eqref{E:common-Euclidean-barrier}.  The following argument
removes this assumption for bounded $v$.

It remains to justify (i) and (iii) for bounded subsolutions without
assuming $(v-C\Phi)^+\in\mathscr{V}_0(\Omega)$.  If
$(v-C\Phi)^+\notin\mathscr{V}_0(\Omega)$, the preceding global energy
test is unavailable, even though $v\le C\Phi$ on $\partial\Omega$.
We instead construct admissible tests on bounded truncations.
On the comparison domain, $W\ge c>0$: one may take $c=1$ in (i)
after the above choice of $R_y$, and $c=a$ in (iii).
Write $M_v=\|v\|_{L^\infty(\Omega)}$ and choose $0<\gamma<\sqrt c$.
Then
$$B_\gamma(x):=\sum_{j=1}^N\cosh(\gamma x_j),
 \qquad \mathscr{L} B_\gamma=(W-\gamma^2)B_\gamma\geq0,$$
and
$$
 \inf_{|x|=L}B_\gamma(x)
 \ge\cosh\!\left(\frac{\gamma L}{\sqrt N}\right)
 \longrightarrow\infty.
$$
Fix $\varepsilon>0$.  Choose $L_\varepsilon$ larger than the boundary
radius ($R_y$ in (i), $R$ in (iii)) and sufficiently large that
$\varepsilon\cosh(\gamma L/\sqrt N)\ge M_v$ for every
$L\ge L_\varepsilon$. On $\Omega_L:=\Omega\cap B_L$, set
$$
 q_\varepsilon:=v-C\Phi-\varepsilon B_\gamma,
 \qquad h_{\varepsilon,L}:=(q_\varepsilon)^+.
$$
Then
$$
 \mathscr{L}q_\varepsilon
 =\mathscr{L}(v-C\Phi)-\varepsilon\mathscr{L}B_\gamma\le0
 \quad\hbox{in }\mathcal D'(\Omega_L).
$$
The original boundary condition and the choice of $L$ give
$$
 \begin{cases}
 q_\varepsilon\le-\varepsilon B_\gamma\le0
     &\text{on }\partial\Omega\cap B_L,\\
 q_\varepsilon\le M_v-\varepsilon\cosh(\gamma L/\sqrt N)\le0
     &\text{on }\Omega\cap\partial B_L.
 \end{cases}
$$
Thus $h_{\varepsilon,L}\in H_0^1(\Omega_L)$ by local $H^1$ regularity and the zero trace.  Moreover, $0\le h_{\varepsilon,L}\leq v$ and hence
$$\int_{\Omega_L}W^+h_{\varepsilon,L}^2\mathrm{d}x
 \leq\int_{\Omega_L}W^+v^2\mathrm{d}x<\infty.$$
Consequently $h_{\varepsilon,L}\in\mathscr{V}_0(\Omega_L)$ is an admissible test for $\mathscr{L}q_\varepsilon\le0$.  Testing by $h_{\varepsilon,L}$ gives
\begin{align*}
 0&\geq\int_{\Omega_L}
 \bigl(\nabla q_\varepsilon\cdot\nabla h_{\varepsilon,L}
       +Wq_\varepsilon h_{\varepsilon,L}\bigr)\mathrm{d}x\\
 &=\int_{\Omega_L}
 \bigl(|\nabla h_{\varepsilon,L}|^2+Wh_{\varepsilon,L}^2\bigr)
 \mathrm{d}x
 \geq c\|h_{\varepsilon,L}\|_{L^2(\Omega_L)}^2.
\end{align*}
Therefore $h_{\varepsilon,L}=0$ for every sufficiently large $L$. Letting $L\to\infty$, we obtain $v\le C\Phi+\varepsilon B_\gamma$ in $\Omega$; then $\varepsilon\downarrow0$ gives $v\le C\Phi$. This completes the bounded-subsolution comparisons in (i) and (iii) without any global form-domain assumption on $(v-C\Phi)^+$.
\end{proof}

\begin{lemma}[Projected compactness under a singular free scaling]
\label{L:projected-Q-compactness}
Let
$$
 X_y:=\{w\in H^1(\mathbb{R}^d): |y|w\in L^2(\mathbb{R}^d)\}.
$$
Let $\varepsilon_n\to0^+$ and let $\Omega_n\subset\mathbb{R}^\ell$ exhaust
$\mathbb{R}^\ell$.  Suppose that
$$
 U_n\in L^2_{\rm loc}(\Omega_n;X_y)
       \cap H^1_{\rm loc}(\Omega_n;L^2(\mathbb{R}^d))
$$
is a weak solution of
\begin{equation}\label{E:abstract-Q-equation}
 (H_y-d)U_n+\varepsilon_n
 \bigl(-\Delta_\xi U_n+B_n(\xi)\cdot\nabla_\xi U_n+c_n(\xi)U_n\bigr)
 =\varepsilon_n h_n
\end{equation}
on $\mathbb{R}^d\times\Omega_n$, where
$h_n\in L^2_{\rm loc}(\Omega_n;L^2(\mathbb{R}^d))$.
Assume that $B_n\in W^{1,\infty}_{\rm loc}(\Omega_n;\mathbb{R}^\ell)$ and $c_n\in L^\infty_{\rm loc}(\Omega_n)$, with their local norms uniformly bounded in $n$, and that, for every $K\subset\subset\mathbb{R}^\ell$,
\begin{equation}\label{E:abstract-Q-local-bounds}
 \sup_n\int_K\int_{\mathbb{R}^d}(|U_n|^2+|h_n|^2)\,\mathrm{d}y\,\mathrm{d}\xi
 <+\infty.
\end{equation}
If $P$ denotes the $\phi_0$-projection in the $y$-variable and $Q=I-P$, then
\begin{equation}\label{E:abstract-Q-conclusion}
 \int_K\int_{\mathbb{R}^d}|QU_n|^2\,\mathrm{d}y\,\mathrm{d}\xi
 \leq C_K\varepsilon_n
\end{equation}
for every $K\subset\subset\mathbb{R}^\ell$ and all sufficiently large $n$.
\end{lemma}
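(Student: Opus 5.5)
The plan is a localized energy estimate for the complementary component: test the equation against $\chi^2 QU_n$ with a cutoff $\chi$, and use the spectral gap of $H_y-d$ on $\operatorname{ran}Q$. Since $\sigma(H_y)=\{d+2k\}$ by \eqref{E:spectrum-Hy}, we have
$$\langle (H_y-d)w,w\rangle_{L^2(\mathbb{R}^d)}\ge 2\|w\|_{L^2(\mathbb{R}^d)}^2\qquad\text{for } w\in X_y\cap\operatorname{ran}Q.$$
Also $P$ and $Q$ act only in $y$, so they commute with $H_y$, with $\nabla_\xi$, and with multiplication by functions of $\xi$. Applying $Q$ to \eqref{E:abstract-Q-equation} therefore gives, weakly on $\mathbb{R}^d\times\Omega_n$, the same equation for $W_n:=QU_n$ with right-hand side $\varepsilon_n Qh_n$. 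Note that $\|Qh_n\|\le\|h_n\|$ and $\|W_n\|\le \|U_n\|$ pointwise in $\xi$.

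Fix $K\subset\subset\mathbb{R}^\ell$ and a nested family $K\subset K_1\subset K_2$ with cutoffs $\chi\in C_c^\infty(K_1)$, $\chi=1$ on $K$. For $n$ large, $K_2\subset\Omega_n$. Testing the $Q$-equation with $\chi^2W_n$ is admissible by the assumed regularity. It gives
$$2\int\chi^2\|W_n\|^2\,\mathrm{d}\xi+\varepsilon_n\int\chi^2\|\nabla_\xi W_n\|^2\,\mathrm{d}\xi\le \varepsilon_n\Bigl|\int\bigl(2\chi\nabla\chi\cdot\langle\nabla_\xi W_n,W_n\rangle+\chi^2\langle B_n\cdot\nabla_\xi W_n,W_n\rangle+c_n\chi^2\|W_n\|^2-\chi^2\langle Qh_n,W_n\rangle\bigr)\mathrm{d}\xi\Bigr|.$$
The first-order terms are handled by Young's inequality, $\varepsilon_n|\chi\nabla W_n|\,|W_n|\le\frac{\varepsilon_n}{2}\chi^2|\nabla W_n|^2+C\varepsilon_n|W_n|^2$, which absorbs half the gradient term on the left. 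Alternatively, for the drift one can integrate by parts using $B_n\in W^{1,\infty}_{\rm loc}$, since $\langle B_n\cdot\nabla W_n,W_n\rangle=\frac12 B_n\cdot\nabla\|W_n\|^2$, and this avoids a gradient entirely. The zero-order terms are bounded by $C\varepsilon_n\int_{K_1}(\|U_n\|^2+\|h_n\|^2)$, which is $O(\varepsilon_n)$ by \eqref{E:abstract-Q-local-bounds}.

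The result is $\int_K\|QU_n\|^2\,\mathrm{d}\xi\le C\varepsilon_n$, with $C$ depending only on the cutoff, the local bounds on $B_n$, $c_n$, and the local $L^2$ bounds. The gap term controls the left side with constant $2$ and no $\varepsilon_n$, so the $O(\varepsilon_n)$ right side transfers directly. Notably, no a priori local bound on $\nabla_\xi U_n$ is needed: the cutoff-gradient and drift terms either integrate by parts onto $\|W_n\|^2$ or are absorbed by the $\varepsilon_n\chi^2\|\nabla W_n\|^2$ term.

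The main technical point is justifying the test function and the commutation. One must check that $\chi^2 QU_n$ lies in the weak-formulation test class, namely $L^2(X_y)\cap H^1(L^2)$ with compact support in $\xi$. One must also check that $\langle (H_y-d)U_n,\chi^2QU_n\rangle=\langle (H_y-d)QU_n,\chi^2QU_n\rangle$, which holds because $P$ projects onto the $H_y$-eigenvector $\phi_0$ and so $(H_y-d)PU_n=0$. Both follow from $\phi_0\in X_y$ and Fubini in $\xi$.
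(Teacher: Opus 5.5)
Your proposal is correct and follows essentially the same route as the paper. Both arguments project with $Q$, test the projected equation with $\chi^2QU_n$, and use the oscillator gap $\langle (H_y-d)w,w\rangle\ge 2\|w\|_2^2$ on $\operatorname{ran}Q$. Both then absorb the cutoff term by Young's inequality, integrate the drift by parts onto $\operatorname{div}_\xi(\chi^2B_n)\|QU_n\|^2$, and close with the local $L^2$ bounds. The differences are cosmetic: the paper splits the source term by Young's inequality (producing an $\varepsilon_n^2$ contribution where your Cauchy--Schwarz bound gives $O(\varepsilon_n)$, which suffices), and it justifies admissibility of the test function by truncating in $y$ and mollifying in both variables, a bit more than your Fubini remark.
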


\begin{proof}
Put $R_n=QU_n$.  Since $Q$ acts only in $y$ and is bounded on $L^2_y$
and $X_y$, it commutes with the free-variable terms, and $R_n$ satisfies
the projected equation.  Fix $K\subset\subset\mathbb R^\ell$ and choose
$\eta\in C_c^\infty(\mathbb R^\ell)$ equal to one on $K$, with
$K_1:=\operatorname{supp}\eta\subset\subset\Omega_n$ for large $n$.

The test $\eta^2R_n$ is legitimate in the local form space
$L^2(\Omega_n;X_y)\cap H_0^1(\Omega_n;L^2_y)$.
Indeed, truncate in $y$ and then mollify in both variables, keeping the
$\xi$-support in a fixed compact subset of $\Omega_n$.  This approximates
$\eta^2R_n$ by compactly supported smooth functions in both the
$L^2_\xi(X_y)$ and $H^1_\xi(L^2_y)$ norms; the weighted $y$-term converges
by truncation.  All terms in the weak equation are continuous in these
norms.  Thus the smooth tests pass to $\eta^2R_n$.

All integrals below are over $\mathbb R^d\times\Omega_n$.  Testing gives
\begin{align}
 &\int\eta^2\bigl(|\nabla_yR_n|^2+(|y|^2-d)R_n^2\bigr)
 +\varepsilon_n\int\eta^2|\nabla_\xi R_n|^2 \notag\\
 &\quad+2\varepsilon_n\int\eta R_n\nabla_\xi R_n\cdot\nabla\eta
 +\varepsilon_n\int\eta^2(B_n\cdot\nabla_\xi R_n)R_n
 +\varepsilon_n\int c_n\eta^2R_n^2
 =\varepsilon_n\int\eta^2(Qh_n)R_n.
 \label{E:abstract-Q-tested-identity}
\end{align}
The oscillator gap bounds the first term below by $2\int\eta^2R_n^2$.
For the cutoff and drift terms, respectively,
\begin{align*}
 2\left|\int\eta R_n\nabla_\xi R_n\cdot\nabla\eta\right|
 &\le\tfrac12\int\eta^2|\nabla_\xi R_n|^2
       +2\int|\nabla\eta|^2R_n^2,\\
 \int\eta^2(B_n\cdot\nabla_\xi R_n)R_n
 &=-\tfrac12\int\operatorname{div}_\xi(\eta^2B_n)R_n^2.
\end{align*}
Young's inequality bounds the source by
$\frac12\int\eta^2R_n^2+\frac12\varepsilon_n^2\int\eta^2|Qh_n|^2$.
The local bounds on $B_n,c_n$ therefore yield
$$
 \frac32\int\eta^2R_n^2
 +\frac{\varepsilon_n}{2}\int\eta^2|\nabla_\xi R_n|^2
 \le C_\eta\varepsilon_n\int_{\mathbb R^d\times K_1}|R_n|^2
 +\frac{\varepsilon_n^2}{2}\int_{\mathbb R^d\times K_1}|Qh_n|^2.
$$
Since $Q$ is an $L^2_y$ contraction, \eqref{E:abstract-Q-local-bounds}
bounds the right-hand side by $C_\eta\varepsilon_n$.  As $\eta=1$ on $K$,
this proves \eqref{E:abstract-Q-conclusion}.
\end{proof}

\begin{lemma}[Envelope upgrade from local to global convergence]
\label{L:envelope-global-compactness}
Let $a_1>a\ge0$ and $b_1>b\ge0$.  Suppose that $w_n\to w$ locally in measure on $\mathbb{R}^d\times\mathbb{R}^m$ and
$$
 |w_n(y,z)|+|w(y,z)|
 \leq C e^{-a_1|y|^2-b_1|z|}.
$$
Then $w_n\to w$ in $L^q(\mathbb{R}^N)$ for every $1\le q<+\infty$.  If the local convergence is uniform, then also
$$
 \|e^{a|y|^2+b|z|}(w_n-w)\|_\infty\to0.
$$
If $w_n\to w$ also in $X$, then the convergence holds in the corresponding weighted $X$--supremum norm.
\end{lemma}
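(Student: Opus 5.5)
The proof is a tail-splitting argument: the envelope makes the contribution of $\{|y|>R\}\cup\{|z|>R\}$ uniformly small, and the hypothesis handles the remaining compact set. Fix $1\le q<\infty$ and $\varepsilon>0$. Set $B_R:=\{|y|\le R,\ |z|\le R\}$.

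\emph{Step 1 ($L^q$ convergence).} The envelope gives
$$
 \int_{\mathbb R^N\setminus B_R}|w_n-w|^q
 \le C^q\int_{\mathbb R^N\setminus B_R}e^{-qa_1|y|^2-qb_1|z|}\,\mathrm dx .
$$
This is small for $R$ large. If $b_1=0$ (so $b=0$), the $z$-integral is infinite, so the statement must presuppose $b_1>0$, or be read with the $z$-tail controlled; we take $b_1>0$. Inside $B_R$ we use $|w_n-w|\le 2C$ together with convergence in measure on the bounded set $B_R$. Bounded convergence in measure on a finite-measure set gives $\int_{B_R}|w_n-w|^q\to0$, split over the small set where $|w_n-w|\ge\delta$ and its complement. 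Letting $n\to\infty$ and then $R\to\infty$ proves the first claim.

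\emph{Step 2 (weighted sup norm).} Since $a_1>a$ and $b_1>b$, outside $B_R$
$$
 e^{a|y|^2+b|z|}|w_n-w|\le 2Ce^{-(a_1-a)|y|^2-(b_1-b)|z|}.
$$
This is at most $2C\max\{e^{-(a_1-a)R^2},e^{-(b_1-b)R}\}$, which is small uniformly in $n$. On $B_R$ the weight is bounded by $e^{aR^2+bR}$, so local uniform convergence makes the contribution tend to zero. Choosing $R$ first and then $n$ proves the claim.

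\emph{Step 3 ($X$ part).} The corresponding weighted $X$--supremum norm is $\|\cdot\|_X+\|e^{a|y|^2+b|z|}\cdot\|_\infty$, as in $\mathcal X_{\theta,\tau}$. The $X$-component converges by hypothesis and the weighted-sup component by Step 2, so the combined norm converges.

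No step is a real obstacle. The only care needed is the uniform-in-$n$ tail bound, which is exactly what the strict inequalities $a_1>a$ and $b_1>b$ provide, and the remark that the $L^q$ statement needs $b_1>0$ for integrability in $z$.
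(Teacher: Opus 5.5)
Your proof is correct and follows the same argument as the paper. Split off the complement of a compact set, where the envelope makes the tail uniformly small; for the weighted sup norm, the strict gaps $a_1>a$ and $b_1>b$ play this role. On the compact part, use bounded convergence in measure or local uniform convergence. Your caveat about $b_1=0$ is unnecessary: the hypothesis $b_1>b\ge0$ already forces $b_1>0$, and likewise $a_1>a\ge0$ forces $a_1>0$, so the envelope is always integrable.
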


\begin{proof}
The common envelope and local convergence in measure give the $L^q$ conclusion
by dominated convergence, first on a fixed ball and then globally.  Under local
uniform convergence, the assertion holds uniformly on that ball; outside it,
multiplication by the weaker weight leaves the uniform tail
$2C e^{-(a_1-a)|y|^2-(b_1-b)|z|}$.  This proves the weighted supremum
convergence.
\end{proof}

\begin{lemma}[Monotone-cylinder Hamiltonian exclusion]
\label{L:common-cylinder-Hamiltonian}
Let $k\ge0$, with the convention that $\mathbb{R}^0$ consists of one point. Let $\mathcal V\in C(\mathbb{R}^k;\mathbb{R})$ and
$f\in C([0,+\infty))$.  Suppose that $V\ge0$ is a bounded classical solution of
\begin{equation}\label{E:common-cylinder-equation}
 -\Delta_YV-V_{\tau\tau}+\mathcal V(Y)V=f(V)
 \qquad\hbox{in }\mathbb{R}^k\times\mathbb{R},
\end{equation}
and $V_\tau\le0$.  Assume that its endpoint limits $V_\pm$ exist and that
$$
 V(\cdot,\tau+\cdot)\longrightarrow V_\pm(\cdot)
 \quad\hbox{in }C^1_{\rm loc}(\mathbb{R}^k\times\mathbb{R})
 \quad\hbox{as }\tau\to\pm\infty.
$$
Suppose, moreover, that there is $\Gamma\in L^2(\mathbb{R}^k)\cap L^\infty(\mathbb{R}^k)$ such that, for all $(Y,\tau)$,
\begin{equation}\label{E:common-cylinder-envelope}
 |V(Y,\tau)|+|\nabla_YV(Y,\tau)|+|V_\tau(Y,\tau)|\le\Gamma(Y),
\end{equation}
and
\begin{equation}\label{E:common-cylinder-integrability}
 |\mathcal V|\Gamma^2\in L^1(\mathbb{R}^k),
 \qquad
 Y\longmapsto\sup_{0\le s\le\Gamma(Y)}|F(s)|
       \in L^1(\mathbb{R}^k),
 \qquad F(s):=\int_0^sf(t)\,\mathrm dt.
\end{equation}
Define
$$
 J(w):=\frac{1}{2}\int_{\mathbb{R}^k}
 \bigl(|\nabla_Yw|^2+\mathcal V(Y)w^2\bigr)\mathrm{d}Y
 -\int_{\mathbb{R}^k}F(w)\mathrm{d}Y.
$$
If $V_+\equiv0$, $V_-\not\equiv0$, and $J(V_-)>0$, then no such $V$ exists.
\end{lemma}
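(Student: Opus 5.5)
The plan is to argue by contradiction with a Hamiltonian identity in the cylinder variable $\tau$, the direct analogue of the conservation law $\frac12 V'^2-\mathcal{V}V^2/2+F(V)=\mathrm{const}$ for ODEs. For each $\tau\in\mathbb{R}$ I define the cross-sectional Hamiltonian
$$
 \mathcal H(\tau):=\int_{\mathbb{R}^k}\Bigl(\tfrac12 V_\tau^2-\tfrac12|\nabla_YV|^2-\tfrac12\mathcal V(Y)V^2+F(V)\Bigr)\mathrm dY
 =\tfrac12\int_{\mathbb{R}^k}V_\tau^2\,\mathrm dY-J(V(\cdot,\tau)).
$$
The envelope \eqref{E:common-cylinder-envelope} together with \eqref{E:common-cylinder-integrability} makes every term absolutely integrable, uniformly in $\tau$. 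Here $\Gamma\in L^2$ controls $V_\tau^2$ and $|\nabla_YV|^2$, $|\mathcal V|\Gamma^2\in L^1$ controls the potential term, and $\sup_{s\le\Gamma}|F(s)|\in L^1$ controls $F(V)$.

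The first step is to show that $\mathcal H$ is constant. Formally,
$$
 \mathcal H'(\tau)=\int\bigl(V_\tau V_{\tau\tau}-\nabla_YV\cdot\nabla_YV_\tau-\mathcal VVV_\tau+f(V)V_\tau\bigr)\,\mathrm dY .
$$
Integrating by parts in $Y$ and using \eqref{E:common-cylinder-equation}, this becomes $\int V_\tau\bigl(V_{\tau\tau}+\Delta_YV-\mathcal VV+f(V)\bigr)\mathrm dY=0$. I expect this to be the main obstacle, because no second derivatives are bounded by $\Gamma$ and the boundary terms at $|Y|\to\infty$ are not controlled a priori. To avoid differentiating in $\tau$ pointwise, I would work in integrated form. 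Fix $\tau_1<\tau_2$ and a cutoff $\chi_R(Y)=\chi(Y/R)$, multiply the equation by $\chi_RV_\tau$, and integrate over $\mathbb{R}^k\times(\tau_1,\tau_2)$. Since $V$ is classical, the local integrations by parts are legitimate. The cutoff produces the error term $\int\!\!\int\nabla\chi_R\cdot\nabla_YV\,V_\tau$, which is bounded by $R^{-1}(\tau_2-\tau_1)\|\Gamma\|_2^2\to0$. Letting $R\to\infty$ by dominated convergence then gives $\mathcal H(\tau_2)=\mathcal H(\tau_1)$. In case $k=0$ the identity is simply the ODE first integral.

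The second step is to evaluate the constant at the two ends. The convergence $V(\cdot,\tau+\cdot)\to V_\pm$ in $C^1_{\rm loc}$ gives pointwise convergence of $V$, $\nabla_YV$ and $V_\tau$. The $C^1_{\rm loc}$ convergence to a $\tau$-independent limit also forces $V_\tau(\cdot,\tau)\to0$ pointwise. Dominated convergence with the same majorants then yields
$$\mathcal H(\tau)\to\tfrac12\cdot0-J(V_\pm)\quad\text{as }\tau\to\pm\infty.$$
At $+\infty$ this gives $\mathcal H\equiv -J(0)=0$, since $F(0)=0$. At $-\infty$ it gives $\mathcal H\equiv -J(V_-)<0$. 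These two values are incompatible, so no such $V$ exists.

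The hypothesis $V_\tau\le0$ is what guarantees that the limits $V_\pm$ exist and that the dominated pointwise limits are coherent. It is not otherwise needed in the identity. Likewise, $V_-\not\equiv0$ is only consistent with, and not strictly required for, the strict inequality $J(V_-)>0$.
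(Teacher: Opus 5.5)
Your proposal is correct and follows essentially the same route as the paper. Both arguments use the cross-sectional Hamiltonian $\tfrac12\|V_\tau(\cdot,\tau)\|_2^2-J(V(\cdot,\tau))$ and a cutoff in $Y$ whose error is controlled by $R^{-1}\|\Gamma\|_2^2$ on a fixed $\tau$-interval, followed by dominated convergence as $R\to\infty$; the constant is then evaluated as $\tau\to\pm\infty$ via the $C^1_{\rm loc}$ convergence to the $\tau$-independent limits $V_\pm$ (the paper merely tests with $\rho_R^2V_\tau$ and localizes the error to the annulus $A_R$, which changes nothing).
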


\begin{proof}
The envelope assumptions make
$$
 \mathscr{H}(\tau):=\tfrac12\|V_\tau(\cdot,\tau)\|_2^2-J(V(\cdot,\tau))
$$
finite.  For $k=0$, multiplication of the ODE by $V_\tau$ makes
$\mathscr{H}$ constant.  For $k\ge1$, choose a smooth cutoff $\rho_R$
equal to one on $B_R^k$ and zero outside $B_{2R}^k$, with
$|\nabla\rho_R|\le C/R$, and define
$$
 \mathscr{H}_R(\tau)
 :=\int\rho_R^2\left[\tfrac12|V_\tau|^2
 -\tfrac12|\nabla_YV|^2-\tfrac12\mathcal V V^2+F(V)\right].
$$
Testing the equation by $\rho_R^2V_\tau$ gives
\begin{equation}\label{E:common-cylinder-truncated-Hamiltonian}
 \mathscr{H}_R'(\tau)
 =2\int\rho_RV_\tau\nabla_YV\cdot\nabla\rho_R.
\end{equation}
The error is supported in $A_R=B_{2R}^k\setminus B_R^k$, so
\begin{equation}\label{E:common-cylinder-cutoff-error}
 |\mathscr{H}_R'(\tau)|\le\frac CR\int_{A_R}\Gamma^2.
\end{equation}
Integrating over a fixed interval $[\tau_1,\tau_2]$ and letting $R\to\infty$
shows that $\mathscr{H}(\tau_2)=\mathscr{H}(\tau_1)$; the passage
$\mathscr{H}_R\to\mathscr{H}$ follows from
\eqref{E:common-cylinder-envelope}--\eqref{E:common-cylinder-integrability}.
Finally, the assumed $C^1_{\rm loc}$ convergence of the translates
$V(Y,\tau+s)$ includes
$$V_\tau(Y,\tau)\to0,\qquad (V,\nabla_YV)(Y,\tau)\to(V_\pm,\nabla_YV_\pm)(Y),$$
since $V_\pm$ is independent of $s$.  Dominated convergence, with the
majorants $\Gamma^2$, $|\mathcal{V}|\Gamma^2$, and $\sup_{0\le s\le\Gamma}|F(s)|$, gives
$$\|V_\tau(\cdot,\tau)\|_2\longrightarrow0,\qquad J(V(\cdot,\tau))\longrightarrow J(V_\pm)
 \quad\hbox{as }\tau\to\pm\infty.$$
Constancy of $\mathscr{H}$ forces $J(V_-)=J(V_+)=0$, contradicting $J(V_-)>0$.
\end{proof}

\section{Uniform localization and compactness on frequency strips}\label{sec:priori-estimates}

\subsection{Uniform amplitude bounds}
\begin{proposition}[Uniform $L^\infty$ estimate]\label{prop:L-infty}
Under assumptions \ref{gcon1} and \ref{gcon2}, for any $M>-d$, there exists
$C_M>0$ such that every centered positive solution $u\in\mathcal{K}$ of
\eqref{E:fixed-lambda} with $\lambda\in(-d,M]$ satisfies
$$\|u\|_\infty=u(0,0)\le C_M.$$
\end{proposition}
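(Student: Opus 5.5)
The bound is proved by contradiction via a Gidas--Spruck type blow-up, using the growth at infinity from \ref{gcon2}. The identity $\|u\|_\infty=u(0,0)$ is immediate. By Lemma~\ref{L:individual-decay} and Theorem~A, a centered positive solution is radially nonincreasing in $|y|$ and in $|z|$, so its maximum is attained at the origin.

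\emph{Blow-up setup.} Suppose there are $(\lambda_n,u_n)$ with $\lambda_n\in(-d,M]$, $u_n\in\mathcal K$ centered positive solutions, and $M_n:=u_n(0,0)\to\infty$. Set $\varepsilon_n:=M_n^{-(\beta-2)/2}\to0$ and
$$
 w_n(x):=M_n^{-1}u_n(\varepsilon_n x).
$$
Then $0\le w_n\le w_n(0)=1$ and
$$
 -\Delta w_n+\varepsilon_n^2\bigl(\varepsilon_n^2|y|^2+\lambda_n\bigr)w_n
 =F^{(\infty)}_{M_n}(w_n),
$$
where $F^{(\infty)}_{M_n}(s)=g(M_ns)/M_n^{\beta-1}$.

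\emph{Passing to the limit.} On any ball $B_L$, the potential term $\varepsilon_n^2(\varepsilon_n^2|y|^2+\lambda_n)$ is $O(\varepsilon_n^2)$, since $\lambda_n$ is bounded ($\lambda_n>-d$, $\lambda_n\le M$). By Lemma~\ref{lemma:20260618-1129}(ii), $F^{(\infty)}_{M_n}\to\mu_\infty s^{\beta-1}$ uniformly on $[0,1]$, and its derivatives stay bounded there. Hence the right-hand sides are uniformly bounded. Interior $W^{2,p}$ estimates, Sobolev embedding and Schauder estimates give $C^{1,\gamma}_{\rm loc}$ bounds, and then $C^2_{\rm loc}$ bounds. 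A subsequence therefore converges in $C^2_{\rm loc}(\mathbb R^N)$ to some $w$ with
$$
 -\Delta w=\mu_\infty w^{\beta-1},\qquad 0\le w\le1=w(0).
$$
The sign of $\lambda_n$ is irrelevant here, because its contribution vanishes with $\varepsilon_n^2$. The limit $w$ is a bounded nonnegative entire solution with $w(0)=1$, so $w>0$ by the strong maximum principle.

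\emph{Liouville step.} Since $\beta<2_N^*$, the Gidas--Spruck Liouville theorem forces $w\equiv0$ when $N\ge3$, a contradiction. For $N=1,2$, a positive superharmonic function on $\mathbb R^N$ is constant, so $w\equiv1$. This contradicts $-\Delta w=\mu_\infty>0$.

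\emph{Expected obstacle.} The main difficulty is the Liouville step. For $N\ge3$ the Gidas--Spruck theorem for the full subcritical range is a deep quoted result rather than something to reprove. If one prefers to avoid it, one can exploit the symmetry instead: $w_n$ is radially nonincreasing in $|y|$ and in $|z|$, so the limit $w$ inherits this monotonicity. Using the monotonicity together with an integral identity, or a moving-plane argument, would still ultimately rely on the same Liouville input. I would therefore simply cite Gidas--Spruck and check that every hypothesis (boundedness, $C^2$, positivity, subcriticality) is met by the limit.
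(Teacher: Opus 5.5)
Your proof is correct and follows essentially the same route as the paper. You blow up at the maximum point with scale $M_n^{-(\beta-2)/2}$, pass to the limit $-\Delta w=\mu_\infty w^{\beta-1}$ using Lemma~\ref{lemma:20260618-1129}(ii) and interior $W^{2,p}$/Schauder estimates, and exclude the limit by Gidas--Spruck for $N\ge3$. The only minor difference is the case $N=1,2$, where you argue directly that positive superharmonic functions on $\mathbb{R}^N$ are constant (giving the contradiction $w\equiv1$ with $-\Delta w=\mu_\infty>0$) instead of citing Quittner--Souplet; this is a valid self-contained substitute.
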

\begin{proof}
Suppose, for contradiction, that there are $\lambda_n\in(-d,M]$ and positive
solutions $u_n\in\mathcal K$ such that
$M_n:=\|u_n\|_\infty\to+\infty$.  By \textbf{Theorem~A} and our centering convention, each
$u_n$ is nonincreasing in $|y|$ and $|z|$; hence its maximum is attained at the
origin.  Set
$$r_n:=M_n^{-\frac{\beta-2}{2}}, \quad v_n(\xi):=M_n^{-1} u_n\left(r_n \xi\right), \quad \xi=(Y, Z) \in \mathbb{R}^d\times \mathbb{R}^m .$$
Then  $0 \leq v_n \leq 1, v_n(0)=1$ and
\begin{equation}\label{eq:20260616-1316}
-\Delta v_n=-r_n^4|Y|^2 v_n-r_n^2 \lambda_n v_n+\frac{g\left(M_n v_n\right)}{M_n^{\beta-1}}~\hbox{in}~\mathbb{R}^N.
\end{equation}
Under the assumptions \ref{gcon1} and \ref{gcon2}, the right-hand side of \eqref{eq:20260616-1316} is locally bounded in $\mathbb{R}^N$.  Moreover, Lemma \ref{lemma:20260618-1129} gives
$$\frac{g(M_ns)}{M_n^{\beta-1}}=F_{M_n}^{(\infty)}(s)\to \mu_\infty s^{\beta-1}\qquad\hbox{uniformly for }s\in[0,1].$$
Since $r_n^2\lambda_n\to0$ and $r_n^4|Y|^2\to0$ locally uniformly,
interior $W^{2,p}$ estimates first give, for $p>N$, compactness in
$C^{1,\gamma}_{\rm loc}$.  In addition,
Lemma~\ref{lemma:20260618-1129} gives uniform convergence of both $F_{M_n}^{(\infty)}$
and $\big(F_{M_n}^{(\infty)}\big)'$ on $[0,1]$.  Hence the nonlinear right-hand sides converge in
$C^{0,\gamma'}_{\rm loc}$ for every $\gamma'<\gamma$, and the interior Schauder
estimate upgrades the convergence, after extraction, to
$v_n\to v$ in $C^{2,\gamma'}_{\rm loc}(\mathbb{R}^N)$.  Here $0\le v\le1$, $v(0)=1$, and
\begin{equation}\label{eq:20260616-1323}
-\Delta v=\mu_\infty v^{\beta-1} \quad \hbox{in}~\mathbb{R}^N.
\end{equation}
No such limit exists in the stated exponent range: for $N=1,2$ this follows
from \cite[Theorem~8.4]{QuittnerSouplet}, while for $N\ge3$ it follows from the
Gidas--Spruck theorem \cite[Theorem~1.1]{GidasSpruck}, because
$1<\beta-1<2_N^*-1$.  This contradiction proves the asserted estimate.
\end{proof}

\subsection{Gaussian localization in the confined variables}
For any $-d<\Lambda_1\leq \Lambda_2<+\infty$, define the solution set
\begin{equation}\label{eq:20260616-1333}
\mathcal{U}_{\Lambda_1}^{\Lambda_2}:=\{u:(\lambda,u)\in\cS,
        \ \lambda\in[\Lambda_1,\Lambda_2]\}.
\end{equation}
For convenience, we also adopt the notation $\mathcal{U}_{-d}^{\Lambda_2}$
for the family corresponding to $\lambda\in(-d,\Lambda_2]$.

By Proposition \ref{prop:L-infty}, for
$-d\leq\Lambda_1\leq\Lambda_2<+\infty$ with $\Lambda_2>-d$, there exists
$C_{\Lambda_2}>0$ such that
\begin{equation}\label{eq:20260616-1352}
\max_{x\in \mathbb{R}^N}u(x)\leq C_{\Lambda_2},\quad \forall u\in \mathcal{U}_{\Lambda_1}^{\Lambda_2}.
\end{equation}
Under assumptions \ref{gcon1} and \ref{gcon2}, the quotient $g(s)/s$ extends
continuously to $s=0$ with value $0$.  Hence the same uniform $L^\infty$ bound gives
\begin{equation}\label{eq:20260616-1354}
K_{\Lambda_2}:=
\sup_{u\in \mathcal{U}_{-d}^{\Lambda_2}}
\sup_{x\in \mathbb{R}^N}\frac{g(u(x))}{u(x)}<+\infty,
\end{equation}
where the quotient is understood as $0$ at points at which $u=0$.  In particular,
$K_{\Lambda_2}$ depends on the upper frequency bound $\Lambda_2$, but not on a positive
distance of $\lambda$ from $-d$.

\begin{proposition}[Uniform sub-Gaussian decay in the confined variables]
\label{prop:subGaussian-y-decay}
Let $-d<\Lambda_2<+\infty$ and define $\mathcal{U}_{-d}^{\Lambda_2}$ as above. Under the assumptions \ref{gcon1} and \ref{gcon2},
for every $\theta\in(0,1)$, there exists a constant $C_\theta>0$, depending only on
$\theta, \Lambda_2$ and $d$, such that
$$
u(y,z)\le C_\theta e^{-\frac{\theta}{2}|y|^2},
\qquad (y,z)\in\mathbb{R}^d\times\mathbb{R}^m,\quad u\in \mathcal{U}_{-d}^{\Lambda_2} .
$$
Equivalently,
$$\sup_{u\in \mathcal{U}_{-d}^{\Lambda_2}}\sup_{(y,z)\in\mathbb{R}^N}
e^{\frac{\theta}{2}|y|^2}u(y,z)<+\infty
\qquad\hbox{for every }\theta\in(0,1).$$
In particular, for every fixed $R>0$,
$$u(y,z)\leq u(y,0)\leq C_\theta e^{-\frac{\theta}{2}|y|^2}
\qquad\hbox{uniformly for } ~u\in \mathcal{U}_{-d}^{\Lambda_2}~\hbox{and}~ |z|=R .$$
\end{proposition}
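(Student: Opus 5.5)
The plan is to reduce the statement to Lemma~\ref{L:common-exterior-barrier}(i), with the uniform amplitude bound of Proposition~\ref{prop:L-infty} supplying the constant $M$. Fix $\theta\in(0,1)$ and $u\in\mathcal U_{-d}^{\Lambda_2}$ with frequency $\lambda\in(-d,\Lambda_2]$. By \eqref{eq:20260616-1352}, $0<u\le M:=C_{\Lambda_2}$, where $M$ depends only on $\Lambda_2$ (and $d,N,g$). By \eqref{eq:20260616-1354}, $g(u)\le K u$ pointwise with $K:=K_{\Lambda_2}$. Hence $u$ satisfies
$$
 (H+\lambda-K)u=g(u)-Ku\le0\qquad\hbox{in }\mathbb R^N,
$$
classically, and so in $\mathcal D'(\mathbb R^N)$. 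By Lemma~\ref{L:individual-decay}, $u$ is a bounded nonnegative subsolution with the local $H^1$ regularity required. The parameters $\lambda\in[-d,\Lambda_2]$ and $K\in[0,K_{\Lambda_2}]$ range over fixed bounded sets. This boundedness, not coercivity, is the only input Lemma~\ref{L:common-exterior-barrier}(i) needs. There the radius $R_y$ is chosen so that $|y|^2+\lambda-K\ge1$ and $\theta d+\lambda-K+(1-\theta^2)|y|^2\ge0$ for $|y|\ge R_y$, and this choice is uniform over the bounded parameter sets.

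Applying (i) then gives
$$
 u(y,z)\le e^{\theta R_y^2/2}\,M\,e^{-\theta|y|^2/2},
$$
so $C_\theta:=e^{\theta R_y^2/2}C_{\Lambda_2}$ depends only on $\theta,\Lambda_2,d$ (through $K_{\Lambda_2}$ and $C_{\Lambda_2}$). The equivalent supremum formulation is a restatement of this bound. For the last assertion, use the centering: $u\in\mathcal K$ is radially nonincreasing in $|z|$ by Theorem~A, so $u(y,z)\le u(y,0)$. Combined with the Gaussian bound at $z=0$, this holds uniformly on $|z|=R$.

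The step to handle with most care is the uniformity as $\lambda\downarrow-d$, since the global coercivity of $H+\lambda$ degenerates there (see the spectral-edge caveat). The argument avoids this degeneracy in two ways. First, it works only on the exterior region $\{|y|>R_y\}$, where the potential $W=|y|^2+\lambda-K\ge1$ is bounded below. Second, it uses the bounded-subsolution version of the comparison, which relies on the $\cosh$-barrier truncation and requires no global form-domain membership. One more point needs checking: Proposition~\ref{prop:L-infty} and \eqref{eq:20260616-1354} are stated for $\lambda\in(-d,M]$, including frequencies near $-d$, so the constants $C_{\Lambda_2}$ and $K_{\Lambda_2}$ are indeed independent of $\lambda+d$.
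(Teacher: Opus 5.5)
Your proposal is correct and matches the paper's proof: the uniform amplitude bound $C_{\Lambda_2}$ and the quotient bound $K_{\Lambda_2}$ give $(H+\lambda-K_{\Lambda_2})u\le0$. You then apply Lemma~\ref{L:common-exterior-barrier}(i), whose radius and constant are uniform over the bounded parameter range, including $\lambda$ near $-d$. Your treatment of the final assertion, via monotonicity in $|z|$ from Theorem~A, makes explicit a step the paper leaves implicit.
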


\begin{proof}
For $u\in\mathcal U_{-d}^{\Lambda_2}$, the uniform amplitude bound and
\eqref{eq:20260616-1354} give
$$
 (H+\lambda-K_{\Lambda_2})u\le0.
$$
Apply Lemma~\ref{L:common-exterior-barrier}(i), with
$M=C_{\Lambda_2}$.  Its comparison radius and constant are uniform for
$\lambda\in(-d,\Lambda_2]$, and \eqref{E:common-Gaussian-conclusion} gives the
asserted estimate on all of $\mathbb{R}^N$.
\end{proof}

\subsection{Uniform tails and exponential localization in the free variables}
We first prove uniform decay of the family $\mathcal U_{\Lambda_1}^{\Lambda_2}$ as $|z|\to+\infty$, where $-d<\Lambda_1\le\Lambda_2<+\infty$.  Recall that $\mathcal U_{\Lambda_1}^{\Lambda_2}\subset\mathcal U_{-d}^{\Lambda_2}$.
\begin{proposition}[Uniform decay to zero]
\label{prop:uniform-z-tail}
For any $-d<\Lambda_1\leq \Lambda_2<+\infty$, under the assumptions
\ref{gcon1}, \ref{gcon2}, and \ref{gcon4}, we have
\begin{equation}\label{eq:20260616-1939}
\lim_{R\rightarrow+\infty}
\sup_{u\in\mathcal{U}_{\Lambda_1}^{\Lambda_2}}\sup_{\substack{y\in\mathbb{R}^d\\ |z|\ge R}}
u(y,z)=\lim_{R\rightarrow+\infty}
\sup_{\substack{u\in\mathcal{U}_{\Lambda_1}^{\Lambda_2}\\ |z|=R}}u(0,z)=0.
\end{equation}
\end{proposition}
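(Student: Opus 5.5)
The plan is a contradiction argument using translation in $z$ and the monotone-cylinder Hamiltonian exclusion, Lemma~\ref{L:common-cylinder-Hamiltonian}.

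\emph{Reduction and setup.} The two limits in \eqref{eq:20260616-1939} coincide: by Theorem~A and the centering, each $u$ is nonincreasing in $|y|$ and $|z|$, so $\sup_{|z|\ge R}u(y,z)=u(0,z)|_{|z|=R}$. Suppose the second limit is not zero. Then there are $\delta>0$, $\lambda_n\in[\Lambda_1,\Lambda_2]$, $u_n\in\mathcal U_{\Lambda_1}^{\Lambda_2}$ and $R_n\to\infty$ with $u_n(0,z_n)\ge\delta$ for some $|z_n|=R_n$. By monotonicity in $|z|$, $u_n(0,z)\ge\delta$ on the whole ball $|z|\le R_n$. Pass to $\lambda_n\to\lambda\in[\Lambda_1,\Lambda_2]$.

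\emph{Local limit and profile.} Write $z=r\omega$ and translate in the radial variable: $V_n(y,\tau):=u_n(y,(R_n+\tau)e_1)$ for $m=1$, and similarly along a ray for $m\ge2$. By Proposition~\ref{prop:L-infty} and interior $W^{2,p}$/Schauder estimates, $V_n$ converges in $C^2_{\rm loc}$ to a limit $V$. When $m\ge2$, the angular part of the Laplacian, $(m-1)r^{-1}\partial_r$ and $r^{-2}\Delta_\omega$, disappears as $R_n\to\infty$. The limit $V=V(y,\tau)$ is block-radial in $y$ and solves
\[
-\Delta_yV-V_{\tau\tau}+(|y|^2+\lambda)V=g(V)\quad\hbox{in }\mathbb R^d\times\mathbb R,
\]
with $k=d$ and $\mathcal V=|y|^2+\lambda$. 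It has the following properties.
\begin{itemize}
\item $V_\tau\le0$ and $V(0,0)\ge\delta$.
\item By Proposition~\ref{prop:subGaussian-y-decay} together with interior gradient estimates, $V$ is bounded by a Gaussian envelope $\Gamma(y)=Ce^{-\theta|y|^2/2}$ controlling $V,\nabla_yV,V_\tau$. This gives \eqref{E:common-cylinder-envelope}--\eqref{E:common-cylinder-integrability}.
\item The endpoint limits $V_\pm(y)=\lim_{\tau\to\pm\infty}V(y,\tau)$ exist by monotonicity. By the same compactness, the translates converge in $C^1_{\rm loc}$, and $V_\pm$ are bounded solutions of $(H_y+\lambda)w=g(w)$ in $\mathbb R^d$.
\item $V_-\ge V(\cdot,0)$, so $V_-\not\equiv0$.
\end{itemize}

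\emph{Identifying $V_+$.} I expect this to be the main obstacle. I would try to show $V_+\equiv0$ from the fact that $u_n\in L^2$ with mass uniformly bounded, which I would get from a Pohozaev/energy bound on the strip. If $V_+\not\equiv0$, then $u_n\ge c$ on a fixed $y$-ball times the shell $R_n\le|z|\le R_n+L$ for every $L$. For $m\ge2$ this shell has volume $\sim R_n^{m-1}L$. For $m=1$, monotonicity gives the bound on all of $|z|\le R_n+L$. In either case $\|u_n\|_2\to\infty$.

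This requires a uniform $X$-bound on the strip. I would obtain it from the Nehari identity combined with the Pohozaev identity and \ref{gcon4}, but I am not sure that closes. If it fails, I would instead replace $V$ by its limit as $\tau\to+\infty$ through a second translation, taking the minimal nonzero plateau. Alternatively, I would pick the first radius where $u_n(0,\cdot)$ drops to $\delta/2$ and translate there; then $V_+(0)\le\delta/2$ forces $V_+$ to be the smaller plateau, and I would iterate.

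\emph{Energy sign and conclusion.} For a nonzero solution $w=V_-$ of $(H_y+\lambda)w=g(w)$, test the equation by $w$ to get $\int(|\nabla w|^2+(|y|^2+\lambda)w^2)=\int g(w)w$. Hence
\[
J(w)=\int\Bigl(\tfrac12g(w)w-G(w)\Bigr)\,\mathrm dy>0
\]
by \ref{gcon4}. Lemma~\ref{L:common-cylinder-Hamiltonian} then excludes $V$, which is the desired contradiction.
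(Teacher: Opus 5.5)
\textbf{Gap: the step $V_+\equiv0$, which you flag yourself.} Neither of your routes proves it.

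Your primary route needs a uniform $L^2$ or $X$ bound on the strip. That bound is not available here, and using it is circular: the uniform bounds of Corollary~\ref{cor:20260616-2057} come from the exponential decay in Proposition~\ref{prop:uniform-z-exponential-decay}, which is itself derived from the present proposition. Nehari and Pohozaev identities cannot replace it. Combining the $z$-Pohozaev identity with Nehari gives $\|\nabla_zu\|_2^2=m\int(\frac12g(u)u-G(u))$, which is essentially the integrated Hamiltonian identity. Together with \ref{gcon4} this yields only relative statements such as $\|u\|_X^2\lesssim\|\nabla_zu\|_2^2$, never an absolute bound. Note also that a uniform mass bound would make the proposition trivial: monotonicity and the Harnack inequality give $u_n\ge c\delta$ on $B_\rho^d\times B_{R_n}^m$, whose measure tends to infinity. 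So your ``obstacle'' is the entire content of the statement.

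Your fallbacks do not close either. Translating where $u_n(0,\cdot)$ drops to $\delta/2$ gives $V_+(0)\le\delta/2$, but nothing forces $V_+=0$, so the iteration need not terminate. For two nonzero plateaus, the Hamiltonian conservation behind Lemma~\ref{L:common-cylinder-Hamiltonian} only yields $J(V_-)=J(V_+)$, which is no contradiction.

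\textbf{Missing idea: small-amplitude nonexistence for the cross-sectional equation, used to choose the translation point.}
\begin{itemize}
\item Put $\mu_1=d+\Lambda_1$. Choose $0<\varepsilon<\delta$ with $g(s)\le\frac{\mu_1}{2}s$ on $[0,\varepsilon]$; this is possible since $g(s)=o(s)$.
\item Each $u_n\in C_0$ by Lemma~\ref{L:individual-decay}, and $u_n(0,\cdot)$ is radially nonincreasing. Hence there is $r_n>R_n$ with $u_n(0,r_n)=\varepsilon$ and $u_n(0,z)\le\varepsilon$ for $|z|\ge r_n$.
\item Translate at $r_n$ rather than at $R_n$. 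Then $V(0,0)=\varepsilon$ and $0\le V\le\varepsilon$ for $\tau\ge0$. So $0\le V_+\le\varepsilon$, and $V_+\in X_y$ by the Gaussian bound.
\item Test $(H_y+\lambda)V_+=g(V_+)$ by $V_+$ and use $\inf\sigma(H_y)=d$. This gives $\mu_1\|V_+\|_2^2\le\int g(V_+)V_+\le\frac{\mu_1}{2}\|V_+\|_2^2$, hence $V_+\equiv0$, while $V_-(0)\ge\varepsilon$.
\end{itemize}
With this one change, the rest of your argument is exactly the paper's proof: the envelope, $J(V_-)>0$ from \ref{gcon4}, and Lemma~\ref{L:common-cylinder-Hamiltonian}. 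No mass bound is required.
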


\begin{proof}
Write $u(y,z)=U(|y|,|z|)$.  \textbf{Theorem~A} gives
$$\sup_{y\in\mathbb{R}^d,\,|z|\geq R}u(y,z)=U(0,R),$$
so it suffices to control $U(0,R)$.  If the assertion failed, there would
exist $u_n\in\mathcal U_{\Lambda_1}^{\Lambda_2}$, with parameters
$\lambda_n\to\lambda$, and $R_n\to\infty$ such that
$U_n(0,R_n)\ge\varepsilon_0>0$.  Put $\mu_1=d+\Lambda_1$ and choose
$0<\varepsilon<\varepsilon_0$ so that
$$
 g(s)\le\frac{\mu_1}{2}s\qquad(0\le s\le\varepsilon).
$$
By Lemma~\ref{L:individual-decay}, there is a first $r_n\ge R_n$ with
$$
 U_n(0,r_n)=\varepsilon,\qquad
 U_n(0,t)\le\varepsilon\quad(t\ge r_n).
$$
Define $V_n(y,\tau)=U_n(|y|,r_n+\tau)$.  Then
$$
 -\Delta_yV_n-V_{n,\tau\tau}
 -\frac{m-1}{r_n+\tau}V_{n,\tau}
 +|y|^2V_n+\lambda_nV_n=g(V_n).
$$
The uniform amplitude bound and interior estimates give, after extraction,
$V_n\to V$ in $C^2_{\rm loc}$, where
\begin{equation}\label{eq:20260616-1642}
 -\Delta_yV-V_{\tau\tau}+|y|^2V+\lambda V=g(V).
\end{equation}
Moreover,
$$
 V(0,0)=\varepsilon,\qquad V_\tau\le0,\qquad
 0\le V(y,\tau)\le\varepsilon\quad(\tau\ge0),
$$
and Proposition~\ref{prop:subGaussian-y-decay} gives
\begin{equation}\label{eq:20260616-1644}
 0\le V(y,\tau)\le C_\theta e^{-\theta|y|^2/2}.
\end{equation}

The endpoint limits $V_\pm(y)=\lim_{\tau\to\pm\infty}V(y,\tau)$ exist.
Translation in $\tau$, followed by local elliptic compactness, shows that
both belong to $X_y$ and solve
$$-\Delta_yV_\pm+|y|^2V_\pm+\lambda V_\pm=g(V_\pm)~\hbox{in}~\mathbb{R}^d. $$
Since $0\leq V_+\leq \varepsilon$, testing this equation and using
$\inf\sigma(H_y)=d$ gives
$$\mu_1\|V_+\|_2^2
 \leq\int_{\mathbb{R}^d} g(V_+)V_+ \mathrm{d}y
 \leq\frac{\mu_1}{2}\|V_+\|_2^2,
$$
hence $V_+\equiv 0$.

Set
$$J_\lambda(w):=\frac{1}{2}\int_{\mathbb{R}^d}
 (|\nabla_yw|^2+|y|^2w^2+\lambda w^2)\mathrm{d}y
 -\int_{\mathbb{R}^d}G(w)\mathrm{d}y.$$
The Gaussian bound and the interior gradient estimate give, for
$0<\theta_0<\theta$,
$$
 |\nabla_{y,\tau}V(y,\tau)|
 \le C(1+|y|^2)e^{-\theta_0|y|^2/2},
$$
Because $g'(0)=0$ and $V$ is uniformly bounded, $|G(V)|\le CV^2$.
Thus one may take
$\Gamma(y)=C(1+|y|^2)e^{-\theta_0|y|^2/2}$ in
\eqref{E:common-cylinder-envelope}; then
$\bigl||y|^2+\lambda\bigr|\Gamma^2$ and
$y\mapsto\sup_{0\le s\le\Gamma(y)}|G(s)|$ are integrable.  Hence the precise
hypotheses of Lemma~\ref{L:common-cylinder-Hamiltonian} hold with
$\mathcal V(y)=|y|^2+\lambda$ and $f=g$.  Since
$V_-(0)\ge\varepsilon$, the stationary equation and
\eqref{E:positive-Hamiltonian-density} yield
$$
 J_\lambda(V_-)=\int_{\mathbb{R}^d}
 \left(\frac{1}{2}g(V_-)V_--G(V_-)\right)>0.
$$
This contradicts Lemma~\ref{L:common-cylinder-Hamiltonian} and proves
\eqref{eq:20260616-1939}.
\end{proof}

Proposition~\ref{prop:uniform-z-tail} yields uniform exponential decay.
\begin{proposition}[Uniform exponential decay in the free variables]
\label{prop:uniform-z-exponential-decay}
Suppose that the assumptions of Proposition~\ref{prop:uniform-z-tail} hold.  Let
$\mu:=d+\Lambda_1>0$ and choose $\theta\in(0,1)$ with
$\Lambda_1+\theta d>0$.  For every
$\sigma\in(0,\sqrt{\Lambda_1+\theta d})$, there exist $R_0,C>0$ such that
$$u(y,z)\leq C e^{-\frac{\theta}{2}|y|^2}e^{-\sigma(|z|-R_0)}
\qquad\hbox{for}~ |z|\geq R_0,\quad u\in\mathcal{U}_{\Lambda_1}^{\Lambda_2}.
$$
 In particular,
\begin{equation}\label{eq:20260616-1731-global}
u(y,z)\le C e^{-\frac{\theta}{2}|y|^2}e^{-\sigma |z|}
\qquad\hbox{for all}~(y,z)\in\mathbb{R}^N,\quad u\in \mathcal{U}_{\Lambda_1}^{\Lambda_2},
\end{equation}
after changing the constant $C>0$.
\end{proposition}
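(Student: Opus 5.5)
The plan is to combine the uniform tail smallness of Proposition~\ref{prop:uniform-z-tail} with the exterior barrier of Lemma~\ref{L:common-exterior-barrier}(ii).

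First, fix $\theta$ and $\sigma$ as in the statement and set $\eta:=\Lambda_1+\theta d-\sigma^2>0$. For this $\eta$ we have $\delta_\eta=\lambda+\theta d-\eta-\sigma^2\ge \Lambda_1+\theta d-\eta-\sigma^2=0$. That is only borderline, so instead choose $\eta:=\tfrac12(\Lambda_1+\theta d-\sigma^2)>0$, which gives $\delta_\eta\ge\eta>0$ for every $\lambda\in[\Lambda_1,\Lambda_2]$.

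Next, since $g(s)/s\to0$ as $s\to0^+$ by \ref{gcon1}--\ref{gcon2}, pick $\varepsilon>0$ with $g(s)\le\eta s$ for $0\le s\le\varepsilon$. By Proposition~\ref{prop:uniform-z-tail}, there is $R_0$ such that $u(y,z)\le\varepsilon$ for all $|z|\ge R_0$, all $y$, and all $u\in\mathcal U_{\Lambda_1}^{\Lambda_2}$. On $\Omega_{R_0}=\mathbb{R}^d\times\{|z|>R_0\}$ every such $u$ then satisfies, classically and hence distributionally,
$$(H+\lambda-\eta)u=g(u)-\eta u\le0 .$$

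For the boundary data, Proposition~\ref{prop:subGaussian-y-decay} gives $u(y,z)\le C_\theta e^{-\theta|y|^2/2}$ uniformly on $|z|=R_0$, with $C_\theta$ depending only on $\theta,\Lambda_2,d$. Since $u\in X$ and $u\ge0$, Lemma~\ref{L:common-exterior-barrier}(ii) applies with $\tau=\sigma$ and $B=C_\theta$. It yields $u\le C_\theta e^{-\theta|y|^2/2}e^{-\sigma(|z|-R_0)}$ on $\Omega_{R_0}$, which is the first assertion. The constants $R_0$ and $C=C_\theta$ are uniform in $u$ and $\lambda$, because $\varepsilon$, $\eta$ and $R_0$ depend only on $\Lambda_1,\Lambda_2,\theta,\sigma$.

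For the global bound \eqref{eq:20260616-1731-global}, on $|z|\le R_0$ use Proposition~\ref{prop:subGaussian-y-decay} again:
$$u\le C_\theta e^{-\theta|y|^2/2}\le C_\theta e^{\sigma R_0}e^{-\theta|y|^2/2}e^{-\sigma|z|}.$$
On $|z|\ge R_0$, the bound from the previous step is already of this form with constant $C_\theta e^{\sigma R_0}$.

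The only delicate point is matching the decay rate to the full range $\sigma<\sqrt{\Lambda_1+\theta d}$. This requires taking $\eta$ strictly between $0$ and $\Lambda_1+\theta d-\sigma^2$, which in turn makes the smallness threshold $\varepsilon$ depend on $\sigma$. That dependence is harmless, because Proposition~\ref{prop:uniform-z-tail} supplies uniform smallness for every $\varepsilon>0$. The form-domain condition in Lemma~\ref{L:common-exterior-barrier}(ii) is already handled inside that lemma for $v\in X$, so no extra truncation is needed.
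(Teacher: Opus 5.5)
Your proposal is correct and follows the paper's own proof. The steps match: choose $\eta>0$ with $\Lambda_1+\theta d-\eta-\sigma^2>0$; use Proposition~\ref{prop:uniform-z-tail} and $g(s)=o(s)$ to obtain $(H+\lambda-\eta)u\le0$ on $\{|z|>R_0\}$ uniformly over the strip; take the Gaussian boundary bound from Proposition~\ref{prop:subGaussian-y-decay}; apply Lemma~\ref{L:common-exterior-barrier}(ii) with $\tau=\sigma$; and enlarge the constant on $|z|\le R_0$. The paper also imposes $d+\Lambda_1-\eta>0$, which holds automatically for your choice $\eta=\frac12(\Lambda_1+\theta d-\sigma^2)$, since the coercivity constant in that lemma is $\delta_\eta+(1-\theta)d+\sigma^2>0$.
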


\begin{proof}
Choose $\eta>0$ so that
$$
 \Lambda_1+\theta d-\eta-\sigma^2>0,
 \qquad d+\Lambda_1-\eta>0.
$$
Since $g(s)=o(s)$ at zero, Proposition~\ref{prop:uniform-z-tail} gives
$R_0>0$, uniformly over the strip, such that
$$
 (H+\lambda-\eta)u\le0\qquad\hbox{on }\{|z|>R_0\}.
$$
Proposition~\ref{prop:subGaussian-y-decay} supplies the Gaussian boundary
bound on $|z|=R_0$.  Lemma~\ref{L:common-exterior-barrier}(ii), applied with
the radial barrier $e^{-\theta|y|^2/2-\sigma(|z|-R_0)}$, now gives the
exterior estimate.  Increasing the constant on $|z|\le R_0$ proves
\eqref{eq:20260616-1731-global}.
\end{proof}

\subsection{The unified weighted fixed-point setting}
\label{sec:fixed-point-framework}
The product decay estimate places every centered positive solution
on a compact frequency strip in the weighted space announced in
Section~\ref{sec:assumption-results}.  We now establish the mapping
properties needed for degree and continuation.

For $s\in\mathbb{R}$, set
\begin{equation}\label{E:positive-part-extension}
 s^+:=\max\{s,0\},\qquad g_+(s):=g(s^+).
\end{equation}
Since $g(0)=g'(0)=0$, the function $g_+$ belongs to $C^1(\mathbb{R})$.

Fix $\lambda_0>-d$.  Choose $\theta\in(0,1)$ such that
$\lambda_0+\theta d>0$, and take
$$
 0<\tau<\sqrt{\lambda_0+\theta d}.
$$
Define the Gaussian--exponentially weighted centered space
\begin{equation}\label{E:weighted-space}
\begin{aligned}
\mathcal{X}_{\theta,\tau}
&:=\left\{u\in X_r\cap C_0(\mathbb{R}^N):
 e^{\frac{\theta}{2}|y|^2+\tau|z|}u\in L^\infty(\mathbb{R}^N)\right\},\\
\|u\|_{\mathcal X_{\theta,\tau}}
&:=\|u\|_X+
 \left\|e^{\frac{\theta}{2}|y|^2+\tau|z|}u\right\|_\infty .
\end{aligned}
\end{equation}
Because $X_r$ consists of functions radial about $y=0$ and $z=0$, the centering is built into the ambient space.  The space $\mathcal X_{\theta,\tau}$ is Banach: a Cauchy sequence converges in $X$ and in the weighted supremum norm, and the two limits agree locally.
Define
\begin{equation}\label{E:positive-part-map-weighted}
 T_\lambda^+(u):=(H+\lambda)^{-1}g_+(u).
\end{equation}
Here $(H+\lambda)^{-1}:X'\to X$ is the form resolvent from
Lemma~\ref{L:form-resolvent}; for $u\in\mathcal X_{\theta,\tau}$ the source $g_+(u)$ belongs in fact to $L^2(\mathbb{R}^N)\subset X'$.

If, in addition, \ref{gcon4} holds, then the choice
\eqref{E:weighted-space} does not discard any centered positive solution on a compact frequency strip.  Indeed, Proposition \ref{prop:uniform-z-exponential-decay}, applied with
$\Lambda_1=\lambda_0$ and $\sigma=\tau$, gives, for every $\Lambda>\lambda_0$,
\begin{equation}\label{E:no-loss-weighted-space}
 \sup_{\substack{(\lambda,u)\in\cS\\
 \lambda\in[\lambda_0,\Lambda]}}
 \left\|e^{\frac{\theta}{2}|y|^2+\tau|z|}u\right\|_\infty<+\infty.
\end{equation}
Together with the uniform $X$-bound proved in Corollary \ref{cor:20260616-2057} below, this shows that the whole centered
positive solution set on $[\lambda_0,\Lambda]$ is bounded in the fixed weighted norm. By \textbf{Theorem~A} every positive finite-energy solution can be translated in the free variables to a centered one, so no positive solution is lost modulo the unavoidable translation
symmetry.

\begin{lemma}[Uniform weighted compactness for all $m\ge1$]
\label{L:weighted-compact-extension}
Assume \ref{gcon1} and \ref{gcon2}.  Let $m\ge1$, and let
$\lambda_0,\theta,\tau$ be as above.  Then $T_\lambda^+$ is a completely continuous map on bounded subsets of $\mathcal X_{\theta,\tau}$, uniformly for $\lambda$ in every compact
interval $[\lambda_0,\Lambda]$.  For each fixed $\lambda$ it is $C^1$ on $\mathcal{X}_{\theta,\tau}$, with
$$
 DT_\lambda^+(u)h=(H+\lambda)^{-1}\bigl(g_+'(u)h\bigr).
$$
Every nonzero fixed point is a centered positive solution of
\eqref{E:fixed-lambda} and hence belongs to $\mathcal{K}$.  If, in addition, \ref{gcon4} holds, then all centered positive solutions in the strip belong to $\mathcal{X}_{\theta,\tau}$ and are therefore captured by this fixed-point formulation.
\end{lemma}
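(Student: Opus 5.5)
The plan is to prove the mapping, compactness and $C^1$ properties first, and then the two identification statements.

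\emph{Mapping property.} For $u$ in a bounded set $B\subset\mathcal X_{\theta,\tau}$, write $A_u:=\|e^{\frac\theta2|y|^2+\tau|z|}u\|_\infty$; this is bounded on $B$, and $\|u\|_\infty\le A_u$. By \ref{gcon1}--\ref{gcon2}, $|g_+(s)|\le C_B|s|^{\alpha-1}\le C_B'|s|$ on $[-A,A]$. Since $\alpha>2$, this gives
$$|g_+(u)|\le C e^{-(\alpha-1)(\frac\theta2|y|^2+\tau|z|)}.$$
Lemma~\ref{L:common-exterior-barrier}(ii), estimate \eqref{E:common-weighted-resolvent-estimate}, then bounds $T^+_\lambda u$ pointwise by a constant times $\Phi_{\theta,\tau}$, uniformly for $\lambda\in[\lambda_0,\Lambda]$, since $\lambda_0+\theta d-\tau^2>0$. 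Lemma~\ref{L:form-resolvent} bounds $T^+_\lambda u$ in $X$. Radial symmetry in $y$ and in $z$ is preserved because $H+\lambda$ commutes with rotations in each block. Membership in $C_0$ follows from local elliptic regularity together with the envelope. Hence $T^+_\lambda$ maps $\mathcal X_{\theta,\tau}$ into itself.

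\emph{Compactness.} Take $u_n\in B$ and $\lambda_n\in[\lambda_0,\Lambda]$, and pass to a subsequence with $\lambda_n\to\lambda$. The sources $f_n=g_+(u_n)$ are bounded in $L^\infty$ and share the envelope above. Interior $W^{2,p}$ estimates on balls, with $p>N$, give $C^{1,\gamma}_{\rm loc}$ precompactness of $w_n:=T^+_{\lambda_n}u_n$. Passing to a further subsequence, $w_n\to w$ locally uniformly, and $f_n\to f$ locally in measure; the latter uses local compactness of bounded sets in $\mathcal X_{\theta,\tau}$, which again comes from the envelope plus the $H^1$ bound.

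The key observation is that the $w_n$ enjoy the envelope $Ce^{-\frac\theta2|y|^2-\tau\langle z\rangle}$ only with the same weights as the norm. To apply Lemma~\ref{L:envelope-global-compactness} I need a strictly stronger decay. I would obtain it by reapplying \eqref{E:common-weighted-resolvent-estimate} with slightly larger parameters $\theta_1\in(\theta,1)$ and $\tau_1\in(\tau,\sqrt{\lambda_0+\theta_1 d})$. This is possible because $\lambda_0+\theta d>\tau^2$ is an open condition, and because the source decays at the rate $(\alpha-1)>1$ times the original weights, so it is controlled by the $(\theta_1,\tau_1)$ weights once these are chosen close enough. Lemma~\ref{L:envelope-global-compactness} then upgrades local uniform convergence to weighted sup convergence.

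For the $X$-norm part, I would use $f_n\to f$ in $L^2$ (dominated convergence with the common envelope) together with the resolvent continuity of Lemma~\ref{L:form-resolvent}:
$$R_{\lambda_n}f_n-R_\lambda f=(R_{\lambda_n}-R_\lambda)f_n+R_\lambda(f_n-f).$$
Continuity of $T^+_\lambda$ itself follows from the same decomposition.

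\emph{$C^1$ property.} Fix $\lambda$. The Nemytskii map $u\mapsto g_+(u)$ is $C^1$ from $\mathcal X_{\theta,\tau}$ into the weighted space $L^\infty_{w}$, with weight $e^{\frac\theta2|y|^2+\tau|z|}$, and its derivative is $h\mapsto g_+'(u)h$. To see this, use the mean value theorem together with uniform continuity of $g_+'$ on $[-A,A]$; note that $|g_+'(u)|$ is bounded and $h$ carries the weight. Since $R_\lambda$ is bounded from $L^\infty_w$ into $\mathcal X_{\theta,\tau}$ by the first step, the chain rule gives the stated formula for $DT_\lambda^+(u)$.

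\emph{Fixed points and solutions.} If $u=T^+_\lambda u\neq0$, then $u$ solves $(H+\lambda)u=g_+(u)\ge0$ weakly. Testing with $u^-$ and using coercivity gives $u^-=0$. The strong maximum principle then gives $u>0$, and regularity makes $u$ classical. By Theorem~A, $u$ is monotone in $|y|$ and $|z|$; being radial about the origin, it is centered, so it lies in $\mathcal K$ (in the monotone cone when $m=1$). Conversely, under \ref{gcon4}, \eqref{E:no-loss-weighted-space} shows that every centered positive solution in the strip lies in $\mathcal X_{\theta,\tau}$, and such a solution is evidently a fixed point because $g_+(u)=g(u)$.

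I expect the main obstacle to be the weight-gain step in the compactness argument, i.e. obtaining strictly stronger decay than the ambient weight. If the resolvent estimate does not absorb $\theta_1>\theta$ cleanly, the fallback is to use the Gaussian comparison of Lemma~\ref{L:common-exterior-barrier}(i) directly on $w_n$ with any $\theta_1<1$, and the $z$-barrier of part (ii) with $\tau_1$ slightly above $\tau$.
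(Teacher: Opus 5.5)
Your proposal is correct and follows essentially the paper's argument: the barrier estimate \eqref{E:common-weighted-resolvent-estimate} gives the mapping property; the $(\alpha-1)$-fold decay of $g_+(u)$ lets you gain weights $(\theta_1,\tau_1)$, so that Lemma~\ref{L:envelope-global-compactness} upgrades local uniform convergence; $L^2$ convergence of the sources plus resolvent continuity handles the $X$-part; and the negative-part test, strong maximum principle and Theorem~A identify the fixed points. The one deviation is in the $C^1$ step, where you take the same-weight space $L^\infty_w$ as the target of the Nemytskii map instead of the paper's gained space $\mathcal F_{\theta_1,\tau_1}$; this is legitimate, since the same-weight resolvent bound together with $L^\infty_w\hookrightarrow L^2$ makes $R_\lambda:L^\infty_w\to\mathcal X_{\theta,\tau}$ bounded, and it makes operator-norm continuity of $u\mapsto g_+'(u)$ immediate, so you avoid the paper's splitting into $\{A\le R\}$ and $\{A>R\}$.
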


\begin{proof}
It is convenient to separate the proof into the weighted resolvent estimate, compactness, and $C^1$-regularity.

\smallskip
\noindent\emph{Step 1: a uniform weighted resolvent estimate.}
Let $B\subset\mathcal X_{\theta,\tau}$ be bounded.  Choose
\begin{equation}\label{E:weighted-intermediate-parameters}
\begin{gathered}
 \theta<\theta_1<\min\{1,(\alpha-1)\theta\},\qquad
 \tau<\tau_1<
 \min\left\{(\alpha-1)\tau,\sqrt{\lambda_0+\theta_1d}\right\}.
\end{gathered}
\end{equation}
Set
$$
 A(y,z):=\frac{\theta}{2}|y|^2+\tau|z|,
 \qquad
 A_1(y,z):=\frac{\theta_1}{2}|y|^2+\tau_1|z|,
$$
and
$$
 \mathcal{F}_{\theta_1,\tau_1}
:=\{f\in L^2:f\text{ is block-radial},\ e^{A_1}f\in L^\infty\},\qquad
 \|f\|_{\mathcal{F}_{\theta_1,\tau_1}}
 :=\|f\|_2+\|e^{A_1}f\|_\infty .
$$
We claim that, uniformly for $\lambda\in[\lambda_0,\Lambda]$,
\begin{equation}\label{E:weighted-linear-resolvent}
 \|(H+\lambda)^{-1}f\|_{\mathcal X_{\theta,\tau}}
 \le C\|f\|_{\mathcal{F}_{\theta_1,\tau_1}}.
\end{equation}
Indeed, Lemma~\ref{L:common-exterior-barrier}(ii), with
$(\theta,\tau)$ there replaced by $(\theta_1,\tau_1)$, and Kato's
inequality give
\begin{equation}\label{E:weighted-linear-pointwise}
 |(H+\lambda)^{-1}f|
 \le C\|e^{A_1}f\|_\infty
 e^{-\theta_1|y|^2/2-\tau_1\langle z\rangle}.
\end{equation}
The form-resolvent bound of Lemma~\ref{L:form-resolvent} gives the $X$-part
of \eqref{E:weighted-linear-resolvent}; multiplying
\eqref{E:weighted-linear-pointwise} by $e^A$ gives its weighted supremum part.

\smallskip
\noindent\emph{Step 2: invariance of bounded sets and complete continuity.} Since $B$ is bounded in $\mathcal X_{\theta,\tau}$, there exists $M_B>0$ such that
$$
 |u(y,z)|\le M_Be^{-A(y,z)},\qquad u\in B.
$$
By \ref{gcon2} and the definition of $g_+$, after increasing a constant depending only on $B$,
$$
 |g_+(s)|\le C_B|s|^{\alpha-1}
 \qquad\hbox{for }|s|\le M_B.
$$
Hence \eqref{E:weighted-intermediate-parameters} implies
\begin{equation}\label{E:double-weight-source}
 0\leq g_+(u(y,z))
 \leq C_Be^{-(\alpha-1)A(y,z)}
 \leq C_Be^{-A_1(y,z)},
 \qquad u\in B.
\end{equation}
In particular, $g_+(B)$ is bounded in $\mathcal{F}_{\theta_1,\tau_1}$. Applying \eqref{E:weighted-linear-resolvent} gives
\begin{equation}\label{E:weighted-resolvent-self-map-bound}
 \sup_{\substack{u\in B\\ \lambda\in[\lambda_0,\Lambda]}}
 \|T_\lambda^+(u)\|_{\mathcal{X}_{\theta,\tau}}<+\infty.
\end{equation}
Moreover, because the source is nonnegative, positivity of the resolvent together with \eqref{E:weighted-linear-pointwise} gives the strict gain
\begin{equation}\label{E:weighted-resolvent-gain}
 0\leq T_\lambda^+(u)(y,z)
 \leq C_B
 e^{-\frac{\theta_1}{2}|y|^2-\tau_1\langle z\rangle},
 \qquad
 u\in B,\quad\lambda\in[\lambda_0,\Lambda].
\end{equation}

We now prove relative compactness.  Let $u_n\in B$ and
$\lambda_n\in[\lambda_0,\Lambda]$.  Passing to a subsequence, we may assume
$$
 \lambda_n\to\lambda,\qquad u_n\rightharpoonup u\quad\hbox{in }X,\qquad u_n\to u\quad\hbox{a.e. in }\mathbb{R}^N.
$$
The common exponential envelope implies, by dominated convergence,
\begin{equation}\label{E:weighted-input-L2-compact}
 u_n\to u\qquad\hbox{strongly in }L^q(\mathbb{R}^N)
 \quad\hbox{for every }2\le q<+\infty.
\end{equation}
All $u_n$ and $u$ take values in one bounded interval.  Since $g_+$ is $C^1$, it is Lipschitz on that interval; therefore \eqref{E:weighted-input-L2-compact} gives
$$
 g_+(u_n)\to g_+(u)\qquad\hbox{strongly in }L^2(\mathbb{R}^N),
$$
and hence in $X'$.  If
$$
 v_n:=T_{\lambda_n}^+(u_n),\qquad v:=(H+\lambda)^{-1}g_+(u),
$$
then Lemma~\ref{L:form-resolvent} yields
$$
 (H+\lambda_n)^{-1}
 \longrightarrow
 (H+\lambda)^{-1}
 \qquad\hbox{in }\mathcal L(X',X).
$$
Together with $g_+(u_n)\to g_+(u)$ in $X'$, this gives
\begin{equation}\label{E:weighted-output-X-compact}
 v_n\to v\qquad\hbox{strongly in }X.
\end{equation}
The equations for $v_n$, the uniform bound
\eqref{E:weighted-resolvent-gain}, and interior $W^{2,p}$ estimates with
$p>N$ give $v_n\to v$ locally uniformly.  The same bound supplies the
stronger common envelope with exponents $(\theta_1/2,\tau_1)$.  Hence
Lemma~\ref{L:envelope-global-compactness}, together with
\eqref{E:weighted-output-X-compact}, gives
$$
 \|e^A(v_n-v)\|_\infty\to0.
$$
Together with \eqref{E:weighted-output-X-compact}, this proves relative compactness in $\mathcal{X}_{\theta,\tau}$.  The same argument, without extracting subsequences, proves continuity with respect to both $u$ and $\lambda$ on bounded sets.  Thus $(\lambda,u)\mapsto T_\lambda^+(u)$ is completely continuous on
$[\lambda_0,\Lambda]\times\mathcal X_{\theta,\tau}$.

\smallskip
\noindent\emph{Step 3: $C^1$-regularity.}
We prove the slightly stronger statement that the Nemytskii map
$$
 \mathcal{N}(u):=g_+(u)
$$
is $C^1$ from $\mathcal{X}_{\theta,\tau}$ into
$\mathcal{F}_{\theta_1,\tau_1}$ on every bounded subset for which
\eqref{E:weighted-intermediate-parameters} is fixed.  Its candidate derivative is
$$
 D\mathcal{N}(u)h=g_+'(u)h.
$$
Indeed, by \ref{gcon2}, for each $M>0$ there is $C_M>0$ such that
\begin{equation}\label{E:gprime-local-power-bound-weighted}
 |g_+'(s)|\leq C_M|s|^{\alpha-2},
 \qquad |s|\leq M,
\end{equation}
where $g_+'(s)=0$ for $s<0$ and $g_+'(0)=0$. Thus, if $u$ ranges in a bounded subset of $\mathcal X_{\theta,\tau}$ and $\|h\|_{\mathcal X_{\theta,\tau}}\leq 1$, then
$$
 |g_+'(u)h| \leq C e^{-(\alpha-1)A},
$$
so $D\mathcal{N}(u)$ is a bounded operator from $\mathcal{X}_{\theta,\tau}$ to $\mathcal{F}_{\theta_1,\tau_1}$.

It remains to prove continuity of the derivative in operator norm.  Suppose $u_n\to u$ in $\mathcal{X}_{\theta,\tau}$ and set
$$
 \delta_n:=\|D\mathcal N(u_n)-D\mathcal N(u)\|_
 {\mathcal L(\mathcal{X}_{\theta,\tau},\mathcal{F}_{\theta_1,\tau_1})}.
$$
Because convergence in $\mathcal{X}_{\theta,\tau}$ implies uniform convergence and the
functions $u_n,u$ remain in one bounded interval, uniform continuity of $g_+'$ gives
$$
 \|g_+'(u_n)-g_+'(u)\|_\infty\to0.
$$
Consequently, for $\|h\|_{\mathcal{X}_{\theta,\tau}}\le1$,
$$
 \|(g_+'(u_n)-g_+'(u))h\|_2
 \le \|g_+'(u_n)-g_+'(u)\|_\infty\|h\|_2\to0
$$
uniformly in $h$.

For the weighted $L^\infty$ part, fix $R>0$ and split the space into
$\{A\le R\}$ and $\{A>R\}$.  On $\{A\le R\}$ the factor $e^{A_1-A}$ is bounded, and
uniform convergence of $g_+'(u_n)$ to $g_+'(u)$ gives
$$
 \sup_{\|h\|_{\mathcal{X}_{\theta,\tau}}\le1}
 \sup_{\{A\le R\}}
 e^{A_1}|(g_+'(u_n)-g_+'(u))h|\to0.
$$
On $\{A>R\}$, the common weighted bound for $u_n$ and $u$, together with
\eqref{E:gprime-local-power-bound-weighted}, gives
$$
 \begin{aligned}
 e^{A_1}|(g_+'(u_n)-g_+'(u))h|
 &\le
 C e^{A_1-(\alpha-1)A}\\
 &=
 C\exp\left(
 -\frac{(\alpha-1)\theta-\theta_1}{2}|y|^2
 -\bigl((\alpha-1)\tau-\tau_1\bigr)|z|
 \right).
 \end{aligned}
$$
By \eqref{E:weighted-intermediate-parameters}, the last expression tends uniformly to
zero as $A(y,z)\to+\infty$, independently of $n$ and $h$.  First choosing $R$ large and
then $n$ large proves $\delta_n\to0$.  Hence $D\mathcal N$ is continuous in operator
norm.  The integral remainder formula
$$
 \mathcal N(u+k)-\mathcal N(u)-D\mathcal N(u)k
 =
 \int_0^1\bigl[D\mathcal N(u+tk)-D\mathcal N(u)\bigr]k\,\mathrm dt
$$
then proves Fr\'echet differentiability and shows that
$\mathcal N\in C^1$.

Combining this with the bounded linear estimate
\eqref{E:weighted-linear-resolvent}, we conclude that, for each fixed
$\lambda\in[\lambda_0,\Lambda]$,
$$
 T_\lambda^+\in C^1(\mathcal{X}_{\theta,\tau},\mathcal{X}_{\theta,\tau}),
 \qquad
 DT_\lambda^+(u)h
 =
 (H+\lambda)^{-1}\bigl(g_+'(u)h\bigr).
$$
No sign condition on $g'$ is used here; sign-changing
linearized sources are handled through the distributional Kato inequality used in
the proof of \eqref{E:weighted-linear-resolvent}.

\smallskip
\noindent\emph{Step 4: identification of the fixed points.}
If $u=T_\lambda^+(u)$, testing the equation by $-u^-$ gives $u\ge0$.  Hence either
$u\equiv0$ or, by the strong maximum principle, $u>0$.  In the latter case
$g_+(u)=g(u)$, so $u$ is a positive solution of \eqref{E:fixed-lambda}.  Since
$u\in\mathcal{X}_{\theta,\tau}\subset X_r\cap C_0(\mathbb{R}^N)$, elliptic regularity and \textbf{Theorem~A} apply.  The function is already centered at $z=0$; a nonconstant decaying function cannot be radial about two distinct centers.  Therefore its free-variable
symmetry center is $0$, and $u\in\mathcal K$.

If \ref{gcon4} also holds, Proposition
\ref{prop:uniform-z-exponential-decay} and \eqref{E:no-loss-weighted-space} place
every centered positive solution with $\lambda\in[\lambda_0,\Lambda]$ in
$\mathcal{X}_{\theta,\tau}$, where it satisfies $u=T_\lambda^+(u)$.  Thus the
nonzero fixed points are exactly the centered positive solutions in the strip.
The proof is complete.
\end{proof}

\subsection{Compactness in the energy space}
\begin{corollary}[Uniform energy bounds]
\label{cor:20260616-2054}\label{cor:20260616-2057}
Assume \ref{gcon1}, \ref{gcon2}, and \ref{gcon4}.  For every
$-d<\Lambda_1\le\Lambda_2<+\infty$,
\begin{equation}\label{eq:20260616-2052}
 \sup_{u\in\mathcal U_{\Lambda_1}^{\Lambda_2}}\|u\|_2<+\infty,
\end{equation}
and
\begin{equation}\label{eq:20260616-2057}
 \sup_{u\in\mathcal U_{\Lambda_1}^{\Lambda_2}}\|u\|_X<+\infty.
\end{equation}
\end{corollary}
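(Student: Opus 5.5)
The $L^2$ bound \eqref{eq:20260616-2052} follows by integrating the pointwise envelope \eqref{eq:20260616-1731-global}. Fix $\theta\in(0,1)$ with $\Lambda_1+\theta d>0$ and some $\sigma\in(0,\sqrt{\Lambda_1+\theta d})$. Proposition~\ref{prop:uniform-z-exponential-decay} (which uses \ref{gcon1}, \ref{gcon2}, \ref{gcon4}) then gives
$$
0<u(y,z)\le Ce^{-\frac\theta2|y|^2-\sigma|z|}
$$
uniformly for $u\in\mathcal U_{\Lambda_1}^{\Lambda_2}$. Squaring and integrating over $\mathbb R^d\times\mathbb R^m$, we obtain
$$
\|u\|_2^2\le C^2\int e^{-\theta|y|^2-2\sigma|z|}\,\mathrm dx,
$$
which is a finite constant depending only on $\Lambda_1,\Lambda_2$. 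By the same envelope, $\int|y|^2u^2$ is bounded as well, although we shall not need this separately.

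For the $X$-bound, we test \eqref{E:fixed-lambda} with $u$ itself; this is legitimate since $u\in X$ is a weak solution. This gives
$$
\int_{\mathbb R^N}\bigl(|\nabla u|^2+|y|^2u^2+\lambda u^2\bigr)\,\mathrm dx=\int_{\mathbb R^N}g(u)u\,\mathrm dx .
$$
Next we use the uniform amplitude bound \eqref{eq:20260616-1352}, namely $0<u\le C_{\Lambda_2}$, together with \eqref{eq:20260616-1354}. These give $g(u)u\le K_{\Lambda_2}u^2$, so the right-hand side is at most $K_{\Lambda_2}\|u\|_2^2$, which is uniformly bounded by the first step.

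On the left-hand side, \eqref{E:coercive2} gives
$$
\|u\|_X^2\le \max\Bigl\{1,\tfrac{d+1}{d+\lambda}\Bigr\}\int\bigl(|\nabla u|^2+|y|^2u^2+\lambda u^2\bigr),
$$
and the prefactor is at most $\max\{1,(d+1)/(d+\Lambda_1)\}$ because $\lambda\ge\Lambda_1>-d$. Combining the two estimates yields \eqref{eq:20260616-2057}.

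There is no real obstacle here: all the work sits in Proposition~\ref{prop:uniform-z-exponential-decay}, and ultimately in the Hamiltonian exclusion of Proposition~\ref{prop:uniform-z-tail}, which is exactly where \ref{gcon4} enters. The only point to watch is that the coercivity constant stays uniform, and this requires $\Lambda_1>-d$ strictly. The lower bound on the strip is therefore essential, consistent with the spectral-edge caveat after Lemma~\ref{L:form-resolvent}.
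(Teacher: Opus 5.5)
Your proposal is correct and follows the paper's proof: integrate the Gaussian--exponential envelope \eqref{eq:20260616-1731-global} to get the $L^2$ bound, then use $0\le g(u)\le K_{\Lambda_2}u$ together with coercivity that is uniform for $\lambda\ge\Lambda_1>-d$. The only cosmetic difference is in the second step. The paper applies the uniform form-resolvent bound of Lemma~\ref{L:form-resolvent} to $u=(H+\lambda)^{-1}g(u)$, using $\|g(u)\|_{X'}\le\|g(u)\|_2\le K_{\Lambda_2}\|u\|_2$. You instead test the equation with $u$ and invoke \eqref{E:coercive2} directly, which is the same coercivity estimate in energy-identity form.
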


\begin{proof}
The product bound \eqref{eq:20260616-1731-global} is integrable and immediately
gives \eqref{eq:20260616-2052}.  Moreover,
$0\le g(u)\le K_{\Lambda_2}u$, so
$$
 \|g(u)\|_{X'}\le\|g(u)\|_2\le K_{\Lambda_2}\|u\|_2.
$$
Since $u=(H+\lambda)^{-1}g(u)$, the uniform form-resolvent bound on
$[\Lambda_1,\Lambda_2]$ and \eqref{eq:20260616-2052} give
\eqref{eq:20260616-2057}.
\end{proof}
\begin{proposition}\label{prop:20260622-0855}
Let $-d<\Lambda_1\leq\Lambda_2<+\infty$, and assume \ref{gcon1} and
\ref{gcon2}.  Then the set $\mathcal U_{\Lambda_1}^{\Lambda_2}$ is bounded
away from $0$ in $X$.
\end{proposition}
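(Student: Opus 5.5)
We must show there is $\delta>0$ with $\|u\|_X\ge\delta$ for all $u\in\mathcal U_{\Lambda_1}^{\Lambda_2}$. Only \ref{gcon1}--\ref{gcon2} are available, so the argument must avoid the exponential localization that uses \ref{gcon4}. We test the equation against $u$ itself and combine coercivity on the strip with the small-amplitude behaviour of $g$.

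\emph{Step 1: energy identity.} Testing \eqref{E:fixed-lambda} with $u\in X$ gives
$$
 \int_{\mathbb R^N}\bigl(|\nabla u|^2+|y|^2u^2+\lambda u^2\bigr)\,\mathrm dx=\int_{\mathbb R^N}g(u)u\,\mathrm dx .
$$
By \eqref{E:coercive2} and \eqref{E:form-coercivity-constant}, the left side is at least $c\|u\|_X^2$. Here $c:=\min\{1,(d+\Lambda_1)/(d+1)\}>0$ is uniform for $\lambda\in[\Lambda_1,\Lambda_2]$.

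\emph{Step 2: bound on the nonlinearity.} From \ref{gcon1}--\ref{gcon2} we have $g(s)s\le \varepsilon s^2+C_\varepsilon s^{\beta}$ for all $s\ge0$. An alternative is the local bound $g(s)s\le C s^{\alpha}$ on $[0,C_{\Lambda_2}]$, which uses Proposition~\ref{prop:L-infty}. If $N\ge3$ or $\beta<2_N^*$, the Sobolev embedding $X\hookrightarrow H^1\hookrightarrow L^\beta$ holds with some constant $S$. Hence
$$
 \int g(u)u\le \varepsilon\|u\|_2^2+C_\varepsilon S^\beta\|u\|_X^\beta .
$$

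\emph{Step 3: conclusion.} Choose $\varepsilon=c/2$ and use $\|u\|_2\le\|u\|_X$. This yields $\tfrac c2\|u\|_X^2\le C\|u\|_X^\beta$. Since $u\ne0$ and $\beta>2$, we get $\|u\|_X\ge (c/2C)^{1/(\beta-2)}$, and this bound is independent of $u$ and of $\lambda$ in the strip.

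There is no serious obstacle; the only care points are the following. The coercivity constant is uniform because $\Lambda_1>-d$, which is exactly why the statement is restricted to strips away from $-d$. Both the Sobolev embedding $H^1\hookrightarrow L^\beta$ and the subcritical bound for $g$ are global and hold in all dimensions, since $2<\beta<2_N^*$.
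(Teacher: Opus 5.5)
Your proof is correct and is essentially the paper's argument. Test the equation by $u$, use the coercivity constant from \eqref{E:coercive2}, which is uniform for $\lambda\ge\Lambda_1>-d$, bound $g(s)s\le\varepsilon s^2+C_\varepsilon s^\beta$, and absorb the quadratic term via the Sobolev embedding, using $\beta>2$. The hedge ``if $N\ge3$ or $\beta<2_N^*$'' is unnecessary because $2<\beta<2_N^*$ is part of \ref{gcon2}, and the $L^\infty$-based alternative you mention is not needed.
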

\begin{proof}
Let
$$C_{\Lambda_1}:=\max\left\{1, \frac{d+1}{d+\Lambda_1}\right\}.$$
Since $g(s)=o(s)$ as $s\to0^+$ and $g$ has subcritical
growth, for every $0<\varepsilon<\frac{1}{2C_{\Lambda_1}}$, there exists $C_\varepsilon>0$ such that
$$g(s)s\leq \varepsilon s^2+C_\varepsilon s^\beta,\qquad s\geq0.$$
For any $u\in \mathcal{U}_{\Lambda_1}^{\Lambda_2}$, there exists $\lambda\in [\Lambda_1,\Lambda_2]$ such that
$$-\Delta u+|y|^2u+\lambda u=g(u)~\hbox{in}~\mathbb{R}^N.$$
Testing by $u$ and using \eqref{E:coercive2}, we obtain
$$\begin{aligned}
\frac{1}{C_{\Lambda_1}}\|u\|_X^2\leq &\int_{\mathbb{R}^N}\bigl(|\nabla u|^2+|y|^2u^2+\lambda u^2\bigr)\mathrm{d}x\\
=&\int_{\mathbb{R}^N}g(u)u\mathrm{d}x
\leq \varepsilon \|u\|_X^2+ C_\varepsilon\|u\|_X^\beta.
\end{aligned}$$
Since $\beta>2$, there exists $\delta_{\Lambda_1}>0$ such that
$$\|u\|_X\geq \delta_{\Lambda_1}>0,\qquad \forall u\in \mathcal{U}_{\Lambda_1}^{\Lambda_2}.$$
\end{proof}

\begin{proposition}[Compactness in $X$]\label{prop:X-estimate}
Let $-d<\Lambda_1\leq \Lambda_2<+\infty$.  Assume \ref{gcon1}, \ref{gcon2}, and \ref{gcon4}.  Then the set  $\mathcal{U}_{\Lambda_1}^{\Lambda_2}$ is compact in $X_r$.
\end{proposition}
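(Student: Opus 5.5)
We take an arbitrary sequence $(\lambda_n,u_n)\in\cS$ with $\lambda_n\in[\Lambda_1,\Lambda_2]$. The plan is to show that a subsequence converges in $X$ to some $u$ with $(\lambda,u)\in\cS$ and $\lambda\in[\Lambda_1,\Lambda_2]$. Passing to a subsequence, we may assume $\lambda_n\to\lambda\in[\Lambda_1,\Lambda_2]$.

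\textbf{Step 1 (convergence in $X$).} By Corollary~\ref{cor:20260616-2057}, $(u_n)$ is bounded in $X$, so $u_n\rightharpoonup u$ in $X$ and $u_n\to u$ a.e. Proposition~\ref{prop:uniform-z-exponential-decay} gives the common envelope $u_n\le Ce^{-\theta|y|^2/2-\sigma|z|}$. Dominated convergence then yields $u_n\to u$ in every $L^q$, $2\le q<\infty$; alternatively one may invoke Lemma~\ref{L:embedding}, since $u_n\in\mathcal K$. Lemma~\ref{lemma:20260622-0829} gives $g(u_n)\to g(u)$ in $X'$. Since $u_n=R_{\lambda_n}g(u_n)$, the norm continuity \eqref{E:uniform-form-resolvent-continuity} of Lemma~\ref{L:form-resolvent} gives
\[
u_n\to R_\lambda g(u)\quad\text{in }X .
\]
This limit must coincide with $u$. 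Hence $u_n\to u$ strongly in $X$, and $u=(H+\lambda)^{-1}g(u)$ solves \eqref{E:fixed-lambda} weakly.

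\textbf{Step 2 (the limit belongs to the set).} Because $X_r$ is closed in $X$, the limit $u$ is block-radial. In the case $m=1$, the nonnegativity and the monotonicity in $|y|$ and in $|z|$ pass to the a.e.\ limit, so $u\in\mathcal K$. Proposition~\ref{prop:20260622-0855} gives $\|u_n\|_X\ge\delta_{\Lambda_1}$, and strong convergence therefore gives $\|u\|_X\ge\delta_{\Lambda_1}>0$, so $u\not\equiv0$. Since $u\ge0$ and $g\ge0$, elliptic regularity together with the strong maximum principle give $u>0$ as a classical solution. Moreover $u$ is centered by construction. Therefore $(\lambda,u)\in\cS$ and $u\in\mathcal U_{\Lambda_1}^{\Lambda_2}$.

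The only delicate point is nontriviality of the limit, which is what rules out the vanishing scenario. It is handled entirely by the lower bound of Proposition~\ref{prop:20260622-0855} combined with strong convergence. Loss of mass through $z$-translations is already excluded by the uniform exponential envelope, which in turn rests on \ref{gcon4} through Proposition~\ref{prop:uniform-z-tail}. Since every sequence has a subsequence converging in $X_r$ to a point of the set, $\mathcal U_{\Lambda_1}^{\Lambda_2}$ is sequentially compact, hence compact in $X_r$.
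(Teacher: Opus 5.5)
Your proof is correct and follows essentially the same route as the paper. Both arguments use the uniform $X$-bound from Corollary~\ref{cor:20260616-2057}, the convergence $g(u_n)\to g(u)$ in $X'$ from Lemma~\ref{lemma:20260622-0829}, and the resolvent continuity of Lemma~\ref{L:form-resolvent} to upgrade weak to strong convergence. Both then apply Proposition~\ref{prop:20260622-0855} to exclude $u=0$ and the strong maximum principle to get positivity. Your extra remarks on the closedness of $X_r$ and of the cone $\mathcal K$ (including the monotone cone when $m=1$) make explicit a step that the paper leaves implicit.
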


\begin{proof}
Let $u_n\in\mathcal U_{\Lambda_1}^{\Lambda_2}$, with corresponding
$\lambda_n$.  Corollary~\ref{cor:20260616-2057} and extraction give
$$
 \lambda_n\to\lambda\in[\Lambda_1,\Lambda_2],
 \qquad u_n\rightharpoonup u\quad\hbox{in }X.
$$
Lemma~\ref{lemma:20260622-0829} gives $g(u_n)\to g(u)$ in $X'$.
Writing $R_\rho=(H+\rho)^{-1}$, the equation and the first resolvent
identity yield
$$
 \|u_n-R_\lambda g(u)\|_X
 \le C\|g(u_n)-g(u)\|_{X'}
 +C|\lambda_n-\lambda|\,\|g(u)\|_{X'}\longrightarrow0.
$$
The weak limit is therefore $u=R_\lambda g(u)$, and $u_n\to u$ strongly
in $X$.  Proposition~\ref{prop:20260622-0855} excludes $u=0$; the strong
maximum principle gives $u>0$.  Thus the limit remains in
$\mathcal U_{\Lambda_1}^{\Lambda_2}$, proving compactness.
\end{proof}

\begin{proof}[Proof of Theorem~\ref{T:compactness}]
Under assumptions \ref{gcon1} and \ref{gcon2}, the uniform $L^\infty$
estimate follows from Proposition~\ref{prop:L-infty}.  Under the same assumptions,
Lemma~\ref{L:weighted-compact-extension} shows that $T_\lambda^+$ is completely
continuous on bounded subsets of $\mathcal{X}_{\theta,\tau}$, uniformly for
$\lambda\in[\Lambda_1,\Lambda_2]$.  The same lemma also shows that every nonzero
fixed point of $T_\lambda^+$ is a centered positive solution of
\eqref{E:fixed-lambda}.

If, in addition, \ref{gcon4} holds, Proposition~\ref{prop:uniform-z-tail} and
Proposition~\ref{prop:uniform-z-exponential-decay} give the uniform decay in the free
variables and the product-type exponential localization.  Consequently,
Proposition~\ref{prop:X-estimate} yields the compactness of
$\mathcal U_{\Lambda_1}^{\Lambda_2}$ in $X_r$.  Finally,
\eqref{E:no-loss-weighted-space} and Lemma~\ref{L:weighted-compact-extension} show that
every centered positive solution in the strip belongs to
$\mathcal{X}_{\theta,\tau}$.  Hence the nonzero fixed points of $T_\lambda^+$ are
precisely the centered positive solutions in the strip.
\end{proof}

\section{Dimensional reduction at the lower spectral edge}\label{sec:Asymptotics-low}

\subsection{Vanishing and preliminary compactness}
Throughout this section, $\mu=\lambda+d$.  Let $\lambda_n>-d$,
$\mu_n:=\lambda_n+d\to0^+$, and let $(\lambda_n,u_n)\in\mathcal{S}$.  Without loss of generality, we may assume that $\lambda_n\leq1-d$ (that is, $\mu_n\leq1$) for all $n\in\mathbb{N}$.  Proposition~\ref{prop:L-infty} then gives $M>0$ such that
\begin{equation}\label{eq:20260617-0941}
0<u_n\leq M \qquad\hbox{in }\mathbb{R}^N,\quad n\in\mathbb{N}.
\end{equation}
By a standard elliptic estimate, after passing to a subsequence, we may assume that
\begin{equation}\label{eq:20260617-0942}
u_n\to u \qquad\hbox{in}~C^2_{\mathrm{loc}}(\mathbb{R}^N).
\end{equation}
Then $u$ is a bounded nonnegative classical solution of the zero-mass equation
\eqref{E:zero-mass}.

\begin{lemma}[Polynomial growth for zero-mass limits]
\label{lem:polynomial-growth-zero-mass-limit}
For every fixed $z\in\mathbb{R}^m$, the functions
$$u(\cdot,z),\qquad |\nabla_x u(\cdot,z)|,\qquad |\nabla_x^2u(\cdot,z)|$$
have at most polynomial growth in $|y|$. More precisely, there exists a constant $C>0$, independent of $z$, such that
$$
\begin{cases}
|u(y,z)|\leq M,\\
|\nabla_x u(y,z)|\leq C(1+|y|^2),\\
|\nabla_x^2u(y,z)|\leq C(1+|y|^4),
\end{cases}
\qquad x=(y,z)\in\mathbb{R}^d\times\mathbb{R}^m.
$$
In particular, there exists $C>0$ such that
\begin{equation}\label{eq:20260617-0954}
|u(y,z)|+|\nabla_x u(y,z)|+|\nabla_x^2u(y,z)|
\leq C(1+|y|^4),\qquad y\in\mathbb{R}^d, z\in\mathbb{R}^m.
\end{equation}
\end{lemma}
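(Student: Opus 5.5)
The bound $|u|\le M$ is inherited from \eqref{eq:20260617-0941} under the $C^2_{\rm loc}$ convergence \eqref{eq:20260617-0942}. For the derivative bounds I would prove uniform estimates for the $u_n$ themselves, with constants independent of $n$, and then pass to the limit pointwise. Each $u_n$ solves
$$
-\Delta u_n=f_n(x):=-|y|^2u_n-\lambda_nu_n+g(u_n),
$$
and $|\lambda_n|\le d+1$ because $\mu_n\le1$. Since $0<u_n\le M$, the source satisfies $|f_n(x)|\le C(1+|y|^2)$ with $C$ depending only on $M$, $d$ and $\sup_{[0,M]}|g|$.

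\emph{Gradient bound.} Fix $x_0=(y_0,z_0)$ and work on the unit ball $B_2(x_0)$. There $1+|y|^2\le C(1+|y_0|^2)$. The interior $W^{2,p}$ estimate \cite[Theorem~9.11]{GilbargTrudinger} with $p>N$ gives
$$
\|u_n\|_{W^{2,p}(B_1(x_0))}\le C\bigl(\|u_n\|_{L^p(B_2(x_0))}+\|f_n\|_{L^p(B_2(x_0))}\bigr)\le C(1+|y_0|^2).
$$
Morrey's embedding \cite[Theorem~7.26]{GilbargTrudinger} then yields $|\nabla u_n(x_0)|\le C(1+|y_0|^2)$. Since the constants depend only on $N$, $p$ and the unit radius, the bound is uniform in $x_0$ and $n$.

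\emph{Hessian bound.} This step needs a Hölder estimate on $f_n$, and it is the only delicate point, because it uses $g\in C^1$. On $B_1(x_0)$, $u_n$ is bounded in $C^{1}$ by $C(1+|y_0|^2)$. Hence $g(u_n)$ has Lipschitz constant at most $\sup_{[0,M]}|g'|\cdot C(1+|y_0|^2)$. The term $|y|^2u_n$ has gradient bounded by $C(1+|y_0|)M+C(1+|y_0|^2)^2$, and $\lambda_nu_n$ is likewise controlled. So $[f_n]_{C^{0,\gamma}(B_1(x_0))}\le C(1+|y_0|^4)$ and $\|f_n\|_{C^0(B_1(x_0))}\le C(1+|y_0|^2)$. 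The interior Schauder estimate \cite[Theorem~6.2]{GilbargTrudinger} on $B_{1/2}(x_0)\subset B_1(x_0)$ gives
$$
|\nabla^2u_n(x_0)|\le C\bigl(\|u_n\|_{C^0}+\|f_n\|_{C^{0,\gamma}(B_1(x_0))}\bigr)\le C(1+|y_0|^4).
$$

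\emph{Passage to the limit.} All three bounds are independent of $n$, $z_0$ and $y_0$ beyond the displayed polynomial factors. They therefore pass to $u$ through the $C^2_{\rm loc}$ convergence \eqref{eq:20260617-0942}. Summing them gives \eqref{eq:20260617-0954}, and this polynomial growth in $|y|$ is in particular dominated by $C_Ke^{\eta|y|^2}$ for every $\eta>0$, as required in \ref{gcon3}.
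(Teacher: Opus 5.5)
Your argument is correct and follows the paper's proof: interior $W^{2,p}$ estimates plus Morrey on unit balls give $C^{1,\gamma}$ bounds of order $1+|y_0|^2$, the source then has a H\"older bound of order $1+|y_0|^4$, and the interior Schauder estimate finishes. The only difference is cosmetic: you run these estimates uniformly on the $u_n$ (using $|\lambda_n|\le d+1$) and pass to the limit through the $C^2_{\rm loc}$ convergence, whereas the paper applies the same chain directly to the limit $u$ (which solves $-\Delta u=g(u)-(|y|^2-d)u$ with $0\le u\le M$), so it needs no bookkeeping in $n$.
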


\begin{proof}
The local convergence and the uniform amplitude bound give $0\le u\le M$.
Writing
$$
 -\Delta u=F:=g(u)-(|y|^2-d)u,
$$
we have, on every ball $B_2(x_0)$, $x_0=(y_0,z_0)$,
$$
 \|F\|_{L^\infty(B_2(x_0))}\le C(1+|y_0|^2).
$$
The interior $W^{2,p}$ estimate with $p>N$, followed by Morrey's
embedding, yields
$$
 \|u\|_{C^{1,\gamma}(B_1(x_0))}
 \le C(1+|y_0|^2).
$$
Because $g'$ is bounded on $[0,M]$, the product rule and the last estimate
give
$$
 \|F\|_{C^{0,\gamma}(B_1(x_0))}
 \le C(1+|y_0|^4).
$$
The interior Schauder estimate therefore implies
$$
 \|u\|_{C^{2,\gamma}(B_{1/2}(x_0))}
 \le C(1+|y_0|^4).
$$
All constants are independent of $z_0$.  Since $x_0$ was arbitrary, these
bounds prove \eqref{eq:20260617-0954}.
\end{proof}

\begin{lemma}[Vanishing amplitude]\label{L:lower-vanishing}
Assume \ref{gcon1}, \ref{gcon2}, and \ref{gcon3}.  If $(\lambda_n,u_n)\in\cS$ and $\mu_n:=\lambda_n+d\to0^+$, then
\begin{equation}\label{E:amp-vanish}
        \norm{u_n}_{\infty}\to0.
\end{equation}
\end{lemma}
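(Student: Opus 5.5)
We argue by contradiction, using the centering and monotonicity from \textbf{Theorem~A}. Each $u_n\in\mathcal K$ is radially nonincreasing in $|y|$ and in $|z|$, so $\|u_n\|_\infty=u_n(0,0)$. Suppose \eqref{E:amp-vanish} fails. Then along a subsequence $u_n(0,0)\ge\varepsilon_0>0$. After a further extraction, \eqref{eq:20260617-0942} gives $u_n\to u$ in $C^2_{\rm loc}(\mathbb{R}^N)$, where $u$ is a bounded nonnegative classical solution of the zero-mass equation \eqref{E:zero-mass} and $u(0,0)\ge\varepsilon_0$. Hence $u\not\equiv0$, and the aim is to show that $u$ is an admissible competitor in \ref{gcon3}.

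The first task is to check the structural properties required by \ref{gcon3}.
\begin{enumerate}[label=(\alph*)]
\item \emph{Block-radiality.} Each $u_n$ has the form $U_n(|y|,|z|)$, and pointwise limits preserve this, so $u(y,z)=U(|y|,|z|)$. Monotonicity in $|y|$ and $|z|$ also passes to the limit, although it is not needed.
\item \emph{Boundedness and regularity.} We have $0\le u\le M$ by \eqref{eq:20260617-0941}, and $u\in C^2$.
\item \emph{Growth condition.} Lemma~\ref{lem:polynomial-growth-zero-mass-limit} gives
$$
|u|+|\nabla_yu|+|\Delta_yu|\le C(1+|y|^4),
$$
with $C$ independent of $z$. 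Since $1+|y|^4\le C_\eta e^{\eta|y|^2}$ for any $\eta\in(0,\tfrac12)$, for instance $\eta=\tfrac14$, the locally uniform growth hypothesis in \ref{gcon3} holds with $\eta_K=\tfrac14$ for every compact $K$.
\end{enumerate}
Given (a)--(c), \ref{gcon3} forces $u\equiv0$, contradicting $u(0,0)\ge\varepsilon_0$.

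The main obstacle is justifying that the limit equation is exactly $(H-d)u=g(u)$, with the correct coefficient, under only local convergence. This is where local control of the nonlinearity enters. Write the equation for $u_n$ as $-\Delta u_n+(|y|^2-d)u_n+\mu_nu_n=g(u_n)$. On a fixed ball the coefficients $|y|^2$ are bounded, $\mu_n\to0$, $0\le u_n\le M$, and $g$ is $C^1$ on $[0,M]$. Interior $W^{2,p}$ estimates followed by Schauder estimates therefore give $C^{2,\gamma}$ bounds on each ball, uniform in $n$. A diagonal extraction then yields $C^2_{\rm loc}$ convergence, and the term $\mu_nu_n$ drops out in the limit.

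No integrability in $z$ is needed, which is why \ref{gcon3} is phrased without it. Compactness in $X$ is unavailable near $\lambda=-d$, but the argument never uses it: only pointwise information at the origin survives the limit, and that suffices. Finally, since every subsequence of $\|u_n\|_\infty$ has a further subsequence converging to $0$, the whole sequence satisfies $\|u_n\|_\infty\to0$.
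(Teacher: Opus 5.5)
Your proof is correct and follows the paper's argument. You argue by contradiction, extract a $C^2_{\rm loc}$ limit that is a nontrivial bounded solution of the zero-mass equation with $u(0,0)\ge\varepsilon_0$, and exclude it via Lemma~\ref{lem:polynomial-growth-zero-mass-limit} and \ref{gcon3}. You also make explicit two points the paper leaves implicit: block-radiality passes to the limit, and the polynomial bound $C(1+|y|^4)$ fits the admissible growth $C_Ke^{\eta_K|y|^2}$ for any $\eta_K\in(0,\tfrac12)$.
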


\begin{proof}
If \eqref{E:amp-vanish} fails, then $\norm{u_n}_\infty\geq c_0>0$.  By \textbf{Theorem~A} and the centering convention, $\norm{u_n}_\infty=u_n(0,0)$; hence,
after taking a subsequence, the local estimates give convergence to a nontrivial
nonnegative solution of
\begin{equation}\label{eq:20260617-0854}
-\Delta u+|y|^2 u-du=g(u)~\hbox{in}~\mathbb{R}^N.
\end{equation}
In particular, $u(0,0)=\lim_nu_n(0,0)\geq c_0>0$.

However, by Lemma \ref{lem:polynomial-growth-zero-mass-limit}, for any fixed $z\in \mathbb{R}^m$, the functions $u(\cdot,z)$, $|\nabla_x u(\cdot,z)|$ and
$|\nabla_x^2 u(\cdot,z)|$ have at most polynomial growth in $|y|$.
Assumption \ref{gcon3} therefore gives $u\equiv0$, contradicting
$u(0)\geq c_0>0$.
\end{proof}

\begin{proposition}[Sharp amplitude scale at the lower spectral edge]
\label{prop:sharp-amplitude-scale-lower-edge}
Assume \ref{gcon1}, \ref{gcon2}, and \ref{gcon3}. Let $(\lambda_n,u_n)\in \mathcal{S}$ and $\mu_n:=\lambda_n+d\to0^+.$
Set
$$M_n:=\|u_n\|_{L^\infty(\mathbb{R}^N)}=u_n(0,0).$$
Then there exist constants $c,C>0$, independent of $n$, such that
$$c\mu_n\leq M_n^{\alpha-2}\leq C\mu_n.$$
Equivalently,
$$0<c\leq \mu_n^{-\frac1{\alpha-2}}M_n\leq C<+\infty.$$
\end{proposition}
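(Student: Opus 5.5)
The two inequalities will be proved separately: the lower bound on $M_n$ by an energy identity, and the upper bound by contradiction, using a blow-up in the free variables followed by a Liouville theorem in $\mathbb{R}^m$. Throughout, Lemma~\ref{L:lower-vanishing} gives $M_n\to0$. Then Lemma~\ref{lemma:20260618-1129}(i), with $a=M_n$, gives $g(s)\le C s^{\alpha-1}$ on $[0,M_n]$ for large $n$.

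\emph{Lower bound $c\mu_n\le M_n^{\alpha-2}$.} Test \eqref{E:fixed-lambda} by $u_n$. Using $\inf\sigma(H)=d$, as in \eqref{E:coercive1},
$$
\mu_n\|u_n\|_2^2\le\int_{\mathbb{R}^N}\bigl(|\nabla u_n|^2+(|y|^2-d)u_n^2+\mu_nu_n^2\bigr)=\int g(u_n)u_n\le C M_n^{\alpha-2}\|u_n\|_2^2 .
$$
Dividing by $\|u_n\|_2^2>0$ gives $\mu_n\le CM_n^{\alpha-2}$.

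\emph{Upper bound, setup.} Suppose instead that, along a subsequence, $\varepsilon_n:=M_n^{\alpha-2}$ satisfies $\mu_n/\varepsilon_n\to0$. Set
$$
\widetilde v_n(y,\zeta):=M_n^{-1}u_n\bigl(y,\zeta/\sqrt{\varepsilon_n}\bigr).
$$
Then $0\le\widetilde v_n\le1$ and
$$
(H_y-d)\widetilde v_n+\varepsilon_n\Bigl(-\Delta_\zeta\widetilde v_n+\tfrac{\mu_n}{\varepsilon_n}\widetilde v_n\Bigr)=\varepsilon_n F^{(0)}_{M_n}(\widetilde v_n).
$$
By \eqref{eq:20260622-1306}, the right-hand side is bounded by $C\varepsilon_n\widetilde v_n^{\alpha-1}$. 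Applying Lemma~\ref{L:common-exterior-barrier}(i) to $\widetilde v_n$ gives a uniform Gaussian bound $\widetilde v_n\le Ce^{-\theta|y|^2/2}$. Hence the local bounds \eqref{E:abstract-Q-local-bounds} hold, and Lemma~\ref{L:projected-Q-compactness} (with $B_n=0$, $c_n=\mu_n/\varepsilon_n$) yields $\|Q\widetilde v_n\|_{L^2(\mathbb{R}^d\times K)}^2\le C_K\varepsilon_n$.

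\emph{Upper bound, projected equation.} Let $w_n(\zeta):=\int\widetilde v_n\phi_0\,\mathrm dy$, so $0\le w_n\le\int\phi_0$. Since $H_y-d$ annihilates $\phi_0$, projecting gives
$$
-\Delta_\zeta w_n+\tfrac{\mu_n}{\varepsilon_n}w_n=\int F^{(0)}_{M_n}(\widetilde v_n)\phi_0\,\mathrm dy ,
$$
with right-hand side uniformly bounded. Interior $W^{2,p}$ estimates in $\zeta$ then give $w_n\to w$ in $C^1_{\rm loc}(\mathbb{R}^m)$. Using the $Q$-estimate, the Gaussian envelope and \eqref{eq:20260622-0819}, we get $\widetilde v_n-\phi_0w_n\to0$ in $L^2_{\rm loc}$. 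It follows that the right-hand side converges in $L^1_{\rm loc}$ to $\mu_0\int\phi_0^{\alpha}\,\mathrm dy\,w^{\alpha-1}=c_\alpha w^{\alpha-1}$. Therefore $w$ is a bounded nonnegative solution of $-\Delta w=c_\alpha w^{\alpha-1}$ in $\mathbb{R}^m$.

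\emph{Nontriviality of the limit.} This is the delicate point, since the blow-up scale in $\zeta$ shrinks the region where $u_n/M_n$ is close to $1$. In the original variables $u_n/M_n$ has uniformly bounded gradient in $y$, by interior estimates on its unscaled equation with bounded coefficients. Since $u_n(0,0)=M_n$, this gives $\widetilde v_n(y,0)\ge\frac12$ for $|y|\le r_0$, so $w_n(0)\ge c>0$. The uniform $C^1$ bounds on $w_n$ transfer this to $w(0)\ge c$.

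\emph{Conclusion.} Since $2<\alpha<2_m^*$, the Liouville theorems already invoked in Proposition~\ref{prop:L-infty} (\cite[Theorem~8.4]{QuittnerSouplet} for $m\le2$, \cite[Theorem~1.1]{GidasSpruck} for $m\ge3$) force $w\equiv0$. This contradicts $w(0)\ge c$, so $M_n^{\alpha-2}\le C\mu_n$.

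The main obstacle is the justification that $F^{(0)}_{M_n}(\widetilde v_n)$ projects to $c_\alpha w^{\alpha-1}$. This requires controlling the nonlinearity of $\phi_0w_n+Q\widetilde v_n$ using only $L^2_{\rm loc}$ smallness of $Q\widetilde v_n$. I would handle it with the Lipschitz bound on $F^{(0)}_{M_n}$ on $[0,1]$, valid because $\alpha>2$, together with the uniform Gaussian tail.
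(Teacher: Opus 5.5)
Your proposal is correct and matches the paper's proof step for step. The lower bound comes from the energy identity with $\inf\sigma(H)=d$. The upper bound is by contradiction: blow up in $\zeta$ at the scale $M_n^{\alpha-2}$, use the Gaussian envelope, control the $Q$-component by Lemma~\ref{L:projected-Q-compactness}, project onto $\phi_0$, and apply the Liouville theorem for $-\Delta w=c_\alpha w^{\alpha-1}$ in $\mathbb{R}^m$.

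The remaining differences are minor and all valid. You obtain $w(0)>0$ from a uniform interior $C^1$ bound on $u_n/M_n$, where the paper uses the Harnack inequality. You identify the limiting nonlinearity by a Lipschitz estimate, where the paper uses a.e.\ convergence and dominated convergence. Finally, the Gaussian bound is more precisely obtained for $u_n/M_n$ in the original variables and then rescaled, rather than applying Lemma~\ref{L:common-exterior-barrier}(i) to $\widetilde v_n$ directly.
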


\begin{proof}
By Lemma~\ref{L:lower-vanishing}, we already know that $M_n\to0$.
We divide the proof into two steps.

\medskip
\noindent
\textbf{Step 1. Lower bound: $M_n^{\alpha-2}\geq c\mu_n$.}

Since $M_n\to0$ and $g(s)\sim \mu_0s^{\alpha-1}$ as $s\to0^+$, there exists $C>0$
such that, for all large $n$,
$$0\leq g(s)\leq C s^{\alpha-1},\qquad 0\leq s\leq M_n.$$
Since $u_n$ satisfies
$$-\Delta u_n+|y|^2u_n+\lambda_nu_n=g(u_n)~\hbox{in}~\mathbb{R}^N,$$
testing it by $u_n$ gives
$$\int_{\mathbb{R}^N}\left(|\nabla u_n|^2+|y|^2u_n^2+\lambda_nu_n^2\right)\,\mathrm{d}x
=\int_{\mathbb{R}^N}g(u_n)u_n\,\mathrm{d}x,$$
and hence
$$\mu_n\int_{\mathbb{R}^N}u_n^2\,\mathrm{d}x\leq\int_{\mathbb{R}^N}g(u_n)u_n\,\mathrm{d}x\leq CM_n^{\alpha-2}\int_{\mathbb{R}^N}u_n^2\,\mathrm{d}x.$$
Since $u_n\not\equiv0$, we conclude that
$$\mu_n\le CM_n^{\alpha-2},$$
and thus $M_n^{\alpha-2}\ge c\mu_n$.

\medskip
\noindent
\textbf{Step 2. Upper bound: $M_n^{\alpha-2}\leq C\mu_n$.}

We argue by contradiction. Set
$$\kappa_n:=M_n^{\alpha-2}.$$
Suppose that the upper bound is false. Then, up to a subsequence,
\begin{equation}\label{eq:20260617-1347}
\frac{\kappa_n}{\mu_n}\to+\infty.
\end{equation}
We shall derive a contradiction by a blow-up argument in the free variables.

We shall use the Gaussian estimate at the natural amplitude scale.  Set
$$
 K_n:=\sup_{0<s\le M_n}\frac{g(s)}s.
$$
Then $K_n\to0$ by \ref{gcon1} and $M_n\to 0^+$.  Applying Lemma~\ref{L:common-exterior-barrier}(i) to
$$
 (H+\lambda_n-K_n)u_n\le0,\qquad 0<u_n\le M_n,
$$
gives, for each $0<\theta<1$, a constant $A_\theta>0$, independent of
$n$, such that
\begin{equation}\label{eq:20260617-1313}
 0\le\frac{u_n(y,z)}{M_n}
 \le A_\theta e^{-\theta|y|^2/2}.
\end{equation}
Now define the blow-up functions
$$w_n(y,\zeta):=M_n^{-1}u_n\left(y,\frac{\zeta}{\sqrt{\kappa_n}}\right),
\qquad (y,\zeta)\in\mathbb{R}^d\times\mathbb{R}^m.$$
Then
$$0<w_n\le1,\qquad w_n(0,0)=1,$$
and by \eqref{eq:20260617-1313},
\begin{equation}\label{eq:20260617-1349}
0\leq w_n(y,\zeta)\leq A_\theta e^{-\frac{\theta}{2}|y|^2}.
\end{equation}
Moreover $w_n$ satisfies
\begin{equation}\label{eq:20260617-1350}
(H_y-d)w_n+\kappa_n(-\Delta_\zeta w_n)+\mu_nw_n=\kappa_n F_{M_n}(w_n),
\end{equation}
where
$$F_{M_n}(s):=\frac{g(M_ns)}{M_n^{\alpha-1}}.$$
By Lemma \ref{lemma:20260618-1129}, since $M_n\to 0^+$, \eqref{eq:20260622-0819} gives
\begin{equation}\label{eq:20260617-1352}
F_{M_n}(s)\to \mu_0s^{\alpha-1}\qquad\hbox{uniformly for}~s\in[0,1],
\end{equation}
and
\begin{equation}\label{eq:20260617-1353}
0\leq F_{M_n}(s)\leq C s^{\alpha-1},\qquad 0\leq s\leq 1.
\end{equation}

Let $P$ be the orthogonal projection in $L^2(\mathbb{R}^d_y)$ onto
$\mathrm{span}\{\phi_0\}$:
$$(Pw)(y,\zeta):=\phi_0(y)\int_{\mathbb{R}^d}w(\eta,\zeta)\phi_0(\eta)\,\mathrm{d}\eta.$$
Set
$$Q:=I-P,$$
and define
$$v_n(\zeta):=\int_{\mathbb{R}^d}w_n(y,\zeta)\phi_0(y)\,\mathrm{d}y,\qquad r_n:=Qw_n.$$
Thus $\|v_n\|_\infty\leq \|\phi_0\|_{L^1(\mathbb{R}^d)}$ and
$$w_n(y,\zeta)=\phi_0(y)v_n(\zeta)+r_n(y,\zeta),
\qquad\int_{\mathbb{R}^d}r_n(y,\zeta)\phi_0(y)\,\mathrm{d}y=0.$$

Projecting \eqref{eq:20260617-1350} onto $\phi_0$ gives
\begin{equation}\label{eq:20260617-1356}
-\Delta_\zeta v_n+\frac{\mu_n}{\kappa_n}v_n=\int_{\mathbb{R}^d}\phi_0(y)F_{M_n}(w_n(y,\zeta))\,\mathrm{d}y~\hbox{in}~\mathbb{R}^m.
\end{equation}

By \eqref{eq:20260617-1349} and \eqref{eq:20260617-1353}, the right-hand side of
\eqref{eq:20260617-1356} is uniformly bounded in $L^\infty(\mathbb{R}^m)$.
Also $0\leq v_n\leq\|\phi_0\|_{L^1}$.  Since
\eqref{eq:20260617-1347} is equivalent to
$$\frac{\mu_n}{\kappa_n}\to0,$$
standard elliptic estimates applied to \eqref{eq:20260617-1356} imply that,
up to a subsequence,
\begin{equation}\label{eq:20260903-0841}
v_n\to v\qquad\hbox{in }C^{1,\gamma}_{\mathrm{loc}}(\mathbb{R}^m)
\quad\hbox{for every }0<\gamma<1
\end{equation}
for some bounded nonnegative function $v$.

We next prove that $v\not\equiv0$. Since $u_n(0,0)=M_n$ and
$$
-\Delta u_n+
\left(|y|^2+\lambda_n-\frac{g(u_n)}{u_n}\right)u_n=0,
$$
set
$$
V_n(y,z):=
|y|^2+\lambda_n-\frac{g(u_n(y,z))}{u_n(y,z)}.
$$
Since $\lambda_n\to-d$, $M_n=\|u_n\|_\infty\to0$, and
$g(s)/s\to0$ as $s\to0^+$ by \ref{gcon1}, there exists
$K>0$, independent of $n$, such that
$$
\|V_n\|_{L^\infty(B_1(0))}\leq K.
$$
Applying the interior Harnack inequality
\cite[Theorem~8.20, p.~199]{GilbargTrudinger} to
$$
\Delta u_n-V_n(y,z)u_n=0
        \qquad\hbox{in }B_1(0),
$$
we obtain a constant $C_H>0$, independent of $n$, such that
$$
\sup_{B_{1/4}(0)}u_n
    \leq C_H\inf_{B_{1/4}(0)}u_n.
$$
Since $u_n(0,0)=M_n=\|u_n\|_{L^\infty(\mathbb{R}^N)}$, it follows that
$$
u_n(y,z)\geq C_H^{-1}M_n
        \qquad\hbox{for } |(y,z)|\leq\frac{1}{4}.
$$
Therefore
$$w_n(y,0)=\frac{u_n(y,0)}{M_n}\geq C_{H}^{-1}\qquad\hbox{for}~|y|\leq \frac{1}{4}$$
and thus
$$v_n(0)=\int_{\mathbb{R}^d}w_n(y,0)\phi_0(y)\,\mathrm{d}y\geq C_{H}^{-1}\int_{|y|\leq \frac{1}{4}}\phi_0(y)\,\mathrm{d}y=:c_1>0,\quad \forall n\in \mathbb{N}.$$
Passing to the limit gives
$$v(0)\geq c_1>0.$$
Thus $v$ is nontrivial.

It remains to identify the limiting equation. We claim first that
\begin{equation}\label{eq:20260617-1409}
r_n=Qw_n\to0
\qquad\hbox{in}~L_{\mathrm{loc}}^{2}(\mathbb{R}^d\times\mathbb{R}^m).
\end{equation}
Apply Lemma~\ref{L:projected-Q-compactness} to
\eqref{eq:20260617-1350}, with
$$
 \varepsilon_n=\kappa_n,\qquad B_n=0,\qquad
 c_n=\mu_n/\kappa_n,\qquad h_n=F_{M_n}(w_n).
$$
The rescaling of $u_n\in X$ gives the required local form regularity.
The coefficient $c_n$ is bounded by \eqref{eq:20260617-1347}, while
\eqref{eq:20260617-1349} and \eqref{eq:20260617-1353} give the uniform
Gaussian bounds for $w_n$ and $h_n$.  Thus
$$
 \int_K\int_{\mathbb R^d}|Qw_n|^2\,\mathrm dy\,\mathrm d\zeta
 \le C_K\kappa_n\longrightarrow0,
$$
which proves \eqref{eq:20260617-1409}.

Therefore,
$$
w_n-\phi_0v_n=r_n\to0
\qquad\hbox{in }
L_{\mathrm{loc}}^2(\mathbb{R}^d\times\mathbb{R}^m).
$$
Since \eqref{eq:20260903-0841} gives that $v_n\to v$ locally uniformly in $\mathbb{R}^m$, after passing to a subsequence we have
$$
w_n(y,\zeta)\to\phi_0(y)v(\zeta)
\qquad\hbox{for a.e. }(y,\zeta)
\in\mathbb{R}^d\times\mathbb{R}^m.
$$
Set
$$
h_n(\zeta):=
\int_{\mathbb{R}^d}
\phi_0(y)F_{M_n}(w_n(y,\zeta))\,\mathrm dy.
$$
By \eqref{eq:20260617-1349},
\eqref{eq:20260617-1352}, and
\eqref{eq:20260617-1353}, the dominated convergence theorem
on $\mathbb{R}^d\times K$, for every
$K\subset\subset\mathbb{R}^m$, gives
$$
h_n\to
\mu_0\int_{\mathbb{R}^d}
\phi_0(y)\bigl(\phi_0(y)v(\zeta)\bigr)^{\alpha-1}
\,\mathrm dy
=c_\alpha v^{\alpha-1}
$$
in $L^1(K)$.  Since $\{h_n\}$ is uniformly bounded in
$L^\infty(\mathbb{R}^m)$, the convergence also holds in
$L^q_{\mathrm{loc}}(\mathbb{R}^m)$ for every $1\le q<\infty$.
Passing to the limit in the weak formulation of
\eqref{eq:20260617-1356}, and recalling that
$\mu_n/\kappa_n\to0$, we obtain
$$
-\Delta v=c_\alpha v^{\alpha-1}
\qquad\hbox{in }\mathcal D'(\mathbb{R}^m).
$$
Since $0\leq v\in L^\infty(\mathbb{R}^m)\cap C^{1,\gamma}_{\mathrm{loc}}(\mathbb{R}^m)$,
standard elliptic regularity then shows that $v$ is a nontrivial bounded nonnegative classical solution to
\begin{equation}\label{eq:20260617-1441}
-\Delta v=c_\alpha v^{\alpha-1}\qquad\hbox{in}~\mathbb{R}^m.
\end{equation}

However, such a solution is excluded by
\cite[Theorem~8.4]{QuittnerSouplet} when $m=1,2$ and by
\cite[Theorem~1.1]{GidasSpruck} when $m\ge3$, since
$1<\alpha-1<2_m^*-1$ in the latter case.
This contradiction shows that
$$\frac{M_n^{\alpha-2}}{\mu_n}=\frac{\kappa_n}{\mu_n}$$
is bounded from above. Therefore
$$M_n^{\alpha-2}\leq C\mu_n.$$

Combining \textbf{Step 1} and \textbf{Step 2}, we conclude that
$$c\mu_n\le M_n^{\alpha-2}\le C\mu_n.$$
The proof is complete.
\end{proof}

\subsection{Convergence to the effective lower-dimensional profile}
Let $\mu_n:=\lambda_n+d\to0^+$.  Proposition
\ref{prop:sharp-amplitude-scale-lower-edge} shows that
\begin{equation}\label{eq:20260617-1823}
\widetilde{u}_n(y,\zeta)=\mu_n^{-\frac1{\alpha-2}}u_n\left(y,\frac{\zeta}{\sqrt{\mu_n}}\right)
\end{equation}
satisfy
\begin{equation}\label{eq:20260617-1846}
0<c\leq \|\widetilde{u}_n\|_\infty\leq C
\end{equation}
and
\begin{equation}\label{eq:20260617-1054}
(H_y-d)\widetilde{u}_n+\mu_n(-\Delta_\zeta\widetilde{u}_n+\widetilde{u}_n)
=\mu_n F_{\mu_n^{\frac{1}{\alpha-2}}}(\widetilde{u}_n),
\end{equation}
where
\begin{equation}\label{eq:20260617-1903}
F_{\mu_n^{1/(\alpha-2)}}(s)\to\mu_0s^{\alpha-1}
\quad\text{uniformly for bounded $s$}.
\end{equation}

Define
\begin{equation}\label{eq:20260617-1826}
v_n(\zeta):=\int_{\mathbb{R}^d}\widetilde{u}_n(y,\zeta)\phi_0(y)\,\mathrm{d}y.
\end{equation}
Then $0\le v_n\le C\|\phi_0\|_1$.  The Harnack argument (below \eqref{eq:20260903-0841}) in the proof of Proposition~\ref{prop:sharp-amplitude-scale-lower-edge}, together with the lower
sharp-amplitude bound, gives $v_n(0)\ge c>0$.  Hence every locally uniform limit
of $(v_n)$ is nontrivial.

\begin{lemma}[Local vanishing of the orthogonal component]
\label{lem:local-vanishing-orthogonal-component}
Let $\widetilde{u}_n(y,\zeta)$ be given by \eqref{eq:20260617-1823} and $\mu_n=\lambda_n+d\to0^+$.
Define
\begin{equation}\label{eq:20260617-1825}
P w=\phi_0(y)\int_{\mathbb{R}^d}w(\eta,\zeta)\phi_0(\eta)\,\mathrm{d}\eta,\qquad Q:=I-P.
\end{equation}
Then, for every compact set $K\subset\subset\mathbb{R}^m$,
\begin{equation}\label{eq:20260617-1828}
r_n:=Q\widetilde{u}_n\to 0
\qquad\hbox{in}~L^2(\mathbb{R}^d\times K).
\end{equation}
\end{lemma}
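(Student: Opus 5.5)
The plan is to apply Lemma~\ref{L:projected-Q-compactness} directly to the rescaled equation \eqref{eq:20260617-1054}, exactly as in Step~2 of the proof of Proposition~\ref{prop:sharp-amplitude-scale-lower-edge}. I will take $\ell=m$, $\Omega_n=\mathbb{R}^m$, $\varepsilon_n=\mu_n$, drift $B_n=0$, potential $c_n\equiv1$, and source $h_n=F_{\mu_n^{1/(\alpha-2)}}(\widetilde u_n)$. Then \eqref{eq:20260617-1054} has precisely the form \eqref{E:abstract-Q-equation}. The coefficient hypotheses are trivial: $B_n=0$, and $c_n=1$ is uniformly bounded.

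Next comes the local form regularity. The function $\widetilde u_n$ is obtained from $u_n\in X$ by a dilation in $z$ and multiplication by a constant. It therefore belongs to $L^2(\mathbb{R}^m;X_y)\cap H^1(\mathbb{R}^m;L^2(\mathbb{R}^d))$ and is a weak solution there.

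For the uniform local bounds \eqref{E:abstract-Q-local-bounds}, I will use a Gaussian envelope.
\begin{itemize}
\item Apply Lemma~\ref{L:common-exterior-barrier}(i) to $(H+\lambda_n-K_n)u_n\le0$, where $K_n=\sup_{0<s\le M_n}g(s)/s\to0$ by Lemma~\ref{L:lower-vanishing}. This gives \eqref{eq:20260617-1313}, namely $u_n\le A_\theta M_n e^{-\theta|y|^2/2}$ with $A_\theta$ independent of $n$.
\item Combine this with the sharp amplitude bound $M_n\le C\mu_n^{1/(\alpha-2)}$ from Proposition~\ref{prop:sharp-amplitude-scale-lower-edge}. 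The dilation in $z$ does not affect a bound depending only on $y$, so
$$0\le\widetilde u_n(y,\zeta)\le C A_\theta e^{-\theta|y|^2/2}.$$
\item Since $\widetilde u_n$ is uniformly bounded by \eqref{eq:20260617-1846}, the bound \eqref{eq:20260622-1306} of Lemma~\ref{lemma:20260618-1129} gives $0\le h_n\le C_1\widetilde u_n^{\alpha-1}\le Ce^{-(\alpha-1)\theta|y|^2/2}$.
\end{itemize}
Both envelopes are square-integrable in $y$ uniformly in $\zeta$. Hence $\int_K\int_{\mathbb{R}^d}(|\widetilde u_n|^2+|h_n|^2)\le C|K|$ for every compact $K$.

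Lemma~\ref{L:projected-Q-compactness} then yields $\int_K\int_{\mathbb{R}^d}|Q\widetilde u_n|^2\le C_K\mu_n\to0$, which is \eqref{eq:20260617-1828}.

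The only point needing care is the $n$-independence of the Gaussian constant. Lemma~\ref{L:common-exterior-barrier}(i) requires $\lambda_n$ and $K_n$ to range in bounded sets, and here $\lambda_n\to-d$ and $K_n\to0$, so the comparison radius and constant are uniform. It is also essential to normalize by the true amplitude $\mu_n^{1/(\alpha-2)}$, which is comparable to $M_n$ by the two-sided bound. No further obstacle is expected.
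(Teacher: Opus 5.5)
Your proposal is correct and is essentially the paper's own proof. The paper also applies Lemma~\ref{L:projected-Q-compactness} to \eqref{eq:20260617-1054} with $\varepsilon_n=\mu_n$, $B_n=0$, $c_n=1$, $h_n=F_{\mu_n^{1/(\alpha-2)}}(\widetilde u_n)$, and verifies \eqref{E:abstract-Q-local-bounds} using \eqref{eq:20260617-1313}, $M_n=O(\mu_n^{1/(\alpha-2)})$, and Lemma~\ref{lemma:20260618-1129}; note that only the upper amplitude bound is needed for the envelope, not the two-sided comparability you invoke.
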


\begin{proof}
Apply Lemma~\ref{L:projected-Q-compactness} to
\eqref{eq:20260617-1054} with
$$
 \varepsilon_n=\mu_n,\qquad B_n=0,\qquad c_n=1,\qquad
 h_n=F_{\mu_n^{1/(\alpha-2)}}(\widetilde u_n).
$$
Indeed, \eqref{eq:20260617-1313} and
$M_n=O(\mu_n^{1/(\alpha-2)})$ give, for each $0<\theta<1$,
$$
 0\le\widetilde u_n(y,\zeta)\le Ce^{-\theta|y|^2/2}.
$$
Lemma~\ref{lemma:20260618-1129} gives the same type of Gaussian bound for
$h_n$.  These estimates verify \eqref{E:abstract-Q-local-bounds}, and
\eqref{E:abstract-Q-conclusion} yields
$$
 \int_K\int_{\mathbb{R}^d}|r_n|^2\,\mathrm dy\,\mathrm d\zeta
 \le C_K\mu_n\longrightarrow0.
$$
\end{proof}

\begin{lemma}[Identification of the local limit]
\label{lem:identification-local-limit}
Up to a subsequence,
$$
v_n\to v
\qquad\hbox{in }W^{2,q}_{\mathrm{loc}}(\mathbb{R}^m)
\quad\hbox{for every }1<q<+\infty,
$$
where $v$ is a nontrivial bounded nonnegative radial solution of
\begin{equation}\label{eq:20260617-1837}
-\Delta_\zeta v+v=c_\alpha v^{\alpha-1}\qquad\hbox{in}~\mathbb{R}^m,
\end{equation}
with
$$c_\alpha=\mu_0\int_{\mathbb{R}^d}\phi_0(y)^\alpha\,\mathrm{d}y.$$
\end{lemma}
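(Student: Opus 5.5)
We follow the projected-equation strategy already used in Step~2 of Proposition~\ref{prop:sharp-amplitude-scale-lower-edge}, now at the natural scale $\kappa_n=\mu_n$. Projecting \eqref{eq:20260617-1054} onto $\phi_0$ in the $y$-variable kills the term $(H_y-d)\widetilde u_n$, because $H_y-d$ is symmetric and $(H_y-d)\phi_0=0$. Dividing by $\mu_n$ then gives
\begin{equation*}
-\Delta_\zeta v_n+v_n=h_n(\zeta):=\int_{\mathbb{R}^d}\phi_0(y)F_{\mu_n^{1/(\alpha-2)}}(\widetilde u_n(y,\zeta))\,\mathrm{d}y
\qquad\hbox{in }\mathbb{R}^m.
\end{equation*}
To justify this, we test the weak form of \eqref{eq:20260617-1054} with $\phi_0(y)\psi(\zeta)$, $\psi\in C_c^\infty(\mathbb{R}^m)$. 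This test is legitimate because $\widetilde u_n$ is a rescaling of $u_n\in X$.

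Uniform bounds come next. By the Gaussian bound $0\le\widetilde u_n\le Ce^{-\theta|y|^2/2}$ from the proof of Lemma~\ref{lem:local-vanishing-orthogonal-component}, and by \eqref{eq:20260622-1306}, $h_n$ is bounded in $L^\infty(\mathbb{R}^m)$. Also $0\le v_n\le C\|\phi_0\|_1$. Interior $W^{2,q}$ estimates on balls therefore give bounds in $W^{2,q}_{\rm loc}$ for every $q<\infty$. Morrey then gives precompactness in $C^{1,\gamma}_{\rm loc}$, and a diagonal extraction yields $v_n\to v$ locally uniformly. Radiality of $v$ is inherited from $v_n$: each $\widetilde u_n$ is block-radial, so $v_n$ is radial in $\zeta$. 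Nonnegativity and boundedness pass to the limit, and $v(0)\ge c>0$ follows from the Harnack lower bound $v_n(0)\ge c$ recorded just before Lemma~\ref{lem:local-vanishing-orthogonal-component}.

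The main step is identifying the limit of $h_n$. Lemma~\ref{lem:local-vanishing-orthogonal-component} gives $r_n=Q\widetilde u_n\to0$ in $L^2(\mathbb{R}^d\times K)$. Combined with the locally uniform convergence of $v_n$, this yields $\widetilde u_n\to\phi_0 v$ in $L^2_{\rm loc}$, hence a.e. after a further subsequence. We write
\begin{equation*}
F_{a_n}(\widetilde u_n)-\mu_0(\phi_0v)^{\alpha-1}
=\bigl[F_{a_n}(\widetilde u_n)-\mu_0\widetilde u_n^{\alpha-1}\bigr]+\mu_0\bigl[\widetilde u_n^{\alpha-1}-(\phi_0v)^{\alpha-1}\bigr],
\qquad a_n:=\mu_n^{1/(\alpha-2)}.
\end{equation*}
The first bracket tends to zero uniformly relative to $\widetilde u_n^{\alpha-1}$ by \eqref{eq:20260622-0819}. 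The second tends to zero a.e. Both are dominated by $Ce^{-(\alpha-1)\theta|y|^2/2}$. Dominated convergence on $\mathbb{R}^d\times K$ therefore gives $h_n\to c_\alpha v^{\alpha-1}$ in $L^1(K)$. Since the $h_n$ are uniformly bounded, the convergence also holds in $L^q(K)$ for every $q<\infty$.

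With the sources converging in $L^q_{\rm loc}$, we can finish. Passing to the limit in the weak equation shows that $v$ solves \eqref{eq:20260617-1837} in $\mathcal D'$. Elliptic regularity, using that $v^{\alpha-1}$ is H\"older continuous, makes $v$ classical. Finally, applying the $W^{2,q}$ estimate to the difference of the equations for $v_n$ and $v$,
\begin{equation*}
(-\Delta+1)(v_n-v)=h_n-c_\alpha v^{\alpha-1},
\end{equation*}
whose right-hand side tends to zero in $L^q_{\rm loc}$, upgrades the convergence to $v_n\to v$ in $W^{2,q}_{\rm loc}$ for every $1<q<\infty$.

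The hard step is the identification of the nonlinear limit. It hinges on the vanishing of $Q\widetilde u_n$ being strong enough to pass inside the nonlinearity, and the uniform Gaussian envelope is exactly what makes the dominated-convergence argument in the $y$-variable work.
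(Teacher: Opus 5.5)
Your proposal is correct and follows essentially the same route as the paper: you project onto $\phi_0$, bound the source uniformly using the Gaussian envelope, extract a $C^{1,\gamma}_{\rm loc}$ limit that is nontrivial by the Harnack lower bound, and obtain a.e.\ convergence $\widetilde u_n\to\phi_0v$ from the local vanishing of $Q\widetilde u_n$. You then apply dominated convergence to get $h_n\to c_\alpha v^{\alpha-1}$ in $L^q_{\rm loc}$ and upgrade to $W^{2,q}_{\rm loc}$ via the difference equation. Your two-bracket splitting of the nonlinearity and your explicit remarks on the weak test $\phi_0\psi$ and on radiality only make explicit details the paper leaves implicit.
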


\begin{proof}
Projecting \eqref{eq:20260617-1054} onto $\phi_0$ gives
\begin{equation}\label{eq:20260617-1841}
-\Delta_\zeta v_n+v_n
=\int_{\mathbb{R}^d}\phi_0(y)F_{\mu_n^{\frac{1}{\alpha-2}}}(\widetilde{u}_n(y,\zeta))\,\mathrm{d}y .
\end{equation}
By Lemma \ref{lemma:20260618-1129} and \eqref{eq:20260617-1846}, there exists $C>0$ such that
\begin{equation}\label{eq:20260617-1847}
F_{\mu_n^{\frac{1}{\alpha-2}}}(\widetilde{u}_n(y,\zeta))\leq C \widetilde{u}_{n}^{\alpha-1}(y,\zeta), \quad (y,\zeta)\in \mathbb{R}^d\times \mathbb{R}^m.
\end{equation}
Fix $\theta\in (0,1)$, by \eqref{eq:20260617-1313} and $M_n=O(\mu_n^{\frac1{\alpha-2}})$, we further have the domination
\begin{equation}\label{eq:20260617-1854}
F_{\mu_n^{\frac{1}{\alpha-2}}}(\widetilde{u}_n(y,\zeta))\leq  CA_{\theta}^{\alpha-1} e^{-\frac{\theta(\alpha-1)}{2}|y|^2}, \quad (y,\zeta)\in \mathbb{R}^d\times \mathbb{R}^m.
\end{equation}
Thus the right-hand side of \eqref{eq:20260617-1841} is uniformly bounded
in $L^\infty(\mathbb{R}^m)$.  Interior $W^{2,q}$ estimates and compact
embedding therefore give, up to a subsequence,
$$
 v_n\to v\qquad\hbox{in }C^{1,\gamma}_{\mathrm{loc}}(\mathbb{R}^m)
 \quad\hbox{for every }0<\gamma<1.
$$
The lower bound $v_n(0)\ge c$ proved above gives $v\not\equiv0$.

Finally, by Lemma~\ref{lem:local-vanishing-orthogonal-component},
$$\widetilde{u}_n-\phi_0v_n=r_n\to0\qquad\hbox{in }L^2_{\mathrm{loc}}(\mathbb{R}^N).$$
Thus, after passing to a further subsequence,
\begin{equation}\label{eq:20260617-1901}
\widetilde{u}_n(y,\zeta)\to\phi_0(y)v(\zeta)
\qquad\hbox{a.e. in}~\mathbb{R}^d\times\mathbb{R}^m.
\end{equation}
Using \eqref{eq:20260617-1903}, \eqref{eq:20260617-1901}, and the dominated
convergence theorem first in $y$ and then on compact subsets of the
$\zeta$-space, we obtain, for every $1<q<+\infty$,
\begin{equation}\label{eq:20260617-1940}
\int_{\mathbb{R}^d}\phi_0(y)F_{\mu_n^{\frac{1}{\alpha-2}}}(\widetilde{u}_n(y,\zeta))\,\mathrm{d}y
\to
\mu_0\int_{\mathbb{R}^d}\phi_0(y)\bigl(\phi_0(y)v(\zeta)\bigr)^{\alpha-1}\,\mathrm{d}y
=c_\alpha v(\zeta)^{\alpha-1}
\quad\hbox{in }L^q_{\mathrm{loc}}(\mathbb{R}^m).
\end{equation}
The interior estimate for the difference of the equations now improves the
convergence to $W^{2,q}_{\mathrm{loc}}$ for every finite $q>1$, and $v$
satisfies \eqref{eq:20260617-1837}.
\end{proof}

\begin{lemma}[Uniform smallness and exponential decay of the projected profiles]
\label{lem:uniform-tail-vn}
The sequence $\{v_n\}$ satisfies
\begin{equation}\label{eq:20260617-1911}
\lim_{R\to+\infty}\sup_n\sup_{|\zeta|\geq R}v_n(\zeta)=0.
\end{equation}
Consequently, for every $\sigma\in(0,1)$, there exists $C>0$ such that
\begin{equation}\label{eq:20260819-1126}
v_n(\zeta)\le C e^{-\sigma|\zeta|}\qquad\hbox{for all}~\zeta\in\mathbb{R}^m
\end{equation}
and all sufficiently large $n$.
\end{lemma}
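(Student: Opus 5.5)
The plan is to prove the uniform smallness \eqref{eq:20260617-1911} first, by contradiction through a Hamiltonian argument in the rescaled free variable, modelled on Proposition~\ref{prop:uniform-z-tail}. The exponential bound \eqref{eq:20260819-1126} will then follow from an exterior comparison in $\mathbb{R}^m$.

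\emph{Step 1 (radial monotonicity).} By Theorem~A, $\widetilde u_n$ is nonincreasing in $|\zeta|$. Since $\phi_0>0$, the projection $v_n$ is radial and nonincreasing in $|\zeta|$. Hence $\sup_{|\zeta|\ge R}v_n=v_n(R e_1)$, and it suffices to show $\sup_n v_n(R)\to0$ as $R\to\infty$.

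\emph{Step 2 (contradiction setup).} Suppose $v_n(R_n)\ge\varepsilon_0>0$ along a subsequence with $R_n\to\infty$. The Gaussian bound $\widetilde u_n\le Ce^{-\theta|y|^2/2}$ and $\|\widetilde u_n\|_\infty\le C$ hold. Choose $\varepsilon<\varepsilon_0$ so small that $c_\alpha s^{\alpha-1}\le s/2$ for $0\le s\le\varepsilon$. Since $v_n$ decays (it is a fixed-$n$ function in $L^2$ with the decay of Proposition~\ref{prop:uniform-z-exponential-decay}), pick the first radius $r_n\ge R_n$ with $v_n(r_n)=\varepsilon$.

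\emph{Step 3 (limit profile).} Translate: $W_n(y,t):=\widetilde u_n(y,(r_n+t)e_1)$ and $\nu_n(t)=v_n(r_n+t)$. The projected equation \eqref{eq:20260617-1841} is radial in $\zeta$:
\[
-\nu_n''-\tfrac{m-1}{r_n+t}\nu_n'+\nu_n=h_n(t),
\]
with a uniformly bounded right-hand side. Hence $\nu_n\to\nu$ in $C^1_{\rm loc}(\mathbb{R})$. Lemma~\ref{L:projected-Q-compactness} is applied on slabs around $\zeta=r_ne_1$ (its local estimate is translation uniform, since all bounds are uniform in $\zeta$). It gives $QW_n\to0$ in $L^2_{\rm loc}$, so $W_n\to\phi_0\nu$. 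The source then converges to $c_\alpha\nu^{\alpha-1}$, exactly as in Lemma~\ref{lem:identification-local-limit}. Thus $\nu$ solves the one-dimensional ODE $-\nu''+\nu=c_\alpha\nu^{\alpha-1}$, with $\nu'\le0$, $\nu(0)=\varepsilon$ and $0\le\nu\le\varepsilon$ on $[0,\infty)$.

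\emph{Step 4 (Hamiltonian exclusion).} The limits $\nu_\pm$ exist by monotonicity and are constant solutions. The choice of $\varepsilon$ forces $\nu_+=0$. The other limit satisfies $\nu_-\ge\varepsilon$ and is a positive constant root of $\nu=c_\alpha\nu^{\alpha-1}$, namely $\nu_-=c_\alpha^{-1/(\alpha-2)}$. Now apply Lemma~\ref{L:common-cylinder-Hamiltonian} with $k=0$, $\mathcal V=1$ and $f(s)=c_\alpha s^{\alpha-1}$. Then
\[
J(\nu_-)=\tfrac12\nu_-^2-\tfrac{c_\alpha}{\alpha}\nu_-^\alpha=\Bigl(\tfrac12-\tfrac1\alpha\Bigr)\nu_-^2>0,
\]
which is a contradiction. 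Here the pure-power structure replaces \ref{gcon4}. This proves \eqref{eq:20260617-1911}.

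\emph{Step 5 (exponential decay).} Choose $R$ so large that $c_\alpha$-type sources are small beyond $R$ for all large $n$. The pointwise control $F(\widetilde u_n)\le C\widetilde u_n^{\alpha-1}$ gives $\int\phi_0F(\widetilde u_n)\,dy\le \delta v_n$ for $|\zeta|\ge R$; this only requires $\widetilde u_n\le C\phi_0v_n$ plus an error, see the main obstacle below. Then $-\Delta v_n+(1-\delta)v_n\le0$ on $\{|\zeta|>R\}$, and Lemma~\ref{L:common-exterior-barrier}(iii) with $a=1-\delta>\sigma^2$ gives \eqref{eq:20260819-1126}.

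\emph{Main obstacle.} The hard step is comparing the nonlinearity with $v_n$ in Step 5, since $\widetilde u_n$ need not be pointwise dominated by $\phi_0 v_n$. I would first try the following. By Step 4's analogue applied to $\sup_y\widetilde u_n$ (a Harnack argument on unit balls, as in the proof of Proposition~\ref{prop:sharp-amplitude-scale-lower-edge}), show that $\sup_{y}\widetilde u_n(y,\zeta)\to0$ uniformly for $|\zeta|\ge R$. Then bound
\[
F(\widetilde u_n)\le C\Bigl(\sup_y\widetilde u_n\Bigr)^{\alpha-2}\widetilde u_n,
\]
so that the projected source is at most $\delta\,v_n$, using $\int\phi_0\widetilde u_n\,dy=v_n$. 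This works because $\widetilde u_n\ge0$ and $\phi_0>0$, which makes the bound $\int\phi_0\,\delta\,\widetilde u_n\,dy=\delta v_n$ exact.
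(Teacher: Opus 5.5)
Your proposal follows the paper's proof essentially step for step: the first-crossing radius, the $Q$-compactness lemma on translated windows, the one-dimensional limit $-\nu''+\nu=c_\alpha\nu^{\alpha-1}$ with the $k=0$ Hamiltonian exclusion, and then the exponential decay. That decay comes from the Harnack bound $\widetilde u_n(y,\zeta)\le\widetilde u_n(0,\zeta)\le C v_n(\zeta)$; this uses that $y$ is not rescaled and that $\widetilde u_n$ is nonincreasing in $|y|$, and it is exactly what makes $F(\widetilde u_n)\le\eta\,\widetilde u_n$ and hence the projected source at most $\eta v_n$ for $|\zeta|\ge R$.

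One small correction: for fixed $n$, the decay of $v_n$ should be justified only by $v_n\in L^2$ together with radial monotonicity, not by Proposition~\ref{prop:uniform-z-exponential-decay}, since that proposition needs \ref{gcon4}, which is not assumed here.
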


\begin{proof}
By \textbf{Theorem~A} and the centering convention, each $u_n$ is
radially symmetric and nonincreasing with respect to $|z|$.
The rescaling \eqref{eq:20260617-1823} preserves these
properties in the free variable $\zeta$. Since $\phi_0>0$,
the projected function $v_n$ (defined by \eqref{eq:20260617-1826})
is therefore radial and nonincreasing in $|\zeta|$.

We argue by contradiction. Suppose that \eqref{eq:20260617-1911} fails. Then there exist $\varepsilon_0>0$,
a subsequence still denoted by $v_n$, and $R_n\to+\infty$ such that
$$v_n(R_n)\geq\varepsilon_0.$$

Moreover, $\phi_0 v_n=P\widetilde{u}_n\in L^2(\mathbb{R}^N)$, which implies that $v_n\in L^2(\mathbb{R}^m)$.
Hence,
$$v_n(r)\to0\qquad\hbox{as }r\to+\infty.$$
 Choose $0<\varepsilon<\varepsilon_0$ so small that
$$c_\alpha \varepsilon^{\alpha-2}<1.$$
Choose $\rho_n\ge R_n$ such that
$$
 v_n(\rho_n)=\varepsilon,\qquad
 v_n(r)\le\varepsilon\quad(r\ge\rho_n),
$$
and define
$$
 V_n(\tau):=v_n(\rho_n+\tau),\qquad
 \tau\in(-\rho_n,+\infty).
$$
The radial form of \eqref{eq:20260617-1841} is
\begin{equation}\label{eq:20260617-1921}
-v_n''(r)-\frac{m-1}{r}v_n'(r)+v_n(r)
=\int_{\mathbb{R}^d}\phi_0(y)F_{\mu_n^{\frac{1}{\alpha-2}}}(\widetilde{u}_n(y,r))\,\mathrm{d}y .
\end{equation}
Fix $\omega\in\mathbb S^{m-1}$ and set
$$
 \mathcal U_n(y,\tau):=\widetilde u_n(y,(\rho_n+\tau)\omega),
 \qquad F_n:=F_{\mu_n^{1/(\alpha-2)}}.
$$
Substituting $r=\rho_n+\tau$ in \eqref{eq:20260617-1921} gives
$$
 -V_n''-\frac{m-1}{\rho_n+\tau}V_n'+V_n
 =\int_{\mathbb R^d}\phi_0(y)F_n(\mathcal U_n(y,\tau))\,\mathrm dy.
$$
Both $V_n$ and the right-hand side are uniformly bounded, by
\eqref{eq:20260617-1846} and Lemma~\ref{lemma:20260618-1129}.
Since $\rho_n\to\infty$, interior estimates give local $W^{2,q}$ bounds
for every finite $q>1$.  Compact embedding therefore yields, after extraction,
$$
 V_n\to V\quad\hbox{in }C^1_{\rm loc}(\mathbb R).
$$

To identify the nonlinear limit, we next control the orthogonal component
on these translated intervals.  Write
$$
 \mathcal R_n:=Q\mathcal U_n,\qquad
 \mathcal U_n=\phi_0(y)V_n(\tau)+\mathcal R_n,
 \qquad P\mathcal R_n=0.
$$
The translated form of \eqref{eq:20260617-1054} is
$$
 (H_y-d)\mathcal U_n+\mu_n\left(
 -\partial_{\tau\tau}\mathcal U_n
 -\frac{m-1}{\rho_n+\tau}\partial_\tau\mathcal U_n+\mathcal U_n\right)
 =\mu_nF_n(\mathcal U_n).
$$
Apply Lemma~\ref{L:projected-Q-compactness} with
$$
 \varepsilon_n=\mu_n,\qquad
 B_n(\tau)=-\frac{m-1}{\rho_n+\tau},\qquad
 c_n=1,\qquad h_n=F_n(\mathcal U_n).
$$
Indeed, $B_n\to0$ in $W^{1,\infty}_{\rm loc}(\mathbb R)$, while the
Gaussian bound $0\le\mathcal U_n\le Ce^{-\theta|y|^2/2}$ and
Lemma~\ref{lemma:20260618-1129} verify
\eqref{E:abstract-Q-local-bounds}.  Thus
\begin{equation}\label{E:translated-Q-tail-estimate}
 \int_{\mathbb R^d\times I}|\mathcal R_n|^2\,\mathrm dy\,\mathrm d\tau
 \le C_I\mu_n\longrightarrow0
 \qquad (I\subset\subset\mathbb R).
\end{equation}
The uniform Lipschitz bound for $F_n$ on the bounded range of
$\mathcal U_n$ and $\phi_0V_n$, together with $\|\phi_0\|_2=1$, gives
\begin{align*}
 &\left\|\int_{\mathbb R^d}\phi_0
       \bigl[F_n(\mathcal U_n)-F_n(\phi_0V_n)\bigr]\,\mathrm dy
       \right\|_{L^2(I)}\\
 &\qquad\le C\|\mathcal R_n\|_{L^2(\mathbb R^d\times I)}
       \le C_I\sqrt{\mu_n}\longrightarrow0.
\end{align*}
Moreover, the uniform convergence of $F_n(s)$ to $\mu_0s^{\alpha-1}$
on bounded intervals, $V_n\to V$ locally uniformly, and
$\phi_0\in L^1(\mathbb R^d)$ imply
$$
 \int_{\mathbb R^d}\phi_0F_n(\phi_0V_n)\,\mathrm dy
 \longrightarrow
 \mu_0\left(\int_{\mathbb R^d}\phi_0^\alpha\,\mathrm dy\right)
 V^{\alpha-1}=c_\alpha V^{\alpha-1}
$$
uniformly on $I$.  Hence the right-hand side of the equation for $V_n$
converges to $c_\alpha V^{\alpha-1}$ in $L^2(I)$ and, by its uniform
boundedness, in every finite $L^q(I)$.  Returning to that equation gives
$V_n\to V$ in $W^{2,q}_{\rm loc}$ for every finite $q>1$ and
\begin{equation}\label{eq:20260617-1922}
 -V''+V=c_\alpha V^{\alpha-1}\qquad\hbox{in }\mathbb R,
\end{equation}
with
$$
 V(0)=\varepsilon,\qquad
 0\le V(\tau)\le\varepsilon\quad(\tau\ge0),\qquad V'\le0.
$$

The endpoint limits $V_\pm=\lim_{\tau\to\pm\infty}V(\tau)$ exist.
Equation \eqref{eq:20260617-1922} makes $V''$ bounded; hence $V'$ is
uniformly continuous.  Since $-V'\in L^1(\mathbb{R})$, one has
$V'(\tau)\to0$ at both endpoints, and translation in $\tau$ yields
$$
 V_\pm=c_\alpha V_\pm^{\alpha-1}.
$$
Our choice of $\varepsilon$ gives $V_+=0$, whereas
$V_-\ge V(0)=\varepsilon>0$.  For the scalar stationary functional
$$
 J(s)=\frac{1}{2}s^2-\frac{c_\alpha}{\alpha}s^\alpha
$$
the equilibrium identity gives
$$
 J(V_-)=\left(\frac{1}{2}-\frac1\alpha\right)V_-^2>0.
$$
Lemma~\ref{L:common-cylinder-Hamiltonian}, with
$k=0$, $\mathcal V=1$, and $f(s)=c_\alpha s^{\alpha-1}$, now gives a
contradiction.  Thus \eqref{eq:20260617-1911} holds.
We also compare the value at $y=0$ with its $\phi_0$-average.  Apply the local Harnack inequality in balls centered at $(0,z)$ with bounded $y$-radius and use $u_n\leq M_n=O(\mu_n^{1/(\alpha-2)})$.  There exist $C,\rho>0$, independent of $n$ and $z$ (due to the translation invariance of the equation in $z$), such that
\begin{equation}\label{eq:20260617-1942-local}
        u_n(y,z)\ge C u_n(0,z)
        \qquad\hbox{for } |y|\le\rho .
\end{equation}
After the lower-edge scaling this gives
\begin{equation}\label{eq:20260617-1942}
\widetilde{u}_n(y,\zeta)\geq C \widetilde{u}_n(0,\zeta)~\hbox{for}~|y|\leq \rho.
\end{equation}
Thus,
\begin{align*}
v_n(\zeta):=&\int_{\mathbb{R}^d}\widetilde{u}_n(y,\zeta)\phi_0(y)\,\mathrm{d}y\\
\geq&\int_{B_\rho(0)}\widetilde{u}_n(y,\zeta)\phi_0(y)\,\mathrm{d}y\\
\geq& C_1 \widetilde{u}_n(0,\zeta)\int_{B_\rho(0)}\phi_0(y)\,\mathrm{d}y\\
\geq&C_2 \widetilde{u}_n(0,\zeta),
\end{align*}
that is,
\begin{equation}\label{eq:20260617-1942-average}
\widetilde{u}_n(0,\zeta)\leq C v_n(\zeta).
\end{equation}
Then by the monotonicity in $|y|$ given by \textbf{Theorem~A}, we have
\begin{equation}\label{eq:20260617-2003}
0\leq \widetilde{u}_n(y,\zeta)\leq C v_n(\zeta).
\end{equation}
By \eqref{eq:20260617-1847}, \eqref{eq:20260617-2003} and \eqref{eq:20260617-1911},
for any $\eta>0$ small, there exists $R_0$ large enough, such that
\begin{equation}\label{eq:20260617-2003-small}
F_{\mu_n^{\frac{1}{\alpha-2}}}(\widetilde{u}_n)\leq \eta \widetilde{u}_n\qquad\hbox{for}~|\zeta|\geq R_0.
\end{equation}
Recalling \eqref{eq:20260617-1841}, we arrive at
$$-\Delta v_n+v_n\le \eta v_n\qquad\hbox{for}~|\zeta|\geq R_0.$$
Equivalently,
$$-\Delta v_n+(1-\eta)v_n\leq 0\qquad\hbox{for}~|\zeta|\geq R_0.$$
Choosing $0<\sigma<\sqrt{1-\eta}$ and comparing with $e^{-\sigma(|\zeta|-R_0)}$, we obtain
$$v_n(\zeta)\le Ce^{-\sigma(|\zeta|-R_0)}\qquad\hbox{for}~|\zeta|\geq R_0.$$
After changing the constant, and by the arbitrariness of $\eta>0$ small, we obtain the desired exponential decay.
\end{proof}

\begin{proposition}[Global $L^2$ convergence and strong convergence of the projected profiles]
\label{prop:global-L2-convergence-rescaled}
Up to a subsequence,
$$\widetilde{u}_n\to \phi_0(y)v(\zeta)\qquad\hbox{strongly in}~L^2(\mathbb{R}^N).$$
Consequently,
$$r_n=Q\widetilde{u}_n\to0\qquad\hbox{strongly in}~L^2(\mathbb{R}^N),$$
and
$$v_n\to v\qquad\hbox{strongly in}~H^1(\mathbb{R}^m).$$
\end{proposition}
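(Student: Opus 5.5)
The plan is to upgrade the local convergence of Lemmas~\ref{lem:local-vanishing-orthogonal-component} and \ref{lem:identification-local-limit} to global $L^2$ convergence. I would do this with a uniform product envelope for $\widetilde u_n$, and then obtain the $H^1$ convergence of $v_n$ from the projected equation \eqref{eq:20260617-1841}.

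\emph{Step 1: a common integrable envelope.} By \eqref{eq:20260617-2003} and monotonicity in $|y|$ we have $0\le\widetilde u_n(y,\zeta)\le Cv_n(\zeta)$. The Gaussian bound from the proof of Lemma~\ref{lem:local-vanishing-orthogonal-component} gives $\widetilde u_n\le Ce^{-\theta|y|^2/2}$. Taking the geometric mean of these two bounds and using \eqref{eq:20260819-1126}, I expect, for large $n$,
$$
 0\le\widetilde u_n(y,\zeta)\le C e^{-\theta|y|^2/4-\sigma|\zeta|/2}.
$$
The limit $\phi_0 v$ obeys the same envelope, since $v\le Ce^{-\sigma|\zeta|}$ follows by passing to the limit in \eqref{eq:20260819-1126}. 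Along a subsequence we already have $\widetilde u_n\to\phi_0 v$ a.e., by \eqref{eq:20260617-1901}. Lemma~\ref{L:envelope-global-compactness}, or directly dominated convergence, then gives $\widetilde u_n\to\phi_0v$ in $L^q(\mathbb R^N)$ for every finite $q$, in particular in $L^2$.

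\emph{Step 2: the components.} $P$ and $Q$ are $L^2$-contractions acting in $y$, and $Q(\phi_0 v)=0$. Hence $r_n=Q(\widetilde u_n-\phi_0v)\to0$ in $L^2(\mathbb R^N)$. Similarly $\|v_n-v\|_{L^2(\mathbb R^m)}=\|P(\widetilde u_n-\phi_0v)\|_{L^2(\mathbb R^N)}\to0$.

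\emph{Step 3: $H^1$ convergence of $v_n$.} I would subtract the equations \eqref{eq:20260617-1841} and \eqref{eq:20260617-1837} and test the difference with $v_n-v$. This gives
$$
 \|v_n-v\|_{H^1(\mathbb R^m)}^2=\int_{\mathbb R^m}(h_n-c_\alpha v^{\alpha-1})(v_n-v)\,\mathrm d\zeta,
 \qquad h_n:=\int_{\mathbb R^d}\phi_0F_{\mu_n^{1/(\alpha-2)}}(\widetilde u_n)\,\mathrm dy .
$$
It therefore suffices to show $h_n\to c_\alpha v^{\alpha-1}$ in $L^2(\mathbb R^m)$. The $L^q_{\mathrm{loc}}$ convergence is \eqref{eq:20260617-1940}. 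For the tails, \eqref{eq:20260617-1847} and the envelope give the domination $h_n\le Ce^{-(\alpha-1)\sigma|\zeta|/2}$, which is uniform in $n$ and square integrable; dominated convergence then yields global convergence. Testing is legitimate because $v_n,v\in H^1$: we have $v_n\in H^1$ since $\phi_0v_n=P\widetilde u_n$ with $\widetilde u_n\in H^1$, and $v\in H^1$ by exponential decay and elliptic estimates.

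\emph{Main obstacle.} The delicate point is Step~1. The estimate \eqref{eq:20260819-1126} is stated only for large $n$, and the bound $\widetilde u_n\le Cv_n$ relies on the uniform Harnack comparison \eqref{eq:20260617-1942}. I would double-check that the Harnack constant is uniform in $\zeta$ after rescaling: the rescaling stretches $z$, but Harnack is applied before rescaling, in unit balls, where the potential is bounded. If that step were doubtful, I would instead use the Gaussian bound in $y$ together with tightness in $\zeta$ obtained from \eqref{eq:20260819-1126} applied to $P\widetilde u_n$. The $L^2$ tail of $Q\widetilde u_n$ for $|\zeta|>R$ would then be controlled by testing the $Q$-projected equation with a cutoff supported in $|\zeta|>R$, as in Lemma~\ref{L:projected-Q-compactness}.
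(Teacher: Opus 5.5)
Your proposal is correct and follows the paper's proof: the same product envelope $\widetilde u_n\le Ce^{-\theta|y|^2/4-\sigma|\zeta|/2}$, obtained from \eqref{eq:20260617-2003}, the Gaussian bound and \eqref{eq:20260819-1126} via $\min\{a,b\}\le\sqrt{ab}$, then Lemma~\ref{L:envelope-global-compactness} and projection by $P$ and $Q$. The only difference is the $H^1$ step: the paper shows $\|v_n\|_{H^1}\to\|v\|_{H^1}$ by testing \eqref{eq:20260617-1841} by $v_n$ and using \eqref{eq:20260617-1837}, then combines this with weak convergence, whereas you test the difference equation by $v_n-v$, which is equally valid and needs only $L^2$-boundedness of $h_n$ because $\|v_n-v\|_2\to0$ is already known.
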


\begin{proof}
Recalling Lemma \ref{lem:local-vanishing-orthogonal-component}, we already know that
$$
\widetilde{u}_n\to \phi_0v
\qquad\hbox{in }L^2(\mathbb{R}^d\times K)~\hbox{for every compact set $K\subset\subset\mathbb{R}^m$}.
$$
The Gaussian estimate and \eqref{eq:20260617-2003} give
\begin{equation}\label{eq:20260617-2037}
 \widetilde u_n(y,\zeta)
 \le\min\{Cv_n(\zeta),C_\theta e^{-\theta|y|^2/2}\}.
\end{equation}
By Lemma~\ref{lem:uniform-tail-vn},
\begin{equation}\label{eq:20260617-2111}
 v_n(\zeta)\le Ce^{-\sigma|\zeta|}.
\end{equation}
Passing to the local limit gives
\begin{equation}\label{eq:20260617-2048}
 v(\zeta)\le Ce^{-\sigma|\zeta|}.
\end{equation}
Using $\min\{a,b\}\le\sqrt{ab}$, we obtain the common product envelope
$$
 \widetilde u_n(y,\zeta)
 \le C e^{-\theta|y|^2/4-\sigma|\zeta|/2}.
$$
Lemma~\ref{L:envelope-global-compactness}, applied to the local $L^2$
convergence above, now gives
$$
 \widetilde u_n\to\phi_0v\qquad\hbox{strongly in }L^2(\mathbb{R}^N).
$$

Applying $P$ to the $L^2$ convergence gives $\phi_0v_n\to\phi_0v$ in $L^2(\mathbb{R}^N)$, equivalently,
$$v_n\rightarrow v~\hbox{in}~L^2(\mathbb{R}^m).$$
Similarly, applying $Q$ gives
$$r_n\rightarrow 0~\hbox{in}~L^2(\mathbb{R}^N).$$

Recalling \eqref{eq:20260617-1854}, \eqref{eq:20260617-2111}, and
\eqref{eq:20260617-1903}, the dominated convergence theorem yields
\begin{equation}\label{eq:20260617-2114}
\int_{\mathbb{R}^m}\left(\int_{\mathbb{R}^d}\phi_0(y)F_{\mu_n^{\frac{1}{\alpha-2}}}(\widetilde{u}_n(y,\zeta))\,\mathrm{d}y\right) v_n(\zeta)\mathrm{d}\zeta \rightarrow c_\alpha \|v\|_{L^\alpha(\mathbb{R}^m)}^{\alpha}.
\end{equation}
By \eqref{eq:20260617-2048} and \eqref{eq:20260617-1837}, we have
\begin{equation}\label{eq:20260617-2118}
c_\alpha \|v\|_{L^\alpha(\mathbb{R}^m)}^{\alpha}=\|v\|_{H^1(\mathbb{R}^m)}^{2}.
\end{equation}
Testing \eqref{eq:20260617-1841} by $v_n$ gives
\begin{equation}\label{eq:20260617-2120}
\|v_n\|_{H^1(\mathbb{R}^m)}^{2}=\int_{\mathbb{R}^m}\left(\int_{\mathbb{R}^d}\phi_0(y)F_{\mu_n^{\frac{1}{\alpha-2}}}(\widetilde{u}_n(y,\zeta))\,\mathrm{d}y\right) v_n(\zeta)\mathrm{d}\zeta.
\end{equation}
After passing to a subsequence, $v_n\to v$ almost everywhere in
$\mathbb{R}^m$.  Equations \eqref{eq:20260617-2114},
\eqref{eq:20260617-2118}, and \eqref{eq:20260617-2120}, together with the
$L^2$ convergence, therefore give
$$v_n\rightarrow v~\hbox{strongly in}~H^1(\mathbb{R}^m).$$

The proof is complete.
\end{proof}

\begin{proposition}[Strong $H^1$ convergence of the rescaled profiles]
\label{prop:strong-H1-convergence-rescaled}
Under the assumptions of Proposition~\ref{prop:global-L2-convergence-rescaled}, we have
$$
\widetilde{u}_n\to \phi_0(y)v(\zeta)
\qquad\hbox{strongly in }H^1(\mathbb{R}^d\times\mathbb{R}^m).
$$
More precisely,
$$
\nabla_y\widetilde{u}_n\to \nabla_y\bigl(\phi_0v\bigr)
\qquad\hbox{in }L^2(\mathbb{R}^d\times\mathbb{R}^m),
$$
and
$$
\nabla_\zeta\widetilde{u}_n\to \nabla_\zeta\bigl(\phi_0v\bigr)
\qquad\hbox{in }L^2(\mathbb{R}^d\times\mathbb{R}^m).
$$
\end{proposition}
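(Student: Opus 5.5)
We prove the statement by an energy identity: combine the already known $L^2$ convergence (Proposition~\ref{prop:global-L2-convergence-rescaled}) with convergence of the quadratic form of the rescaled equation. Since $H^1$ is a Hilbert space, weak convergence plus convergence of the relevant norms gives strong convergence. We first fix the weak limits. By \eqref{E:Xdef} and the rescaling, $\widetilde u_n$ lies in $H^1$. Testing \eqref{eq:20260617-1054} by $\widetilde u_n$ gives
$$
\int\bigl(|\nabla_y\widetilde u_n|^2+(|y|^2-d)\widetilde u_n^2\bigr)
+\mu_n\int\bigl(|\nabla_\zeta\widetilde u_n|^2+\widetilde u_n^2\bigr)
=\mu_n\int F_{\mu_n^{1/(\alpha-2)}}(\widetilde u_n)\widetilde u_n .
$$
Split $\widetilde u_n=\phi_0v_n+r_n$. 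Then the first integral equals $\langle (H_y-d)r_n,r_n\rangle\ge 2\|r_n\|_2^2$, and the $\zeta$-gradient term splits orthogonally as $\|\nabla_\zeta v_n\|_2^2+\|\nabla_\zeta r_n\|_2^2$, since $Q$ commutes with $\nabla_\zeta$.

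\textbf{Step 1: the $\zeta$-derivatives.} Subtract from the identity above $\mu_n$ times the tested projected equation \eqref{eq:20260617-2120}. The $P$-parts cancel exactly and we are left with
$$
\langle (H_y-d)r_n,r_n\rangle+\mu_n\bigl(\|\nabla_\zeta r_n\|_2^2+\|r_n\|_2^2\bigr)
=\mu_n\int Q\bigl[F_n(\widetilde u_n)\bigr]\, r_n .
$$
The right side is at most $\mu_n C\|r_n\|_2$, using the Gaussian/exponential envelope \eqref{eq:20260617-1854} together with \eqref{eq:20260617-2111}, which give $\|F_n(\widetilde u_n)\|_2\le C$. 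Dividing by $\mu_n$ yields $\|\nabla_\zeta r_n\|_2^2\le C\|r_n\|_2\to0$. Combined with $v_n\to v$ in $H^1(\mathbb R^m)$ from Proposition~\ref{prop:global-L2-convergence-rescaled}, this gives $\nabla_\zeta\widetilde u_n\to\nabla_\zeta(\phi_0 v)$ in $L^2$.

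\textbf{Step 2: the $y$-derivatives.} The same identity gives the sharper bound $\langle (H_y-d)r_n,r_n\rangle\le C\mu_n\|r_n\|_2$. Since $H_y-d\ge0$ on $Q$-range with form domain $X_y$, this controls $\|\nabla_yr_n\|_2^2+\||y|r_n\|_2^2\le C(\mu_n+\|r_n\|_2^2)\to0$. Moreover $\nabla_y(\phi_0v_n)=\nabla\phi_0\, v_n\to\nabla\phi_0\, v$ in $L^2$ because $v_n\to v$ in $L^2$. Hence $\nabla_y\widetilde u_n\to\nabla_y(\phi_0v)$, and together with Step~1 and the $L^2$ convergence this proves strong $H^1$ convergence.

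\textbf{Main obstacle.} The delicate point is justifying the tests in both identities, in particular the projection of the nonlinear term in the subtracted identity. This requires $r_n$ in the form domain of $H_y$ uniformly in $\zeta$ and the cross terms $\int F_n(\widetilde u_n)\phi_0 v_n$ to be handled exactly, not merely in the limit. Both follow from $\widetilde u_n\in X$, which gives finite $|y|$-moments after rescaling, and from Fubini with the product envelope, as in Lemma~\ref{L:projected-Q-compactness}. So we would take the global version of that lemma's testing argument with $\eta\equiv1$, legitimate because all functions are in $X$ with the envelope.
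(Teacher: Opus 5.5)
Your argument is correct. It differs from the paper's only in how the $\zeta$-derivatives are handled; the $y$-part is the same as in the paper.

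\textbf{The paper's route.} It tests \eqref{eq:20260617-1054} by $\widetilde u_n$ and identifies the limit of $\int F_{\mu_n^{1/(\alpha-2)}}(\widetilde u_n)\widetilde u_n$ as $\mu_0\int(\phi_0v)^\alpha=\|v\|_{H^1(\mathbb R^m)}^2$, using dominated convergence on the product envelope. It then squeezes $\|\nabla_\zeta\widetilde u_n\|_2^2+\|\widetilde u_n\|_2^2$ between this limit and weak lower semicontinuity. This gives strong convergence of $\nabla_\zeta\widetilde u_n$ and $\langle (H_y-d)\widetilde u_n,\widetilde u_n\rangle=o(\mu_n)$ together. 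The $y$-derivatives are then handled through the oscillator gap, exactly as in your Step~2.

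\textbf{Your route.} You subtract the tested projected identity \eqref{eq:20260617-2120}. This amounts to testing the $Q$-equation by $r_n$, i.e.\ the global ($\eta\equiv1$) version of Lemma~\ref{L:projected-Q-compactness}. The trade-off is as follows:
\begin{itemize}
\item You need only a uniform bound on $\|F_{\mu_n^{1/(\alpha-2)}}(\widetilde u_n)\|_2$, not the limit of the nonlinear term.
\item In exchange, you must import $v_n\to v$ in $H^1(\mathbb R^m)$ from Proposition~\ref{prop:global-L2-convergence-rescaled}. The paper's squeeze does not need this for the $\zeta$-part.
\end{itemize}

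Your route is also quantitative. Combining your identity with the gap $\langle (H_y-d)r_n,r_n\rangle\ge2\|r_n\|_2^2$ gives $\|r_n\|_2\le C\mu_n$, $\|\nabla_\zeta r_n\|_2\le C\mu_n^{1/2}$ and $\langle (H_y-d)r_n,r_n\rangle\le C\mu_n^2$. These mirror \eqref{E:eta-small-conclusion-sec7}, whereas the paper records only $o(\mu_n)$.

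Two minor remarks. What you flag as the main obstacle is not a real difficulty: both tested identities are already legitimate for functions in $X$, and the subtraction is just algebra plus Fubini. Also, your opening sentence describes a weak-convergence-plus-norm argument, but what you actually carry out is a direct strong estimate on $r_n$; the framing should match the execution.
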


\begin{proof}
Write $U_n=\widetilde u_n$ and $U=\phi_0v$.  By
Proposition~\ref{prop:global-L2-convergence-rescaled}, $U_n\to U$ in $L^2$.
Testing the rescaled equation \eqref{eq:20260617-1054} by $U_n$ gives
\begin{equation}\label{eq:20260617-2140}
 \frac{A_n}{\mu_n}+B_n
 =\int_{\mathbb R^N}F_{\mu_n^{1/(\alpha-2)}}(U_n)U_n,
\end{equation}
where $A_n:=\langle(H_y-d)U_n,U_n\rangle\ge0$ and
$B_n:=\|\nabla_\zeta U_n\|_2^2+\|U_n\|_2^2$.
The common Gaussian--exponential envelope and the small-amplitude limit
of the nonlinearity imply, by dominated convergence,
$$
 \int F_{\mu_n^{1/(\alpha-2)}}(U_n)U_n
 \longrightarrow\mu_0\int U^\alpha
 =c_\alpha\int v^\alpha=\|v\|_{H^1(\mathbb R^m)}^2.
$$
Thus $(\nabla_\zeta U_n)$ is bounded in $L^2$, with weak limit
$\nabla_\zeta U$.  Weak lower semicontinuity and $A_n\ge0$ give
$$
 \|v\|_{H^1}^2\le\liminf B_n\le\limsup B_n\le\|v\|_{H^1}^2.
$$
Consequently $\nabla_\zeta U_n\to\nabla_\zeta U$ strongly in $L^2$
and $A_n=o(\mu_n)$.

To control the confined variables, decompose $U_n=\phi_0v_n+r_n$ with
$r_n=QU_n$.  Orthogonality and the oscillator gap yield
$$
 \|\nabla_y r_n\|_2^2+\||y|r_n\|_2^2
 \le\frac{d+2}{2}\langle(H_y-d)r_n,r_n\rangle
 =\frac{d+2}{2}A_n\longrightarrow0.
$$
Since $v_n\to v$ in $L^2$ and $\nabla_y\phi_0\in L^2$, it follows that
$\nabla_yU_n\to\nabla_y(\phi_0v)$ in $L^2$.  Together with the free-variable
convergence, this proves the assertion.
\end{proof}

\begin{proof}[Proof of Theorem~\ref{T:asymptotics}(i)]
Proposition~\ref{prop:strong-H1-convergence-rescaled} gives \eqref{eq:20260618-1027}.  Lemma~\ref{lem:identification-local-limit} identifies every local limit of $v_n$ as a positive radial solution of \eqref{E:Valpha}.  By the uniqueness of the scalar ground state, this limit is $V_\alpha$, and therefore the whole sequence satisfies \eqref{eq:20260618-1030}.  Proposition~\ref{prop:global-L2-convergence-rescaled} gives the corresponding global convergence.
Indeed, if the whole sequence failed to converge, a subsequence staying a
fixed positive distance from $\phi_0V_\alpha$ would possess a further subsequence
converging to that same profile, a contradiction.
Thus,
$$\phi_0 v_n\rightarrow \phi_0V_\alpha~\hbox{in}~H^1(\mathbb{R}^N),$$
and therefore
$$r_n=\widetilde{u}_n-\phi_0 v_n\rightarrow 0~\hbox{in}~H^1(\mathbb{R}^N),$$
which proves \eqref{eq:20260618-1031}.
The mass identity is a change of variables:
$$
        \norm{u_n}_2^2
        =\mu_n^{\frac{2}{\alpha-2}}\mu_n^{-m/2}
        \int_{\R^d\times\R^m}\widetilde{u}_n(y,\zeta)^2\mathrm{d}y\mathrm{d}\zeta.
$$
Using \eqref{eq:20260618-1030} and $\norm{\phi_0}_2=1$, we obtain \eqref{E:lower-mass} and \eqref{E:lower-limit}.
\end{proof}

\section{The full-dimensional high-frequency limit}\label{sec:Asymptotics-high}
\subsection{The sharp amplitude scale}

We first determine the sharp blow-up rate of positive solutions as $\lambda\to+\infty$.
\begin{proposition}[Sharp amplitude scale as $\lambda\to+\infty$]
\label{prop:sharp-amplitude-scale-infinity}
Assume \ref{gcon1} and \ref{gcon2}. Let $\lambda_n\to+\infty$ and let
$u_n\in \cK$ be a positive solution of
$$
-\Delta u_n+|y|^2u_n+\lambda_nu_n=g(u_n)
\qquad\hbox{in }\mathbb{R}^N.
$$
Set
$$
M_n:=\|u_n\|_{L^\infty(\mathbb{R}^N)}=u_n(0,0).
$$
Then there exist constants $c,C>0$, independent of $n$, such that
$$
c\lambda_n\le M_n^{\beta-2}\le C\lambda_n.
$$
Equivalently,
$$
0<c\le \lambda_n^{-\frac1{\beta-2}}M_n\le C<+\infty.
$$
\end{proposition}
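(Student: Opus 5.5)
The plan mirrors the lower-edge argument of Proposition~\ref{prop:sharp-amplitude-scale-lower-edge}. By \textbf{Theorem~A} and the centering, the maximum of $u_n$ is attained at the origin. We prove the two inequalities separately: the lower bound by an energy test, and the upper bound by an isotropic blow-up together with a Liouville theorem.

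\textbf{Lower bound.} First, $M_n\to+\infty$. Indeed, testing the equation by $u_n$ and using $|y|^2\ge0$ gives
$$\lambda_n\|u_n\|_2^2\le\int g(u_n)u_n\le \sup_{0<s\le M_n}\frac{g(s)}{s}\,\|u_n\|_2^2 .$$
Hence $\sup_{0<s\le M_n}g(s)/s\ge\lambda_n\to\infty$, which forces $M_n\to\infty$ because $g(s)/s$ is bounded on bounded sets. By \ref{gcon2}, once $M_n\ge1$,
$$\sup_{0<s\le M_n}\frac{g(s)}{s}\le C\bigl(1+M_n^{\beta-2}\bigr)\le C'M_n^{\beta-2}.$$
This yields $\lambda_n\le C'M_n^{\beta-2}$, which is the lower bound $M_n^{\beta-2}\ge c\lambda_n$.

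\textbf{Upper bound.} Suppose instead that $\kappa_n:=M_n^{\beta-2}$ satisfies $\lambda_n/\kappa_n\to0$ along a subsequence. Set
$$w_n(\xi):=M_n^{-1}u_n(\xi/\sqrt{\kappa_n}).$$
Then $0<w_n\le1$, $w_n(0)=1$, and
$$-\Delta w_n+\kappa_n^{-2}|Y|^2w_n+\frac{\lambda_n}{\kappa_n}w_n=F^{(\infty)}_{M_n}(w_n).$$
The potential term $\kappa_n^{-2}|Y|^2$ and the coefficient $\lambda_n/\kappa_n$ tend to $0$ locally uniformly, since $\kappa_n\to\infty$. Lemma~\ref{lemma:20260618-1129}(ii) gives $C^1$-convergence of $F^{(\infty)}_{M_n}$ to $\mu_\infty s^{\beta-1}$ on $[0,1]$. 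Interior $W^{2,p}$ and Schauder estimates, exactly as in Proposition~\ref{prop:L-infty}, then give $w_n\to w$ in $C^2_{\rm loc}(\mathbb R^N)$. The limit satisfies $0\le w\le 1$, $w(0)=1$, and $-\Delta w=\mu_\infty w^{\beta-1}$ in $\mathbb R^N$. Since $2<\beta<2_N^*$, this is excluded by \cite[Theorem~8.4]{QuittnerSouplet} for $N\le2$ and by \cite[Theorem~1.1]{GidasSpruck} for $N\ge3$. This contradiction gives $M_n^{\beta-2}\le C\lambda_n$.

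\textbf{Main obstacle.} The delicate point is uniformity of the rescaled nonlinearity. Here $M_n\to\infty$, so the large-amplitude regime of Lemma~\ref{lemma:20260618-1129}(ii) applies uniformly on $[0,1]$; this is why the lower bound, which establishes $M_n\to\infty$, must come first. One must also check that the confining term is harmless under the isotropic scaling. The coefficient of $|Y|^2$ after rescaling is $\kappa_n^{-2}$, which vanishes locally uniformly, in contrast with the lower edge, where the potential survives. No \ref{gcon3} or \ref{gcon4} is needed, consistent with the hypotheses of the statement.
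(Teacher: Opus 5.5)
Your proof is correct and is essentially the paper's argument: the upper bound uses the same isotropic blow-up $w_n(\xi)=M_n^{-1}u_n\bigl(M_n^{-(\beta-2)/2}\xi\bigr)$, the vanishing of $\kappa_n^{-2}|Y|^2$ and $\lambda_n/\kappa_n$, and the Gidas--Spruck and Quittner--Souplet Liouville theorems. The only difference is in the lower bound, where the paper evaluates the equation at the maximum point to get $\lambda_nM_n\le g(M_n)$ and you derive the equivalent $\lambda_n\le\sup_{0<s\le M_n}g(s)/s$ from the energy identity; also, in the upper-bound step $M_n\to\infty$ already follows from the contradiction hypothesis, so the ordering you emphasize is not actually needed.
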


\begin{proof}
We divide the proof into two steps.

\medskip
\noindent
\textbf{Step 1. Lower bound.}

By \textbf{Theorem~A} and the centering convention, the maximum is attained at the origin.
At this point $-\Delta u_n(0,0)\geq0$ and $|y|^2=0$; hence
\begin{equation}\label{eq:20260618-1106}
\lambda_n M_n\leq g(M_n).
\end{equation}
Since $\lambda_n\to+\infty$, this implies $g(M_n)/M_n\to+\infty$ and hence
$M_n\to+\infty$.  By \ref{gcon2}, $g(s)\leq Cs^{\beta-1}$ for all sufficiently
large $s$.  Thus \eqref{eq:20260618-1106} gives
$$M_n^{\beta-2}\geq c\lambda_n.$$

\medskip
\noindent
\textbf{Step 2. Upper bound.}

We argue by contradiction. Suppose that, up to a subsequence,
\begin{equation}\label{eq:20260618-1107}
\frac{M_n^{\beta-2}}{\lambda_n}\to+\infty.
\end{equation}
Set
$$
\rho_n:=M_n^{-\frac{\beta-2}{2}},\qquad
w_n(\xi):=M_n^{-1}u_n(\rho_n\xi),\qquad
\xi=(Y,Z)\in\mathbb{R}^d\times\mathbb{R}^m.
$$
Then $0<w_n\leq1$ and $w_n(0)=1$.
Moreover, $w_n$ satisfies
\begin{equation}\label{eq:20260618-1108}
-\Delta w_n+\rho_n^4|Y|^2w_n+\lambda_n\rho_n^2 w_n
=\frac{g(M_nw_n)}{M_n^{\beta-1}}\qquad\hbox{in}~\mathbb{R}^N.
\end{equation}
Since
$$\rho_n^4=M_n^{-2(\beta-2)}\to0$$
and, by \eqref{eq:20260618-1107},
$$\lambda_n\rho_n^2=\frac{\lambda_n}{M_n^{\beta-2}}\to0,$$
the lower order terms in \eqref{eq:20260618-1108} vanish locally uniformly.

By \ref{gcon2} and the global boundedness $0\leq w_n\le1$, the right-hand side of \eqref{eq:20260618-1108} is uniformly bounded in $L^\infty(\mathbb{R}^N)$. Thus interior $W^{2,p}$ estimates give compactness in
$C^{1,\gamma}_{\rm loc}$ for $p>N$.  Lemma~\ref{lemma:20260618-1129} gives
uniform convergence on $[0,1]$ of the rescaled nonlinearities and their first
derivatives, so the composed right-hand sides converge in local H\"older spaces.
The Schauder estimate then yields, up to a subsequence,
$$w_n\to w\qquad\hbox{in }C^{2,\gamma'}_{\mathrm{loc}}(\mathbb{R}^N)
\quad\hbox{for every }0<\gamma'<\gamma,$$
where
$$0\leq w\leq 1,~\hbox{and}~w(0)=1.$$

Passing to the limit in \eqref{eq:20260618-1108} and using Lemma \ref{lemma:20260618-1129}, we obtain
\begin{equation}\label{eq:20260618-1125}
-\Delta w=\mu_\infty w^{\beta-1}\qquad\hbox{in}~\mathbb{R}^N.
\end{equation}
Thus $w$ is a nontrivial bounded nonnegative entire solution of
\eqref{eq:20260618-1125}.  This is impossible by
\cite[Theorem~8.4]{QuittnerSouplet} when $N=1,2$ and by
\cite[Theorem~1.1]{GidasSpruck} when $N\ge3$, because
$2<\beta<2_N^*$.
Therefore \eqref{eq:20260618-1107} is impossible, and thus
$$M_n^{\beta-2}\leq C\lambda_n.$$
Combining the two estimates gives
$$c\lambda_n\leq M_n^{\beta-2}\leq C\lambda_n.$$
\end{proof}

Let $\widehat{u}_n$ be defined by \eqref{E:high-rescale}.  It satisfies
\begin{equation}\label{E:rescaled-high-eq}
        -\Delta \widehat{u}_n+\widehat{u}_n+\lambda_n^{-2}|Y|^2\widehat{u}_n
        =\frac{g(\lambda_n^{1/(\beta-2)}\widehat{u}_n)}
        {\lambda_n^{(\beta-1)/(\beta-2)}}=F_{\lambda_n^{1/(\beta-2)}}(\widehat{u}_n).
\end{equation}
By Proposition \ref{prop:sharp-amplitude-scale-infinity},
\begin{equation}\label{eq:20260618-1235}
0<c\leq \|\widehat{u}_n\|_{L^\infty(\mathbb{R}^N)}=\widehat{u}_n(0)\leq C<+\infty, \forall n\in \mathbb{N}.
\end{equation}
 By Lemma \ref{lemma:20260618-1129},
 \begin{equation}\label{eq:20260618-1150}
 F_{\lambda_n^{1/(\beta-2)}}(\widehat{u}_n)-\mu_\infty\widehat{u}_n^{\beta-1}\to 0~\hbox{uniformly in}~\mathbb{R}^{N}.
 \end{equation}

Elliptic estimates yield
$$\widehat{u}_n\rightarrow W~\hbox{in}~C_{\mathrm{loc}}^{2}(\mathbb{R}^N),$$
where $W$ is a nonnegative classical bounded solution of
$$-\Delta W+W=\mu_\infty W^{\beta-1}\quad\text{in}~\R^N.$$
Equation~\eqref{eq:20260618-1235} also gives $W\not\equiv 0$ and
$$W(0)=\max_{x\in \mathbb{R}^N}W(x)\geq c>0.$$
In particular, Lemma~\ref{lem:uniform-smallness-high-frequency} below gives
$$W(x)\to 0\qquad\hbox{as }|x|\to\infty.$$
Then standard comparison principle gives that $W$ decays exponentially to zero at infinity and thus $W\in H^1(\mathbb{R}^N)$. The moving-plane theorem of Gidas, Ni, and Nirenberg \cite{GidasNiNirenberg} then shows that $W$ is radial and strictly decreasing about some point.  Since $W(0)=\max W$, that point is the origin.  Kwong's uniqueness theorem \cite{Kwong} therefore gives
\begin{equation}\label{eq:W=Wbeta}
W=W_\beta.
\end{equation}

\begin{lemma}[Positive localized energy of scalar radial limits]\label{lem:positive-localized-energy-scalar}
Let $a,b>0$, $2<p<2_k^*$, and suppose that $W\geq0$ is a nontrivial bounded radial nonincreasing classical solution of
$$-\Delta W+aW=bW^{p-1}\quad\hbox{in}~\mathbb{R}^k.$$
For
$$J_R^a(W):=\frac{1}{2}\int_{B_R^k}\bigl(|\nabla W|^2+aW^2\bigr)\,\mathrm{d}x
        -\frac{b}{p}\int_{B_R^k}W^p\,\mathrm{d}x,$$
there exists $R_0>0$ such that $J_R^a(W)>0$ for all $R\geq R_0$.
\end{lemma}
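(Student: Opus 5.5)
We need to show that $J_R^a(W)>0$ for all large $R$. Since $W$ is radial nonincreasing, bounded and nonnegative, the limit $\ell:=\lim_{r\to\infty}W(r)\ge0$ exists. The plan splits into two cases according to whether $\ell$ is zero or not.

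\emph{Case $\ell>0$.} The translates $W(\cdot+re_1)$ converge locally in $C^2$ to the constant $\ell$, so $\ell$ is a positive equilibrium: $a\ell=b\ell^{p-1}$. Since $W$ is nonincreasing, $W\ge\ell$ everywhere. The radial ODE $-(r^{k-1}W')'=r^{k-1}(bW^{p-1}-aW)$ gives $-(r^{k-1}W')'\ge0$, because $bs^{p-1}-as\ge0$ for $s\ge\ell$. Hence $r^{k-1}W'$ is nonincreasing, and it vanishes at $0$. Boundedness then forces $W'$ to behave well, and standard ODE reasoning (integrating, together with the limit $\ell$) shows that $W'\to0$ and $\int_{B_R}|\nabla W|^2=o(R^k)$.

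Then
$$J_R^a(W)=|B_R|\Bigl(\tfrac a2\ell^2-\tfrac bp\ell^p\Bigr)+o(R^k)=|B_R|\,a\ell^2\Bigl(\tfrac12-\tfrac1p\Bigr)+o(R^k),$$
which is positive for large $R$. To control the error, I will use $W\to\ell$ uniformly outside large balls, so that the $W^2$ and $W^p$ integrals are $|B_R|(\ell^2+o(1))$ and $|B_R|(\ell^p+o(1))$. For the gradient term I expect the main technical point: I plan to show $W'(r)\to0$ from the $C^1_{\rm loc}$ convergence of translates, which gives $\int_{B_R}|W'|^2=o(R^k)$.

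\emph{Case $\ell=0$.} Comparison with the exponential barrier \eqref{eq:20260910-0954} (Lemma~\ref{L:common-exterior-barrier}(iii)) gives exponential decay of $W$, and elliptic estimates give the same decay for $\nabla W$. Hence $W\in H^1(\mathbb R^k)$ and $J_R^a(W)\to J^a_\infty(W)$. Testing the equation by $W$ on all of $\mathbb R^k$, with the boundary terms vanishing thanks to the decay, gives
$$\int\bigl(|\nabla W|^2+aW^2\bigr)=b\int W^p,$$
and therefore
$$J_\infty^a(W)=\Bigl(\tfrac12-\tfrac1p\Bigr)b\int W^p>0,$$
since $W\not\equiv0$. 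By continuity in $R$, $J_R^a(W)>0$ for all $R\ge R_0$.

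The main obstacle is the gradient bound in the plateau case. I expect to handle it via monotonicity of $W$ and the identity $r^{k-1}W'(r)=-\int_0^r s^{k-1}(bW^{p-1}-aW)\,ds$. Combined with $bW^{p-1}-aW\to0$, this yields $|W'(r)|=o(1)$ directly, by a Ces\`aro-type averaging argument.
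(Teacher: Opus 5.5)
Your proof is correct and follows essentially the paper's argument. You split according to $\ell=\lim_{r\to\infty}W(r)$. In the plateau case you show $a\ell=b\ell^{p-1}$ and $W'\to0$, so the energy density $(\frac12-\frac1p)a\ell^2>0$ on a ball of volume $|B_R|$ drives $J_R^a(W)\to+\infty$. In the case $\ell=0$, exponential decay and testing the equation by $W$ give $\lim_R J_R^a(W)=(\frac12-\frac1p)b\int W^p>0$. The only difference is technical: the paper gets $W'\to0$ from uniform continuity of $W'$ (bounded second derivatives) together with $W'\in L^1$, and gets the equilibrium relation from the limit of $W''$ in the radial ODE. You get both from $C^2_{\rm loc}$ compactness of the translates $W(\cdot+re_1)$, which is equally valid.

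One caveat concerns the Ces\`aro-type argument in your last paragraph, which fails as stated. The identity $|W'(r)|=r^{1-k}\int_0^r s^{k-1}(bW^{p-1}-aW)\,ds$ together with $bW^{p-1}-aW\to0$ only gives $|W'(r)|\le O(r^{1-k})+\frac rk\sup_{s\ge r_0}(bW^{p-1}-aW)$, that is $o(r)$, not $o(1)$. You do not need that argument, because convergence of the translates to the constant $\ell$ already gives $\nabla W(re_1)\to0$. Alternatively, your observation that $r^{k-1}|W'|$ is nondecreasing yields $2^{1-k}r|W'(r)|\le\int_r^{2r}|W'|=W(r)-W(2r)\to0$.
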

\begin{proof}
Since $W$ is bounded and radial nonincreasing, the limit
$L:=\lim_{r\to\infty}W(r)$ exists.  Interior elliptic estimates give
bounded first and second derivatives.  Thus $W'$ is uniformly continuous;
as $-W'\in L^1(1,+\infty)$, it follows that $W'(r)\to 0$.
Since $r^{-1}W'(r)\to0$ as $r\to \infty$, the radial equation shows that $W''(r)$ has the limit $aL-bL^{p-1}$.  A nonzero limit would contradict $W'(r)\to0$, and therefore $W''(r)\to0$ and $aL=bL^{p-1}$.  If $L>0$, the energy density at infinity equals
$$\frac{1}{2} aL^2-\frac{b}{p} L^p=\left(\frac{1}{2}-\frac{1}{p}\right)aL^2>0.$$
Thus $J_R^a(W)\to+\infty$ as $R\to+\infty$.
If $L=0$, choose $R$ so large that $bW^{p-2}\le a/2$ for $r\ge R$.  Then
$-\Delta W+(a/2)W\le0$ there.  For any $0<\gamma<\sqrt{a/2}$, comparison on
exterior annuli with a suitable multiple of $e^{-\gamma(r-R)}$ and passage to
the limit give $W(r)\le Ce^{-\gamma r}$.  Interior gradient estimates applied on
unit balls give $|\nabla W(r)|\le Ce^{-\gamma r}$ as well.  Thus
$W\in H^1(\mathbb{R}^k)$, so testing its equation by $W$ gives
$$
\int_{\mathbb{R}^k}\bigl(|\nabla W|^2+aW^2\bigr)\,\mathrm{d}x
=b\int_{\mathbb{R}^k}W^p\,\mathrm{d}x,
$$
and consequently
$$\lim_{R\to\infty}J_R^a(W)=\left(\frac{1}{2}-\frac{1}{p}\right)b\int_{\mathbb{R}^k}W^p\,\mathrm{d}x>0.$$
This proves the lemma.
\end{proof}

\begin{lemma}[Localized Hamiltonian exclusion]\label{L:localized-Hamiltonian-exclusion}
Let $a,b>0$, $2<p<2_k^*$, and let
$V\in C^2(\mathbb{R}^k\times\mathbb{R})$ be bounded and nonnegative.  Suppose
$$
 -\Delta_YV-V_{\tau\tau}+aV=bV^{p-1},
$$
$V$ is radial nonincreasing in $|Y|$ and nonincreasing in $\tau$, and its
pointwise endpoint limits satisfy
$$
 V_+(Y):=\lim_{\tau\to+\infty}V(Y,\tau)\equiv0,
 \qquad
 V_-(Y):=\lim_{\tau\to-\infty}V(Y,\tau)\not\equiv0.
$$
Then no such $V$ exists.
\end{lemma}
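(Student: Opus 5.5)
The plan is to reduce this lemma to the abstract cylinder exclusion, Lemma~\ref{L:common-cylinder-Hamiltonian}, using Lemma~\ref{lem:positive-localized-energy-scalar} to supply a positive energy at $\tau=-\infty$. The difficulty is that no $L^2$ envelope in $Y$ is assumed: $V_-$ may tend to a nonzero constant plateau at infinity. So the Hamiltonian cannot be taken over all of $\mathbb{R}^k$. I will work with a truncated Hamiltonian on $B_R^k$ and control the lateral flux by radial monotonicity.

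\emph{Step 1: limiting profiles.} Since $V$ is bounded, interior $W^{2,p}$ and Schauder estimates give uniformly bounded $C^{2,\gamma}$ norms. Monotonicity in $\tau$ makes $V_\pm$ exist pointwise. The translates $V(\cdot,\cdot+s)$ then converge in $C^2_{\rm loc}$ to $V_\pm$ as $s\to\pm\infty$. In particular $V_\tau\to0$ locally uniformly, and $V_\pm$ solve the stationary equation in $\mathbb{R}^k$. The profile $V_-$ is bounded, nonnegative, radial nonincreasing and nontrivial. Lemma~\ref{lem:positive-localized-energy-scalar} therefore gives $R_0$ with $J_R^a(V_-)>0$ for all $R\ge R_0$. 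On the other side, $J_R^a(V_+)=0$.

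\emph{Step 2: truncated Hamiltonian identity.} Define
$$\mathscr H_R(\tau):=\int_{B_R^k}\Bigl[\tfrac12V_\tau^2-\tfrac12|\nabla_YV|^2-\tfrac a2V^2+\tfrac bpV^p\Bigr]\mathrm dY .$$
Multiplying the equation by $V_\tau$ and integrating over $B_R^k$ gives
$$\mathscr H_R'(\tau)=-\int_{\partial B_R^k}\partial_rV\,V_\tau\,\mathrm dS .$$
Radial monotonicity gives $\partial_rV\le0$, and monotonicity in $\tau$ gives $V_\tau\le0$. Hence the flux integrand $\partial_rV\,V_\tau$ is nonnegative, so $\mathscr H_R$ is nonincreasing in $\tau$. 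This sign structure is the key point, and it replaces the integrable envelope of Lemma~\ref{L:common-cylinder-Hamiltonian}.

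\emph{Step 3: limits.} By the $C^2_{\rm loc}$ convergence on the bounded ball $B_R^k$,
$$\mathscr H_R(\tau)\to -J_R^a(V_\pm)\quad\text{as }\tau\to\pm\infty .$$
Monotonicity then yields $-J_R^a(V_-)\ge -J_R^a(V_+)=0$, that is, $J_R^a(V_-)\le0$. For $R\ge R_0$ this contradicts Step 1, so no such $V$ exists.

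\emph{Main obstacle.} The delicate point is the sign of the flux term. If the exact sign bookkeeping in Step 2 instead produced the opposite monotonicity, I would fall back to integrating the flux over $\tau\in\mathbb{R}$. Its total is bounded by $\int_{\partial B_R}|\partial_rV|\,(V_--V_+)\,\mathrm dS$, which is controlled by $CR^{k-1}\,|\partial_rV_-|$-type quantities. I would then choose radii $R_j\to\infty$ along which $|\partial_rV_-(R_j)|R_j^{k-1}\to0$; such radii exist either by the exponential decay or the plateau analysis in Lemma~\ref{lem:positive-localized-energy-scalar}. Along these radii the integrated flux is $o(1)$, while $J_{R_j}^a(V_-)$ stays bounded below by a positive constant, which again gives the contradiction.
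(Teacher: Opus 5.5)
Your proposal is correct and follows the paper's proof. Both use the truncated Hamiltonian $\mathscr H_R$ on $B_R^k$, the flux identity $\mathscr H_R'=-\int_{\partial B_R}\partial_rV\,V_\tau\,\mathrm dS\le0$ (with $\partial_rV\le0$ and $V_\tau\le0$), and the contradiction with Lemma~\ref{lem:positive-localized-energy-scalar}. The only difference is in how the endpoint limits of $\mathscr H_R$ are obtained. The paper bounds $\int_{\mathbb R}\int_{B_R}|V_\tau|^2$ via Tonelli and then uses the monotonicity of $\mathscr H_R$. You instead use full $C^2_{\rm loc}$ convergence of the $\tau$-translates, from uniform Schauder bounds and uniqueness of the monotone pointwise limit, which works equally well. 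The fallback in your ``Main obstacle'' paragraph is unnecessary, since the sign in Step~2 is correct.
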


\begin{proof}
Translations in $\tau$ and interior elliptic compactness show that $V_\pm$
are bounded radial nonincreasing classical solutions of
$$
 -\Delta_YW+aW=bW^{p-1}\quad\hbox{in }\mathbb{R}^k.
$$
For $R>0$ set
$$
 J_R^a(w)=\frac{1}{2}\int_{B_R^k}(|\nabla w|^2+aw^2)\mathrm{d}Y
 -\frac{b}{p}\int_{B_R^k}w^p\mathrm{d}Y,
 \qquad
 H_R(\tau)=\frac{1}{2}\int_{B_R^k}|V_\tau|^2\mathrm{d}Y-J_R^a(V(\cdot,\tau)).
$$
An integration by parts on the fixed ball gives
$$
 H_R'(\tau)=-\int_{\partial B_R^k}V_\tau\partial_\nu V\,\mathrm dS\le0,
$$
because both factors in the integrand are nonpositive.  Interior estimates,
uniform under translations in $\tau$, give
$\sup_{\mathbb{R}\times B_R}|V_\tau|\le C_R$.  By monotonicity and Tonelli's theorem,
$$
 \int_{\mathbb{R}}\int_{B_R}|V_\tau|^2
 \le C_R\int_{B_R}\int_{\mathbb{R}}(-V_\tau)
 =C_R\int_{B_R}(V_--V_+)<\infty.
$$
Thus there are sequences $s_j\to+\infty$ and $t_j\to-\infty$ for which the
$L^2(B_R)$-norms of $V_\tau(\cdot,s_j)$ and $V_\tau(\cdot,t_j)$ tend to zero.
Local convergence to $V_\pm$ then gives
$$
 \lim_{\tau\to+\infty}H_R(\tau)=0,
 \qquad
 \lim_{\tau\to-\infty}H_R(\tau)=-J_R^a(V_-).
$$
The limits are limits of the entire monotone function $H_R$, not merely of the chosen subsequences.  Hence $-J_R^a(V_-)\ge0$ for every $R$.  This contradicts Lemma~\ref{lem:positive-localized-energy-scalar}, which gives
$J_R^a(V_-)>0$ for all sufficiently large $R$.
\end{proof}

\subsection{Uniform localization after isotropic rescaling}

\begin{lemma}[Uniform smallness at infinity for the high-frequency scaling]
\label{lem:uniform-smallness-high-frequency}
We have
$$\lim_{R\to+\infty}\sup_n\sup_{|\xi|\geq R}\widehat{u}_n(\xi)=0.$$
\end{lemma}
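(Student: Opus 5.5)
The plan is to argue by contradiction, as in Proposition~\ref{prop:uniform-z-tail}, and to close with Lemma~\ref{L:localized-Hamiltonian-exclusion}. By \textbf{Theorem~A}, each $\widehat u_n(Y,Z)=\widehat U_n(|Y|,|Z|)$ is nonincreasing in $|Y|$ and in $|Z|$. Hence
$$\sup_{|\xi|\ge R}\widehat u_n\le\max\{\widehat U_n(R/\sqrt2,0),\,\widehat U_n(0,R/\sqrt2)\}.$$
If the claim failed, there would be $\varepsilon_0>0$, $R_n\to\infty$ and a subsequence along which either $\widehat U_n(0,R_n)\ge\varepsilon_0$ for all $n$ (the free case) or $\widehat U_n(R_n,0)\ge\varepsilon_0$ for all $n$ (the confined case). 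Fix $0<\varepsilon<\varepsilon_0$ so small that, by Lemma~\ref{lemma:20260618-1129}, $F_{\lambda_n^{1/(\beta-2)}}(s)\le s/2$ for $0\le s\le\varepsilon$ and all large $n$, and also $\mu_\infty\varepsilon^{\beta-2}<1/2$. Each $\widehat u_n$ decays at infinity by Lemma~\ref{L:individual-decay}. Therefore there is a last crossing point $r_n\ge R_n$, with $\widehat U_n(0,r_n)=\varepsilon$ and $\widehat U_n(0,t)\le\varepsilon$ for $t\ge r_n$ in the free case, and analogously in the $Y$-direction in the confined case.

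\emph{Free case.} Set $V_n(Y,\tau):=\widehat U_n(|Y|,r_n+\tau)$. By \eqref{E:rescaled-high-eq} it solves
$$-\Delta_YV_n-\partial_{\tau\tau}V_n-\tfrac{m-1}{r_n+\tau}\partial_\tau V_n+\bigl(1+\lambda_n^{-2}|Y|^2\bigr)V_n=F_{\lambda_n^{1/(\beta-2)}}(V_n).$$
The drift coefficient tends to zero, and $\lambda_n^{-2}|Y|^2\to0$ locally uniformly. Using \eqref{eq:20260618-1235}, \eqref{eq:20260618-1150} and interior $W^{2,p}$ and Schauder estimates, I extract a subsequence with $V_n\to V$ in $C^2_{\rm loc}(\mathbb R^d\times\mathbb R)$. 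The limit satisfies
$$-\Delta_YV-V_{\tau\tau}+V=\mu_\infty V^{\beta-1},$$
is radial nonincreasing in $|Y|$, nonincreasing in $\tau$, and has $V(0,0)=\varepsilon$ and $V\le\varepsilon$ for $\tau\ge0$. The pointwise limits $V_\pm$ exist by monotonicity and, by translation and compactness, solve the $d$-dimensional scalar equation. At the maximum point $0$ of $V_+$, the inequality $V_+(0)\le\mu_\infty V_+(0)^{\beta-1}$ together with $V_+(0)\le\varepsilon$ and the choice of $\varepsilon$ forces $V_+\equiv0$. On the other hand $V_-(0)\ge\varepsilon$. Since $\beta<2_N^*\le2_d^*$, Lemma~\ref{L:localized-Hamiltonian-exclusion} with $k=d$, $a=1$, $b=\mu_\infty$, $p=\beta$ gives a contradiction.

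\emph{Confined case.} Fix a unit vector $e\in\mathbb R^d$ and use $\tau$ as the radial variable in $Y$: set $V_n(Y',\tau):=\widehat u_n((r_n+\tau)e+Y',Z)$, or equivalently work with the radial profile in $|Y|$ plus $Z$. The main obstacle here is the potential $\lambda_n^{-2}|Y|^2=\lambda_n^{-2}(r_n+\tau)^2$, which need not vanish. I would split according to $c:=\lim r_n/\lambda_n\in[0,+\infty]$ along a subsequence.
\begin{itemize}
\item If $c<\infty$, the potential converges locally uniformly to the constant $c^2$. The limit problem is then the same as in the free case with $a=1+c^2$, cylinder variables $Z\in\mathbb R^m$ (so $k=m$, and $\beta<2_N^*\le2_m^*$), and vanishing drift $\tfrac{d-1}{r_n+\tau}$. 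The same argument, with Lemma~\ref{L:localized-Hamiltonian-exclusion}, excludes this case.
\item If $c=\infty$, the potential exceeds any $K$ on a fixed neighbourhood of the point for large $n$, while the source is bounded. A local comparison, or Lemma~\ref{L:common-exterior-barrier}(i) rescaled, then gives $\widehat u_n\le C/K$ there, contradicting the value $\varepsilon$ at the crossing point.
\end{itemize}
Both cases being impossible proves the lemma.
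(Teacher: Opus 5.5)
Your proof is correct, and its architecture matches the paper's. The shared steps are:
\begin{itemize}
\item the reduction by monotonicity to the two coordinate slices;
\item the last crossing point at a level $\varepsilon$ with $\mu_\infty\varepsilon^{\beta-2}<1$;
\item the translated $C^2_{\rm loc}$ limits;
\item the closing appeal to Lemma~\ref{L:localized-Hamiltonian-exclusion}, with $(k,a)=(d,1)$ in the free case and $(k,a)=(m,1+\ell^2)$ in the confined case with finite $\ell$.
\end{itemize}

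The one genuine difference is how you handle $r_n/\lambda_n\to\infty$.

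\textbf{The paper's route.} It rules this subcase out a priori. On the slice $f_n(Y)=\widehat u_n(Y,0)$ it uses $-\Delta_Z\widehat u_n(Y,0)\ge0$ to get $-\Delta_Yf_n+(1+\lambda_n^{-2}|Y|^2)f_n\le\widehat F_n(f_n)$. It then tests against a unit-mass bump supported in $\{r_n/2<|Y|<r_n\}$, where $f_n\ge\varepsilon$. This gives $\varepsilon r_n^2/(4\lambda_n^2)\le C+Cr_n^{-2}$, hence $r_n\le C_\varepsilon\lambda_n$, so only finite $\ell$ remains.

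\textbf{Your route.} You let $c=\infty$ occur and kill it locally. On the unit ball around the crossing point $x_n$, the coefficient $1+\lambda_n^{-2}|Y|^2$ exceeds any $K$, while $0\le\widehat F_n(\widehat u_n)\le C$. Hence $-\Delta\widehat u_n+K\widehat u_n\le C$ there. Compare with the supersolution $C/K+M\sum_{j=1}^N\cosh\bigl(\sqrt K(x-x_n)_j\bigr)/\cosh\bigl(\sqrt{K/N}\bigr)$, which is at least $M\ge\widehat u_n$ on $\partial B_1(x_n)$. This gives $\varepsilon=\widehat u_n(x_n)\le C/K+2MNe^{-\sqrt{K/N}}$, a contradiction for $K$ large.

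\textbf{Comparison.} Your argument is more elementary: a pointwise barrier near a single point, with no slice reduction and no use of monotonicity over the whole ball. The paper's argument additionally yields the quantitative fact that the crossing radius stays on the scale $\lambda_n$, where the rescaled trap is of order one.

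\textbf{Minor points.}
\begin{itemize}
\item Lemma~\ref{L:common-exterior-barrier}(i) cannot simply be quoted ``rescaled''. It treats homogeneous inequalities with $\lambda$ and $K$ in fixed bounded sets, so the local comparison is the argument to write out. That comparison gives $C/K$ plus an exponentially small boundary term, not $C/K$ alone.
\item Extracting a subsequence with $n\to\infty$ as $R\to\infty$ should explicitly invoke the fixed-$n$ decay of each $\widehat u_n$, as the paper notes.
\item Your extra requirement $F_{\lambda_n^{1/(\beta-2)}}(s)\le s/2$ is never used.
\end{itemize}
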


\begin{proof}
We argue by contradiction.  Suppose that the conclusion is false.  Then there are
$\varepsilon_0>0$, a subsequence still denoted by $\widehat{u}_n$, and points
$$\xi_n=(Y_n,Z_n)\in\mathbb{R}^d\times\mathbb{R}^m,\qquad |\xi_n|\to+\infty,$$
such that
\begin{equation}\label{eq:20260618-1212-revised}
        \widehat{u}_n(\xi_n)\geq\varepsilon_0 .
\end{equation}
Discarding finitely many indices causes no loss: each fixed
$\widehat{u}_n\in C_0(\mathbb{R}^N)$ has a vanishing tail, so those indices can be
absorbed by enlarging the final radius.  Thus every subsequence used below may be
assumed to lie in the asymptotic regime of the preceding uniform estimates. Write $\widehat{u}_n(Y,Z)=\widehat{U}_n(|Y|,|Z|)$.
Put $\widehat F_n:=F_{\lambda_n^{1/(\beta-2)}}$.

\smallskip
\noindent\textbf{Case 1: $|Z_n|\to\infty$.}
By monotonicity,
$$\widehat{U}_n(0,R_n)\geq \varepsilon_0,\qquad R_n:=|Z_n|\to+\infty.$$
Choose $0<\varepsilon<\varepsilon_0$ such that
\begin{equation}\label{eq:20260618-1231-revised}
        \mu_\infty\varepsilon^{\beta-2}<1 .
\end{equation}
Since each fixed $\widehat{u}_n$ belongs to $C_0(\mathbb{R}^N)$ (the
rescaling preserves the decay in Lemma~\ref{L:individual-decay}), there is $r_n\geq R_n$ such that
$$\widehat{u}_n(0,r_n)=\varepsilon,\qquad
        \widehat{U}_n(0,r)\leq\varepsilon\quad\hbox{for}~r\geq r_n.$$
Set
$$V_n(Y,\tau):=\widehat{u}_n(|Y|,r_n+\tau),
        \qquad (Y,\tau)\in\mathbb{R}^d\times(-r_n,+\infty).$$
Then
$$
-\Delta_YV_n-\partial_{\tau\tau}V_n
-\frac{m-1}{r_n+\tau}\partial_\tau V_n
+V_n+\lambda_n^{-2}|Y|^2V_n=\widehat{F}_n(V_n).
$$
By Lemma~\ref{lemma:20260618-1129} and standard interior estimates,
after passing to a subsequence,
$$V_n\to V\quad\hbox{in}~C^2_{\mathrm{loc}}(\mathbb{R}^d\times\mathbb{R}),$$
where
\begin{equation}\label{E:high-tail-limit-Z-revised}
        -\Delta_YV-\partial_{\tau\tau}V+V=\mu_\infty V^{\beta-1}
        \quad\hbox{in}~\mathbb{R}^d\times\mathbb{R} .
\end{equation}
Moreover
$$ V(0,0)=\varepsilon,\qquad 0\leq V(Y,\tau)\leq\varepsilon \quad\hbox{for}~\tau\geq0,$$
and $V$ is nonincreasing in $\tau$ and in $|Y|$.  Hence
$V_+(Y):=\lim_{\tau\to+\infty}V(Y,\tau)$ exists and $0\le V_+\le\varepsilon$.  To see that
$V_+$ satisfies the limiting elliptic equation, take any sequence $\tau_j\to+\infty$ and
consider $V_j(Y,\tau):=V(Y,\tau+\tau_j)$.  By local elliptic estimates, a subsequence
converges in $C^2_{\rm loc}$.  The monotonicity in $\tau$ forces the limit to be
independent of $\tau$ and equal to $V_+(Y)$.  Passing to the limit in
\eqref{E:high-tail-limit-Z-revised} gives
$$
        -\Delta_YV_+ +V_+=\mu_\infty V_+^{\beta-1}\quad\hbox{in }\mathbb{R}^d,
        \qquad 0\leq V_+\leq\varepsilon .
$$
Since $V_+$ is radial nonincreasing, it attains its maximum at $Y=0$.  Evaluating at
$Y=0$ and using \eqref{eq:20260618-1231-revised}, we obtain $V_+\equiv0$.
The backward limit $V_-(Y):=\lim_{\tau\to-\infty}V(Y,\tau)$ exists by
monotonicity and is nontrivial, since $V_-(0)\ge V(0,0)=\varepsilon$.  Translation
compactness shows that it solves the corresponding stationary equation.

The endpoint limits satisfy the hypotheses of Lemma~\ref{L:localized-Hamiltonian-exclusion} with $k=d$, $a=1$, $b=\mu_\infty$ and $p=\beta$.  That lemma rules out Case~1.

\smallskip
\noindent\textbf{Case 2: $|Y_n|\to\infty$.}
By monotonicity,
$$
 \widehat U_n(R_n,0)\ge\varepsilon_0,\qquad
 R_n:=|Y_n|\to\infty.
$$
Choose $\varepsilon$ as in \eqref{eq:20260618-1231-revised}.
The fixed-$n$ decay and monotonicity give $r_n\ge R_n$ such that
$$
 \widehat U_n(r_n,0)=\varepsilon,\qquad
 \widehat U_n(r,0)\le\varepsilon\quad(r\ge r_n).
$$
We first show that $r_n/\lambda_n$ is bounded.  Set
$f_n(Y):=\widehat u_n(Y,0)$.  Since $Z=0$ is a maximum in the free
variables, $-\Delta_Z\widehat u_n(Y,0)\ge0$, and hence
$$
 -\Delta_Y f_n+\left(1+\frac{|Y|^2}{\lambda_n^2}\right)f_n
 \le\widehat F_n(f_n).
$$
By \eqref{eq:20260618-1235} and Lemma~\ref{lemma:20260618-1129},
$0\le f_n\le M$ and $0\le\widehat F_n(f_n)\le C$ uniformly in $n$.
Moreover, $f_n\ge\varepsilon$ on $B_{r_n}^d$ by monotonicity.
Choose $0\le\eta\in C_c^\infty(\{1/2<|Y|<1\})$ with
$\int_{\mathbb R^d}\eta \mathrm{d}Y=1$, and set
$\eta_n(Y)=r_n^{-d}\eta(Y/r_n)$. Then $\int_{\mathbb{R}^d}\eta_n \mathrm{d}Y=1$. Noting that
$$1+\frac{|Y|^2}{\lambda_n^2}\geq \frac{r_n^2}{4\lambda_n^2}, \quad f_n\geq \varepsilon, \qquad (\frac{1}{2}r_n\leq |Y|\leq r_n),$$
testing against $\eta_n$ gives
\begin{align*}
 \frac{\varepsilon r_n^2}{4\lambda_n^2}
 &\le\int_{\mathbb R^d}
       \left(1+\frac{|Y|^2}{\lambda_n^2}\right)f_n\eta_n\\
 &\le\int_{\mathbb R^d}\widehat F_n(f_n)\eta_n
       +\int_{\mathbb R^d}f_n\Delta\eta_n\\
 &\le C+\frac{M}{r_n^2}\|\Delta\eta\|_{L^1}.
\end{align*}
So there exists some $C_\varepsilon>0$ such that
$$r_n/\lambda_n\leq C_\varepsilon,\qquad \forall n\in \mathbb{N}.$$
Up to a subsequence, $r_n/\lambda_n\to\ell\in[0,\infty)$.

Define
$$
 V_n(\tau,Z):=\widehat U_n(r_n+\tau,|Z|),
 \qquad (\tau,Z)\in(-r_n,\infty)\times\mathbb R^m.
$$
Then
\begin{equation}\label{E:high-tail-translated-Y-equation}
 -\partial_{\tau\tau}V_n
 -\frac{d-1}{r_n+\tau}\partial_\tau V_n
 -\Delta_ZV_n
 +\left(1+\lambda_n^{-2}(r_n+\tau)^2\right)V_n
 =\widehat F_n(V_n).
\end{equation}
The coefficients converge locally uniformly, and interior elliptic
estimates yield, after extraction, $V_n\to V$ in $C^2_{\rm loc}$, where
\begin{equation}\label{E:high-tail-limit-Y-revised}
 -\partial_{\tau\tau}V-\Delta_ZV+(1+\ell^2)V
 =\mu_\infty V^{\beta-1}
 \qquad\hbox{in }\mathbb R\times\mathbb R^m.
\end{equation}
Moreover, $V(0,0)=\varepsilon$, $0\le V\le\varepsilon$ for $\tau\ge0$,
and $V$ is nonincreasing in $\tau$ and in $|Z|$.
As in Case~1, the limits $V_\pm(Z)=\lim_{\tau\to\pm\infty}V(\tau,Z)$
solve
$$
 -\Delta_ZV_\pm+(1+\ell^2)V_\pm=\mu_\infty V_\pm^{\beta-1}.
$$
Since $0\le V_+\le\varepsilon$ and
$\mu_\infty\varepsilon^{\beta-2}<1\le1+\ell^2$, evaluation at its
maximum point $Z=0$ gives $V_+\equiv0$, whereas $V_-(0)\ge\varepsilon$.
Lemma~\ref{L:localized-Hamiltonian-exclusion}, with $k=m$,
$a=1+\ell^2$, $b=\mu_\infty$, and $p=\beta$, now rules out Case~2.

The contradiction proves
$$\lim_{R\to+\infty}\sup_n\sup_{|\xi|\geq R}\widehat{u}_n(\xi)=0.$$
\end{proof}

\subsection{Identification of the full-dimensional profile}

\begin{proof}[Proof of Theorem \ref{T:asymptotics}(ii)]
We first record the relative small-argument estimate that is needed for the exterior comparison.  Put $a_n:=\lambda_n^{1/(\beta-2)}$.  For every $\varepsilon>0$, by \eqref{eq:20260618-1130}, there are $\delta>0$ and $n_\varepsilon$ such that
\begin{equation}\label{E:high-rescaled-relative-smallness}
 0\leq s\leq\delta,\quad n\geq n_\varepsilon
 \quad\Longrightarrow\quad
 \frac{g(a_ns)}{a_n^{\beta-1}}\leq\varepsilon s.
\end{equation}

Combining \eqref{E:rescaled-high-eq},
\eqref{E:high-rescaled-relative-smallness} with $\varepsilon=1/2$, and
Lemma~\ref{lem:uniform-smallness-high-frequency}, we find $R_0>0$ such that
$$
 -\Delta\widehat{u}_n+\frac{1}{2}\widehat{u}_n\le0
 \qquad\hbox{on }\{|x|>R_0\}
$$
for all sufficiently large $n$.  The boundary values at $|x|=R_0$ are
uniformly bounded by \eqref{eq:20260618-1235}.  Applying
Lemma~\ref{L:common-exterior-barrier}(iii) gives constants $C_1,C_2>0$,
independent of $n$, such that
\begin{equation}\label{eq:20260618-1344}
 \widehat{u}_n(x)\le C_1e^{-C_2|x|},\qquad |x|\ge R_0 .
\end{equation}
Recalling \eqref{eq:W=Wbeta}, the uniform exponential decay
\eqref{eq:20260618-1344} gives $\widehat{u}_n\to W_\beta$ in
$L^q(\mathbb{R}^N)$ for every $1\leq q<+\infty$.

By \eqref{eq:20260618-1344} and $\lambda_n\to +\infty$, the same domination gives
$$\int_{\mathbb{R}^N}\lambda_{n}^{-2}|Y|^2\widehat{u}_n^2\mathrm{d}x=o(1).$$
The dominated convergence theorem gives
$$\int_{\mathbb{R}^N} \frac{g(\lambda_n^{1/(\beta-2)}\widehat{u}_n)}
        {\lambda_n^{(\beta-1)/(\beta-2)}} \widehat{u}_n \mathrm{d}x\rightarrow \mu_\infty \int_{\mathbb{R}^N} W_{\beta}^{\beta} \mathrm{d}x.$$
Testing \eqref{E:rescaled-high-eq} by $\widehat{u}_n$ gives
\begin{equation}\label{eq:20260618-1353}
\|\widehat{u}_n\|_{H^1(\mathbb{R}^N)}^{2}\rightarrow \mu_\infty \int_{\mathbb{R}^N} W_{\beta}^{\beta} \mathrm{d}x.
\end{equation}
Thus the weak limit of $\{\widehat{u}_n\}$ in $H^1(\mathbb{R}^N)$ must be $W_\beta$. On the other hand, testing \eqref{E:Wbeta} by $W_\beta$ gives
$$\|W_\beta\|_{H^1(\mathbb{R}^N)}^{2}=\mu_\infty \int_{\mathbb{R}^N} W_{\beta}^{\beta} \mathrm{d}x.$$
Hence,
$$\widehat{u}_n\rightharpoonup W_\beta ~\hbox{in}~ H^1(\mathbb{R}^N), ~\hbox{and}~ \|\widehat{u}_n\|_{H^1(\mathbb{R}^N)}^{2}\rightarrow \|W_\beta\|_{H^1(\mathbb{R}^N)}^{2},$$
and therefore $\widehat{u}_n\to W_\beta$ strongly in $H^1(\mathbb{R}^N)$.

The preceding argument applies to every subsequence.  Since the only possible
limit is the unique centered ground state $W_\beta$, the subsequence principle shows
that the original sequence, rather than merely a selected subsequence, converges in
all the asserted topologies.

Finally,
$$\norm{u_n}_2^2
        =\lambda_n^{\frac{2}{\beta-2}}\lambda_n^{-N/2}
        \int_{\R^N}\widehat{u}_n^2\mathrm{d}\xi,$$
which proves \eqref{E:high-mass} and \eqref{E:high-limit}.
\end{proof}

\section{The local branch, nondegeneracy, and degree at the spectral edge}\label{sec:LUTD}

This section proves Theorem~\ref{T:local-degree}.  We now use the
Lyapunov--Schmidt reduction not only for the index computation, but also for the
local construction of the small solution.

Throughout this section we write
$$
        \mu:=\lambda+d>0 .
$$
For a solution $u$ of \eqref{E:fixed-lambda} with $\mu>0$ small, we use the lower-edge rescaling
\begin{equation}\label{E:sec7-rescale}
        U(y,\zeta):=\mu^{-\frac1{\alpha-2}}
        u\left(y,\frac{\zeta}{\sqrt\mu}\right).
\end{equation}
Then $U$ satisfies
\begin{equation}\label{E:sec7-rescaled-eq}
        (H_y-d)U+\mu(-\Delta_\zeta U+U)=\mu \mathfrak{g}_\mu(U),
\end{equation}
where, in the notation of Lemma~\ref{lemma:20260618-1129},
\begin{equation}\label{E:sec7-gmu}
        \mathfrak{g}_\mu(s):=F_{\mu^{1/(\alpha-2)}}^{(0)}(s)=
        \frac{g\bigl(\mu^{\frac1{\alpha-2}}s\bigr)}
        {\mu^{\frac{\alpha-1}{\alpha-2}}},
        \qquad s\geq0 .
\end{equation}
Applying Lemma~\ref{lemma:20260618-1129} with
$a=\mu^{1/(\alpha-2)}$, for every bounded interval of $s\ge0$,
\begin{equation}\label{E:sec7-gmu-conv}
        \mathfrak{g}_\mu(s)\to \mu_0s^{\alpha-1},
        \qquad
        \mathfrak{g}_\mu'(s)\to \mu_0(\alpha-1)s^{\alpha-2}
\end{equation}
uniformly as $\mu\to0^+$.

For the construction we use the positive-part extension $g_+$ introduced in
\eqref{E:positive-part-extension}, and define
\begin{equation}\label{E:sec7-gmu-plus}
        \mathfrak{g}_\mu^+(s):=
        \frac{g_+\bigl(\mu^{\frac1{\alpha-2}}s\bigr)}
        {\mu^{\frac{\alpha-1}{\alpha-2}}},
        \qquad s\in\mathbb{R} .
\end{equation}
On $[0,+\infty)$, $\mathfrak{g}_\mu^+=\mathfrak{g}_\mu$.  The corresponding limiting nonlinearity is
$$
        \mathfrak{g}_0^+(s):=\mu_0(s^+)^{\alpha-1}.
$$
Near the positive profile $\phi_0V_\alpha$, the derivative of this limiting map is the
same as the derivative of $\mu_0s^{\alpha-1}$, because $\phi_0V_\alpha>0$ pointwise.

Let $P$ be the orthogonal projection in $L^2(\mathbb{R}^d_y)$ onto
$\operatorname{span}\{\phi_0\}$,
\begin{equation}\label{E:sec7-PQ}
        Pw=\phi_0(y)\int_{\mathbb{R}^d}w(\eta,\zeta)\phi_0(\eta)\,\mathrm{d}\eta,
        \qquad Q:=I-P.
\end{equation}
We shall repeatedly use the spectral gap
\begin{equation}\label{E:sec7-gap}
        \int_{\mathbb{R}^d}
        \bigl(|\nabla_y r|^2+(|y|^2-d)r^2\bigr)\,\mathrm{d} y
        \ge 2\int_{\mathbb{R}^d}r^2\,\mathrm{d} y,
        \qquad r\perp\phi_0 .
\end{equation}

For the Lyapunov--Schmidt construction below, we work in the Banach spaces
$$
\begin{aligned}
        \mathcal Y
        &:={\bigl\{}v\in H^1_{\rm rad}(\mathbb{R}^m)\cap L^\infty(\mathbb{R}^m):
        \|v\|_{\mathcal Y}:=\|v\|_{H^1(\mathbb{R}^m)}
        +\|v\|_{L^\infty(\mathbb{R}^m)}<\infty{\bigr\}},\\
        \mathcal Z
        &:={\bigl\{}r\in X_r\cap L^\infty(\mathbb{R}^N):\ Pr=0,
        \|r\|_{\mathcal Z}:=\|r\|_X
        +\|r\|_{L^\infty(\mathbb{R}^N)}<\infty{\bigr\}}.
\end{aligned}
$$

\subsection{Local existence near the lower edge}

\begin{theorem}[Lyapunov--Schmidt existence near the lower edge]\label{T:sec7-LS-existence}
Assume \ref{gcon1} and \ref{gcon2}.
There exists $\mu_e>0$ such that, for every $0<\lambda+d<\mu_e$,
problem \eqref{E:fixed-lambda} has at least one positive solution
$u_\lambda\in\mathcal{K}$.  Moreover, after the rescaling \eqref{E:sec7-rescale}, this
solution satisfies
\begin{equation}\label{E:sec7-LS-profile-conv}
        U_\mu\to \phi_0V_\alpha
        \qquad\hbox{in }H^1(\mathbb{R}^d\times\mathbb{R}^m)
        \quad\hbox{as }\mu\to0^+ .
\end{equation}
The constructed family can be chosen so that
$\mu\mapsto(v_\mu,R_\mu(v_\mu))$ is $C^1$ into
$\mathcal Y\oplus\mathcal Z$ for $\mu>0$, and
$U_\mu:=\phi_0v_\mu+R_\mu(v_\mu)$ extends continuously to $\mu=0$
after setting $U_0=\phi_0V_\alpha$.
\end{theorem}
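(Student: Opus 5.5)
We carry out a Lyapunov--Schmidt reduction for the rescaled equation \eqref{E:sec7-rescaled-eq}, with $\mathfrak g_\mu$ replaced by $\mathfrak g_\mu^+$, in the splitting $U=\phi_0v+r$ with $v\in\mathcal Y$ and $r\in\mathcal Z$. Applying $P$ and $Q$ gives the coupled system
\begin{equation*}
 -\Delta_\zeta v+v=\int_{\mathbb R^d}\phi_0\,\mathfrak g_\mu^+(\phi_0v+r)\,\mathrm dy,
 \qquad
 \bigl[(H_y-d)+\mu(-\Delta_\zeta+1)\bigr]r=\mu\,Q\,\mathfrak g_\mu^+(\phi_0v+r).
\end{equation*}
On $\operatorname{ran}Q$ the operator $L_\mu:=(H_y-d)+\mu(-\Delta_\zeta+1)$ is bounded below by $2$, by the gap \eqref{E:sec7-gap}. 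We therefore rewrite the $Q$-equation as the fixed-point problem
\[
 r=\mu L_\mu^{-1}Q\,\mathfrak g_\mu^+(\phi_0v+r)=:\Phi_\mu(v,r).
\]

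\textbf{Step 1: uniform complementary-mode resolvent.} We need a bound $\|L_\mu^{-1}Q f\|_{\mathcal Z}\le C\|f\|_{L^2\cap L^\infty}$, uniformly in $\mu\in(0,1]$. The $L^2$ part, and also $\|\nabla_y r\|_2$ and $\||y|r\|_2$, follow from the gap. The $\zeta$-gradient is only controlled with a factor $\mu^{-1/2}$, but the prefactor $\mu$ in $\Phi_\mu$ absorbs it. The main obstacle is the $L^\infty$ part. Here we use that $L_\mu+d$ has a positive kernel, namely the Mehler kernel in $y$ times the heat kernel in $\zeta$ with time scaled by $\mu$. This yields $L^\infty$ bounds for $(L_\mu+c)^{-1}$ that are uniform in $\mu$, with any $c>0$. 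We then remove $Q$ and the shift by writing $L_\mu^{-1}Q=(L_\mu+c)^{-1}Q+c(L_\mu+c)^{-1}L_\mu^{-1}Q$, and we control $L_\mu^{-1}Q$ on $L^2$ by the gap. If this fails, a fallback is to work with $\mathcal Z$ defined through $L^2_\zeta(X_y)$ together with a Sobolev bound in $\zeta$, using Gaussian barriers as in Lemma~\ref{L:common-exterior-barrier}.

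\textbf{Step 2: solving the $Q$-equation.} Since $\mathfrak g_\mu^+$ is $C^1$ with bounds uniform in $\mu$ on bounded sets (Lemma~\ref{lemma:20260618-1129}), $\Phi_\mu$ is a contraction of size $O(\mu)$ on a ball of $\mathcal Z$, for $v$ in a bounded neighborhood of $V_\alpha$ in $\mathcal Y$. This gives a unique $r=R_\mu(v)$ with $\|R_\mu(v)\|_{\mathcal Z}\le C\mu$. The implicit function theorem makes $(\mu,v)\mapsto R_\mu(v)$ of class $C^1$ for $\mu>0$ and continuous up to $\mu=0$ with $R_0\equiv0$.

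\textbf{Step 3: the reduced equation.} Substituting $r=R_\mu(v)$ gives $\mathcal F(\mu,v):=v-(-\Delta_\zeta+1)^{-1}\int\phi_0\,\mathfrak g_\mu^+(\phi_0v+R_\mu(v))\,\mathrm dy=0$ in $\mathcal Y$. At $\mu=0$ this reduces to $v-(-\Delta+1)^{-1}c_\alpha(v^+)^{\alpha-1}=0$, which has the solution $V_\alpha$. Its derivative there is $I-(-\Delta+1)^{-1}c_\alpha(\alpha-1)V_\alpha^{\alpha-2}$, which is invertible on radial functions by the nondegeneracy of $V_\alpha$ \cite[Lemma~2.1]{ChangGustafsonNakanishiTsai}, since its kernel consists only of the nonradial translation modes. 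Invertibility on $\mathcal Y$, and not only on $H^1_{\rm rad}$, follows from the decay of $V_\alpha$ and a bootstrap. Because $\mathcal F$ is only continuous in $\mu$ at $0$ while $D_v\mathcal F$ is continuous jointly, we apply the parameter version of the implicit function theorem, or equivalently a uniform contraction argument. This yields $v_\mu\to V_\alpha$ in $\mathcal Y$, $C^1$ for $\mu>0$. Then $U_\mu\to\phi_0V_\alpha$ in $H^1$, since the $\zeta$-gradient of $R_\mu$ is $O(\mu^{1/2})$ and the remaining components of its norm are $O(\mu)$.

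\textbf{Step 4: positivity and membership in $\mathcal K$.} Testing the full equation by $U_\mu^-$ gives $U_\mu\ge0$, and the strong maximum principle gives $U_\mu>0$. Hence $\mathfrak g_\mu^+=\mathfrak g_\mu$ on $U_\mu$, and undoing the scaling yields a positive block-radial solution $u_\lambda\in X_r$. For $m=1$, the required monotonicity in \eqref{E:cone-m-one} follows from Lemma~\ref{L:individual-decay} and Theorem~A, so $u_\lambda\in\mathcal K$.
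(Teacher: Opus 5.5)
Your architecture matches the paper's. It consists of the $P/Q$ splitting, a contraction for the $Q$-equation, the implicit function theorem for the reduced equation at $V_\alpha$ via radial nondegeneracy, and positivity by testing the negative part.

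The gap is at the point you flag in Step~1: the $\mu$-uniform bound $\|L_\mu^{-1}Qf\|_\infty\le C\|f\|_\infty$. This is equivalent to $\|\mathcal A_\mu^{-1}Qh\|_\infty\le C\mu\|h\|_\infty$, the second estimate of Lemma~\ref{lemma:20260622-1443}. The identity $L_\mu^{-1}Q=(L_\mu+c)^{-1}Q+c(L_\mu+c)^{-1}L_\mu^{-1}Q$ does not reduce it to the $L^2$ gap. To bound the second term in $L^\infty$ you need one of two things, and neither is available:
\begin{itemize}
\item an $L^\infty$ bound for $L_\mu^{-1}Qf$, which is circular, since $\|c(L_\mu+c)^{-1}\|_{\infty\to\infty}\le2^{d/2}$ cannot absorb it;
\item a $\mu$-uniform bound $(L_\mu+c)^{-1}:L^2(\mathbb R^N)\to L^\infty$, which is false because the $\zeta$-diffusion runs at speed $\mu$.
\end{itemize}
For the second point, take $(H_y-d)\phi=2k\phi$ with $k\ge1$ and $f=\phi(y)\psi(\zeta)$. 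Then $(L_\mu+c)^{-1}f=\phi\,\bigl(2k+c+\mu(1-\Delta_\zeta)\bigr)^{-1}\psi$, which tends to $\phi\psi/(2k+c)$ in $L^2$ as $\mu\to0$. The limit is unbounded whenever $\psi\in L^2\setminus L^\infty$.

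Your fallback does not prove the theorem as stated. It changes $\mathcal Z$, which the statement fixes. Moreover, the $L^\infty$ component is what keeps $\phi_0v+r$ in a bounded range, where Lemma~\ref{lemma:20260618-1129} gives uniform $C^1$ control of $\mathfrak g_\mu^+$. For large $s$ one has $\mathfrak g_\mu(s)\approx\mu_\infty\mu^{(\beta-\alpha)/(\alpha-2)}s^{\beta-1}$, which degenerates when $\beta<\alpha$.

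What is needed is an $L^\infty$ version of the oscillator gap for the $Q$-projected problem. The paper proves it by contradiction:
\begin{itemize}
\item rescale $\xi=\zeta/\sqrt\mu$ to obtain the nondegenerate operator $(H_y-d)-\Delta_\xi+\mu$;
\item take approximate maxima of a normalized counterexample; the potential keeps their $y$-coordinates bounded;
\item translate in $\xi$ and pass to a bounded nonzero limit $R$ with $QR=R$;
\item observe that the Hermite coefficients of $R$ solve $(-\Delta_\xi+2|\nu|+\mu_*)a_\nu=0$ and therefore vanish.
\end{itemize}

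Your kernel idea can also be repaired, but only by working with the $Q$-projected semigroup itself. For $t\ge1$, factor
\[
 e^{-t(H_y-d)}Q=e^{-\frac12(H_y-d)}\,\bigl[e^{-(t-1)(H_y-d)}Q\bigr]\,e^{-\frac12(H_y-d)}
\]
through $L^2_y$. The Mehler kernel at a fixed positive time maps $L^\infty_y\to L^2_y$ and $L^2_y\to L^\infty_y$, so this gives $\|e^{-t(H_y-d)}Q\|_{L^\infty_y\to L^\infty_y}\le Ce^{-2t}$. Tensoring with the positive $L^\infty$-contraction $e^{-t\mu(1-\Delta_\zeta)}$ and integrating in $t$ then yields $\|L_\mu^{-1}Q\|_{\infty\to\infty}\le C$ uniformly in $\mu$.

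Two smaller corrections. First, by your own Step~1 the $\zeta$-gradient makes $\|R_\mu(v)\|_{\mathcal Z}=O(\mu^{1/2})$, not $O(\mu)$; this still suffices. Second, $C^1$ dependence for $\mu>0$ requires $\mu\mapsto L_\mu^{-1}Q$ to be $C^1$ in $\mathcal L(L^2\cap L^\infty,\mathcal Z)$. That needs its own argument, which the paper supplies in Lemma~\ref{L:sec7-parameter-regularity}.
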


\begin{proof}
We first solve the extended rescaled problem
\begin{equation}\label{E:sec7-extended-rescaled}
        (H_y-d)U+\mu(-\Delta_\zeta U+U)=\mu \mathfrak{g}_\mu^+(U)
        \quad\hbox{in }\mathbb{R}^d\times\mathbb{R}^m .
\end{equation}
Write
\begin{equation}\label{E:sec7-LS-decomp}
        U=\phi_0(y)v(\zeta)+r(y,\zeta),
        \qquad Pr=0 .
\end{equation}
After dividing \eqref{E:sec7-extended-rescaled} by $\mu$, the projected system is
\begin{equation}\label{E:sec7-P-equation-existence}
        -\Delta_\zeta v+v
        =
        \int_{\mathbb{R}^d}\phi_0(y)\mathfrak{g}_\mu^+(\phi_0(y)v+r(y,\cdot))\,\mathrm{d} y,
\end{equation}
and
\begin{equation}\label{E:sec7-Q-equation-existence}
        \left\{\frac1\mu(H_y-d)-\Delta_\zeta+1\right\}r
        =Q\mathfrak{g}_\mu^+(\phi_0v+r).
\end{equation}
On the $Q$-component, set
$$
        \mathcal{A}_\mu:=
        Q\left\{\frac1\mu(H_y-d)-\Delta_\zeta+1\right\}Q .
$$
By \eqref{E:sec7-gap},
\begin{equation}\label{E:sec7-Acoercive-existence}
\begin{aligned}
        \langle\mathcal{A}_\mu r,r\rangle
        &\ge
        \frac2\mu\|r\|_{L^2(\mathbb{R}^d\times\mathbb{R}^m)}^2
        +\int_{\mathbb{R}^d\times\mathbb{R}^m}
        \bigl(|\nabla_\zeta r|^2+r^2\bigr)\,\mathrm{d} y\mathrm{d}\zeta .
\end{aligned}
\end{equation}
Thus $\mathcal{A}_\mu$ is invertible on the $Q$-component and its inverse is small
in the estimates relevant to the $Q$-equation as $\mu\to0^+$.

Let $\mathcal V$ be a small neighborhood of $V_\alpha$ in $\mathcal Y$, and
let $\mathcal{R}$ be a small ball centered at $0$ in $\mathcal Z$.
Lemma~\ref{lemma:20260622-1443} below shows that, for every sufficiently small
$\mu>0$ and each $v\in\mathcal V$, the $Q$-equation has a unique solution
\begin{equation}\label{E:sec7-Rmu-existence}
        r=R_\mu(v)\in\mathcal{R} .
\end{equation}
Moreover, $R_\mu$ is $C^1$ and
\begin{equation}\label{E:sec7-Rmu-small-existence}
        \|R_\mu(v)\|_{\mathcal Z}=o(1),
        \qquad
        \|DR_\mu(v)\|_{\mathcal L(\mathcal Y,\mathcal Z)}=o(1)
\end{equation}
uniformly for $v\in\mathcal V$ as $\mu\to0^+$.

Substituting $r=R_\mu(v)$ into \eqref{E:sec7-P-equation-existence}, we get the
reduced equation
\begin{equation}\label{E:sec7-reduced-equation-existence}
        \mathcal G_\mu(v):=
        v-(-\Delta_\zeta+1)^{-1}
        \int_{\mathbb{R}^d}\phi_0(y)
        \mathfrak{g}_\mu^+(\phi_0(y)v+R_\mu(v))\,\mathrm{d} y=0 .
\end{equation}
The strengthened form of Lemma~\ref{lemma:20260618-1129}, with
$a=\mu^{1/(\alpha-2)}$, gives numbers $\varepsilon_\mu\to0$ such that, on
every fixed bounded interval,
\begin{equation}\label{E:sec7-relative-C1-remainder}
 \left|\mathfrak{g}_\mu^+(s)-\mathfrak{g}_0^+(s)\right|
 \le\varepsilon_\mu|s|^{\alpha-1},
 \qquad
 \left|(\mathfrak{g}_\mu^+)'(s)-(\mathfrak{g}_0^+)'(s)\right|
 \le\varepsilon_\mu|s|^{\alpha-2}.
\end{equation}
Let $E:=L^2(\mathbb R^N)\cap L^\infty(\mathbb R^N)$,
endowed with the norm $\|\cdot\|_E:=\|\cdot\|_2+\|\cdot\|_\infty$.
On the bounded neighborhood under consideration, $w=\phi_0v+r$ is uniformly
bounded in $E$, i.e., there exists $M>0$ such that
$$\|w\|_E\leq M,\quad w=\phi_0v+r,\qquad (v,r)\in \mathcal{V}\times \mathcal{R}.$$
The associated Nemytskii operator  is defined by
$$\mathcal{N}_\mu(w)(x):=\mathfrak{g}_\mu^+(w(x)).$$
By
$$\|(\mathfrak{g}_\mu^+-\mathfrak{g}_0^+)(w)\|_2\leq \varepsilon_\mu\|w\|_\infty^{\alpha-2}\|w\|_2~\hbox{and}~
\|(\mathfrak{g}_\mu^+-\mathfrak{g}_0^+)(w)\|_\infty\leq \varepsilon_\mu\|w\|_\infty^{\alpha-2}\|w\|_\infty,
$$
$$
 \sup_{\|w\|_E\le M}\|\mathcal N_\mu(w)-\mathcal N_0(w)\|_E
 \le\varepsilon_\mu M^{\alpha-1}\longrightarrow0.
$$
By Lemma~\ref{L:sec7-parameter-regularity}(ii) below,
the Fr\'echet derivative of $\mathcal N_\mu$, for $\mu>0$, is given by
$$D \mathcal{N}_\mu(w)[h]=(\mathfrak{g}_\mu^+)'(w)h.$$
So the second estimate in \eqref{E:sec7-relative-C1-remainder} gives
\begin{align*}
\|(D\mathcal{N}_\mu(w)-D\mathcal{N}_0(w)) [h]\|_E\leq& \|(\mathfrak{g}_0^+)'(w)-(\mathfrak{g}_\mu^+)'(w)\|_\infty \|h\|_E\\
\leq&\varepsilon_\mu M^{\alpha-2} \|h\|_E,
\end{align*}
and thus
$$\|D\mathcal{N}_\mu(w)-D\mathcal{N}_0(w)\|_{\mathcal{L}(E,E)}\leq \varepsilon_\mu M^{\alpha-2}.$$
The same derivative formula holds for
$\mathcal N_0(w)=\mu_0(w^+)^{\alpha-1}$.
Thus $\mathcal N_\mu\to\mathcal N_0$ in $C^1$ uniformly on bounded
subsets of $E$.  Moreover, the uniform continuity of
$(\mathfrak g_0^+)'$ on bounded intervals and
\eqref{E:sec7-Rmu-small-existence} give
$$
 \sup_{v\in\mathcal V}
 \|D\mathcal N_0(\phi_0v+R_\mu(v))-D\mathcal N_0(\phi_0v)\|
 _{\mathcal L(E,E)}\longrightarrow0.
$$
The map
$$
 h(y,\zeta)\longmapsto
 \int_{\mathbb{R}^d}\phi_0(y)h(y,\zeta)\,\mathrm dy
$$
is bounded, on block-radial inputs, from $E$ to
$L^2_{\rm rad}(\mathbb R^m)\cap L^\infty(\mathbb R^m)$, and
$(-\Delta_\zeta+1)^{-1}$ is bounded from the latter space to $\mathcal Y$.
Together with \eqref{E:sec7-Rmu-small-existence}, these observations give
\begin{equation}\label{E:sec7-Gmu-C1-conv}
        \mathcal G_\mu\to\mathcal G_0
        \quad\hbox{in }C^1\hbox{ on a neighborhood of }V_\alpha\hbox{ in }\mathcal{Y},
\end{equation}
where
$$\mathcal{G}_0(v)=v-(-\Delta_\zeta+1)^{-1}\bigl(c_\alpha(v^+)^{\alpha-1}\bigr).$$
Since $V_\alpha>0$ and solves
$-\Delta V_\alpha+V_\alpha=c_\alpha V_\alpha^{\alpha-1}$, we have
$\mathcal G_0(V_\alpha)=0$.  The derivative at $V_\alpha$ is
$$
        D\mathcal G_0(V_\alpha)
        =I-(-\Delta+1)^{-1}
        \bigl(c_\alpha(\alpha-1)V_\alpha^{\alpha-2}\,\cdot\bigr).
$$
This operator is invertible on $H^1_{\mathrm{rad}}(\mathbb{R}^m)$, because it is
equivalent to the radial linearized operator
$$
        L_\alpha=-\Delta+1-c_\alpha(\alpha-1)V_\alpha^{\alpha-2},
$$
whose kernel in the radial space is trivial.  We shall use the implicit function theorem in the space $\mathcal{Y}$.  The derivative
above is an isomorphism on $\mathcal{Y}$ as well.  Indeed, write
$$
        K\xi:=(-\Delta+1)^{-1}
        \bigl(c_\alpha(\alpha-1)V_\alpha^{\alpha-2}\xi\bigr).
$$
Since $V_\alpha$ decays exponentially, multiplication by
$V_\alpha^{\alpha-2}$ localizes the function.  More precisely, if
$\{\xi_n\}$ is bounded in $\mathcal Y$, then, after passing to a subsequence,
$\xi_n$ converges in $L^2_{\rm loc}$ and almost everywhere.  The common
$L^\infty$ bound and the exponential decay of
$V_\alpha^{\alpha-2}$ show that
$V_\alpha^{\alpha-2}\xi_n$ converges strongly in $L^2$ and, by interpolation,
in $L^p$ for every finite $p\geq2$.  The resolvent
$(-\Delta+1)^{-1}$ maps $L^2$ boundedly into $H^2$ and maps $L^p$ into
$W^{2,p}$; choosing $p>m/2$ and using Sobolev embedding gives convergence
both in $H^1$ and in $L^\infty$.  Thus $K$ is compact on $\mathcal{Y}$.  Hence
$I-K=D\mathcal{G}_0(V_\alpha)$ is Fredholm of index zero on $\mathcal{Y}$.  Its kernel
is contained in $H^1_{\mathrm{rad}}(\mathbb{R}^m)$ and is therefore trivial by the radial nondegeneracy of $V_\alpha$.  Consequently
$D\mathcal{G}_0(V_\alpha):\mathcal{Y}\to\mathcal{Y}$ is an isomorphism.  The implicit function theorem therefore gives, for all sufficiently small $\mu>0$, a unique solution $v_\mu$ of \eqref{E:sec7-reduced-equation-existence} in $\mathcal{V}$, with
\begin{equation}\label{E:sec7-vmu-conv}
        v_\mu\to V_\alpha
        \qquad\hbox{in }\mathcal Y.
\end{equation}
Let
\begin{equation}\label{E:sec7-Umu-def}
        U_\mu:=\phi_0v_\mu+R_\mu(v_\mu).
\end{equation}
Then $U_\mu$ solves the extended rescaled equation \eqref{E:sec7-extended-rescaled},
and by \eqref{E:sec7-Rmu-small-existence} and \eqref{E:sec7-vmu-conv},
$$
        U_\mu\to\phi_0V_\alpha
        \qquad\hbox{in }H^1(\mathbb{R}^d\times\mathbb{R}^m).
$$
By Lemmas~\ref{L:sec7-parameter-regularity} and
\ref{lemma:20260622-1443}, $(\mu,v)\mapsto R_\mu(v)$ is jointly $C^1$
for $0<\mu<\mu_R$, and so is the reduced map $(\mu,v)\mapsto\mathcal G_\mu(v)$.
In view of \eqref{E:sec7-Gmu-C1-conv} and \eqref{E:sec7-vmu-conv},
$D_v\mathcal G_\mu(v_\mu)$ remains invertible for all sufficiently small
$\mu>0$.  The parameter-dependent implicit function theorem therefore
makes $\mu\mapsto v_\mu$ a $C^1$ map from $(0,\mu_e)$ to $\mathcal Y$,
after reducing $\mu_e$ if necessary.  Composition with $R_\mu$ proves the asserted
$C^1$ dependence of $(v_\mu,R_\mu(v_\mu))$.  The convergences
\eqref{E:sec7-Rmu-small-existence} and \eqref{E:sec7-vmu-conv} give its
continuous extension to $(V_\alpha,0)$ at $\mu=0$; no differentiability
at this singular endpoint is asserted.

Returning to the original variables, define
\begin{equation}\label{E:sec7-ulambda-from-Umu}
        \widetilde{u}_\lambda(y,z)
        :=\mu^{\frac1{\alpha-2}}U_\mu(y,\sqrt\mu z),
        \qquad \mu=\lambda+d .
\end{equation}
Then $\widetilde{u}_\lambda\in X_r$ solves the extended equation
\begin{equation}\label{E:sec7-extended-original}
        -\Delta u+|y|^2u+\lambda u=g_+(u)
        \quad\hbox{in }\mathbb{R}^N .
\end{equation}
Let
$\widetilde{u}_\lambda^-:=\max\{-\widetilde{u}_\lambda,0\}$.  Testing
\eqref{E:sec7-extended-original} by $-\widetilde{u}_\lambda^-$, and using the fact
that $g_+(s)=0$ for $s\le0$, yields
$$
        \int_{\mathbb{R}^N}
        \bigl(|\nabla \widetilde{u}_\lambda^-|^2
        +|y|^2(\widetilde{u}_\lambda^-)^2
        +\lambda(\widetilde{u}_\lambda^-)^2\bigr)\,\mathrm{d} x=0 .
$$
By the spectral inequality $H+\lambda\ge d+
\lambda=\mu>0$, we obtain
$\widetilde{u}_\lambda^-=0$.  Hence $\widetilde{u}_\lambda\ge0$.  Since
$U_\mu\to\phi_0V_\alpha\not\equiv0$, the function $\widetilde{u}_\lambda$ is not
identically zero.  The strong maximum principle then gives
$\widetilde{u}_\lambda>0$ in $\mathbb{R}^N$.  Consequently
$g_+(\widetilde{u}_\lambda)=g(\widetilde{u}_\lambda)$, and
$\widetilde{u}_\lambda$ solves the original equation \eqref{E:fixed-lambda}.

The solution $\widetilde u_\lambda\in X_r$ was constructed in the centered
radial subspace.  If $m\ge2$, positivity immediately gives
$\widetilde u_\lambda\in\mathcal K$.  If $m=1$, Theorem~A supplies
monotonicity about a free-variable symmetry center.  Since
$\widetilde u_\lambda$ is even about $z=0$ and decays at infinity, that center
must be $0$; otherwise the function would be periodic.  Hence
$\widetilde u_\lambda$ is nonincreasing in both $|y|$ and $|z|$, and again
$\widetilde u_\lambda\in\mathcal K$.

We set $u_\lambda:=\widetilde{u}_\lambda$.
This proves the existence of a positive solution in $\mathcal{K}$.  The convergence
\eqref{E:sec7-LS-profile-conv} follows from the construction.
\end{proof}

\begin{lemma}[Parameter regularity for the lower-edge reduction]
\label{L:sec7-parameter-regularity}
Assume \ref{gcon1} and \ref{gcon2}.  Write
$$
 E:=L^2(\mathbb R^N)\cap L^\infty(\mathbb R^N),
 \qquad \|h\|_E:=\|h\|_2+\|h\|_\infty,
 \qquad T_\mu:=\mathcal A_\mu^{-1}Q.
$$
Then the following assertions hold.
\begin{enumerate}[label=(\roman*)]
\item The map
$$
 (0,\infty)\ni\mu\longmapsto T_\mu
 \in\mathcal L(E,X\cap L^\infty)
$$
is $C^1$, where the target space has norm $\|\cdot\|_X+\|\cdot\|_\infty$.
It is also $C^1$ as a map into $\mathcal L(L^2,X)$.
Both $T_\mu$ and its parameter derivative preserve block-radiality.
\item The Nemytskii map
$$
 \mathcal N:(0,\infty)\times E\longrightarrow E,
 \qquad \mathcal N(\mu,w):=\mathfrak g_\mu^+(w),
$$
is jointly $C^1$.  Consequently,
$$
 \mathfrak F_\mu(v,r):=r-T_\mu\mathfrak g_\mu^+(\phi_0v+r)
$$
defines a jointly $C^1$ map from
$(0,\infty)\times\mathcal Y\times\mathcal Z$ to $\mathcal Z$.
\end{enumerate}
\end{lemma}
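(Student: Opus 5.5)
For (i), I will use the fact that $\mathcal A_\mu$ is an affine family in the parameter. Write
$$\mathcal A_\mu=\mu^{-1}\mathcal B+\mathcal D,\qquad \mathcal B:=Q(H_y-d)Q,\qquad \mathcal D:=Q(-\Delta_\zeta+1)Q.$$
Both $\mathcal B$ and $\mathcal D$ are nonnegative forms on the $Q$-subspace of $X$. By \eqref{E:sec7-gap}, $\mathcal B\ge 2Q$, and $\mathcal B$ also controls $\|\nabla_y r\|_2^2+\||y|r\|_2^2$, as in the proof of Proposition~\ref{prop:strong-H1-convergence-rescaled}. Hence $\langle\mathcal A_\mu r,r\rangle\ge c(\mu)\|r\|_X^2$ with $c(\mu)$ locally bounded below on $(0,\infty)$. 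Lax--Milgram then makes $T_\mu$ bounded from $L^2$ into $X$, and the map is locally uniform in $\mu$.

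The form resolvent identity gives
$$T_\mu-T_\nu=(\nu^{-1}-\mu^{-1})\,T_\mu\mathcal B T_\nu.$$
I plan to show that the candidate derivative $\partial_\mu T_\mu=\mu^{-2}T_\mu\mathcal BT_\mu$ is a bounded operator and that the identity yields differentiability with a continuous derivative. To do this I will write $\mathcal B T_\nu=\mu\bigl(Q-\mathcal D T_\nu\bigr)$-type expressions, that is, use $\mathcal B T_\mu=\mu(Q-\mathcal DT_\mu)$. This shows that $\mathcal BT_\mu:L^2\to X'$ is bounded, so $T_\mu\mathcal BT_\mu\in\mathcal L(L^2,X)$. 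Block-radiality is preserved because $H_y$, $\Delta_\zeta$ and $P$ commute with rotations in $y$ and in $\zeta$, and therefore so do $T_\mu$ and its derivative.

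The $L^\infty$ part of the target is the main obstacle. For this I would mimic Lemma~\ref{L:common-exterior-barrier}. The operator $\mu^{-1}(H_y-d)-\Delta_\zeta+1$ is not positivity preserving on the $Q$-range, since $Q$ destroys the sign. I therefore plan to write
$$T_\mu h=\bigl(\mu^{-1}(H_y-d)-\Delta_\zeta+1\bigr)^{-1}Qh$$
as a resolvent of the full operator acting on $Qh$. The full operator $\mu^{-1}H_y-\Delta_\zeta+(1-d/\mu)$ fails to be coercive, and I handle this by rewriting it as $\mu^{-1}[H_y+\mu-d]-\Delta_\zeta+\dots$. The plan is to restrict to $\mu<$ some bound only if necessary. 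Otherwise I decompose into Hermite modes: on the $k$-th mode with $k\ge1$ the operator is $2k\mu^{-1}-\Delta_\zeta+1\ge 1$, and its heat kernel is positive. Using Mehler's formula, the kernel of $e^{-t(\mu^{-1}(H_y-d)+1-\Delta_\zeta)}Q$ is bounded in $L^\infty\to L^\infty$ by $Ce^{-t}$ uniformly for $t\ge t_0$ (it decays like $e^{-2t/\mu}$ on the $Q$-range), while for small $t$ it is bounded by the full Mehler kernel, which is an $L^\infty$ contraction up to the factor $e^{td/\mu}$. Integrating in $t$ then gives $\|T_\mu\|_{L^\infty\to L^\infty}<\infty$, locally uniformly in $\mu$.

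The derivative $\partial_\mu T_\mu=\int_0^\infty t\mu^{-2}(H_y-d)e^{-t\mathcal A_\mu}Q\,dt$ has the same structure. I will bound it in $\mathcal L(E,L^\infty)$ by the same semigroup estimate, using $\|(H_y-d)e^{-s(H_y-d)/\mu}Q\|_{\infty\to\infty}\le C s^{-1}e^{-cs}$ from the explicit Mehler kernel. Continuity in $\mu$ follows by dominated convergence in $t$.

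For (ii), the joint $C^1$ property of $\mathcal N(\mu,w)=\mathfrak g_\mu^+(w)$ on $E$ comes from writing $\mathfrak g_\mu^+(s)=a^{1-\alpha}g_+(as)$ with $a=\mu^{1/(\alpha-2)}$. This gives $\partial_\mu\mathfrak g_\mu^+(s)=a'(\mu)\bigl[(1-\alpha)a^{-\alpha}g_+(as)+a^{1-\alpha}g_+'(as)s\bigr]$, which is continuous in $(\mu,s)$ on $(0,\infty)\times[-M,M]$ and vanishes at $s=0$ to first order. The same holds for $\partial_s$. Uniform continuity on compact sets then gives $C^1$ of the Nemytskii map into $L^\infty$. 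For the $L^2$ component I use $|\partial\mathfrak g(s)|\le C|s|$ near $0$, which gives Lipschitz-type bounds $\|\cdot\|_2\le C\|w\|_\infty\|h\|_2$. The derivative is then continuous in operator norm by splitting $E$-norms as in Step~3 of Lemma~\ref{L:weighted-compact-extension}. Finally $\mathfrak F_\mu$ is the composition of the $C^1$ map $(\mu,v,r)\mapsto\phi_0v+r\in E$ (bounded linear), $\mathcal N$, and the $C^1$ operator family $T_\mu$, using the bilinear chain rule $(\mu,f)\mapsto T_\mu f$. Its values lie in $\mathcal Z$ because $T_\mu$ has range in $Q(X\cap L^\infty)$ and preserves block-radiality.
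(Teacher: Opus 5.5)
Your proposal is correct. Part (ii) is essentially the paper's argument. For the $L^\infty$ half of (i), however, you take a genuinely different route.

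\textbf{How the paper does (i).} It stays within elliptic regularity. Fix $p>\max\{2,N/2\}$. The paper shows that $T_\mu$ maps $F_p=L^2\cap L^p$ boundedly into $D=D_2\cap D_p$, the weighted $W^{2,q}$ spaces with $(1+|y|^2)u\in L^q$. The tools are Kato's inequality, local boundedness of subsolutions, the weighted test $V_M^{q-1}|u|^{q-2}u$, and interior $W^{2,q}$ estimates. Since $L_y=H_y-d$ maps $D$ boundedly into $F_p$, the identity $T_\nu-T_\mu=(\mu^{-1}-\nu^{-1})T_\nu L_yT_\mu$ closes in this single pair of spaces. Both differentiability and operator-norm continuity of $\mu^{-2}T_\mu L_yT_\mu$ then follow algebraically, and the embeddings $E\hookrightarrow F_p$, $D\hookrightarrow X\cap L^\infty$ finish the proof.

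\textbf{How your route differs.} You split the target space. The $\mathcal L(L^2,X)$ part comes from analytic dependence of the form inverse. The $\mathcal L(L^\infty)$ part comes from the tensor-product semigroup $e^{-t\mathcal M_\mu}=e^{-tL_y/\mu}e^{-t(1-\Delta_\zeta)}$ and Mehler's kernel.

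This avoids all the $W^{2,p}$ bookkeeping. Combined with the $Q$-range decay you mention, it even gives the bound $\|\mathcal A_\mu^{-1}Qh\|_\infty\le C\mu\|h\|_\infty$ of Lemma~\ref{lemma:20260622-1443} directly, bypassing the blow-up and Hermite--Liouville argument used there. That decay, $\|e^{-sL_y}Q\|_{\infty\to\infty}\le Ce^{-2s}$ for $s\ge1$, follows by factoring through $L^2$ with the Gaussian-kernel operator $e^{-L_y/2}$.

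The price is that the real work moves into kernel estimates. You need
\[
\|L_y^k e^{-sL_y}\|_{\infty\to\infty}\le C_ks^{-k},\qquad 0<s\le1,\quad k=1,2.
\]
In $\partial_s$ of the Mehler kernel, the unbounded factor $|y|^2$ must be absorbed by the Gaussian in $e^{-sH_y}1=(\cosh 2s)^{-d/2}e^{-\tanh(2s)|y|^2/2}$. The case $k=1$ makes the integral defining $\partial_\mu T_\mu$ converge in $\mathcal L(L^\infty)$. The case $k=2$ is what lets ``dominated convergence in $t$'' give operator-norm convergence, rather than merely strong convergence, of the difference quotients and of $\partial_\mu T_\mu$. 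The paper gets this last point for free from the resolvent identity.

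\textbf{Two small corrections.}

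First, the full operator $\mathcal M_\mu=\mu^{-1}(H_y-d)-\Delta_\zeta+1$ is $\ge1$ on all of $L^2$, because $H_y\ge d$. It is therefore coercive, contrary to your claim. Its semigroup is positive with
\[
\|e^{-t\mathcal M_\mu}\|_{\infty\to\infty}=e^{-t}e^{td/\mu}(\cosh(2t/\mu))^{-d/2}\le 2^{d/2}e^{-t}\quad\text{for all } t.
\]
So no rewriting or small/large-$t$ splitting is needed: $T_\mu=\mathcal M_\mu^{-1}Q$ with $\|Q\|_{\infty\to\infty}\le1+2^{d/2}$.

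Second, in (ii), $\partial_s\mathfrak g_\mu^+(s)=\mu^{-1}g_+'(as)$ need not vanish to first order at $s=0$, since $g$ is only $C^1$. Only its boundedness and uniform continuity on compacts are needed. The first-order vanishing you actually need is for $\partial_\mu\mathfrak g_\mu^+$, in the form that $s^{-1}\partial_\mu\mathfrak g_\mu^+(s)$ extends continuously to $s=0$. This is the paper's function $b$, and it is what makes the $\mu$-remainder $o(|h|)$ in $L^2$.
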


\begin{proof}
Fix $I=[\underline\mu,\overline\mu]\subset(0,\infty)$ and put
$L_y=H_y-d$.  To prove (i), choose $p>\max\{2,N/2\}$ and set
$$
 F_p=L^2(\mathbb R^N)\cap L^p(\mathbb R^N),\qquad V(y)=1+|y|^2,
$$
$$
 D_q=\{u\in W^{2,q}(\mathbb R^N):Vu\in L^q(\mathbb R^N)\},
 \qquad q\in\{2,p\},\qquad D=D_2\cap D_p,
$$
with their sum norms.  Interpolation and Sobolev embedding give
$E\hookrightarrow F_p$ and $D\hookrightarrow X\cap L^\infty$.
Also, for any $u\in D$, we have that
\begin{align*}
 \|L_yu\|_{F_p}
 &=\sum_{q\in\{2,p\}}\|L_yu\|_q\\
 &\le\sum_{q\in\{2,p\}}
   \bigl(\|\Delta_yu\|_q+\||y|^2u\|_q+d\|u\|_q\bigr)\\
 &\le C_d\sum_{q\in\{2,p\}}
   \bigl(\|u\|_{W^{2,q}}+\|Vu\|_q\bigr)\\
 &=C_d\sum_{q\in\{2,p\}}\|u\|_{D_q}=C_d\|u\|_D,
\end{align*}

so $L_y:D\to F_p$ is bounded.  We first establish
\begin{equation}\label{E:sec7-parameter-elliptic-bound}
 \|T_\mu f\|_{D_2}\le C_I\|f\|_2,\qquad
 \|T_\mu f\|_D\le C_I\|f\|_{F_p},\qquad \mu\in I,
\end{equation}
where the second estimate is asserted for $f\in F_p$.

For $f\in L^2$, the coercivity of $\mathcal A_\mu$ and Lax--Milgram
give $u=T_\mu f\in QX$ with $\|u\|_X\le C_I\|f\|_2$.
Since $P$ commutes with the operator and $L_yP=0$, the weak equation
holds against every test function in $X$, not only those in $QX$:
$$
 -\mu^{-1}\Delta_yu-\Delta_\zeta u
 +\bigl(\mu^{-1}|y|^2+1-d/\mu\bigr)u=Qf.
$$
Suppose first that $f\in F_p$.  The Gaussian formula for $P$ shows that
$Q$ is bounded on $L^p$.  Kato's inequality gives
$$
 -\mu^{-1}\Delta_y|u|-\Delta_\zeta|u|-C_I|u|\leq |Qf|.
$$
The local boundedness estimate for subsolutions
\cite[Theorem~8.17]{GilbargTrudinger}, applied on unit balls, therefore gives
$$
 \|u\|_\infty\leq C_I(\|u\|_2+\|Qf\|_p)
 \leq C_I\|f\|_{F_p}.
$$
The constant is independent of the ball center: the principal part is
uniformly elliptic for $\mu\in I$, and the nonnegative potential has
been discarded.  Interpolation also gives $\|u\|_p\leq C_I\|f\|_{F_p}$.

Rewrite the equation as
$$
 -\mu^{-1}\Delta_yu-\Delta_\zeta u+\mu^{-1}Vu=F,
 \qquad F=Qf+\bigl((d+1)/\mu-1\bigr)u.
$$
For $q\in\{2,p\}$ and $M\ge1$, put $V_M=\min\{V,M\}$.
Since $u\in X\cap L^\infty$ and $V_M$ is bounded and Lipschitz, a direct computation verifies that $V_M^{q-1}|u|^{q-2}u$ belongs to $X$, and thus it is an admissible test. Testing and absorbing the mixed gradient term by Young's inequality yield
\begin{align*}
 \mu^{-1}\int V V_M^{q-1}|u|^q
 &\le \int |F|V_M^{q-1}|u|^{q-1}
       +C_I\int V_M^{q-3}|\nabla_yV_M|^2|u|^q\\
 &\le \|F\|_q\|V_Mu\|_q^{q-1}
       +C_I\int V_M^{q-2}|u|^q.
\end{align*}
Here all integrals are over $\mathbb R^N$, and
$|\nabla_yV_M|^2\le4V_M$ was used in the second line.
By H\"older's inequality,
$$
 \int V_M^{q-2}|u|^q
 \leq \|V_Mu\|_q^{q-2}\|u\|_q^2,\qquad q\geq2.
$$
Since $V\ge V_M$, we obtain that
$$\mu^{-1}\|V_Mu\|_{q}^{q}\leq \|F\|_q\|V_Mu\|_{q}^{q-1}+C_I \|V_Mu\|_{q}^{q-2} \|u\|_{q}^{2}$$
and then
$$\mu^{-1}\|V_Mu\|_{q}^{2}\leq \|F\|_q \|V_Mu\|_{q}+C_I\|u\|_{q}^{2}.$$
Another application of Young's inequality gives
$$\|V_Mu\|_{q}^{2}\leq C_I(\|F\|_q^2+\|u\|_{q}^{2}).$$
Hence,
$$
 \|V_Mu\|_q\leq C_I(\|F\|_q+\|u\|_q),
$$
uniformly in $M$.  Monotone convergence yields the same estimate for
$Vu$.  Now the constant-coefficient equation
$-\mu^{-1}\Delta_yu-\Delta_\zeta u=F-\mu^{-1}Vu$ and the interior
$W^{2,q}$ estimate \cite[Theorem~9.11]{GilbargTrudinger}, summed over a
covering by unit balls with bounded overlap, give
$$
 \|u\|_{W^{2,q}}+\|Vu\|_q
 \le C_I(\|F\|_q+\|u\|_q),\qquad q\in\{2,p\}.
$$
For $q=2$, the test is simply $V_Mu\in X$, so this argument applies
to every $f\in L^2$ without an $L^\infty$ bound on $u$.
Together with the preceding bounds on $u$, this proves
\eqref{E:sec7-parameter-elliptic-bound}.

We can now take difference quotients directly in the equation.
Subtracting the equations for $T_\nu f$ and $T_\mu f$ gives
\begin{equation}\label{E:sec7-parameter-resolvent-identity}
 T_\nu-T_\mu
 =\left(\frac1\mu-\frac1\nu\right)T_\nu L_yT_\mu,
 \qquad \mu,\nu\in I.
\end{equation}
Every composition is well defined: $T_\mu:F_p\to D$,
$L_y:D\to F_p$, and $PL_yT_\mu=0$.
The bounds \eqref{E:sec7-parameter-elliptic-bound} imply
$$
 \|T_\nu-T_\mu\|_{\mathcal L(F_p,D)}\le C_I|\nu-\mu|.
$$
Set $B_\mu=\mu^{-2}T_\mu L_yT_\mu$.  For $h\ne0$ with
$\mu,\mu+h\in I$, \eqref{E:sec7-parameter-resolvent-identity} gives
\begin{align*}
 \frac{T_{\mu+h}-T_\mu}{h}-B_\mu
 &=\frac{(T_{\mu+h}-T_\mu)L_yT_\mu}{\mu(\mu+h)}
   -\frac{hT_\mu L_yT_\mu}{\mu^2(\mu+h)}.
\end{align*}
Consequently,
$$
 \left\|\frac{T_{\mu+h}-T_\mu}{h}-B_\mu\right\|_{\mathcal L(F_p,D)}
 \le C_I|h|\longrightarrow0.
$$
The same bounds give
$\|B_\nu-B_\mu\|_{\mathcal L(F_p,D)}\le C_I|\nu-\mu|$.
Thus $\partial_\mu T_\mu=B_\mu$ is continuous in operator norm.
The embeddings $E\hookrightarrow F_p$ and
$D\hookrightarrow X\cap L^\infty$ prove the claimed $C^1$ dependence.
Using $L_y:D_2\to L^2$ and the first estimate in
\eqref{E:sec7-parameter-elliptic-bound}, the same difference-quotient
argument proves $C^1$ dependence in $\mathcal L(L^2,D_2)$, hence in
$\mathcal L(L^2,X)$.
Finally, rotation invariance of the equation and uniqueness show that
$T_\mu$, and therefore $B_\mu$, preserve block-radiality.

To prove (ii), put $a=\mu^{1/(\alpha-2)}$.  Scalar differentiation gives
$$
 (\mathfrak g_\mu^+)'(s)=\mu^{-1}g_+'(as),
 \qquad
 \partial_\mu\mathfrak g_\mu^+(s)
 =\frac{as\,g_+'(as)-(\alpha-1)g_+(as)}
        {(\alpha-2)\mu a^{\alpha-1}}.
$$
Fix $M>0$.  The first expression is uniformly continuous on
$I\times[-M-1,M+1]$.  Define its modulus in the scalar argument by
$$
 \omega_{I,M}(\delta):=
 \sup_{\substack{\mu\in I,\ |s|,|t|\le M+1\\|s-t|\le\delta}}
 |(\mathfrak g_\mu^+)'(s)-(\mathfrak g_\mu^+)'(t)|.
$$
Then $\omega_{I,M}(\delta)\to0$ as $\delta\downarrow0$.  For
$\|w\|_\infty\le M$ and $\|h\|_\infty\le1$, the integral remainder satisfies
$$
 \|\mathcal N(\mu,w+h)-\mathcal N(\mu,w)
       -(\mathfrak g_\mu^+)'(w)h\|_E
 \le\omega_{I,M}(\|h\|_\infty)\|h\|_E.
$$
Hence $D_w\mathcal N(\mu,w)h=(\mathfrak g_\mu^+)'(w)h$, and this
derivative is jointly continuous in operator norm.

For the parameter derivative, define
$$
 b(\mu,s):=
 \begin{cases}
 s^{-1}\partial_\mu\mathfrak g_\mu^+(s),&s\ne0,\\
 0,&s=0.
 \end{cases}
$$
The displayed formula and $g_+(0)=g_+'(0)=0$ show that $b$ is continuous
on $I\times[-M,M]$.  In particular,
$|\partial_\mu\mathfrak g_\mu^+(s)|\le C_{I,M}|s|$, so
$\partial_\mu\mathfrak g_\mu^+(w)\in E$.
For $\mu,\mu+h\in I$, the integral formula in $\mu$ gives
\begin{align*}
 &\|\mathcal N(\mu+h,w)-\mathcal N(\mu,w)
       -h\,\partial_\mu\mathfrak g_\mu^+(w)\|_E\\
 &\qquad\le |h|\|w\|_E
 \sup_{\substack{0\le\theta\le1\\|s|\le M}}
 |b(\mu+\theta h,s)-b(\mu,s)|=o(|h|).
\end{align*}
Moreover, $(\mu,w)\mapsto b(\mu,w)w$ is continuous into $E$, by
uniform continuity of $b$ on bounded sets.  This proves joint $C^1$
regularity of $\mathcal N$ using only $g\in C^1$.
Finally, $(v,r)\mapsto\phi_0v+r$ is bounded from
$\mathcal Y\times\mathcal Z$ to $E$ and preserves block-radiality,
so the last assertion follows by composition with (i).
\end{proof}

\begin{lemma}[Reduction to smooth bounded data]
\label{L:sec7-smooth-data-reduction}
Let $L_y:=H_y-d$, let $0<\mu\leq1$, and suppose that
$$
 F\in L^2(\mathbb{R}^d\times\mathbb{R}^m)
 \cap L^\infty(\mathbb{R}^d\times\mathbb{R}^m),
 \qquad QF=F.
$$
Denote by $R\in X$, $QR=R$, the Lax--Milgram solution of
\begin{equation}\label{E:sec7-smooth-data-model-equation}
 (L_y-\Delta_\xi+\mu)R=F
 \quad\hbox{in }\mathbb{R}^d\times\mathbb{R}^m.
\end{equation}
Then there are functions
$$
 F_j\in C^\infty(\mathbb{R}^{d+m})\cap L^2(\mathbb{R}^d\times\mathbb{R}^m)
 \cap L^\infty(\mathbb{R}^d\times\mathbb{R}^m),
 \qquad QF_j=F_j,
$$
such that
\begin{equation}\label{E:sec7-smooth-data-approximation}
 F_j\to F\quad\hbox{in }L^2(\mathbb{R}^d\times\mathbb{R}^m),
 \qquad
 \sup_j\|F_j\|_\infty
 \leq (1+2^{d/2})\|F\|_\infty.
\end{equation}
The corresponding weak solutions $R_j\in X$, $QR_j=R_j$, of
$$
 (L_y-\Delta_\xi+\mu)R_j=F_j
$$
belong to $C^\infty(\mathbb{R}^{d+m})\cap L^\infty(\mathbb{R}^{d+m})$ and satisfy
\begin{equation}\label{E:sec7-smooth-solution-convergence}
 R_j\to R\qquad\hbox{strongly in }X.
\end{equation}
Consequently, any $L^\infty$ estimate for the above equation that is uniform
in $\mu\in(0,1]$ and valid for smooth bounded data and their corresponding
solutions extends, after changing only the dimensional constant, to every
$F\in L^2(\mathbb{R}^d\times\mathbb{R}^m)\cap L^\infty(\mathbb{R}^d\times\mathbb{R}^m)$ with $QF=F$.
\end{lemma}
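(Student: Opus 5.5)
The plan is to build $F_j$ by mollifying and truncating $F$, then re-projecting onto the range of $Q$, and to get \eqref{E:sec7-smooth-solution-convergence} from Lax--Milgram stability.

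\textbf{Construction of $F_j$.} Let $\rho_j$ be a standard mollifier on $\mathbb{R}^{d+m}$ and $\chi_j$ a smooth cutoff equal to one on $B_j$. Set
$$
 \widetilde F_j:=\rho_j*(\chi_jF),\qquad F_j:=Q\widetilde F_j=\widetilde F_j-\phi_0\int_{\mathbb{R}^d}\widetilde F_j\phi_0\,\mathrm dy .
$$
Each $\widetilde F_j$ is smooth, compactly supported and satisfies $\|\widetilde F_j\|_\infty\le\|F\|_\infty$. The projection term is smooth in $(y,\zeta)$, since $\phi_0$ is smooth and one may differentiate under the integral in $\zeta$. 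It is bounded by
$$
 \phi_0(y)\,\|\phi_0\|_{L^1}\,\|F\|_\infty=2^{d/2}e^{-|y|^2/2}\|F\|_\infty\le 2^{d/2}\|F\|_\infty ,
$$
using $\|\phi_0\|_\infty\|\phi_0\|_1=\pi^{-d/2}(2\pi)^{d/2}=2^{d/2}$. This gives $\sup_j\|F_j\|_\infty\le(1+2^{d/2})\|F\|_\infty$.

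Moreover $\widetilde F_j\to F$ in $L^2$, and $Q$ is an $L^2$ contraction with $QF=F$. Hence $F_j\to F$ in $L^2$, and $QF_j=F_j$ by construction. Block-radial symmetry can be preserved by using radial mollifiers and cutoffs, if needed.

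\textbf{Convergence of the solutions.} For fixed $\mu\in(0,1]$, the form of $L_y-\Delta_\xi+\mu$ is coercive on $X$: the oscillator gives $L_y\ge0$, and $\mu>0$ controls the $L^2$ part. More precisely,
$$
 \langle(L_y-\Delta_\xi+\mu)w,w\rangle\ge c_\mu\|w\|_X^2,
$$
with $c_\mu>0$ obtained from $H_y-d\ge0$ together with $\int|y|^2w^2\le\langle H_yw,w\rangle$. Lax--Milgram then gives
$$
 \|R_j-R\|_X\le c_\mu^{-1}\|F_j-F\|_{X'}\le c_\mu^{-1}\|F_j-F\|_2\to0 .
$$
The relation $QR_j=R_j$ follows from uniqueness, since $Q$ commutes with the operator and $QF_j=F_j$.

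\textbf{Regularity of $R_j$.} Boundedness of $R_j$ comes from Kato's inequality, which gives $(-\Delta+|y|^2-d)|R_j|\le|F_j|$ distributionally. The local subsolution estimate \cite[Theorem~8.17]{GilbargTrudinger} on unit balls, as in the proof of Lemma~\ref{L:sec7-parameter-regularity}, then bounds $\|R_j\|_\infty$ by $\|R_j\|_2+\|F_j\|_\infty$. Smoothness comes from interior regularity: the operator has smooth coefficients and $F_j\in C^\infty$, so iterating interior $W^{2,p}$ and Schauder estimates gives $R_j\in C^\infty$.

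\textbf{Extension of $L^\infty$ estimates.} Suppose $\|R_j\|_\infty\le C\|F_j\|_\infty$ uniformly in $\mu$. Then
$$
 \|R_j\|_\infty\le C(1+2^{d/2})\|F\|_\infty .
$$
Since $R_j\to R$ in $L^2$, a subsequence converges almost everywhere, so $\|R\|_\infty\le C(1+2^{d/2})\|F\|_\infty$. Only the dimensional constant has changed.

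\textbf{Main obstacle.} The delicate point is keeping exact $Q$-orthogonality without losing the sup bound, since mollification does not commute with $Q$ in $y$. This is why $Q$ is applied after mollifying, and the explicit Gaussian bound on the $P$-part is what produces the constant $1+2^{d/2}$. The remaining steps are routine.
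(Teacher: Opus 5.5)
Your proposal is correct and follows the paper's proof essentially step by step. You truncate and mollify, apply $Q$ afterwards, bound the $P$-part by $\|\phi_0\|_\infty\|\phi_0\|_1=2^{d/2}$, obtain $R_j\to R$ in $X$ from Lax--Milgram stability, and pass the uniform bound to the limit along an a.e.\ convergent subsequence. The differences are only in two sub-steps, and both of your variants are valid:
\begin{itemize}
\item For $R_j\in L^\infty$, you use Kato's inequality and the local subsolution estimate on all unit balls, after discarding the nonnegative potential $|y|^2+\mu$. The paper instead uses local boundedness on the cylinder $\{|y|\le A+1\}$ and comparison with a constant on $\{|y|>A\}$.
\item For the convergence of $R_j$, your coercivity constant $c_\mu$ depends on $\mu$, whereas the paper uses the $\mu$-uniform spectral gap on $QX$. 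This is harmless here because $\mu$ is fixed in the lemma.
\end{itemize}
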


\begin{proof}
Let $\chi_j\in C_c^\infty(\mathbb{R}^{d+m})$ satisfy
$0\leq\chi_j\leq1$ and $\chi_j\to1$ pointwise.  By first truncating and then
mollifying, we can choose a standard mollifier $\rho_{\varepsilon_j}$ so that
$$
 G_j:=\rho_{\varepsilon_j}*(\chi_jF)\in C_c^\infty(\mathbb{R}^{d+m}),
 \qquad
 G_j\to F\quad\hbox{in }L^2,
 \qquad
 \|G_j\|_\infty\leq\|F\|_\infty.
$$
Set $F_j:=QG_j$.  Since $Q$ is the orthogonal projection on $L^2$ and
$QF=F$, we have $F_j\to F$ in $L^2$ and $QF_j=F_j$.  Moreover,
$$
 (PG_j)(y,\xi)
 =\phi_0(y)\int_{\mathbb{R}^d}G_j(\eta,\xi)\phi_0(\eta)\,\mathrm d\eta .
$$
The function on the right is smooth; its $\xi$-support is compact, and it is
bounded in the $y$-variables by the Gaussian factor $\phi_0(y)$.  Hence
$$F_j=G_j-PG_j \in C^\infty(\mathbb{R}^d\times\mathbb{R}^m)\cap L^2(\mathbb{R}^d\times\mathbb{R}^m)\cap L^\infty(\mathbb{R}^d\times\mathbb{R}^m).$$  The normalization
of $\phi_0$ gives
$$
 \|P h\|_\infty
 \leq\|\phi_0\|_\infty\|\phi_0\|_1\|h\|_\infty
 =2^{d/2}\|h\|_\infty,
$$
and therefore \eqref{E:sec7-smooth-data-approximation} follows.

Let $R_j$ be the Lax--Milgram solution associated with $F_j$.  Standard
interior elliptic regularity for the operator
$$
 -\Delta_{y,\xi}+|y|^2-d+\mu
$$
shows that $R_j\in C^\infty(\mathbb{R}^{d+m})$.  We next verify separately
that $R_j$ is globally bounded.  Choose $A>0$ so large that
$$
 |y|^2-d+\mu\geq1\qquad\hbox{for }|y|\geq A,\quad 0<\mu\leq1.
$$
For every point $x_0=(y_0,\xi_0)$ with $|y_0|\leq A+1$, the coefficients of
the equation are uniformly bounded on $B_1(x_0)$.  The local boundedness
estimate for linear elliptic equations therefore yields
\begin{equation}\label{eq:20260820-2247}
 \|R_j\|_{L^\infty(B_{1/2}(x_0))}
 \leq C_A\bigl(\|R_j\|_{L^2(B_1(x_0))}
 +\|F_j\|_{L^\infty(B_1(x_0))}\bigr)
 \leq C_A\bigl(\|R_j\|_2+\|F_j\|_\infty\bigr).
\end{equation}
The constant does not depend on $\xi_0$.  Thus $R_j$ is bounded on the whole
cylinder $\{|y|\leq A+1\}$.

Put
$$
 M_j:=\max\left\{
 \sup_{|y|=A,\,\xi\in\mathbb{R}^m}|R_j(y,\xi)|,
 \|F_j\|_\infty\right\}<+\infty.
$$
In the exterior domain $\Omega_A:=\{|y|>A\}$, the function
$w_j:=(R_j-M_j)^+$ has zero trace on $\partial\Omega_A$, belongs to
$H_0^1(\Omega_A)$, and is an admissible test function.  Indeed,
$0\leq w_j\leq R_j^+\in L^2$ and
$\nabla w_j=\boldsymbol1_{\{R_j>M_j\}}\nabla R_j$.  Since
$$
 (L_y-\Delta_\xi+\mu)(R_j-M_j)
 =F_j-(|y|^2-d+\mu)M_j\leq0
 \qquad\hbox{in }\Omega_A,
$$
testing by $w_j$ gives
$$
 \int_{\Omega_A}\left(|\nabla w_j|^2
 +( |y|^2-d+\mu)w_j^2\right)\,\mathrm dx\leq0.
$$
Hence $w_j=0$.  Applying the same argument to $-R_j-M_j$ gives
$|R_j|\leq M_j$ in $\Omega_A$.  Together with the cylinder estimate, this
proves $R_j\in L^\infty(\mathbb{R}^{d+m})$.

Finally, put $W_j:=R_j-R$.  Subtracting the two equations and testing by
$W_j$ gives
$$
 \langle L_yW_j,W_j\rangle
 +\|\nabla_\xi W_j\|_2^2+\mu\|W_j\|_2^2
 =\int_{\mathbb{R}^{d+m}}(F_j-F)W_j\,\mathrm dx .
$$
The spectral gap \eqref{E:sec7-gap} implies
$\|W_j\|_2^2\leq\frac{1}{2}\langle L_yW_j,W_j\rangle$.  It also gives
$$
 \|W_j\|_X^2
 \leq C_d\left(
 \langle L_yW_j,W_j\rangle+\|\nabla_\xi W_j\|_2^2
 +\mu\|W_j\|_2^2\right).
$$
Consequently, Cauchy's inequality yields
$$
 \|R_j-R\|_X=\|W_j\|_X
 \leq C_d\|F_j-F\|_2\longrightarrow0.
$$
This proves \eqref{E:sec7-smooth-solution-convergence}.
If a uniform
$L^\infty$ estimate holds for the smooth pairs $(F_j,R_j)$, that is,
\begin{equation}\label{eq:20260820-2256}
\|R_j\|_{\infty}\leq C \|F_j\|_\infty, \forall j.
\end{equation}
then
\eqref{E:sec7-smooth-data-approximation} gives a uniform $L^\infty$ bound for
$R_j$ that
  \begin{equation}\label{eq:20260820-2257}
\|R_j\|_{\infty}\leq C (1+2^{d/2})\|F\|_\infty, \forall j.
\end{equation}
By \eqref{E:sec7-smooth-solution-convergence}, a subsequence converges
to $R$ almost everywhere, so the same bound passes to $R$.  This proves the
last assertion.
\end{proof}

\begin{lemma}[Uniform $Q$-resolvent and nonlinear reduction]
\label{lemma:20260622-1443}
Let $\mathscr{H}_Q:=QL^2(\mathbb{R}^{d+m})$ and let $\mathscr{V}_Q$ be the
closed $Q$-subspace of the form domain of
$$
 \mathcal{A}_\mu=Q\left\{\mu^{-1}(H_y-d)-\Delta_\zeta+1\right\}Q.
$$
The operator $\mathcal{A}_\mu$ is the positive self-adjoint operator associated
with the closed form
$$
 \mathfrak{a}_\mu[r]=\mu^{-1}\langle(H_y-d)r,r\rangle
       +\|\nabla_\zeta r\|_2^2+\|r\|_2^2,
 \qquad r\in\mathscr{V}_Q.
$$
For $0<\mu\le1$ its inverse is well defined on $\mathscr{H}_Q$; moreover it
maps $Q(L^2\cap L^\infty)$ into $QX\cap L^\infty$ (and into $\mathcal Z$ for block-radial data) and satisfies
\begin{equation}\label{E:sec7-XLinfty-resolvent-estimate}
        \|\mathcal{A}_\mu^{-1}Qh\|_X
        \le C\mu^{1/2}\|h\|_2,
        \qquad
        \|\mathcal{A}_\mu^{-1}Qh\|_\infty
        \le C\mu\|h\|_\infty .
\end{equation}
There exists $\mu_R>0$ such that, for every $0<\mu<\mu_R$ and every
$v\in\mathcal V$, the $Q$-equation \eqref{E:sec7-Q-equation-existence} has a
unique solution $r=R_\mu(v)$ in the fixed ball $\mathcal{R}\subset\mathcal Z$.
The map $(\mu,v)\mapsto R_\mu(v)$ is jointly $C^1$
from $(0,\mu_R)\times\mathcal V$ to $\mathcal Z$, and
\begin{equation}\label{E:sec7-Rmu-Z-small}
 \sup_{v\in\mathcal V}\left(
 \|R_\mu(v)\|_{\mathcal Z}
 +\|DR_\mu(v)\|_{\mathcal L(\mathcal Y,\mathcal Z)}\right)
 \le C\mu^{1/2}.
\end{equation}
In particular, both $\|R_\mu(v)\|_X=o(1)$ and
$\|DR_\mu(v)\|_{\mathcal L(\mathcal Y,X)}=o(1)$ uniformly for
$v\in\mathcal V$ as $\mu\to0^+$.
\end{lemma}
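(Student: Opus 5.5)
The plan has three parts. First, the two resolvent estimates in \eqref{E:sec7-XLinfty-resolvent-estimate}. Then the fixed-point construction of $R_\mu(v)$ in $\mathcal Z$. Finally, $C^1$ regularity and the bound \eqref{E:sec7-Rmu-Z-small} on $DR_\mu$.

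\textbf{Resolvent estimates.} The form $\mathfrak a_\mu$ is closed and, by the gap \eqref{E:sec7-gap}, it dominates $(2/\mu+1)\|r\|_2^2$ on $\mathscr V_Q$. Hence $\mathcal A_\mu$ is positive self-adjoint on $\mathscr H_Q$ with $\|\mathcal A_\mu^{-1}\|_{\mathcal L(L^2)}\le \mu/2$. For the $X$-bound, test the equation $\mathcal A_\mu r=Qh$ with $r$. This gives
$$\mu^{-1}\langle(H_y-d)r,r\rangle+\|\nabla_\zeta r\|_2^2+\|r\|_2^2\le\|h\|_2\|r\|_2\le C\mu\|h\|_2^2 .$$
On $Q$-functions, $\langle(H_y-d)r,r\rangle$ controls $\|\nabla_y r\|_2^2+\||y|r\|_2^2$ (as in Proposition~\ref{prop:strong-H1-convergence-rescaled}). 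Since $\mu\le1$, the left-hand side therefore dominates $c\|r\|_X^2$, so $\|r\|_X\le C\mu^{1/2}\|h\|_2$.

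For the $L^\infty$ bound I would first use Lemma~\ref{L:sec7-smooth-data-reduction}, after multiplying the equation by $\mu$. This reduces the claim to smooth bounded $F=\mu Qh$ and smooth bounded solutions of $(L_y-\mu\Delta_\zeta+\mu)r=F$. One still has to handle the anisotropic $\mu\Delta_\zeta$, but rescaling $\xi=\zeta/\sqrt\mu$ turns the operator into exactly the form of that lemma. The target is then $\|r\|_\infty\le C\|F\|_\infty$, uniformly in $\mu\in(0,1]$, since $\|Qh\|_\infty\le(1+2^{d/2})\|h\|_\infty$.

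This uniform bound is the main obstacle, because $L_y$ has kernel $\phi_0$ and no maximum principle applies directly. I would decompose by the heat semigroup. Write
$$r=\int_0^\infty e^{-t(L_y-\Delta_\xi+\mu)}F\,\mathrm dt .$$
On $Q$-data the Mehler kernel gives $\|e^{-tL_y}Q\|_{\infty\to\infty}\le Ce^{-2t}$ for $t\ge1$, and this operator is bounded for $t\le1$. Meanwhile $e^{t\Delta_\xi}$ is an $L^\infty$ contraction and $e^{-t\mu}\le1$. The operators commute, since $Q$ acts only in $y$. Integrating in $t$ then gives $\|r\|_\infty\le C\|F\|_\infty$ with $C$ independent of $\mu$. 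If one prefers to avoid semigroups, a fallback is a comparison argument on $\{|y|>A\}$ as in Lemma~\ref{L:sec7-smooth-data-reduction}, combined with an $L^2$-based local bound near $|y|\le A$. That route, however, seems to lose powers of $\mu$. Block-radiality is preserved by rotation invariance and uniqueness, so the output lies in $\mathcal Z$.

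\textbf{Nonlinear reduction.} Write $\Phi_\mu(v,r):=T_\mu\mathfrak g_\mu^+(\phi_0v+r)$. On $\mathcal V\times\mathcal R$ the argument $w$ is bounded in $E$, so $\mathfrak g_\mu^+(w)$ and $(\mathfrak g_\mu^+)'(w)$ are uniformly bounded in $E$ by Lemma~\ref{lemma:20260618-1129}.

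The two estimates above give $\|\Phi_\mu(v,r)\|_{\mathcal Z}\le C\mu^{1/2}$. They also give that $D_r\Phi_\mu$ has norm $\le C\mu^{1/2}$ in $\mathcal L(\mathcal Z)$, and similarly $\|D_v\Phi_\mu\|_{\mathcal L(\mathcal Y,\mathcal Z)}\le C\mu^{1/2}$. Hence, for $\mu<\mu_R$, $\Phi_\mu(v,\cdot)$ is a contraction of the fixed ball $\mathcal R$ into itself. The Banach fixed-point theorem gives a unique $R_\mu(v)$ with $\|R_\mu(v)\|_{\mathcal Z}\le C\mu^{1/2}$.

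\textbf{Regularity.} Lemma~\ref{L:sec7-parameter-regularity}(ii) makes $\mathfrak F_\mu(v,r)$ jointly $C^1$. Its derivative $D_r\mathfrak F_\mu=I-D_r\Phi_\mu$ is invertible with uniformly bounded inverse. The implicit function theorem then gives joint $C^1$ dependence of $R_\mu(v)$ on $(\mu,v)$, with
$$DR_\mu(v)=(I-D_r\Phi_\mu)^{-1}D_v\Phi_\mu .$$
This yields the derivative bound $\le C\mu^{1/2}$ and completes \eqref{E:sec7-Rmu-Z-small}.
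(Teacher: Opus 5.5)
Your proof is correct. Apart from the uniform $L^\infty$ bound it follows the paper: the same energy estimate for the $X$-bound, the same contraction in the fixed ball of $\mathcal Z$, and the same implicit-function formula $DR_\mu(v)=(I-D_r\Phi_\mu)^{-1}D_v\Phi_\mu$.

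\textbf{The $L^\infty$ bound.} Here the two routes genuinely differ.

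The paper argues by contradiction. It normalizes $\|R_n\|_\infty=1$ with $\|F_n\|_\infty\to0$ and picks approximate maximum points; this pointwise step is why it first passes to smooth data via Lemma~\ref{L:sec7-smooth-data-reduction}. It then shows $y_n$ stays bounded, translates in the free variable, and extracts a bounded nonzero solution of $(L_y-\Delta_\xi+\mu_*)R=0$ with $QR=R$. That solution is killed Hermite level by Hermite level with a Liouville argument.

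Your representation
\[
 r=\int_0^\infty e^{-t\mu}e^{t\Delta_\xi}\bigl(e^{-tL_y}-P\bigr)F\,\mathrm dt
\]
replaces all of this with one quantitative estimate and gives an explicit constant. The integral converges in $\mathcal L(L^2)$ because the operator is $\ge2$ on $\mathscr H_Q$, which identifies it with the Lax--Milgram solution. It applies directly to every $F\in QL^2\cap L^\infty$, so you need neither the smooth-data reduction nor the rescaling $\xi=\zeta/\sqrt\mu$; $e^{t\mu\Delta_\zeta}$ is already an $L^\infty$ contraction.

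\textbf{What you must still prove.} Everything rests on $\|e^{-tL_y}Q\|_{\infty\to\infty}\le Ce^{-2t}$, which you assert without proof. It does not follow from the $L^2$ gap alone: $\|e^{-tL_y}\|_{\infty\to\infty}=e^{td}(\cosh 2t)^{-d/2}\to2^{d/2}$ does not decay. So the identity $e^{-tL_y}Q=e^{-tL_y}-P$, coming from $L_y\phi_0=0$, is essential.

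To close the gap, set $s=e^{-2t}$ and write the kernel of $e^{-tL_y}$ as
\[
 M_s(x,y)=\pi^{-d/2}(1-s^2)^{-d/2}
 \exp\Bigl(-\frac{1+s^2}{2(1-s^2)}\bigl(|x|^2+|y|^2\bigr)+\frac{2s\,x\cdot y}{1-s^2}\Bigr).
\]
Here $M_0=\phi_0\otimes\phi_0$ is the kernel of $P$. For $0\le s\le\frac12$ one checks:
\begin{itemize}
\item $|\partial_sM_s|\le CM_s\,(1+|x|^2+|y|^2)$;
\item $\sup_x\int M_s(x,y)(1+|x|^2+|y|^2)\,\mathrm dy\le C$, since $M_s(x,\cdot)$ is a Gaussian centred at $x/\cosh 2t$ with prefactor $e^{-\frac{\tanh 2t}{2}|x|^2}$ and $\tanh 2t\ge\frac35$.
\end{itemize}
Together these give $\sup_x\int|M_s-M_0|\,\mathrm dy\le Cs$, which is the claimed bound.

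\textbf{Trade-off.} Your route is more direct and explicit, but depends on the exact Mehler kernel. The paper's softer argument uses only the level-by-level gap $2|\nu|\ge2$ and a Liouville theorem, so it would survive perturbations without an explicit kernel.
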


\begin{proof}
We use the space $E$ from
Lemma~\ref{L:sec7-parameter-regularity} and first prove the resolvent estimates.
The first estimate of \eqref{E:sec7-XLinfty-resolvent-estimate} follows directly from the energy identity.  If
$r=\mathcal{A}_\mu^{-1}Qh$, then testing $\mathcal{A}_\mu r=Qh$ by $r$ and using the
spectral gap \eqref{E:sec7-gap} gives
$$
        \|r\|_{L^2}\le C\mu\|h\|_2,
        \qquad
        \|\nabla_\zeta r\|_2\le C\mu^{1/2}\|h\|_2,
        \qquad
        \|(H_y-d)^{1/2}r\|_2\le C\mu\|h\|_2 .
$$
Therefore $\|r\|_X\le C\mu^{1/2}\|h\|_2$.

We next establish the $L^\infty$ estimate in \eqref{E:sec7-XLinfty-resolvent-estimate}.  Put
$L_y:=H_y-d$.  We first claim that there exists $C>0$, independent of
$0<\mu\le1$, such that, for every
$F\in L^2(\mathbb{R}^N)\cap L^\infty(\mathbb{R}^N)$ with $QF=F$, the
Lax--Milgram solution $R\in X$, $QR=R$, of
\begin{equation}\label{E:sec7-rescaled-Q-resolvent}
 (L_y-\Delta_\xi+\mu)R=F
 \quad\hbox{in }\mathbb{R}^d\times\mathbb{R}^m,
\end{equation}
belongs to $L^\infty(\mathbb{R}^N)$ and satisfies
\begin{equation}\label{E:sec7-uniform-elliptic-Linfty}
 \|R\|_\infty\le C\|F\|_\infty.
\end{equation}
By Lemma~\ref{L:sec7-smooth-data-reduction}, it is enough to prove
\eqref{E:sec7-uniform-elliptic-Linfty} when $F$ and its corresponding
solution $R$ are smooth and bounded.

Suppose, by contradiction, that \eqref{E:sec7-uniform-elliptic-Linfty} fails.
Then there exist $\mu_n\in(0,1]$ and smooth bounded pairs $(F_n,R_n)$ with
$QF_n=F_n$ and $QR_n=R_n$ such that
\begin{equation}\label{E:sec7-elliptic-contradiction-normalization}
 (L_y-\Delta_\xi+\mu_n)R_n=F_n,\qquad
 \|R_n\|_\infty=1,\qquad \|F_n\|_\infty\to0.
\end{equation}
After changing the sign of $R_n$ if necessary, the approximate maximum principle
provides points $(y_n,\xi_n)$ such that
\begin{equation}\label{E:sec7-approximate-max}
 R_n(y_n,\xi_n)\ge1-\frac1n,\qquad
 \Delta_{y,\xi}R_n(y_n,\xi_n)\le\frac1n.
\end{equation}
Indeed, choose $x_n'$ with $R_n(x_n')>1-(2n)^{-1}$ and maximize
$R_n(x)-\varepsilon_n|x|^2$, where
$\varepsilon_n=[2Nn(1+|x_n'|^2)]^{-1}$.  At a maximizer $x_n$ the penalized
function exceeds $1-1/n$ and $\Delta R_n(x_n)\le2N\varepsilon_n\le1/n$.
Evaluating
\eqref{E:sec7-elliptic-contradiction-normalization} at $(y_n,\xi_n)$ gives
$$
 (|y_n|^2-d+\mu_n)R_n(y_n,\xi_n)
 \le \|F_n\|_\infty+\frac1n.
$$
Consequently, $\{y_n\}$ is bounded.  Passing to a subsequence, we may assume
that
$$
 \mu_n\to\mu_*\in[0,1],\qquad y_n\to y_*\in\mathbb{R}^d.
$$

Translate only in the free variables and set
$$
 \widetilde R_n(y,\xi):=R_n(y,\xi+\xi_n),\qquad
 \widetilde F_n(y,\xi):=F_n(y,\xi+\xi_n).
$$
On every bounded subset of $\mathbb{R}^{d+m}$, the coefficients in
\eqref{E:sec7-elliptic-contradiction-normalization} are uniformly bounded.
More precisely, for every $R>0$ and every $p>N$, the interior estimate gives
$$
 \|\widetilde R_n\|_{W^{2,p}(B_R)}
 \le C_R\left(
 \|\widetilde R_n\|_{L^p(B_{2R})}
 +\|\widetilde F_n\|_{L^p(B_{2R})}
 \right)\le C_R.
$$
The compact embedding $W^{2,p}(B_R)\hookrightarrow C^{1,\gamma}(B_R)$,
where $0<\gamma<1-N/p$, and a diagonal argument therefore yield, along a
subsequence,
\begin{equation}\label{E:sec7-elliptic-local-convergence}
 \widetilde R_n\to R
 \quad\hbox{in }C^1_{\rm loc}(\mathbb{R}^{d+m}).
\end{equation}
The limit is bounded, satisfies
\begin{equation}\label{E:sec7-elliptic-limit-equation}
 (L_y-\Delta_\xi+\mu_*)R=0,
\end{equation}
and is nonzero because $R(y_*,0)=1$.

The condition $Q\widetilde R_n=\widetilde R_n$ also passes to the limit.  Indeed,
$\phi_0\in L^1(\mathbb{R}^d)$, $\|\widetilde R_n\|_\infty\le1$, and the local
convergence in \eqref{E:sec7-elliptic-local-convergence}, followed by a split
into $|y|\le A$ and $|y|>A$, gives
$$
 \int_{\mathbb{R}^d}\phi_0(y)R(y,\xi)\,\mathrm dy=0
 \qquad\hbox{for every }\xi\in\mathbb{R}^m.
$$
Thus $QR=R$.

Let $\{\phi_\nu\}_{\nu\in\mathbb N_0^d}$ be the Hermite basis, normalized so
that
$$
 H_y\phi_\nu=(d+2|\nu|)\phi_\nu.
$$
For each multi-index $\nu$, define the bounded function
$$
 a_\nu(\xi):=\int_{\mathbb{R}^d}R(y,\xi)\phi_\nu(y)\,\mathrm dy.
$$
Projecting \eqref{E:sec7-elliptic-limit-equation} onto $\phi_\nu$ gives, in the
distributional sense,
\begin{equation}\label{E:sec7-Hermite-Liouville}
 -\Delta_\xi a_\nu+(2|\nu|+\mu_*)a_\nu=0
 \quad\hbox{in }\mathbb{R}^m.
\end{equation}
The coefficient $a_0$ vanishes because $QR=R$.  If $|\nu|\ge1$, then
$2|\nu|+\mu_*\ge2$, and the only bounded entire solution of
\eqref{E:sec7-Hermite-Liouville} is zero.  This follows, for example, by taking
the Fourier transform in the sense of tempered distributions:
$$
 \bigl(|\eta|^2+2|\nu|+\mu_*\bigr)\widehat a_\nu(\eta)=0~~~\hbox{in}~~~\mathcal{S}'(\mathbb{R}^m)
$$
and the multiplier is strictly positive.  Hence every
Hermite coefficient of $R$ vanishes.  The Hermite functions form a Schauder
basis of the Schwartz space $\mathcal{S}(\mathbb{R}^d)$; in particular,
\begin{equation}\label{eq:Hermite-dense}
\overline{\mathrm{span}\{\phi_\nu: \nu\in \mathbb{N}_0^d\}}^{\mathcal{S}}=\mathcal{S}(\mathbb{R}^d).
\end{equation}
Fix $\xi\in\mathbb{R}^m$ and regard the bounded function
$R(\cdot,\xi)$ as an element of $\mathcal{S}'(\mathbb{R}^d)$.  Its pairing
with every $\phi_\nu$ equals $a_\nu(\xi)=0$.  By
\eqref{eq:Hermite-dense}, continuity of a tempered distribution therefore
implies $R(\cdot,\xi)=0$ in $\mathcal{S}'(\mathbb{R}^d)$.  Since $\xi$ was
arbitrary and $R$ is continuous, $R\equiv0$. This contradicts
$R(y_*,0)=1$ and proves
\eqref{E:sec7-uniform-elliptic-Linfty}.

We now apply the claim to the original resolvent equation.  If
$r=\mathcal{A}_\mu^{-1}Qh$, multiply its equation by $\mu$ and introduce the
nondegenerate scaling
$$
 \xi:=\frac{\zeta}{\sqrt\mu},\qquad
 R(y,\xi):=r(y,\sqrt\mu\,\xi),\qquad
 H(y,\xi):=h(y,\sqrt\mu\,\xi).
$$
Then
$$
 (L_y-\Delta_\xi+\mu)R=\mu QH,\qquad QR=R.
$$
Since $\|P H\|_\infty \leq \|\phi_0\|_1 \|\phi_0\|_\infty \|H\|_\infty=2^{\frac{d}{2}} \|H\|_\infty$, and hence
$$\|QH\|_\infty=\|H-PH\|_\infty \leq (1+2^{\frac{d}{2}})\|H\|_\infty.$$
 Thus $Q$ is bounded on $L^\infty(\mathbb{R}^d)$, and \eqref{E:sec7-uniform-elliptic-Linfty} yields
$$
 \|\mathcal{A}_\mu^{-1}Qh\|_\infty
 =\|R\|_\infty
 \le C\mu\|QH\|_\infty
 \le C\mu\|h\|_\infty.
$$
This proves the second estimate in \eqref{E:sec7-XLinfty-resolvent-estimate}.

The resolvent preserves block-radiality.  For block-radial $h$, combining
the two estimates in \eqref{E:sec7-XLinfty-resolvent-estimate} therefore gives
\begin{equation}\label{E:sec7-Z-resolvent-estimate}
        \|\mathcal{A}_\mu^{-1}Qh\|_{\mathcal Z}
        \le C\mu^{1/2}\|h\|_{E},
        \qquad 0<\mu\le1.
\end{equation}

We now turn to the nonlinear term.  Since $\overline{\mathcal V}$ and
$\overline{\mathcal{R}}$ are bounded in $\mathcal Y$ and $\mathcal Z$, respectively,
there exists $M>0$ such that
$$
        \|\phi_0v+r\|_\infty\le M,
        \qquad v\in\overline{\mathcal V},\quad r\in\overline{\mathcal{R}}.
$$
By \ref{gcon2} and the definition of $\mathfrak{g}_\mu^+$, for this fixed $M$ there are
$C_M>0$ and $\mu_M>0$ such that, whenever $0<\mu<\mu_M$,
\begin{equation}\label{E:sec7-gmu-local-C1-bound}
        |\mathfrak{g}_\mu^+(s)|\le C_M|s|,
        \qquad
        |(\mathfrak{g}_\mu^+)'(s)|\le C_M,
        \qquad |s|\le M .
\end{equation}
It follows that, uniformly for $v\in\overline{\mathcal{V}}$ and
$r,r_1,r_2\in\overline{\mathcal{R}}$,
\begin{equation}\label{E:sec7-Nemytskii-XLinfty-bound}
        \|\mathfrak{g}_\mu^+(\phi_0v+r)\|_{E}\le C,
\end{equation}
and
\begin{equation}\label{E:sec7-Nemytskii-XLinfty-Lipschitz}
        \|\mathfrak{g}_\mu^+(\phi_0v+r_1)-\mathfrak{g}_\mu^+(\phi_0v+r_2)\|_{E}
        \le C\|r_1-r_2\|_{\mathcal Z} .
\end{equation}
Combining \eqref{E:sec7-Z-resolvent-estimate} with
\eqref{E:sec7-Nemytskii-XLinfty-bound}--\eqref{E:sec7-Nemytskii-XLinfty-Lipschitz},
we see that
$$
        r\mapsto \mathcal{A}_\mu^{-1}Q\mathfrak{g}_\mu^+(\phi_0v+r)
$$
maps the small ball $\mathcal{R}\subset\mathcal Z$ into itself and is a contraction
there, uniformly for $v\in\overline{\mathcal V}$, provided $\mu>0$ is sufficiently
small.  Hence the $Q$-equation has a unique solution $r=R_\mu(v)\in\mathcal{R}$.
Moreover,
\begin{equation}\label{E:sec7-Rmu-X-small-revised}
        \|R_\mu(v)\|_{\mathcal Z}\le C\mu^{1/2}
\end{equation}
uniformly for $v\in\mathcal V$.

Finally, we use the jointly $C^1$ map $\mathfrak F_\mu$
from Lemma~\ref{L:sec7-parameter-regularity}(ii).
Moreover,
$$
        D_r\mathfrak{F}_\mu(v,r)[\rho]
        =\rho-\mathcal{A}_\mu^{-1}Q\bigl((\mathfrak{g}_\mu^+)'(\phi_0v+r)\rho\bigr),
$$
and the second term has operator norm $O(\mu^{1/2})$ on $\mathcal Z$ by
\eqref{E:sec7-Z-resolvent-estimate}.  Thus $D_r\mathfrak F_\mu(v,r)$ is invertible
for small $\mu$.  The parameter-dependent implicit
function theorem gives joint $C^1$ dependence of $R_\mu(v)$ on $(\mu,v)$;
the local maps agree by the uniqueness of the fixed point.
Differentiating $\mathfrak{F}_\mu(v,R_\mu(v))=0$ in the direction
$\psi\in\mathcal Y$ yields
$$
\begin{aligned}
        DR_\mu(v)[\psi]
        &=\left[I-\mathcal{A}_\mu^{-1}Q(\mathfrak{g}_\mu^+)'(\phi_0v+R_\mu(v))\right]^{-1} \\
        &\quad\circ
        \mathcal{A}_\mu^{-1}Q\bigl((\mathfrak{g}_\mu^+)'(\phi_0v+R_\mu(v))\phi_0\psi\bigr).
\end{aligned}
$$
Since
$$
        \|(\mathfrak{g}_\mu^+)'(\phi_0v+R_\mu(v))\phi_0\psi\|_{E}
        \le C\|\psi\|_{\mathcal Y},
$$
we get from \eqref{E:sec7-Z-resolvent-estimate} that
$$
        \|DR_\mu(v)[\psi]\|_{\mathcal Z}
        \le C\mu^{1/2}\|\psi\|_{\mathcal Y}.
$$
Together with \eqref{E:sec7-Rmu-X-small-revised}, this proves
\eqref{E:sec7-Rmu-Z-small} and completes the proof.
\end{proof}

\subsection{Local uniqueness near the lower edge}

\begin{theorem}[Local uniqueness near the lower edge]\label{T:sec7-uniqueness-contradiction}
Assume \ref{gcon1}, \ref{gcon2}, and \ref{gcon3}.
There exists $\mu_u>0$ such that, for every $0<\lambda+d<\mu_u$,
problem \eqref{E:fixed-lambda} has at most one positive solution in the cone
$\mathcal{K}$.
\end{theorem}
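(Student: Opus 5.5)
We argue by contradiction, using the normalized-difference method announced in the introduction.  Suppose there are $\mu_n:=\lambda_n+d\to0^+$ and two distinct positive solutions $u_n^{(1)}\ne u_n^{(2)}$ in $\mathcal K$ at the same frequency $\lambda_n$.  By Theorem~\ref{T:asymptotics}(i), which holds for every sequence in $\mathcal S$ under \ref{gcon1}--\ref{gcon3}, both rescaled functions $U_n^{(i)}$ defined by \eqref{E:sec7-rescale} converge to $\phi_0V_\alpha$ in $H^1$.  They also obey the uniform bounds $\|U_n^{(i)}\|_\infty\le C$ of Proposition~\ref{prop:sharp-amplitude-scale-lower-edge}, the Gaussian bound \eqref{eq:20260617-1313}, and the product envelope $Ce^{-\theta|y|^2/4-\sigma|\zeta|/2}$ from the proof of Proposition~\ref{prop:global-L2-convergence-rescaled}.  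We set
$$
 \Psi_n:=\frac{U_n^{(1)}-U_n^{(2)}}{\|U_n^{(1)}-U_n^{(2)}\|_\infty}.
$$
It satisfies the linear equation
$$
 (H_y-d)\Psi_n+\mu_n(-\Delta_\zeta\Psi_n+\Psi_n)=\mu_n c_n(y,\zeta)\Psi_n,
 \qquad
 c_n:=\int_0^1\mathfrak g_{\mu_n}'\bigl(tU_n^{(1)}+(1-t)U_n^{(2)}\bigr)\,\mathrm dt .
$$
By \eqref{eq:20260622-1306}, $0\le c_n\le C(U_n^{(1)}+U_n^{(2)})^{\alpha-2}$.  Hence $c_n$ is bounded and has a Gaussian--exponential envelope, and by \eqref{E:sec7-gmu-conv} it converges locally to $\mu_0(\alpha-1)(\phi_0V_\alpha)^{\alpha-2}$.

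The first step is to show that $\Psi_n$ does not concentrate at infinity and has a nonzero limit.  Take approximate maximum points $x_n=(y_n,\zeta_n)$ of $|\Psi_n|$.  Evaluating the equation there, as in the proof of Lemma~\ref{lemma:20260622-1443}, bounds $y_n$.  For $|\zeta|$ large the coefficient $c_n$ is uniformly small, because the envelopes decay; so the projected and orthogonal parts of the equation behave like $(H_y-d)+\mu_n(-\Delta_\zeta+1-o(1))$.  The $Q$-part is then controlled by the uniform $Q$-resolvent bound \eqref{E:sec7-XLinfty-resolvent-estimate}, which gives $\|Q\Psi_n\|_\infty=O(\mu_n)$ for this part of the source.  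The $P$-part $\psi_n(\zeta):=\int\phi_0\Psi_n\,\mathrm dy$ satisfies
$$
 -\Delta_\zeta\psi_n+\psi_n=\int_{\mathbb R^d}\phi_0c_n\Psi_n\,\mathrm dy ,
$$
whose right-hand side has an exponentially decaying envelope uniformly in $n$.  Exterior comparison with $e^{-\sigma|\zeta|}$, via Lemma~\ref{L:common-exterior-barrier}(iii), then gives uniform decay of $\psi_n$.  Consequently $|\zeta_n|$ stays bounded, since otherwise $\|\Psi_n\|_\infty=1$ would be contradicted.  Local elliptic estimates for the $\psi_n$-equation give $\psi_n\to\psi$ in $C^1_{\rm loc}\cap H^1$, and Lemma~\ref{L:projected-Q-compactness} gives $Q\Psi_n\to0$ in $L^2_{\rm loc}$; together these show $\Psi_n\to\phi_0\psi$.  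Combining $\|Q\Psi_n\|_\infty=o(1)$ with $\psi_n(\zeta_n)\gtrsim1$ shows that $\psi\not\equiv0$.

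Passing to the limit in the projected equation, with dominated convergence in $y$ exactly as in Lemma~\ref{lem:identification-local-limit}, yields
$$
 -\Delta\psi+\psi=c_\alpha(\alpha-1)V_\alpha^{\alpha-2}\psi,
$$
where we used $\mu_0\int\phi_0^{\alpha}\,\mathrm dy=c_\alpha$.  Moreover $\psi$ is radial, since both solutions lie in $X_r$, and $\psi$ lies in $H^1\cap L^\infty$.  The radial nondegeneracy of $V_\alpha$ (\cite[Lemma~2.1]{ChangGustafsonNakanishiTsai}) forces $\psi\equiv0$, a contradiction.

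The main obstacle is the uniform $L^\infty$ control of the orthogonal component $Q\Psi_n$ together with the non-vanishing of the limit.  The orthogonal component is coupled to the projected one through the source $Qc_n\Psi_n$.  I would handle this by writing
$$
 Q\Psi_n=\mu_n\mathcal A_{\mu_n}^{-1}Q(c_n\Psi_n)\cdot\mu_n^{-1}\cdot\mu_n ,
$$
that is, by applying \eqref{E:sec7-XLinfty-resolvent-estimate} in the unrescaled form $\|Q\Psi_n\|_\infty\le C\mu_n\|c_n\Psi_n\|_\infty$, exactly as in the reduction of Lemma~\ref{lemma:20260622-1443}.  This bounds $Q\Psi_n$ by $C\mu_n$ in sup norm, so that $|\Psi_n|$ is governed by $\phi_0|\psi_n|$ up to $O(\mu_n)$.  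This is what makes the maximum-point argument go through.
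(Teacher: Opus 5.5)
Your argument is correct, but it takes a genuinely different route from the paper.

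\textbf{The paper's route.} The paper normalizes the difference in $L^2$, $\Phi_n=(U_{n,1}-U_{n,2})/\|U_{n,1}-U_{n,2}\|_2$, and stays Hilbertian throughout.
\begin{itemize}
\item Testing the $Q$-projected equation by $\eta_n=Q\Phi_n$ and using only the oscillator gap \eqref{E:sec7-gap} gives $\|\eta_n\|_2\le C\mu_n$, hence $\|\xi_n\|_2\to1$ for the $\phi_0$-coefficient.
\item Loss of mass is excluded by the Birman--Schwinger operators $\xi\mapsto(-\Delta+1)^{-1}(b_n\xi)$. Because $b_n$ is uniformly exponentially localized, these converge in operator norm on $L^2_{\rm rad}(\mathbb R^m)$ to a compact limit.
\item Hence $\xi_n\to\xi$ strongly in $L^2$ with $\|\xi\|_2=1$.
\end{itemize}

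\textbf{Your route.} You normalize in $L^\infty$ instead.
\begin{itemize}
\item You get $\|Q\Psi_n\|_\infty=O(\mu_n)$ from the uniform bound $\|\mathcal A_\mu^{-1}Qh\|_\infty\le C\mu\|h\|_\infty$ in \eqref{E:sec7-XLinfty-resolvent-estimate}.
\item Nonvanishing then comes from a maximum point combined with uniform exponential decay of $\psi_n$.
\end{itemize}

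\textbf{Comparison.} Both proofs feed the common Gaussian--exponential envelope of the two solutions into the coefficient: the paper uses it for compactness of $b_n$, you use it for decay of $\psi_n$. The paper needs nothing beyond the $L^2$ spectral gap on the $Q$-component and no pointwise arguments. Yours relies on the heavier Hermite--Liouville blow-up estimate of Lemma~\ref{lemma:20260622-1443}. In return it gives uniform convergence of the normalized difference and sup-norm smallness of its orthogonal part, the same kind of control the existence construction uses in $\mathcal Z$.

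\textbf{Small repairs.}
\begin{itemize}
\item The identity you need is simply $Q\Psi_n=\mathcal A_{\mu_n}^{-1}Q(c_n\Psi_n)$: divide by $\mu_n$ and apply $Q$. It holds because $\Psi_n\in X$, so $Q\Psi_n$ lies in the form domain and is the Lax--Milgram solution. Your displayed product with extra factors of $\mu_n$ is garbled, though the bound you then state is right.
\item The $\psi_n$-equation has the source $f_n=\int\phi_0c_n\Psi_n\,\mathrm dy$, so Lemma~\ref{L:common-exterior-barrier}(iii) does not apply verbatim. Use a barrier $Ae^{-\sigma'|\zeta|}$ with $\sigma'<1$ and $\sigma'$ no larger than the decay rate of $f_n$. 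Alternatively, use the Bessel-kernel representation $\psi_n=(-\Delta+1)^{-1}f_n$.
\item Bounding $y_n$ is unnecessary. Since $\phi_0\le\pi^{-d/4}$ and $\|Q\Psi_n\|_\infty\le C\mu_n$, you already get $|\psi_n(\zeta_n)|\ge\pi^{d/4}(1-C\mu_n)$.
\end{itemize}
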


\begin{proof}
We argue by contradiction.  Suppose that there exist $\mu_n:=\lambda_n+d\to0^+$ and,
for each $n$, two distinct solutions
$$
        u_{n,1},u_{n,2}\in\mathcal{K},
        \qquad
        u_{n,1}\ne u_{n,2},
$$
of \eqref{E:fixed-lambda} with the same parameter $\lambda=\lambda_n$.  Define the
lower-edge rescaled profiles
\begin{equation}\label{E:two-rescaled-solutions}
        U_{n,i}(y,\zeta):=\mu_n^{-\frac1{\alpha-2}}
        u_{n,i}\left(y,\frac{\zeta}{\sqrt{\mu_n}}\right),
        \qquad i=1,2.
\end{equation}
By Theorem~\ref{T:asymptotics}(i), after passing to a subsequence,
\begin{equation}\label{E:two-solutions-limit}
        U_{n,i}\to U_0:=\phi_0(y)V_\alpha(\zeta)
        \qquad\hbox{strongly in }H^1(\mathbb{R}^d\times\mathbb{R}^m),
\end{equation}
and hence also $U_{n,i}\to U_0$ a.e. in $\mathbb{R}^d\times\mathbb{R}^m$.  In
addition, by the lower-edge estimates obtained in Section~\ref{sec:Asymptotics-low} (see \eqref{eq:20260617-2037} and \eqref{eq:20260819-1126}), there exist constants
$C>0$, $\theta_0\in(0,1)$, and $\sigma>0$ such that, for all large $n$ and
$i=1,2$,
\begin{equation}\label{E:sec7-uniform-bounds-corrected}
        0\leq U_{n,i}\leq C,
        \qquad
        U_{n,i}(y,\zeta)\leq C e^{-\frac{\theta_0}{2}|y|^2},
        \qquad
        U_{n,i}(y,\zeta)\leq C e^{-\sigma|\zeta|}.
\end{equation}
Set
$$
        D_n:=\|U_{n,1}-U_{n,2}\|_{L^2(\mathbb{R}^d\times\mathbb{R}^m)}>0,
        \qquad
        \Phi_n:=\frac{U_{n,1}-U_{n,2}}{D_n}.
$$
Then $\|\Phi_n\|_2=1$.  By subtracting the two rescaled equations and using the mean
value theorem, we get
\begin{equation}\label{E:Phi-equation-sec7}
        (H_y-d)\Phi_n+\mu_n(-\Delta_\zeta\Phi_n+\Phi_n)
        =\mu_n a_n(y,\zeta)\Phi_n,
\end{equation}
where
\begin{equation}\label{E:an-definition-sec7}
        a_n(y,\zeta):=\int_0^1
        \mathfrak{g}_{\mu_n}'\bigl(tU_{n,1}(y,\zeta)+(1-t)U_{n,2}(y,\zeta)\bigr)\,\mathrm{d} t .
\end{equation}
Applying Lemma~\ref{lemma:20260618-1129} with
$a=\mu_n^{1/(\alpha-2)}$, for every fixed $M>0$,
$$
        \mathfrak{g}_{\mu_n}'(s)\to \mu_0(\alpha-1)s^{\alpha-2}
        \qquad\hbox{uniformly for }0\leq s\leq M.
$$
Combining this convergence with \eqref{E:two-solutions-limit} and
\eqref{E:sec7-uniform-bounds-corrected}, we obtain
\begin{equation}\label{E:an-limit-sec7}
        a_n(y,\zeta)\to a_0(y,\zeta)
        :=\mu_0(\alpha-1)\bigl(\phi_0(y)V_\alpha(\zeta)\bigr)^{\alpha-2}
        \quad\hbox{a.e. in }\mathbb{R}^d\times\mathbb{R}^m.
\end{equation}
Moreover, since the arguments of $g'$ are small for all large $n$, \ref{gcon2} also
gives
\begin{equation}\label{E:an-decay-sec7}
        |a_n(y,\zeta)|
        \leq C\bigl(U_{n,1}(y,\zeta)+U_{n,2}(y,\zeta)\bigr)^{\alpha-2}
        \leq C e^{-\sigma_1|\zeta|}
\end{equation}
for some $\sigma_1>0$.  Dividing \eqref{E:Phi-equation-sec7} by $\mu_n$ gives
\begin{equation}\label{E:Phi-divided-sec7}
        \left\{\frac1{\mu_n}(H_y-d)-\Delta_\zeta+1\right\}\Phi_n
        =a_n\Phi_n .
\end{equation}

Decompose
\begin{equation}\label{E:Phi-PQ-decomp-sec7}
        \Phi_n(y,\zeta)=\phi_0(y)\xi_n(\zeta)+\eta_n(y,\zeta),
        \qquad P\eta_n=0 .
\end{equation}
Testing \eqref{E:Phi-divided-sec7} by $\Phi_n$, using $\|\Phi_n\|_2=1$ and \eqref{E:an-decay-sec7} gives
\begin{equation}\label{E:Phi-Hzeta-bound-sec7}
        \|\Phi_n\|_{H^1_\zeta(L^2_y)}\le C.
\end{equation}
Applying $Q$ to \eqref{E:Phi-divided-sec7} and testing by $\eta_n$, the spectral gap
\eqref{E:sec7-gap} yields
\begin{equation}\label{E:eta-small-sec7}
        \frac2{\mu_n}\|\eta_n\|_2^2
        +\|\eta_n\|_{H^1_\zeta(L^2_y)}^2
        \le C\|\Phi_n\|_2\|\eta_n\|_2
        =C\|\eta_n\|_2 .
\end{equation}
Consequently,
\begin{equation}\label{E:eta-small-conclusion-sec7}
        \|\eta_n\|_2\le C\mu_n,
        \qquad
        \|\eta_n\|_{H^1_\zeta(L^2_y)}\le C\mu_n^{1/2}.
\end{equation}
Since $\|\Phi_n\|_2=1$, $\|\phi_0\|_2=1$, and
$\|\eta_n\|_2\to0$, it follows that
\begin{equation}\label{E:xi-nonzero-sec7}
        \|\xi_n\|_{L^2(\mathbb{R}^m)}\to1 .
\end{equation}
By \eqref{E:Phi-Hzeta-bound-sec7}, the sequence $\{\xi_n\}$ is bounded in
$H^1_{\mathrm{rad}}(\mathbb{R}^m)$.

Projecting \eqref{E:Phi-divided-sec7} onto $\phi_0$ gives
\begin{equation}\label{E:xi-equation-sec7}
        -\Delta_\zeta\xi_n+\xi_n=b_n(\zeta)\xi_n+e_n(\zeta),
\end{equation}
where
\begin{equation}\label{E:bn-en-definition-sec7}
        b_n(\zeta):=\int_{\mathbb{R}^d}\phi_0(y)^2a_n(y,\zeta)\,\mathrm{d} y,
        \qquad
        e_n(\zeta):=\int_{\mathbb{R}^d}\phi_0(y)a_n(y,\zeta)\eta_n(y,\zeta)\,\mathrm{d} y .
\end{equation}
By \eqref{E:eta-small-conclusion-sec7} and the boundedness of $a_n$,
\begin{equation}\label{E:En-small-sec7}
        e_n\to0\qquad\hbox{strongly in }L^2(\mathbb{R}^m).
\end{equation}
Moreover, \eqref{E:an-limit-sec7}, \eqref{E:an-decay-sec7}, and dominated convergence
imply
\begin{equation}\label{E:bn-limit-sec7}
        b_n\to b_0:=c_\alpha(\alpha-1)V_\alpha^{\alpha-2}
        \qquad\hbox{strongly in }L^q(\mathbb{R}^m)
        \quad\hbox{for every }1\le q<+\infty,
\end{equation}
and the family $\{b_n\}$ is uniformly exponentially localized:
\begin{equation}\label{E:bn-tail-small-sec7}
        |b_n(\zeta)|\le C e^{-\sigma_1|\zeta|}.
\end{equation}
We record explicitly why this implies operator-norm convergence of the corresponding
Birman--Schwinger operators.  Choose $q>\max\{m/2,2\}$ and define $r$ by
$1/r=1/2+1/q$.  Then $1<r<2$ and
$1/r-1/2=1/q<2/m$.  H\"older's inequality and the Bessel-potential estimate
$(-\Delta+1)^{-1}:L^r(\mathbb{R}^m)\to L^2(\mathbb{R}^m)$ give
$$
 \left\|(-\Delta+1)^{-1}\bigl((b_n-b_0)\xi\bigr)\right\|_2
 \le C\|b_n-b_0\|_q\|\xi\|_2.
$$
Thus the condition $q>m/2$ is exactly the strict
Sobolev/Bessel-potential range needed for this mapping, while $q>2$ ensures $r>1$. Consequently the compact operators
$$
        \mathcal{R}_n\xi:=(-\Delta+1)^{-1}(b_n\xi)
$$
converge in operator norm on $L^2_{\mathrm{rad}}(\mathbb{R}^m)$ to the compact operator
$$
        \mathcal{R}_0\xi:=(-\Delta+1)^{-1}(b_0\xi).
$$
Indeed, compactness of $\mathcal{R}_0$ follows by truncating the exponentially localized
coefficient $b_0$ to a large ball and using Rellich's theorem there.  Equation
\eqref{E:xi-equation-sec7} can now be written as
$$
        \xi_n=\mathcal{R}_n\xi_n+(-\Delta+1)^{-1}e_n .
$$
Since $\{\xi_n\}$ is bounded in $L^2_{\rm rad}(\mathbb{R}^m)$,
\eqref{E:En-small-sec7} and the operator-norm convergence above imply, after passing
to a subsequence,
\begin{equation}\label{E:xi-strong-limit-sec7}
        \xi_n\to\xi
        \qquad\hbox{strongly in }L^2(\mathbb{R}^m)
\end{equation}
for some radial function $\xi$.  By \eqref{E:xi-nonzero-sec7}, $\|\xi\|_2=1$, so
$\xi\ne0$.  Passing to the limit in \eqref{E:xi-equation-sec7}, using
\eqref{E:En-small-sec7} and \eqref{E:bn-limit-sec7}, we obtain
\begin{equation}\label{E:limit-linearized-sec7}
        -\Delta\xi+\xi
        =c_\alpha(\alpha-1)V_\alpha^{\alpha-2}\xi
        \qquad\hbox{in }\mathbb{R}^m, \quad \xi\in L^2(\mathbb{R}^m).
\end{equation}
By elliptic regularity, $\xi\in H^1_{\mathrm{rad}}(\mathbb{R}^m)$.  Equivalently,
$$
        L_\alpha\xi=0,
        \qquad
        L_\alpha:=-\Delta+1-c_\alpha(\alpha-1)V_\alpha^{\alpha-2}.
$$
This contradicts the pure-power nondegeneracy theorem
\cite[Lemma~2.1]{ChangGustafsonNakanishiTsai}, namely
$$
        \ker L_\alpha\cap H^1_{\mathrm{rad}}(\mathbb{R}^m)=\{0\}.
$$
The contradiction proves the local uniqueness.
\end{proof}

\subsection{Linearized nondegeneracy, Morse index, and local degree}

The Lyapunov--Schmidt construction also gives the spectral information needed for the
local degree.  We formulate it separately because this information is intrinsic to the
centered equation and does not depend on the Banach space in which the degree is computed.
The word ``centered'' is essential here: translation invariance in the free variables
gives the zero modes $\partial_{z_j}u_\lambda$ in the full space, whereas these modes do
not belong to the block-radial space $X_r$.

\begin{lemma}[Centered nondegeneracy and Morse index]\label{L:lower-edge-Morse}
There exists $\mu_M>0$ such that, for every $0<\mu=\lambda+d<\mu_M$, the centered
linearized operator
\begin{equation}\label{E:lower-edge-linearized-full}
 \mathcal L_\lambda:=H+\lambda-g'(u_\lambda)
\end{equation}
realized as a self-adjoint operator in the centered radial $L^2$-space with
form domain $X_r$, has
\begin{equation}\label{E:lower-edge-Morse-data}
 \ker(\mathcal L_\lambda|_{X_r})=\{0\},\qquad
 \operatorname{ind}_{X_r}(\mathcal L_\lambda)=1.
\end{equation}
\end{lemma}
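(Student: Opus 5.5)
We work in the lower-edge rescaled variables \eqref{E:sec7-rescale}. Under this dilation $\mathcal L_\lambda$ becomes $\mu$ times the rescaled operator
$$
 \mathcal M_\mu:=\frac1\mu(H_y-d)-\Delta_\zeta+1-\mathfrak g_\mu'(U_\mu),
$$
acting on block-radial functions. Since the dilation is a bijection of $X_r$ and multiplication by $\mu>0$ preserves signs, it suffices to prove that $\mathcal M_\mu$ has trivial kernel and exactly one negative eigenvalue. We split $L^2_r=\phi_0L^2_{\rm rad}(\mathbb R^m)\oplus Q L^2_r$ and write $\mathcal M_\mu$ as a $2\times2$ block operator $\begin{pmatrix}A&B\\ B^*&D\end{pmatrix}$. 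Here
$$
 A=-\Delta_\zeta+1-b_\mu,\qquad b_\mu(\zeta):=\int\phi_0^2\mathfrak g_\mu'(U_\mu)\,\mathrm dy,
$$
while $B$ is the cross term $P\mathfrak g_\mu'(U_\mu)Q$, and $D=\mathcal A_\mu-Q\mathfrak g_\mu'(U_\mu)Q$.

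\textbf{Step 1: the $Q$-block.} By the gap \eqref{E:sec7-gap}, $\langle\mathcal A_\mu r,r\rangle\ge 2\mu^{-1}\|r\|_2^2+\|r\|_{H^1_\zeta}^2$. Since $\|\mathfrak g_\mu'(U_\mu)\|_\infty\le C$ uniformly by \eqref{E:sec7-gmu-local-C1-bound}, $D\ge \mu^{-1}$ on its form domain for small $\mu$. Moreover $\|D^{-1}\|_{L^2\to L^2}\le\mu$, and the off-diagonal blocks $B,B^*$ are bounded on $L^2$ uniformly.

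\textbf{Step 2: the Schur complement.} Positivity of $D$ means that, by the standard Schur-complement or min-max argument (Haynsworth inertia for forms), the number of nonpositive directions of $\mathcal M_\mu$ equals that of $S_\mu:=A-BD^{-1}B^*$, and $\ker\mathcal M_\mu\cong\ker S_\mu$. Indeed, $\langle\mathcal M_\mu(\phi_0\xi+r),\phi_0\xi+r\rangle\ge\langle S_\mu\xi,\xi\rangle$, with equality at $r=-D^{-1}B^*\xi$. Since $\|BD^{-1}B^*\|\le C\mu$, we have $S_\mu=L_\alpha+(b_0-b_\mu)+O(\mu)$ in $\mathcal L(L^2_{\rm rad})$, where $b_0=c_\alpha(\alpha-1)V_\alpha^{\alpha-2}$ and $L_\alpha=-\Delta+1-b_0$.

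\textbf{Step 3: perturbation from $L_\alpha$.} We need $b_\mu\to b_0$ in a norm strong enough for spectral stability. From Theorem~\ref{T:sec7-LS-existence}, $U_\mu\to\phi_0V_\alpha$ in $H^1$ and the $\mathcal Y\oplus\mathcal Z$ convergence gives uniform $L^\infty$ control, with Gaussian and exponential envelopes \eqref{E:sec7-uniform-bounds-corrected}. Combining this with \eqref{E:sec7-gmu-conv} and dominated convergence, $b_\mu\to b_0$ in $L^q$ for all finite $q>m/2$, hence in $L^\infty$ after interpolation with the uniform envelope, or at least in the relatively form-compact sense used in the proof of Theorem~\ref{T:sec7-uniqueness-contradiction}. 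Then $S_\mu\to L_\alpha$ in norm-resolvent sense on $L^2_{\rm rad}$. Restricted to radial functions, $L_\alpha$ has exactly one negative eigenvalue (the ground state is a mountain-pass critical point, and $\langle L_\alpha V_\alpha,V_\alpha\rangle<0$ while the second radial eigenvalue is positive), trivial radial kernel by \cite[Lemma~2.1]{ChangGustafsonNakanishiTsai}, and essential spectrum $[1,\infty)$. Thus $0$ lies in a spectral gap of $L_\alpha$ with one eigenvalue below it, and norm-resolvent convergence preserves both facts for small $\mu$. Transferring back through Step 2 gives \eqref{E:lower-edge-Morse-data}.

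The main obstacle is making Step 2 rigorous with unbounded $D$ and form domains. We handle it by defining everything at the form level on $X_r$, using that $D^{-1}$ is bounded into the $Q$-form domain. We also have to check that the Morse index counts exactly the negative directions of the quadratic form on $X_r$, which follows from the min-max characterization.
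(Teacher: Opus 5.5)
This is essentially the paper's argument: you rescale, split along $P\oplus Q$, use the oscillator gap to make the $Q$-block coercive with $\|D^{-1}\|=O(\mu)$, pass to the Schur complement by completing the square at the form level, and then perturb from $L_\alpha$, which is radially nondegenerate with radial Morse index one. One small correction in Step~3: interpolating $L^q$ convergence against a uniform bound does not give $L^\infty$ convergence. You need neither that step nor the envelopes \eqref{E:sec7-uniform-bounds-corrected}, because \eqref{E:sec7-vmu-conv} and \eqref{E:sec7-Rmu-Z-small} already control the $L^\infty$ norms; hence $U_\mu\to\phi_0V_\alpha$ uniformly and $b_\mu\to b_0$ in $L^\infty$ directly, which is how the paper obtains $\|S_\mu-L_\alpha\|_{L^2\to L^2}\to0$.
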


\begin{proof}
We work first in the lower-edge variables used in the Lyapunov--Schmidt
construction.  Recalling \eqref{E:sec7-Umu-def} and
\eqref{E:sec7-ulambda-from-Umu}, we have
$$
 u_\lambda(y,z)=\widetilde{u}_\lambda(y,z)=\mu^{\frac1{\alpha-2}}U_\mu(y,\sqrt\mu z),
 \qquad
 U_\mu=\phi_0v_\mu+R_\mu(v_\mu).
$$
Since
$$
 (\mathfrak{g}_\mu^+)'(s)
 =\frac1\mu (g_+)'\bigl(\mu^{\frac1{\alpha-2}}s\bigr),
$$
the original linearized potential satisfies
$$
 g'(u_\lambda(y,z))
 =\mu a_\mu(y,\sqrt\mu z),
 \qquad
 a_\mu(y,\zeta):=(\mathfrak{g}_\mu^+)'(U_\mu(y,\zeta)).
$$
Thus, if $h(y,z)=\Phi(y,\sqrt\mu z)$, then
$$
 \mathcal L_\lambda h=0
 \quad\Longleftrightarrow\quad
 \mathbb L_\mu\Phi=0,
$$
where
\begin{equation}\label{E:rescaled-linearized-Morse}
 \mathbb L_\mu
 :=\frac1\mu(H_y-d)-\Delta_\zeta+1-a_\mu(y,\zeta).
\end{equation}
The operator is understood through its self-adjoint quadratic form on the
centered radial energy space.  By \eqref{E:sec7-gmu-conv},
\eqref{E:sec7-vmu-conv}, and \eqref{E:sec7-Rmu-Z-small},
$$
 U_\mu\longrightarrow\phi_0V_\alpha
 \qquad\hbox{uniformly on }\mathbb{R}^N.
$$
Consequently, the family $\{a_\mu\}$ is uniformly bounded and
\begin{equation}\label{E:amu-uniform-limit-Morse}
 a_\mu\longrightarrow a_0
 :=\mu_0(\alpha-1)(\phi_0V_\alpha)^{\alpha-2}
 \qquad\hbox{in }L^\infty(\mathbb{R}^N).
\end{equation}

We now decompose the operator explicitly.  Every centered radial function in
the form domain has a unique decomposition
$$
 \Phi(y,\zeta)=\phi_0(y)\xi(\zeta)+r(y,\zeta),
 \qquad Pr=0,
$$
where $\xi\in H^1_{\rm rad}(\mathbb{R}^m)$.  We identify the $P$-space with
$L^2_{\rm rad}(\mathbb{R}^m)$ through the unitary map
$\xi\mapsto\phi_0\xi$.  Since $P$ is the spectral projection of $H_y$
associated with $\phi_0$, it commutes with $H_y-d$.  It also commutes with
$-\Delta_\zeta+1$.  Hence the only terms coupling the $P$- and $Q$-spaces
come from multiplication by $-a_\mu$.

Let
$$
 b_\mu(\zeta)
 :=\int_{\mathbb{R}^d}\phi_0(y)^2a_\mu(y,\zeta)\,\mathrm dy.
$$
The square of $\phi_0$ appears here because one factor comes from the
$P$-projection and the other from the input $\phi_0\xi$.  Indeed,
$$
\begin{aligned}
 P\mathbb L_\mu P(\phi_0\xi)
 &=P\left\{\phi_0(-\Delta_\zeta\xi+\xi)
       -a_\mu\phi_0\xi\right\}\\
 &=\phi_0\left(-\Delta_\zeta\xi+\xi-b_\mu\xi\right).
\end{aligned}
$$
Therefore, after the above identification, the $P$-block is
$$
 A_\mu:=-\Delta_\zeta+1-b_\mu.
$$
On the $Q$-space set
$$
 D_\mu:=Q\mathbb L_\mu Q,
 \qquad
 B_\mu:=Qa_\mu P,
 \qquad B_\mu^*=Pa_\mu Q.
$$
The four blocks of \eqref{E:rescaled-linearized-Morse} are consequently
\begin{equation}\label{E:block-linearized-Morse}
 \mathbb L_\mu
 =\begin{pmatrix}
       A_\mu&-B_\mu^*\\
       -B_\mu&D_\mu
   \end{pmatrix}
 \quad\hbox{with respect to }P\oplus Q.
\end{equation}

We next show that the $Q$-block is uniformly separated from zero.  For
$r\in QX_r$, the spectral gap \eqref{E:sec7-gap} and the uniform boundedness
of $a_\mu$ give
\begin{equation}\label{E:Dmu-positive-Morse}
\begin{aligned}
 \langle D_\mu r,r\rangle
 &=\frac1\mu\langle(H_y-d)r,r\rangle
   +\|\nabla_\zeta r\|_2^2+\|r\|_2^2
   -\int_{\mathbb{R}^N}a_\mu r^2\\
 &\ge \|\nabla_\zeta r\|_2^2
   +\left(\frac2\mu+1-C\right)\|r\|_2^2.
\end{aligned}
\end{equation}
It follows that, for all sufficiently small $\mu$, $D_\mu$ is strictly
positive and invertible on the $Q$-space, and
\begin{equation}\label{E:Dmu-inverse-Morse}
 \|D_\mu^{-1}\|_{L^2\to L^2}\le C\mu.
\end{equation}

Here and below the block calculation is an identity of closed quadratic forms, rather than a finite-dimensional matrix calculation.  More precisely, $A_\mu$ and $D_\mu$ have their natural closed form domains, multiplication by $a_\mu$ defines the bounded off-diagonal form $B_\mu$, and $D_\mu^{-1}B_\mu\xi$ belongs to the form domain of $D_\mu$.  The map
$$
 \mathscr U_\mu(\xi,r)
 :=\bigl(\xi,r-D_\mu^{-1}B_\mu\xi\bigr)
$$
is a bounded invertible map between the corresponding form domains.  This follows directly from the coercivity in \eqref{E:Dmu-positive-Morse} and the form version of Lax--Milgram.

We now eliminate the $Q$-variable.
  The equation $\mathbb{L}_\mu(\phi_0\xi+r)=0$ is, by
\eqref{E:block-linearized-Morse}, equivalent to
\begin{equation}\label{E:block-system-Morse}
 \begin{cases}
   A_\mu\xi-B_\mu^*r=0,\\
   -B_\mu\xi+D_\mu r=0.
 \end{cases}
\end{equation}
The second equation and the invertibility of $D_\mu$ yield
$$
 r=D_\mu^{-1}B_\mu\xi.
$$
Substitution into the first equation gives
$$
 S_\mu\xi=0,
$$
where the Schur complement is
\begin{equation}\label{E:Schur-complement-Morse}
\begin{aligned}
 S_\mu
 &:=A_\mu-B_\mu^*D_\mu^{-1}B_\mu\\
 &=(-\Delta_\zeta+1-b_\mu)
   -Pa_\mu QD_\mu^{-1}Qa_\mu P.
\end{aligned}
\end{equation}
The last term is understood under the identification
$P(\phi_0\xi)\leftrightarrow\xi$.  We have in particular obtained the
one-to-one correspondence
\begin{equation}\label{E:kernel-correspondence-Morse}
 \ker S_\mu\ni\xi
 \longmapsto
 \phi_0\xi+D_\mu^{-1}B_\mu\xi\in\ker\mathbb L_\mu.
\end{equation}

For the Morse index we use the same elimination at the level of quadratic
forms.  From \eqref{E:block-linearized-Morse},
$$
 q_\mu(\xi,r)
 :=\langle\mathbb L_\mu(\phi_0\xi+r),\phi_0\xi+r\rangle
 =\langle A_\mu\xi,\xi\rangle
  -2\langle B_\mu\xi,r\rangle+\langle D_\mu r,r\rangle.
$$
Completing the square gives the exact identity
\begin{equation}\label{E:Schur-square-Morse}
\begin{aligned}
 q_\mu(\xi,r)
 &=\langle S_\mu\xi,\xi\rangle\\
 &\quad+
 \left\langle
 D_\mu\bigl(r-D_\mu^{-1}B_\mu\xi\bigr),
 r-D_\mu^{-1}B_\mu\xi
 \right\rangle.
\end{aligned}
\end{equation}
Since $D_\mu$ is strictly positive, the second term in
\eqref{E:Schur-square-Morse} is nonnegative and vanishes precisely when
$r=D_\mu^{-1}B_\mu\xi$.  It follows directly that
\begin{equation}\label{E:inertia-correspondence-Morse}
 \dim\ker\mathbb L_\mu=\dim\ker S_\mu,
 \qquad
 \operatorname{ind}(\mathbb L_\mu)=\operatorname{ind}(S_\mu).
\end{equation}
Thus no negative or zero direction is lost in passing to the Schur
complement. Equivalently, \eqref{E:Schur-square-Morse} is a congruence of closed forms under $\mathscr U_\mu$; the two equalities in
\eqref{E:inertia-correspondence-Morse} follow from the variational definition of the negative index and the kernel correspondence, with no appeal to a finite-dimensional Sylvester law.

It remains to determine the spectrum of $S_\mu$.  From
\eqref{E:amu-uniform-limit-Morse} and $\|\phi_0\|_2=1$,
$$
 \int_{\mathbb{R}^d}\phi_0(y)^2a_0(y,\zeta)\,\mathrm dy
 =\mu_0(\alpha-1)V_\alpha(\zeta)^{\alpha-2}
   \int_{\mathbb{R}^d}\phi_0(y)^\alpha\,\mathrm dy
 =c_\alpha(\alpha-1)V_\alpha(\zeta)^{\alpha-2}.
$$
It follows that
\begin{equation}\label{E:bmu-uniform-limit-Morse}
 \left\|b_\mu-c_\alpha(\alpha-1)V_\alpha^{\alpha-2}\right\|_\infty
 \longrightarrow0.
\end{equation}
Moreover, \eqref{E:Dmu-inverse-Morse} and the uniform boundedness of
$a_\mu$ show that
$$
 \|B_\mu^*D_\mu^{-1}B_\mu\|_{L^2\to L^2}\le C\mu.
$$
Consequently
\begin{equation}\label{E:Smu-operator-convergence-Morse}
 S_\mu=L_\alpha+E_\mu,
 \qquad
 \|E_\mu\|_{L^2\to L^2}\longrightarrow0,
\end{equation}
where
$$
 L_\alpha
 :=-\Delta_\zeta+1-c_\alpha(\alpha-1)V_\alpha^{\alpha-2}
 \quad\hbox{on }H^1_{\rm rad}(\mathbb{R}^m).
$$

The scalar ground state $V_\alpha$ is radially nondegenerate by the
pure-power spectral theorem \cite[Lemma~2.1]{ChangGustafsonNakanishiTsai}:
$$
 \ker L_\alpha\cap H^1_{\rm rad}(\mathbb{R}^m)=\{0\}.
$$
Its radial Morse index is exactly one.  Indeed, $V_\alpha$ minimizes the
scalar energy
$$
 J_\alpha(v):=\frac{1}{2}\|v\|_{H^1}^2
 -\frac{c_\alpha}{\alpha}\|v\|_\alpha^\alpha
$$
on its Nehari manifold
$$
 \mathcal N_\alpha
 :=\left\{v\ne0:\|v\|_{H^1}^2
 =c_\alpha\|v\|_\alpha^\alpha\right\}.
$$
This standard ground-state characterization follows by minimizing the Sobolev quotient (equivalently, the energy on the Nehari manifold); see \cite[Chapter~4]{Willem}.
Using the equation for $V_\alpha$, its tangent space at $V_\alpha$ is
$$
 T_{V_\alpha}\mathcal{N}_\alpha
 =\left\{\psi\in H^1_{\rm rad}(\mathbb{R}^m):
   \int_{\mathbb{R}^m}V_\alpha^{\alpha-1}\psi\,\mathrm{d}\zeta=0\right\},
$$
which has codimension one.  Since $V_\alpha$ is a constrained minimizer and
$J_\alpha'(V_\alpha)=0$, the quadratic form of $L_\alpha=J_\alpha''(V_\alpha)$
is nonnegative on $T_{V_\alpha}\mathcal N_\alpha$.  Any subspace of dimension
at least two intersects this codimension-one tangent space nontrivially;
hence a negative definite subspace can have dimension at most one.  On the
other hand,
$$
 \langle L_\alpha V_\alpha,V_\alpha\rangle
 =-(\alpha-2)c_\alpha
   \int_{\mathbb{R}^m}V_\alpha^\alpha\,\mathrm{d}\zeta<0,
$$
so its Morse index is at least one.

Since $V_\alpha^{\alpha-2}$ decays exponentially, the essential spectrum of
$L_\alpha$ is $[1,+\infty)$.  Thus radial nondegeneracy implies
\begin{equation}\label{E:Lalpha-zero-gap-Morse}
 \delta_0:=\operatorname{dist}
 \bigl(0,\sigma(L_\alpha|_{L^2_{\rm rad}})\bigr)>0.
\end{equation}
For small $\mu$, \eqref{E:Smu-operator-convergence-Morse} gives
$\|E_\mu\|_{L^2\to L^2}<\delta_0/2$.  The spectral inclusion for bounded
self-adjoint perturbations then shows that zero remains outside
$\sigma(S_\mu)$. More precisely,
$$
\left(-\frac{\delta_0}{2},\frac{\delta_0}{2}\right)
\subset
\rho(L_\alpha+tE_\mu)
\qquad\text{for every }t\in[0,1].
$$
Thus the negative spectral projections
$$
P_t^-:=
\mathbf{1}_{(-\infty,0)}(L_\alpha+tE_\mu)
$$
depend continuously on $t$ in operator norm. Since the rank of a
norm-continuous family of finite-rank projections is locally constant,
and $[0,1]$ is connected, it follows that
$$
\dim\operatorname{Ran}P_t^-
=
\dim\operatorname{Ran}P_0^-
=
\operatorname{ind}_{H^1_{\rm rad}}(L_\alpha)
=
1
\qquad\text{for all }t\in[0,1].
$$
In particular,
$$
\ker S_\mu=\{0\},
\qquad
\operatorname{ind}_{H^1_{\rm rad}}(S_\mu)=1.
$$
Together with \eqref{E:inertia-correspondence-Morse}, this yields
$$
 \ker\mathbb L_\mu=\{0\},
 \qquad
 \operatorname{ind}(\mathbb L_\mu)=1.
$$

Finally, the change of variables relating the two quadratic forms is also
explicit.  If $h(y,z)=\Phi(y,\sqrt\mu z)$, then
\begin{equation}\label{E:quadratic-rescaling-Morse}
 \langle\mathcal L_\lambda h,h\rangle
 =\mu^{1-\frac m2}
   \langle\mathbb L_\mu\Phi,\Phi\rangle.
\end{equation}
The rescaling is invertible and the factor in
\eqref{E:quadratic-rescaling-Morse} is strictly positive.  It therefore
preserves both the kernel dimension and the negative index.  We conclude
that \eqref{E:lower-edge-Morse-data} holds for all sufficiently small
$\mu>0$.
\end{proof}

We next justify that passing to the weighted Banach space does not change the spectral
information that enters the Leray--Schauder degree.

\begin{lemma}[Compatibility of the nonzero spectrum with the weighted space]
\label{L:weighted-spectrum}
Assume \ref{gcon1}--\ref{gcon3} and let $u_{\lambda_0}$ denote the locally
unique solution furnished by Theorems~\ref{T:sec7-LS-existence} and
\ref{T:sec7-uniqueness-contradiction}.  Fix
$0<\lambda_0+d<\min\{\mu_e,\mu_u,\mu_M\}$, choose $\theta\in(0,1)$ with
$\lambda_0+\theta d>0$, and let $0<\tau<\sqrt{\lambda_0+\theta d}$.  Set
\begin{equation}\label{E:Klinearized-weighted}
 K_{\lambda_0}h:=(H+\lambda_0)^{-1}\bigl(g'(u_{\lambda_0})h\bigr).
\end{equation}
Then $K_{\lambda_0}$ is compact both on the centered energy space and on
$\mathcal{X}_{\theta,\tau}$.  Its nonzero eigenvalues and their algebraic multiplicities
are the same in the two spaces.
\end{lemma}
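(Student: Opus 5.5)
Compactness is handled separately in the two spaces; the spectral comparison then rests on one regularity fact: every generalized eigenvector for a nonzero eigenvalue in the energy space already lies in $\mathcal X_{\theta,\tau}$. Throughout, write $u=u_{\lambda_0}$ and $q:=g'(u)$.

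\emph{Compactness.} By Proposition~\ref{prop:uniform-z-exponential-decay} and \eqref{E:no-loss-weighted-space}, $u\le Ce^{-\frac\theta2|y|^2-\tau|z|}$. By \ref{gcon2} with $g'(0)=0$ we get $|q|\le Cu^{\alpha-2}$, so $q$ is bounded and decays like $e^{-(\alpha-2)(\frac\theta2|y|^2+\tau|z|)}$.

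On the centered energy space $X_r$, multiplication by $q$ maps bounded sets into sets that are tight (because $q\to0$ at infinity) and locally compact in $L^2$ (Rellich). Hence $h\mapsto qh$ is compact from $X_r$ into $L^2\subset X'$. Composing with the bounded resolvent of Lemma~\ref{L:form-resolvent} shows that $K_{\lambda_0}$ is compact on $X_r$.

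On $\mathcal X_{\theta,\tau}$, compactness is the linear analogue of Step~2 of Lemma~\ref{L:weighted-compact-extension}. In fact $K_{\lambda_0}=DT^+_{\lambda_0}(u)$ is the derivative of a completely continuous map, so it is compact. Concretely: $qh$ lies in $\mathcal F_{\theta_1,\tau_1}$ with the gain in weight, the estimate \eqref{E:weighted-linear-resolvent} applies, and Lemma~\ref{L:envelope-global-compactness} upgrades local convergence to convergence in the weighted norm.

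\emph{Comparison of spectra.} Since $\mathcal X_{\theta,\tau}\hookrightarrow X_r$ continuously and $K_{\lambda_0}$ acts by the same formula on both spaces, every eigenvector and generalized eigenvector in $\mathcal X_{\theta,\tau}$ is also one in $X_r$. The converse is the key step, and I expect it to be the main obstacle. Let $\nu\ne0$ and suppose $(K_{\lambda_0}-\nu)^kh=0$ with $h\in X_r$. Argue by induction on $k$. For $k=1$, we have
$$h=\nu^{-1}(H+\lambda_0)^{-1}(qh).$$
In general, $h$ solves $(K_{\lambda_0}-\nu)h=h'$ with $h'$ already known to lie in $\mathcal X_{\theta,\tau}$, so
$$h=\nu^{-1}\bigl(K_{\lambda_0}h-h'\bigr).$$

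The regularity of $h$ is then obtained by bootstrapping.
\begin{enumerate}[label=(\alph*)]
\item \emph{Boundedness.} Because $q$ is bounded, local subsolution estimates via Kato's inequality, as in Lemma~\ref{L:individual-decay}, give $h\in L^\infty\cap C_0$.
\item \emph{Weighted decay.} Since $|h|\le C$, the source satisfies $|qh|\le Ce^{-(\alpha-2)A}$ with $A=\frac\theta2|y|^2+\tau|z|$. The weight exponent $\alpha-2$ may be smaller than $1$, so a single application of the resolvent bound \eqref{E:common-weighted-resolvent-estimate} need not reach the full weight $e^{A}$.
\item \emph{Iteration.} Apply \eqref{E:common-weighted-resolvent-estimate} repeatedly, with the intermediate weights $(\theta_j,\tau_j)$ increasing toward $(\theta,\tau)$. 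Each step gains a factor $e^{-(\alpha-2)A}$ from $q$, while the resolvent produces decay at most $\min$ of the source decay and $e^{-\frac{\theta'}{2}|y|^2-\tau'|z|}$ for any admissible $\theta'<1$, $\tau'<\sqrt{\lambda_0+\theta' d}$.
\end{enumerate}
After finitely many steps the decay reaches the fixed target $(\theta,\tau)$, which is admissible with strict margin, so $h\in\mathcal X_{\theta,\tau}$. For $h'\ne0$ the term $\nu^{-1}h'$ already lies in the space and does not interfere with the iteration.

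\emph{Conclusion.} The generalized eigenspaces $\ker(K_{\lambda_0}-\nu)^k$ therefore coincide in the two spaces for every $k$. Hence the nonzero eigenvalues and their algebraic multiplicities, which are the stabilized dimensions of these kernels, agree. Riesz--Schauder theory for compact operators guarantees that these kernels are finite-dimensional and that they stabilize.
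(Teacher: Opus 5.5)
Your compactness arguments are fine. One small point: Proposition~\ref{prop:uniform-z-exponential-decay} uses \ref{gcon4}, which is not assumed here; the decay of the single solution $u_{\lambda_0}$ instead follows from the same exterior comparison started from $u_{\lambda_0}\in C_0$. Treating Jordan chains by induction is a legitimate alternative to the paper's shortcut. The paper notes that $K_{\lambda_0}$ is self-adjoint for the energy inner product $\langle (H+\lambda_0)h,k\rangle$, so it has no Jordan chains in $X_r$ and hence none in $\mathcal X_{\theta,\tau}$.

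\textbf{The gap is in step (c), the step that carries the lemma.} Write $q=g'(u_{\lambda_0})$ and use your $A=\frac\theta2|y|^2+\tau|z|$.
\begin{itemize}
\item Estimate \eqref{E:common-weighted-resolvent-estimate} assumes the source weight $(\theta',\tau')$ is admissible, $\lambda_0+\theta'd-\tau'^2>0$. Since $\lambda_0+d$ is small, this forces $\theta'>-\lambda_0/d=1-(\lambda_0+d)/d$, a narrow window just below $1$.
\item After step (a) you only know $|qh|\le Ce^{-(\alpha-2)A}$, whose $y$-weight is $(\alpha-2)\theta<\alpha-2$.
\item If, for example, $\alpha<3$ and $\lambda_0+d\le(3-\alpha)d$ (both allowed by the hypotheses), then $\lambda_0+\theta'd<0$ for every $\theta'\le(\alpha-2)\theta$. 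So the very first application of the estimate is unavailable, and lowering the weight only makes things worse.
\end{itemize}
More generally, the $y$-weights cannot ``increase toward $\theta$'' from below. Every intermediate weight must already lie in the admissible window, which would require near-optimal Gaussian decay of $h$ before the iteration begins. As written, the iteration never starts. Your claim that the resolvent output decays like the minimum of the source decay and an admissible weight is not what \eqref{E:common-weighted-resolvent-estimate} says.

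\textbf{The missing idea is to use the homogeneity of the eigenvalue equation, rather than treating $qh$ as a source on all of $\mathbb R^N$.} Since $q\to0$ at infinity, Kato's inequality gives $(H+\lambda_0-\varepsilon)|h|\le0$ outside a large ball $B_R$, for any $\varepsilon>0$. Choose $\varepsilon<\lambda_0+\theta d-\tau^2$. Then:
\begin{itemize}
\item $\Phi_{\theta,\tau}$ is a supersolution there by \eqref{E:common-product-barrier-identity};
\item the exterior form is coercive with constant $d+\lambda_0-\varepsilon>0$;
\item $(|h|-C\Phi_{\theta,\tau})^+$ is an admissible test, since it lies in $X$ and vanishes on $\partial B_R$.
\end{itemize}
Lemma~\ref{L:common-exterior-barrier} then gives $|h|\le Ce^{-A}$ in a single step; this is the paper's argument.

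For a chain step, the same comparison applies to $w=K_{\lambda_0}h$, which solves $(H+\lambda_0)w=\nu^{-1}qw-\nu^{-1}qh'$. The extra term satisfies $|qh'|\le Ce^{-(\alpha-1)A}$, so it is dominated by a multiple of $\Phi_{\theta,\tau}$.

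Alternatively, you can rescue your scheme. First apply Lemma~\ref{L:common-exterior-barrier}(i) to $|h|$ with $K=\|q\|_\infty/|\nu|$, which gives $|h|\le C_{\theta'}e^{-\theta'|y|^2/2}$ for every $\theta'<1$. Then iterate only in the $z$-weight, keeping the $y$-weight fixed at $\theta$. For chains, combine this with the self-adjointness remark above so that only eigenvectors need decay estimates.
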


\begin{proof}
Put
$$
 A_0:=H+\lambda_0,
 \qquad
 M_0(x):=g'(u_{\lambda_0}(x)),
$$
so that $K_{\lambda_0}=A_0^{-1}M_0$.  Since
$\lambda_0>-d$, the spectral lower bound $H\ge d$ gives
\begin{equation}\label{E:A0-coercive-weighted-spectrum}
 \langle A_0h,h\rangle\ge(d+\lambda_0)\|h\|_2^2,
 \qquad h\in X_r.
\end{equation}
We divide the proof into compactness, equality of the nonzero eigenspaces,
and equality of algebraic multiplicities.

\smallskip
\noindent\emph{Step 1: compactness in the two spaces.}
For this fixed solution no compact-strip Hamiltonian assumption is needed:
the comparison used in Proposition~\ref{prop:uniform-z-exponential-decay}, now
started from the fixed-solution decay $u_{\lambda_0}(x)\to0$, gives
$u_{\lambda_0}(y,z)\le C e^{-\theta|y|^2/2-\tau|z|}$.  Together with
$g'(0)=0$, this fixed-solution estimate gives
\begin{equation}\label{E:M0-decay-weighted-spectrum}
 M_0\in L^\infty(\mathbb{R}^N),
 \qquad
 \lim_{R\to\infty}
 \|M_0\|_{L^\infty(\mathbb{R}^N\setminus B_R)}=0.
\end{equation}
We claim that multiplication by $M_0$ is compact from $X_r$ into $L^2$.
Indeed, let $\{h_n\}$ be bounded in $X_r$.  By local Rellich compactness
and a diagonal argument, after passing to a subsequence there exists
$h\in X_r$ such that
$$
 h_n\longrightarrow h
 \qquad\hbox{strongly in }L^2(B_R)
 \quad\hbox{for every }R>0.
$$
For a fixed $R$, the boundedness of $M_0$ gives
$$
 \|M_0(h_n-h)\|_{L^2(B_R)}\longrightarrow0.
$$
On the complement of $B_R$, using the common $L^2$ bound, we have
$$
 \|M_0(h_n-h)\|_{L^2(\mathbb{R}^N\setminus B_R)}
 \le C\|M_0\|_{L^\infty(\mathbb{R}^N\setminus B_R)}.
$$
Taking the upper limit as $n\to\infty$ and then letting
$R\to\infty$ proves that $M_0h_n\to M_0h$ in $L^2$.  Thus
$$
 M_0:X_r\longrightarrow L^2(\mathbb{R}^N)
$$
is compact.  The form resolvent $A_0^{-1}:L^2\to X$ is bounded
and preserves block-radial symmetry.  Hence
$K_{\lambda_0}=A_0^{-1}M_0$ is compact on $X_r$.

On $\mathcal{X}_{\theta,\tau}$, Lemma
\ref{L:weighted-compact-extension} shows that $T_{\lambda_0}^+$ is a
$C^1$ completely continuous map.  Since $u_{\lambda_0}>0$, its derivative
at $u_{\lambda_0}$ is precisely
$$
 DT_{\lambda_0}^+(u_{\lambda_0})h
 =A_0^{-1}(M_0h)=K_{\lambda_0}h.
$$
The derivative of a $C^1$ completely continuous map is a compact linear
operator.  Indeed, on the unit ball the difference quotients
$$
 h\longmapsto n\left[T_{\lambda_0}^+
 \left(u_{\lambda_0}+\frac hn\right)-T_{\lambda_0}^+(u_{\lambda_0})\right]
$$
have relatively compact images for each fixed $n$ and converge uniformly
to $DT_{\lambda_0}^+(u_{\lambda_0})h$.  Hence $K_{\lambda_0}$ is compact
on $\mathcal{X}_{\theta,\tau}$ as well.

\smallskip
\noindent\emph{Step 2: equality of the nonzero eigenspaces.}
Suppose that
$$
 K_{\lambda_0}h=\gamma h
 \qquad\hbox{in }X_r,
 \qquad \gamma\ne0.
$$
Applying $A_0$ gives
\begin{equation}\label{E:weighted-eigenfunction-equation}
 (H+\lambda_0)h=\gamma^{-1}M_0h.
\end{equation}
Since $M_0$ is bounded, local boundedness for linear equations with bounded
zeroth-order coefficient first gives $h\in L^\infty_{\rm loc}$.  Interior
$W^{2,p}$ estimates applied to \eqref{E:weighted-eigenfunction-equation}
then yield $h\in W^{2,p}_{\rm loc}(\mathbb{R}^N)$ for every finite $p>1$;
choosing $p>N$ gives $h\in C^1_{\rm loc}(\mathbb{R}^N)$.

Choose $\varepsilon>0$ so small that
\begin{equation}\label{E:epsilon-choice-weighted-spectrum}
 0<\varepsilon<\min\left\{
 d+\lambda_0,\ \lambda_0+\theta d-\tau^2
 \right\}.
\end{equation}
This is possible by the assumptions on $\theta$ and $\tau$.  In view of
\eqref{E:M0-decay-weighted-spectrum}, there exists $R>0$ such that
$$
 |\gamma|^{-1}|M_0(x)|\le\varepsilon
 \qquad\hbox{for }|x|\ge R.
$$
Kato's distributional inequality \cite[Lemma~A, p.~138]{Kato1972}, applied to
\eqref{E:weighted-eigenfunction-equation}, gives
$$
 (H+\lambda_0)|h|
 \le |\gamma|^{-1}|M_0|\,|h|
 \qquad\hbox{in }\mathcal D'(\mathbb{R}^N).
$$
The choice of $R$ therefore yields
\begin{equation}\label{E:Kato-weighted-spectrum}
 (H+\lambda_0-\varepsilon)|h|\le0
 \qquad\hbox{in }\Omega_R:=\mathbb{R}^N\setminus\overline{B_R}.
\end{equation}

Put
$$
 \Phi(y,z):=e^{-\theta|y|^2/2-\tau\langle z\rangle}.
$$
By \eqref{E:epsilon-choice-weighted-spectrum} and
Lemma~\ref{L:common-exterior-barrier}(ii),
$$
 (H+\lambda_0-\varepsilon)\Phi\ge0.
$$
Since $h$ is continuous on the compact boundary $\partial B_R$, a suitable
multiple of $\Phi$ dominates $|h|$ there.  Applying the comparison part of
Lemma~\ref{L:common-exterior-barrier} to
\eqref{E:Kato-weighted-spectrum} gives
$$
 |h(y,z)|\le C e^{-\theta|y|^2/2-\tau\langle z\rangle}
 \le C e^{-\theta|y|^2/2-\tau|z|}
$$
outside $B_R$; increasing $C$ gives the estimate globally.  Consequently
$h\in\mathcal X_{\theta,\tau}$.
We have proved that every energy-space eigenfunction associated with a
nonzero eigenvalue belongs to the weighted space.  The reverse inclusion is
immediate from $\mathcal{X}_{\theta,\tau}\hookrightarrow X_r$.  Hence the
nonzero eigenvalues and their geometric eigenspaces coincide in the two
spaces.

\smallskip
\noindent\emph{Step 3: equality of algebraic multiplicities.}
Define on $X_r$ the energy inner product
$$
 (h,k)_{A_0}:=\langle A_0h,k\rangle.
$$
It is an equivalent Hilbert inner product by
\eqref{E:A0-coercive-weighted-spectrum}.  Since $M_0$ is a real-valued
multiplication operator,
\begin{equation}\label{E:K-selfadjoint-weighted-spectrum}
\begin{aligned}
 (K_{\lambda_0}h,k)_{A_0}
 &=\langle M_0h,k\rangle_{L^2}\\
 &=\langle h,M_0k\rangle_{L^2}
 =(h,K_{\lambda_0}k)_{A_0}.
\end{aligned}
\end{equation}
Thus $K_{\lambda_0}$ is compact and self-adjoint on the energy space.  Its
nonzero eigenvalues are therefore real and semisimple, so their algebraic
and geometric multiplicities agree.

A Jordan chain in the weighted space would also be a Jordan chain in
the energy space, where self-adjointness excludes it.  The coincidence of
geometric eigenspaces therefore gives equality of algebraic
multiplicities as well.
\end{proof}

\begin{proof}[Proof of Theorem~\ref{T:local-degree}]
The existence and profile assertions under \ref{gcon1}--\ref{gcon2} follow directly
from Theorem~\ref{T:sec7-LS-existence}.  Assume \ref{gcon3} for the remainder of the
proof and put
$$
 \mu_*:=\min\{\mu_e,\mu_u,\mu_M\}.
$$
Theorem \ref{T:sec7-LS-existence} gives a centered positive solution $u_\lambda$ for
$0<\lambda+d<\mu_*$, and Theorem \ref{T:sec7-uniqueness-contradiction} shows that it is
the only centered positive solution.  Lemma \ref{L:lower-edge-Morse} gives
\eqref{E:lower-edge-Morse-data}.  It also yields $C^1$ dependence in the
original energy space: the map
$(\lambda,u)\mapsto Hu+\lambda u-g_+(u)$ is $C^1$ from
$(-d,\infty)\times X_r$ to $X_r'$, and its derivative in $u$ is an
isomorphism at each point of this branch.  The implicit-function curves
agree with the constructed branch by positivity and uniqueness.

Fix $\lambda_0\in(-d,-d+\mu_*)$ and admissible $\theta,\tau$.  For this fixed positive
solution, $u_{\lambda_0}\in C_0$ and $g(u_{\lambda_0})/u_{\lambda_0}\to0$ at infinity.
The Gaussian and exponential supersolution comparisons used earlier therefore give
$u_{\lambda_0}\in\mathcal{X}_{\theta,\tau}$; no uniform compact-strip estimate is needed
at this point.  The fixed-point identification in
Lemma~\ref{L:weighted-compact-extension}, together with uniqueness in
$\mathcal K$, makes $u_{\lambda_0}$ the only nonzero fixed point and hence an
isolated one in any sufficiently small neighborhood not containing the origin.
Lemma~\ref{L:weighted-compact-extension} shows that
$T_{\lambda_0}^+$ is $C^1$ and compact near this fixed point, with derivative
$K_{\lambda_0}$ from \eqref{E:Klinearized-weighted}.  Moreover,
for the energy inner product,
\begin{equation}\label{E:degree-Morse-identity}
 \bigl((I-K_{\lambda_0})h,h\bigr)_{H+\lambda_0}
 =\langle\mathcal L_{\lambda_0}h,h\rangle.
\end{equation}
Hence $1$ is not an eigenvalue of $K_{\lambda_0}$, and the total multiplicity of its real
eigenvalues larger than $1$ equals the Morse index of $\mathcal L_{\lambda_0}$, namely
one.  By Lemma \ref{L:weighted-spectrum}, this spectral count is unchanged when
$K_{\lambda_0}$ is viewed on $\mathcal{X}_{\theta,\tau}$.  The local degree formula for a
$C^1$ compact map at a nondegenerate fixed point (see, for example, \cite[Chapter~2]{Nirenberg}) therefore gives
$$
 \deg_{LS}(I-T_{\lambda_0}^+,\mathcal O_{\lambda_0},0)
 =(-1)^1=-1
$$
for every sufficiently small isolating neighborhood $\mathcal O_{\lambda_0}$.
This proves Theorem \ref{T:local-degree} for every $m\ge1$ in one and the same functional
framework.
\end{proof}

\section{Global continuation across the spectral interval}\label{sec:global-conti}

\begin{lemma}[Strip-crossing continuation]\label{L:strip-crossing-continuation}
Let $E$ be a Banach space, $J=[a,b]$, and let $T:J\times\overline\Omega\to E$ be jointly completely continuous, where $\Omega\subset E$ is bounded and open.  Assume
$$
 u\ne T(\lambda,u)\quad\hbox{on }J\times\partial\Omega,
 \qquad
 \deg_{LS}(I-T(a,\cdot),\Omega,0)\ne0.
$$
Then the fixed-point set in $J\times\Omega$ contains a compact connected subset meeting both $\{a\}\times\Omega$ and $\{b\}\times\Omega$.  If the fixed point on the initial slice is unique in $\Omega$, this connected set contains that point.
\end{lemma}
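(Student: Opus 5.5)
This is the classical Leray--Schauder continuation principle on a cylinder, and the plan is to prove it by the standard degree argument combined with a Whyburn-type separation lemma. Set
$$
 \Sigma:=\{(\lambda,u)\in J\times\overline\Omega:\ u=T(\lambda,u)\}.
$$
\textbf{Step 1: compactness of $\Sigma$.} Since $T$ is jointly completely continuous and $J\times\overline\Omega$ is bounded, $T(J\times\overline\Omega)$ is relatively compact. Any sequence $(\lambda_n,u_n)\in\Sigma$ therefore has a subsequence with $\lambda_n\to\lambda$ and $u_n=T(\lambda_n,u_n)\to u$. Continuity of $T$ gives $(\lambda,u)\in\Sigma$, so $\Sigma$ is compact. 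By the boundary hypothesis, $\Sigma\subset J\times\Omega$.

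\textbf{Step 2: degree on every slice.} Homotopy invariance of the Leray--Schauder degree, applied with the admissible compact homotopy $T(\lambda,\cdot)$ (no fixed points on $\partial\Omega$ for any $\lambda\in J$), gives
$$
 \deg_{LS}(I-T(\lambda,\cdot),\Omega,0)=\deg_{LS}(I-T(a,\cdot),\Omega,0)\ne0 \qquad\hbox{for every }\lambda\in J.
$$
In particular, each slice $\Sigma_\lambda$ is nonempty.

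\textbf{Step 3: connected component reaching both ends.} Put $\Sigma_a:=\Sigma\cap(\{a\}\times\Omega)$ and $\Sigma_b:=\Sigma\cap(\{b\}\times\Omega)$; both are nonempty and compact. Suppose, for contradiction, that no connected component of $\Sigma$ meets both. The Whyburn lemma for the compact metric space $\Sigma$ then gives a splitting $\Sigma=K_a\cup K_b$ into disjoint compact sets with $\Sigma_a\subset K_a$ and $\Sigma_b\subset K_b$.

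Choose a bounded open set $\mathcal U\subset J\times\Omega$, relatively open in $J\times E$, containing $K_a$ with $\overline{\mathcal U}\cap K_b=\emptyset$; a small $\delta$-neighborhood of $K_a$, with $\delta<\operatorname{dist}(K_a,K_b\cup(J\times\partial\Omega))$, suffices. Then $\partial\mathcal U\cap\Sigma=\emptyset$. The generalized homotopy principle, i.e.\ homotopy invariance on open subsets of $J\times E$ with varying slices $\mathcal U_\lambda$, shows that $\deg_{LS}(I-T(\lambda,\cdot),\mathcal U_\lambda,0)$ is independent of $\lambda$.

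We evaluate this degree at the two ends. At $\lambda=b$ there are no fixed points in $\mathcal U_b$, so the degree is $0$. At $\lambda=a$, all fixed points in $\Omega$ lie in $\Sigma_a\subset\mathcal U_a$, so excision gives the degree $\deg_{LS}(I-T(a,\cdot),\Omega,0)\ne0$. This contradiction produces a connected component $\mathcal C_0$ of $\Sigma$ meeting both slices. As a closed subset of $\Sigma$, $\mathcal C_0$ is compact.

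\textbf{Step 4: the uniqueness clause.} If $\Sigma_a=\{(a,u_a)\}$, then $\mathcal C_0\cap\Sigma_a\ne\emptyset$ forces $(a,u_a)\in\mathcal C_0$.

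The main obstacle is Step 3, in two places. First, one must justify the generalized homotopy invariance on a non-cylindrical set $\mathcal U$. To handle it, I would cite the standard form of the principle (e.g.\ Nirenberg's notes or Zeidler), or else reduce to cylinders by covering $J$ with finitely many subintervals on which $\mathcal U_\lambda$ can be replaced by a fixed open set, using excision. Second, one must apply the separation lemma correctly, which needs only compactness of $\Sigma$ and disjointness of the two end slices; both were established in Steps 1 and 2.
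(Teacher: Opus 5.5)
Your proof is correct, and it proves the principle that the paper only cites. The paper does not argue from scratch. It invokes the Leray--Schauder continuation principle (Ambrosetti--Arcoya, Theorem~4.3.4) in its alternative form: a connected set of fixed points meets the initial slice and either the terminal slice or the lateral boundary $J\times\partial\Omega$. It then discards the lateral alternative by the hypothesis $u\ne T(\lambda,u)$ on $J\times\partial\Omega$, and obtains compactness and the attachment point exactly as in your Steps~1 and~4.

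Your Steps~2--3 are the standard proof of that cited principle, specialized to the case where the lateral alternative is excluded from the outset. The ingredients are compactness of $\Sigma$, Whyburn's separation lemma, and homotopy invariance on the non-cylindrical neighborhood $\mathcal U$ of $K_a$.

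\emph{What each route buys.} The paper's route is shorter but leaves the reader to match its hypotheses with the cited theorem. Yours is self-contained and shows that the boundary hypothesis is used twice:
\begin{itemize}
\item for constancy of the slice degree;
\item to keep the closed $\delta$-neighborhood of $K_a$ inside $J\times\Omega$, so that $T$ is defined on $\overline{\mathcal U}$.
\end{itemize}
The second use deserves one explicit line. If a point within distance $\delta$ of $K_a$ had its $u$-component outside $\Omega$, the segment in $u$ from the nearby point of $K_a$ would cross $\partial\Omega$ within distance $\delta$ of $K_a$. That contradicts the choice of $\delta$.

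Your proposed reduction of the generalized homotopy property is also valid: ordinary homotopy invariance on a fixed open set $V$, plus excision, on finitely many short subintervals of $J$. This works because, near each $\lambda_0$, compactness of $K_a$ confines all fixed points in $\overline{\mathcal U}$ to a product $(\lambda_0-\eta,\lambda_0+\eta)\times V$ with closure inside $\mathcal U$. So you need not import that property as a black box.
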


\begin{proof}
This is the Leray--Schauder continuation principle
\cite[Theorem~4.3.4]{AmbrosettiArcoya}.  Applied to the fixed-point set in
$J\times\overline\Omega$, it yields a connected set meeting the initial slice and
either the terminal slice or the lateral boundary.  The boundary alternative is
excluded by hypothesis.  The fixed-point set is closed, and complete continuity of
$T$ makes every bounded fixed-point sequence relatively compact; hence the resulting
connected set may be taken compact.  Uniqueness on the initial slice identifies its
point of attachment.
\end{proof}

Let
$$
 \mathcal B_*:=\{(\lambda,u_\lambda):-d<\lambda<-d+\mu_*\}
$$
be the local branch.  The implicit-function construction in Section \ref{sec:LUTD}
shows that $\lambda\mapsto u_\lambda$ is continuous; hence $\mathcal B_*$ is connected.
Fix
$$
 \lambda_0\in(-d,-d+\mu_*),\qquad \Lambda>\lambda_0.
$$
Choose $\theta\in(0,1)$ with $\lambda_0+\theta d>0$ and
$0<\tau<\sqrt{\lambda_0+\theta d}$.  We work on the whole strip
$[\lambda_0,\Lambda]$ in the single Banach space $\mathcal{X}_{\theta,\tau}$.

By Theorem \ref{T:local-degree}, the local Leray--Schauder degree of
$I-T_{\lambda_0}^+$ at $u_{\lambda_0}$ is $-1$.  Lemma
\ref{L:weighted-compact-extension} shows that
$$
 (\lambda,u)\longmapsto T_\lambda^+(u)
$$
is completely continuous on bounded subsets of
$[\lambda_0,\Lambda]\times\mathcal{X}_{\theta,\tau}$ and that all of its nonzero fixed
points are precisely the centered positive solutions in the strip.  The compact-strip
bounds from Section \ref{sec:priori-estimates}, together with
\eqref{E:no-loss-weighted-space}, show that the full nonzero fixed-point set in this strip
is bounded in $\mathcal{X}_{\theta,\tau}$.  Complete continuity therefore makes it
relatively compact.

The nonzero fixed points are also uniformly separated from the zero solution.  Indeed, if
$u_n\ne0$, $\lambda_n\in[\lambda_0,\Lambda]$, and $\|u_n\|_\infty\to0$, then testing the
equation by $u_n$ gives
$$
 d+\lambda_n\le
 \sup_{0<s\le\|u_n\|_\infty}\frac{g(s)}s\longrightarrow0,
$$
contrary to $d+\lambda_n\ge d+\lambda_0>0$.  Hence the nonzero fixed-point set is
closed as well as relatively compact, and is therefore compact in
$[\lambda_0,\Lambda]\times\mathcal{X}_{\theta,\tau}$.

We apply Lemma~\ref{L:strip-crossing-continuation}.  By the compactness and uniform separation just
proved, there exist $0<r<R<+\infty$ such that every nonzero fixed point in the strip lies
in the bounded open annulus
$$
 \Omega:=\{u\in\mathcal{X}_{\theta,\tau}:
 r<\|u\|_{\mathcal{X}_{\theta,\tau}}<R\},
$$
and no fixed point lies on $\partial\Omega$.  At $\lambda=\lambda_0$, local uniqueness,
Theorem~\ref{T:local-degree}, and the excision property of the degree give
$$
 \deg_{LS}(I-T_{\lambda_0}^+,\Omega,0)
 =\deg_{LS}(I-T_{\lambda_0}^+,\mathcal O_{\lambda_0},0)=-1.
$$
Thus all the hypotheses of Lemma~\ref{L:strip-crossing-continuation} hold for
$\Phi(\lambda,u)=u-T_\lambda^+(u)$ on
$[\lambda_0,\Lambda]\times\Omega$.  It yields a compact connected set
\begin{equation}\label{E:crossing-continuum}
 \mathcal{C}_{\lambda_0,\Lambda}
 \subset\{(\lambda,u)\in[\lambda_0,\Lambda]\times\mathcal K:
 u>0\ \hbox{solves }\eqref{E:fixed-lambda}\}
\end{equation}
which meets both $\{\lambda_0\}\times\mathcal K$ and
$\{\Lambda\}\times\mathcal K$.  Its image under the continuous inclusion
$\mathcal{X}_{\theta,\tau}\hookrightarrow X_r$ is connected and lies in $\mathcal{S}$.

Let $\mathcal{C}$ be the connected component of $\mathcal{S}$ containing the local branch
$\mathcal{B}_*$.  For every $\Lambda>\lambda_0$, the crossing continuum
$\mathcal{C}_{\lambda_0,\Lambda}$ intersects $\mathcal{B}_*$ at the unique point
$(\lambda_0,u_{\lambda_0})$ and hence is contained in $\mathcal{C}$.  Thus
$\Lambda\in P_1(\mathcal{C})$ for every $\Lambda>\lambda_0$.  Since
$\mathcal{B}_*\subset\mathcal{C}$, we conclude that
$$
 P_1(\mathcal{C})=(-d,+\infty).
$$
No nesting of the continua $\mathcal{C}_{\lambda_0,\Lambda}$ for different $\Lambda$ is asserted or needed.  Their common intersection with the locally unique point $(\lambda_0,u_{\lambda_0})$ places every one of them in the same connected component $\mathcal{C}$ of $\mathcal{S}$.
This proves Theorem~\ref{T:global}.

\section{Endpoint uniqueness, mass slopes, and orbital stability}
\label{sec:endpoint-stability}

The mass asymptotics in Theorem~\ref{T:asymptotics} by themselves cannot be
differentiated: an $o(1)$ remainder need not have a derivative of the expected
order.  The purpose of this section is to prove differentiability of the two
endpoint branches and to control the differentiated remainders.  Near
$\lambda=-d$ this uses the Lyapunov--Schmidt reduction already constructed in
Section~\ref{sec:LUTD}.  At high frequency, isotropic rescaling leads to
the full-dimensional scalar ground state.  The only zero modes of its
linearization are translations, all absent from the centered block-radial
space.  Together with localization, this gives high-frequency uniqueness,
a uniform inverse in the rescaled form spaces, and a $C^1$ branch.

We then pass from the centered spectral statement to the full linearized
operators.  The $z$-translations reappear as genuine zero modes, whereas the
confining potential moves the would-be $y$-translation modes to the positive
side.  The resulting spectral picture and the differentiated mass laws fit the
Grillakis--Shatah--Strauss slope criterion
\cite{GrillakisShatahStraussI,GrillakisShatahStraussII}; see also the coercivity
method of \cite{WeinsteinStability} and the external-potential implementation in
\cite{Stuart}.

\subsection{The unique high-frequency branch}

For $\lambda>0$ set
\begin{equation}\label{E:stability-high-rescaling}
 a_\lambda:=\lambda^{1/(\beta-2)},\qquad
 U_\lambda(\xi):=a_\lambda^{-1}u_\lambda(\xi/\sqrt\lambda),
 \qquad
 F_\lambda(s):=\frac{g(a_\lambda s)}{a_\lambda^{\beta-1}}.
\end{equation}
The rescaled equation is
\begin{equation}\label{E:stability-high-equation}
 -\Delta U+U+\lambda^{-2}|Y|^2U=F_\lambda(U)
 \qquad\hbox{in }\mathbb R^N,
\end{equation}
where $\xi=(Y,Z)\in\mathbb R^d\times\mathbb R^m$.  Its centered real
linearization is
\begin{equation}\label{E:stability-high-linearization}
 \mathcal A_\lambda
 :=-\Delta+1+\lambda^{-2}|Y|^2-F_\lambda'(U_\lambda).
\end{equation}

The natural form space for this operator is
\begin{equation}\label{E:high-form-space}
 \mathfrak X_\lambda
 :=\{h\in H^1(\mathbb R^N): |Y|h\in L^2(\mathbb R^N)\},
 \qquad
 \|h\|_{\mathfrak X_\lambda}^2
 :=\|h\|_{H^1}^2+\lambda^{-2}\||Y|h\|_2^2.
\end{equation}
Then $h\in \mathfrak X_\lambda$ if and only if $h\in X$, but
\begin{equation}\label{eq:20260903-2004}
\|h\|_{\mathfrak{X}_\lambda}\leq \|h\|_X\leq \lambda \|h\|_{\mathfrak{X}_\lambda},\qquad  \forall h\in X,\quad \forall \lambda\geq 1.
\end{equation}
Thus, for each fixed $\lambda$, the two norms are equivalent; however, the equivalence constants depend on $\lambda$ and are not uniform as $\lambda\to+\infty$.

We use the same notation for its centered block-radial subspace.  Thus
$\mathcal A_\lambda$ is a bounded form operator from
$\mathfrak X_\lambda$ to $\mathfrak X_\lambda'$.  This is the appropriate
dual pair for uniform inverse estimates; for arbitrary
$h\in\mathfrak X_\lambda$, the term $|Y|^2h$ need not belong to the
unweighted space $H^{-1}$.

\begin{lemma}[Compact spectral convergence in the high-frequency scaling]
\label{L:high-spectral-compactness}
Let $\lambda_n\to+\infty$ and let $b_n\in L^\infty(\mathbb R^N)$ be real
functions such that
\begin{equation}\label{E:abstract-high-potential-convergence}
 b_n\to b\quad\hbox{locally uniformly},
 \qquad
 \lim_{R\to\infty}\sup_n
 \|b_n\|_{L^\infty(\mathbb R^N\setminus B_R)}=0.
\end{equation}
Set
\begin{equation*}
 A_n:=-\Delta+1+\lambda_n^{-2}|Y|^2-b_n,
 \qquad A_\infty:=-\Delta+1-b.
\end{equation*}
Then the following assertions hold.
\begin{enumerate}[label=(\roman*)]
\item If $\nu_n$ remains in a compact subset of $(-\infty,1)$ and
$A_n\Phi_n=\nu_n\Phi_n$, $\|\Phi_n\|_2=1$, then, after extraction,
$\nu_n\to\nu<1$ and $\Phi_n\to\Phi$ strongly in $L^2$ and weakly in
$H^1$, where $A_\infty\Phi=\nu\Phi$ and $\|\Phi\|_2=1$.
\item Let $J\subset\subset(-\infty,1)$ be an interval whose endpoints do not
belong to $\sigma(A_\infty)$.  If $J$ contains only isolated eigenvalues of
$A_\infty$, then
\begin{equation}\label{E:abstract-high-spectral-rank}
 \operatorname{rank}\mathbf 1_J(A_n)
 =\operatorname{rank}\mathbf 1_J(A_\infty)
\end{equation}
for all sufficiently large $n$.
\end{enumerate}
The conclusions remain valid after restriction to a fixed closed symmetry or
parity subspace invariant under all the operators.
\end{lemma}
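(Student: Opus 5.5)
For (i) I would follow the localization argument that the paper uses repeatedly. First, the operators $A_n$ are bounded below uniformly, because $b_n$ is uniformly bounded. The hypothesis \eqref{E:abstract-high-potential-convergence} supplies a common $L^\infty$ bound for large $n$: the bound holds outside a fixed ball by the tail condition, and inside it by locally uniform convergence. Testing $A_n\Phi_n=\nu_n\Phi_n$ against $\Phi_n$ then gives
$$\|\nabla\Phi_n\|_2^2+\lambda_n^{-2}\||Y|\Phi_n\|_2^2\le C.$$
After extraction we obtain $\nu_n\to\nu$ in the compact set, $\Phi_n\rightharpoonup\Phi$ in $H^1$, and strong convergence in $L^2_{\rm loc}$. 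Testing the equation against $\varphi\in C_c^\infty$ and using $\lambda_n^{-2}|Y|^2\to0$ locally uniformly gives $A_\infty\Phi=\nu\Phi$.

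The key step is to exclude loss of $L^2$ mass at infinity. Let $\delta:=1-\sup_n\nu_n>0$, and choose $R$ such that $\sup_n\|b_n\|_{L^\infty(B_R^c)}<\delta/2$. Take a cutoff $\chi_R$ that vanishes on $B_R$, equals one outside $B_{2R}$, and satisfies $|\nabla\chi_R|\le C/R$. Testing the equation against $\chi_R^2\Phi_n$ and discarding the nonnegative weighted term gives
$$(1-\nu_n-\delta/2)\|\chi_R\Phi_n\|_2^2\le \tfrac{C}{R^2}\|\Phi_n\|_2^2 .$$
Hence $\|\chi_R\Phi_n\|_2\le C R^{-1}$ uniformly in $n$. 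Combined with the local convergence, this yields $\Phi_n\to\Phi$ strongly in $L^2$, so $\|\Phi\|_2=1$. Since $\sigma_{\rm ess}(A_\infty)=[1,\infty)$, we get $\nu<1$.

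For (ii), the plan is to prove the two inequalities for the ranks separately. For the upper bound $\operatorname{rank}\mathbf 1_J(A_n)\le \operatorname{rank}\mathbf 1_J(A_\infty)=:k$, I would argue by contradiction. Suppose that along a subsequence $\mathbf 1_J(A_n)$ has rank at least $k+1$. Take $L^2$-orthonormal eigenfunctions $\Phi_n^1,\dots,\Phi_n^{k+1}$ with eigenvalues in $J$; these exist because $J\subset\subset(-\infty,1)$ lies below the essential spectrum of $A_n$, which is at least $1-o(1)$ by the same tail argument. By (i), after extraction they converge strongly in $L^2$, so orthonormality passes to the limits. The limits are eigenfunctions of $A_\infty$ with eigenvalues in $\overline J$, and they lie in $J$ because the endpoints of $J$ are not in $\sigma(A_\infty)$. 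This produces $k+1$ orthonormal eigenfunctions of $A_\infty$ in $J$, a contradiction.

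For the lower bound I would use a min--max or trial-function argument. Take the $k$ orthonormal eigenfunctions of $A_\infty$ in $J$, truncate them to a large ball so that the $|Y|^2$ term becomes $O(\lambda_n^{-2})$, and use locally uniform convergence of $b_n$ together with the exponential decay of the eigenfunctions. This makes $\|(A_n-\nu_j)\psi_j\|_2=o(1)$ for the truncated trial functions $\psi_j$. Since $\operatorname{dist}(\nu_j,\partial J)>0$, the standard spectral-projection estimate then shows that $\mathbf 1_J(A_n)$ restricted to their span is injective for large $n$. The main obstacle is exactly this step: the weighted potential $\lambda_n^{-2}|Y|^2$ is not relatively bounded uniformly in $n$, which is why trial functions must be compactly supported and why the form space $\mathfrak X_\lambda$ has to be used. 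The symmetry-subspace version follows by running the same arguments inside the invariant subspace, since both the cutoffs and the trial functions can be chosen radial and hence preserve it.
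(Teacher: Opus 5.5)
Your proposal is correct and follows essentially the paper's proof. In (i) you use the same energy bound and the same IMS cutoff $\chi_R^2\Phi_n$ to get $L^2$ tightness. In (ii) your upper bound on the rank is the same contradiction argument built on (i), and your lower bound also uses compactly supported truncations of the eigenfunctions of $A_\infty$.

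The only variation is in how the lower bound is closed. The paper compares the counting functions $\operatorname{rank}\mathbf 1_{(-\infty,t)}$ at the two endpoints of $J$ via min--max and then subtracts. You instead work on $J$ directly with the quasimode bound $\|(I-\mathbf 1_J(A_n))\psi\|_2\le\|(A_n-\nu)\psi\|_2/\operatorname{dist}(\nu,\partial J)$, which is equally valid.

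One small correction: in (i), $\nu<1$ follows at once from $\nu_n$ staying in a compact subset of $(-\infty,1)$. It does not follow from the location of $\sigma_{\rm ess}(A_\infty)$, so that justification should be replaced.
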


\begin{proof}
Multiplication by $b_n$ is relatively compact with respect to
$-\Delta+1+\lambda_n^{-2}|Y|^2$.  The essential spectrum of $A_n$ therefore
starts at $1+d/\lambda_n$, so a fixed compact spectral window below $1$
contains only discrete spectrum for large $n$.

After extraction, write $\nu_n\to\nu\le1-\delta$ for some $\delta>0$.
Testing the eigenvalue equation by $\Phi_n$ gives
\begin{equation}\label{E:abstract-high-eigenfunction-bound}
 \|\nabla\Phi_n\|_2^2+\|\Phi_n\|_2^2
 +\lambda_n^{-2}\||Y|\Phi_n\|_2^2\le C.
\end{equation}
Choose $\chi_R\in C^\infty(\mathbb R^N)$ with
\begin{equation*}
 0\le\chi_R\le1,\qquad \chi_R=0\ \hbox{on }B_R,\qquad
 \chi_R=1\ \hbox{on }\mathbb R^N\setminus B_{2R},\qquad
 |\nabla\chi_R|\le C/R.
\end{equation*}
Testing by $\chi_R^2\Phi_n$, yields
\begin{align}
 &\|\nabla(\chi_R\Phi_n)\|_2^2
 +(1-\nu_n)\|\chi_R\Phi_n\|_2^2
 +\lambda_n^{-2}\||Y|\chi_R\Phi_n\|_2^2 \notag\\
 &\qquad\leq \int_{\mathbb{R}^N} b_n\chi_R^2\Phi_n^2\mathrm{d}x
       +\frac{C}{R^2}\|\Phi_n\|_2^2.
 \label{E:abstract-high-IMS-tail}
\end{align}
For large $R$, \eqref{E:abstract-high-potential-convergence} bounds the first
term on the right by $(\delta/2)\|\chi_R\Phi_n\|_2^2$.  Hence
\begin{equation}\label{E:abstract-high-L2-tightness}
 \lim_{R\to\infty}\sup_n
 \int_{\mathbb R^N\setminus B_{2R}}|\Phi_n|^2=0.
\end{equation}
Local Rellich compactness gives strong convergence in $L^2(\mathbb R^N)$ to
a function $\Phi$ of norm one.  Passing to compactly supported test functions
gives $A_\infty\Phi=\nu\Phi$, proving (i).

For (ii), write
\begin{equation*}
 N_n(t):=\operatorname{rank}\mathbf1_{(-\infty,t)}(A_n),
 \qquad
 N_\infty(t):=\operatorname{rank}\mathbf1_{(-\infty,t)}(A_\infty),
 \qquad t<1.
\end{equation*}
Whenever $t\notin\sigma(A_\infty)$, compactly supported approximations of
the eigenfunctions below $t$ and the min--max principle give
$N_n(t)\ge N_\infty(t)$ for large $n$.  If the reverse inequality failed,
one could choose $N_\infty(t)+1$ orthonormal eigenfunctions of $A_n$ with
eigenvalues below $t$.  The operators are uniformly bounded from below by
$1-\sup_n\|b_n\|_\infty$, so part (i), applied successively, gives the same
number of orthonormal strong limits with limiting eigenvalues at most $t$.
Equality with $t$ is impossible because $t$ lies in the resolvent set.
This contradicts the definition of $N_\infty(t)$.  Hence
$N_n(t)=N_\infty(t)$ at every such $t$.  Applying this equality to the two
endpoints of $J$ and subtracting proves
\eqref{E:abstract-high-spectral-rank}.  The proof is unchanged in an
invariant closed subspace.
\end{proof}

\begin{proposition}[High-frequency uniqueness and uniform nondegeneracy]
\label{P:high-unique-branch}
Assume \ref{gcon1}--\ref{gcon4}.  There exists $\Lambda_{\mathrm h}>0$
such that, for every $\lambda>\Lambda_{\mathrm h}$,
\eqref{E:fixed-lambda} has exactly one centered positive solution.  The
solutions form a $C^1$ branch $\lambda\mapsto u_\lambda$.  In the rescaled
variables,
\begin{equation}\label{E:high-branch-convergence-strong}
 U_\lambda\longrightarrow W_\beta
 \quad\hbox{in }H^1(\mathbb R^N)\cap L^\infty(\mathbb R^N),
 \qquad
 U_\lambda(\xi)\le Ce^{-c|\xi|},
\end{equation}
with constants in the last estimate independent of large $\lambda$.
Moreover,
\begin{equation}\label{E:high-centered-spectral-data}
 \ker(\mathcal A_\lambda|_{X_r})=\{0\},
 \qquad
 \operatorname{ind}_{X_r}(\mathcal A_\lambda)=1,
\end{equation}
and the inverses are uniformly bounded, in the sense that
\begin{equation}\label{E:high-uniform-inverse}
 \|h\|_{\mathfrak X_\lambda}
 \le C\|\mathcal A_\lambda h\|_{\mathfrak X_\lambda'}
 \qquad(h\in\mathfrak X_\lambda\cap X_r,
        \ \lambda>\Lambda_{\mathrm h}).
\end{equation}
\end{proposition}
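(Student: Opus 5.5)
The plan is to work throughout in the rescaled variables \eqref{E:stability-high-rescaling}, where the centered solutions solve \eqref{E:stability-high-equation}. The argument has four steps: universal convergence, uniform nondegeneracy, uniqueness by a normalized difference, and regularity of the branch.

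\emph{Step 1: convergence.} Theorem~\ref{T:asymptotics}(ii) holds for every sequence of centered positive solutions. It gives $U_\lambda\to W_\beta$ in $H^1\cap C^2_{\rm loc}$, uniformly over all such solutions. The exponential bound $U_\lambda\le Ce^{-c|\xi|}$, with constants uniform in large $\lambda$, is \eqref{eq:20260618-1344}. Local uniform convergence together with this common envelope upgrades the convergence to $L^\infty$, which proves \eqref{E:high-branch-convergence-strong} for every centered solution. Lemma~\ref{lemma:20260618-1129}(ii) then shows that $b_\lambda:=F_\lambda'(U_\lambda)$ converges locally uniformly to $b:=\mu_\infty(\beta-1)W_\beta^{\beta-2}$. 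Since $F_\lambda'(s)$ is small for small $s$ uniformly in large $\lambda$, the decay of $U_\lambda$ gives tails of $b_\lambda$ that are uniformly small. So hypothesis \eqref{E:abstract-high-potential-convergence} holds.

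\emph{Step 2: spectral data.} On $L^2_{\rm rad}$ in the block-radial sense (radial in $Y$ and in $Z$ separately), the limit operator $A_\infty=-\Delta+1-b$ has kernel equal to $\ker L_\beta\cap X_r$. Nondegeneracy \cite[Lemma~2.1]{ChangGustafsonNakanishiTsai} says $\ker L_\beta=\operatorname{span}\{\partial_{\xi_j}W_\beta\}$. Each $\partial_{\xi_j}W_\beta$ is odd in one coordinate, so it is not block-radial. Hence $0\notin\sigma(A_\infty|_{X_r})$.

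The Nehari argument of Lemma~\ref{L:lower-edge-Morse} gives exactly one negative eigenvalue of $A_\infty$. Its eigenfunction is radial, hence block-radial.

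We then apply Lemma~\ref{L:high-spectral-compactness}(ii) in the invariant subspace $X_r$, with a window $J=(-C,\delta_0)$ where $\delta_0<\operatorname{dist}(0,\sigma(A_\infty|_{X_r}))$. This gives rank one below $\delta_0$ and no spectrum in $[-\delta_0,\delta_0]$ for large $\lambda$, which is \eqref{E:high-centered-spectral-data}.

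\emph{Step 3: the uniform inverse \eqref{E:high-uniform-inverse}.} This is the step I expect to be the main obstacle, because the norm of $\mathfrak X_\lambda$ is not uniformly equivalent to that of $H^1$. I would argue by contradiction. Suppose $\|h_n\|_{\mathfrak X_{\lambda_n}}=1$ and $\|\mathcal A_{\lambda_n}h_n\|_{\mathfrak X'}\to0$. Testing by $h_n$ uses the coercivity of $-\Delta+1+\lambda^{-2}|Y|^2$ in the $\mathfrak X_\lambda$ norm, and shows that $\int b_nh_n^2$ stays bounded below. Because $b_n$ has uniformly small tails, a subsequence satisfies $h_n\rightharpoonup h$ in $H^1$ with $\int b h^2>0$, so $h\neq0$. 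For fixed $\varphi\in C_c^\infty$ the weight $\lambda^{-2}|Y|^2$ is negligible, and $\mathfrak X'$-smallness tested on such $\varphi$ gives $A_\infty h=0$. Since $h\in X_r$, this contradicts Step~2.

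\emph{Step 4: uniqueness and the branch.} For uniqueness, take two distinct centered solutions $U_{n,1},U_{n,2}$ and set $\Phi_n=(U_{n,1}-U_{n,2})/\|U_{n,1}-U_{n,2}\|_{\mathfrak X}$. It solves $(-\Delta+1+\lambda_n^{-2}|Y|^2-a_n)\Phi_n=0$, with $a_n=\int_0^1F'_{\lambda_n}(tU_{n,1}+(1-t)U_{n,2})\,dt$. The coefficient $a_n$ satisfies the same convergence and tail hypotheses as in Step~1. The argument of Step~3 applied to $a_n$ therefore produces a nonzero block-radial element of $\ker L_\beta$, which is a contradiction.

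For the $C^1$ branch, the map $(\lambda,u)\mapsto Hu+\lambda u-g(u)$ is $C^1$ from $X_r$ to $X_r'$. By the scaling identity, its $u$-derivative is an isomorphism at each solution, by \eqref{E:high-centered-spectral-data} for the fixed $\lambda$. The implicit function theorem then gives local $C^1$ curves. Uniqueness glues them into a single branch, and each curve stays positive by continuity and the strong maximum principle.
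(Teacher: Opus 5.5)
Your Steps 1--3 and the uniqueness half of Step 4 are essentially the paper's proof. You use universal convergence from Section~\ref{sec:Asymptotics-high}, a normalized difference whose weak limit is a nonzero block-radial element of $\ker L_\beta$, the same contradiction argument for \eqref{E:high-uniform-inverse}, Lemma~\ref{L:high-spectral-compactness} in the centered subspace, and the implicit function theorem. The differences are cosmetic:
\begin{itemize}
\item You normalize $\Phi_n$ in $\mathfrak X_{\lambda_n}$ rather than in $L^2$. This lets you conclude directly from $\int a_n\Phi_n^2=1$ plus tail control that the limit is nonzero. The paper instead shows the limit is zero and then uses collective compactness.
\item You read off both the trivial kernel and the index from spectral compactness. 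The paper gets the kernel from the uniform inverse and the lower bound on the index from the quadratic form at $W_\beta$.
\end{itemize}

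What is missing is existence. The statement asserts \emph{exactly} one centered positive solution for every $\lambda>\Lambda_{\mathrm h}$, but your Step~4 proves only ``at most one.'' The implicit function theorem shows only that the set of frequencies carrying a solution is open. Your phrase ``uniqueness glues the local curves into a single branch'' already presupposes a solution at every frequency. The paper fills this by invoking Theorem~\ref{T:global}: its component $\mathcal C$ meets every slice $\{\lambda\}\times X_r$. Alternatively, you would need a starting solution at some large frequency, plus closedness of that frequency set. Closedness follows from the strip compactness of Theorem~\ref{T:compactness} together with the separation from $0$ in Proposition~\ref{prop:20260622-0855}.

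There is also a smaller inaccuracy in the same step. The map you feed to the implicit function theorem must use the extension $g_+$, since $g(u)$ is undefined where $u<0$. Positivity of the nearby solutions does not follow from continuity in $X_r$, because closeness in $X_r$ gives no pointwise sign control. It follows from testing the extended equation with $-u^-$, which forces $u^-=0$ by coercivity of $H+\lambda$, and then applying the strong maximum principle.
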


\begin{proof}
Existence at every large frequency follows from Theorem~\ref{T:global}: its
component $\mathcal{C}$ meets each slice $\{\lambda\}\times X_r$.  The
convergence and exponential bound in
\eqref{E:high-branch-convergence-strong} hold for every sequence of centered positive solutions by Section~\ref{sec:Asymptotics-high}, in particular by \eqref{E:high-convergence} and \eqref{eq:20260618-1344}.  We prove that two different solutions cannot occur at a sequence $\lambda_n\to\infty$.

Suppose otherwise, and let $U_{n,1}$ and $U_{n,2}$ be the two profiles obtained
from \eqref{E:stability-high-rescaling}.  Put
\begin{equation}\label{E:high-normalized-difference}
 D_n:=\|U_{n,1}-U_{n,2}\|_2,
 \qquad
 \Phi_n:=D_n^{-1}(U_{n,1}-U_{n,2}).
\end{equation}
Then $\|\Phi_n\|_2=1$, and subtraction of the two equations gives
\begin{equation}\label{E:high-difference-equation}
 \bigl(-\Delta+1+\lambda_n^{-2}|Y|^2-b_n(\xi)\bigr)\Phi_n=0,
\end{equation}
where
\begin{equation}\label{E:high-difference-coefficient}
 b_n(\xi):=\int_0^1F_{\lambda_n}'
 \bigl(tU_{n,1}(\xi)+(1-t)U_{n,2}(\xi)\bigr)\,\mathrm dt.
\end{equation}
The uniform $C^1$ limit in Lemma~\ref{lemma:20260618-1129}, the global
$L^\infty$ convergence, and the common exponential tail imply
\begin{equation}\label{E:high-coefficient-compact-convergence}
 b_n\longrightarrow b_\infty
 :=\mu_\infty(\beta-1)W_\beta^{\beta-2}
 \quad\hbox{locally uniformly and in }L^q(\mathbb R^N)
\end{equation}
for every finite $q\ge1$.  They also imply the uniform tail estimate
\begin{equation}\label{E:high-coefficient-tail}
 \lim_{R\to\infty}\sup_n
 \|b_n\|_{L^\infty(\mathbb R^N\setminus B_R)}=0.
\end{equation}
For completeness, put $\delta=\min\{\alpha-2,\beta-2\}>0$.
The two asymptotics of $g'$ give, for every $M>0$,
\begin{equation}\label{E:high-derivative-power-envelope}
 a^{2-\beta}|g'(as)|\le C_Ms^\delta,
 \qquad a\ge1,\quad 0\le s\le M.
\end{equation}
Indeed, split at $as=1$: below this value use
$|g'(as)|\le C(as)^{\alpha-2}$, and above it use
$|g'(as)|\le C(as)^{\beta-2}$.  If $\alpha>\beta$ in the first region,
write $a^{\alpha-\beta}s^{\alpha-2}
=(as)^{\alpha-\beta}s^{\beta-2}$.
The common exponential bound for $U_{n,i}$ therefore gives an exponential
majorant for $b_n$.  Dominated convergence proves both
\eqref{E:high-coefficient-compact-convergence} and
\eqref{E:high-coefficient-tail}.

Testing \eqref{E:high-difference-equation} by $\Phi_n$ shows that
$\{\Phi_n\}$ is bounded in $H^1$.  After extraction,
$\Phi_n\rightharpoonup\Phi$ in $H^1$ and $\Phi_n\to\Phi$ in
$L^2_{\rm loc}$.  Passing to the limit gives
\begin{equation}\label{E:high-limit-kernel-equation}
 \bigl(-\Delta+1-\mu_\infty(\beta-1)W_\beta^{\beta-2}\bigr)\Phi=0.
\end{equation}
The pure-power nondegeneracy theorem
\cite[Lemma~2.1]{ChangGustafsonNakanishiTsai} says that the kernel in the
full space is
$\operatorname{span}\{\partial_{\xi_1}W_\beta,\ldots,
\partial_{\xi_N}W_\beta\}$.  Each of these functions is odd in one
coordinate, while $\Phi$ is radial in $Y$ and in $Z$.  Hence $\Phi=0$.

It remains to use the information at infinity; this is the step that turns
weak convergence into a contradiction.  By local Rellich compactness and
\eqref{E:high-coefficient-tail}, multiplication by $b_n$ is collectively
compact from bounded subsets of $H^1$ to $H^{-1}$.  Therefore
\begin{equation}\label{E:high-potential-difference-vanishes}
 \int_{\mathbb R^N}b_n\Phi_n^2\,\mathrm{d}\xi\longrightarrow0.
\end{equation}
Testing \eqref{E:high-difference-equation} once more gives
\begin{equation*}
 \|\nabla\Phi_n\|_2^2+\|\Phi_n\|_2^2
 +\lambda_n^{-2}\||Y|\Phi_n\|_2^2
 =\int_{\mathbb R^N}b_n\Phi_n^2\,\mathrm{d}\xi\longrightarrow0,
\end{equation*}
contradicting $\|\Phi_n\|_2=1$.  This proves high-frequency uniqueness.

We next prove the uniform inverse estimate in the correct form-dual topology.
If \eqref{E:high-uniform-inverse} failed, there would be
$\lambda_n\to\infty$ and centered
$h_n\in\mathfrak X_{\lambda_n}$ such that
\begin{equation}\label{E:high-inverse-contradiction-sequence}
 \|h_n\|_{\mathfrak X_{\lambda_n}}=1,
 \qquad
 \|\mathcal A_{\lambda_n}h_n\|_{\mathfrak X_{\lambda_n}'}\to0.
\end{equation}
Put $\widetilde b_n=F_{\lambda_n}'(U_{\lambda_n})$.  The same argument as
in \eqref{E:high-coefficient-compact-convergence}--
\eqref{E:high-coefficient-tail} gives local convergence to
$\mu_\infty(\beta-1)W_\beta^{\beta-2}$ and a uniform $L^\infty$ tail.
After extraction, $h_n\rightharpoonup h$ in $H^1$ and
$h_n\to h$ locally in $L^2$.  Passing to the weak equation gives
$L_\beta h=0$.  Since $h$ is centered block-radial and every element of
$\ker L_\beta$ is a linear combination of the odd functions
$\partial_{\xi_j}W_\beta$, one has $h=0$.  Collective compactness of
multiplication by $\widetilde b_n$ then gives
\begin{equation*}
 \int_{\mathbb R^N}\widetilde b_nh_n^2\mathrm{d}\xi\longrightarrow0.
\end{equation*}
Pairing the residual in
\eqref{E:high-inverse-contradiction-sequence} with $h_n$ yields
\begin{equation*}
 \|h_n\|_{\mathfrak X_{\lambda_n}}^2
 =\int_{\mathbb R^N}\widetilde b_nh_n^2\mathrm{d}\xi
  +\langle\mathcal A_{\lambda_n}h_n,h_n\rangle
 \longrightarrow0,
\end{equation*}
contrary to its normalization.  This proves
\eqref{E:high-uniform-inverse} and centered nondegeneracy.

The limiting operator has exactly one negative eigenvalue and no kernel in
the centered subspace.  Lemma~\ref{L:high-spectral-compactness}, applied in
that invariant subspace, shows that $\mathcal A_\lambda$ has at most one
negative eigenvalue for large $\lambda$.  It has at least one, since its
quadratic form evaluated at $W_\beta$ converges to
\begin{equation*}
 \left\langle
 \bigl(-\Delta+1-\mu_\infty(\beta-1)W_\beta^{\beta-2}\bigr)W_\beta,
 W_\beta\right\rangle
 =(2-\beta)\mu_\infty\int W_\beta^\beta<0.
\end{equation*}
Thus \eqref{E:high-centered-spectral-data} holds.

For finite $\lambda$, use the positive-part extension $g_+$ and consider
$$
 \mathscr F(\lambda,u):=Hu+\lambda u-g_+(u),
 \qquad \mathscr F:(-d,\infty)\times X_r\longrightarrow X_r'.
$$
This map is $C^1$ by the subcritical growth and $g_+\in C^1(\mathbb R)$.
Its derivative at a positive solution is $L_{+,\lambda}$, a compact
perturbation of the isomorphism $H+\lambda:X_r\to X_r'$.
Centered nondegeneracy therefore makes it an isomorphism, and the implicit
function theorem gives a local $C^1$ curve.  Every nearby nonzero solution
of the extended equation is positive by testing its negative part and
using the strong maximum principle.  The curves agree by uniqueness.

We also need differentiability of the rescaled profiles.  This follows
directly, for each fixed $\lambda>\Lambda_{\mathrm h}$, by applying the
same theorem to
$$
 (\lambda,U)\longmapsto
 -\Delta U+U+\lambda^{-2}|Y|^2U-F_\lambda^+(U)
 \quad\hbox{from }(0,\infty)\times X_r\hbox{ to }X_r',
$$
where $F_\lambda^+(s)=g_+(a_\lambda s)/a_\lambda^{\beta-1}$.
Its derivative is $\mathcal A_\lambda$, and its parameter derivative has
subcritical growth on compact frequency intervals.  Thus
$\lambda\mapsto U_\lambda$ is $C^1$ in $X_r$ as well.  This argument
requires no differentiability of the dilation group on arbitrary $X$-data.
\end{proof}

\subsection{Differentiating the endpoint mass laws}

We now control the derivatives of the rescaled profiles, using the
Lyapunov--Schmidt equations at the lower edge and the uniform centered
inverse at high frequency.  The mass slopes then follow by differentiating
the exact scaling identities.

\begin{proposition}[Differentiated endpoint profiles]
\label{P:differentiated-profiles}
Along the two branches of Theorem~\ref{T:endpoint-stability},
\begin{align}
 &\mu\|\partial_\mu U_\mu\|_{L^2(\mathbb{R}^N)}\longrightarrow0
 &&(\mu\to0^+), \label{E:lower-profile-derivative-small}\\
 &\lambda\|\partial_\lambda U_\lambda\|_{H^1(\mathbb{R}^N)}
 \longrightarrow0
 &&(\lambda\to\infty), \label{E:high-profile-derivative-small}
\end{align}
where $U_\mu$ is defined by \eqref{E:sec7-rescale} and $U_\lambda$ by
\eqref{E:stability-high-rescaling}.  Consequently,
\eqref{E:main-lower-mass-slope} and
\eqref{E:main-high-mass-slope} hold.
\end{proposition}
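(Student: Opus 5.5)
We differentiate the rescaled equations with respect to the frequency parameter, invert the linearization using the bounds already available, and then differentiate the exact scaling identities for the mass.

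\emph{Lower edge.} Write $U_\mu=\phi_0v_\mu+R_\mu(v_\mu)$ as in Theorem~\ref{T:sec7-LS-existence}. By Lemma~\ref{lemma:20260622-1443} the map $(\mu,v)\mapsto R_\mu(v)$ is jointly $C^1$, and by Theorem~\ref{T:sec7-LS-existence} $\mu\mapsto v_\mu$ is $C^1$ into $\mathcal Y$.

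First differentiate the reduced equation $\mathcal G_\mu(v_\mu)=0$ in $\mu$:
$$
D_v\mathcal G_\mu(v_\mu)\,\partial_\mu v_\mu=-\partial_\mu\mathcal G_\mu(v_\mu).
$$
Since $D_v\mathcal G_\mu(v_\mu)\to D\mathcal G_0(V_\alpha)$, an isomorphism of $\mathcal Y$, it suffices to show $\mu\|\partial_\mu\mathcal G_\mu(v_\mu)\|_{\mathcal Y}\to0$.

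The parameter derivative contains two pieces.
\begin{itemize}
\item The explicit term $\partial_\mu\mathfrak g^+_\mu(s)=\frac{as\,g_+'(as)-(\alpha-1)g_+(as)}{(\alpha-2)\mu a^{\alpha-1}}$, with $a=\mu^{1/(\alpha-2)}$. By \eqref{E:gsmalllarge}--\eqref{E:gprimelimits}, the numerator satisfies $|as\,g_+'(as)-(\alpha-1)g_+(as)|=o(a^{\alpha-1})|s|^{\alpha-1}$ uniformly on bounded $s$, because $s\,g'(s)-(\alpha-1)g(s)=o(s^{\alpha-1})$. Hence $\mu\,\partial_\mu\mathfrak g^+_\mu\to0$ uniformly in the weighted sense, and the corresponding term in $E$ is $o(1)$.
\item The term coming from $\partial_\mu R_\mu(v)$. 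Differentiating the $Q$-equation $\mathfrak F_\mu(v,R_\mu(v))=0$ in $\mu$ and using $\partial_\mu T_\mu=\mu^{-2}T_\mu L_yT_\mu$ from Lemma~\ref{L:sec7-parameter-regularity} gives
$$
\partial_\mu R=(I-T_\mu\mathfrak g_\mu^{+\prime})^{-1}\bigl[\mu^{-2}T_\mu L_y T_\mu\mathfrak g_\mu^+ + T_\mu\partial_\mu\mathfrak g^+_\mu\bigr].
$$
\end{itemize}

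The main obstacle is the term $\mu^{-2}T_\mu L_yT_\mu f=\mu^{-1}T_\mu L_yR_\mu$. Here I use the energy bound $\langle L_y r,r\rangle\le C\mu^2\|h\|_2^2$ from the proof of Lemma~\ref{lemma:20260622-1443}, which gives $\|L_y^{1/2}R_\mu\|_2=O(\mu)$. I then estimate $\mu^{-1}\|T_\mu L_y R_\mu\|$ by duality. Namely, $\langle T_\mu L_yR,\psi\rangle=\langle L_y^{1/2}R,\,L_y^{1/2}T_\mu\psi\rangle\le C\mu\cdot\mu\|\psi\|_2$, so $\mu\|\partial_\mu R_\mu\|_{L^2}=O(\mu)$. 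This $L^2$ control is all that \eqref{E:lower-profile-derivative-small} asks for. If the $\mathcal Y$-level estimate of the reduced derivative needs $E$-norms, I would instead use the $L^2\to L^2$ version of the reduced map, since the reduced equation only involves $\int\phi_0(\cdot)\,\mathrm dy$.

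Together these give $\mu\|\partial_\mu v_\mu\|_{L^2}\to0$ and $\mu\|\partial_\mu R_\mu\|_{L^2}\to0$, which is \eqref{E:lower-profile-derivative-small}.

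\emph{High frequency.} Differentiate \eqref{E:stability-high-equation} in $\lambda$:
$$
\mathcal A_\lambda\partial_\lambda U_\lambda=2\lambda^{-3}|Y|^2U_\lambda+(\partial_\lambda F_\lambda)(U_\lambda).
$$
Apply the uniform inverse \eqref{E:high-uniform-inverse}. Since the $\mathfrak X_\lambda$-norm dominates the $H^1$-norm, it suffices to bound the right side in $\mathfrak X_\lambda'$.

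For the first term, pair with $h$ and use Cauchy--Schwarz with $\lambda^{-1}|Y|h$:
$$
\bigl|\langle 2\lambda^{-3}|Y|^2U_\lambda,h\rangle\bigr|\le 2\lambda^{-2}\||Y|U_\lambda\|_2\,\|h\|_{\mathfrak X_\lambda}.
$$
The exponential bound in \eqref{E:high-branch-convergence-strong} makes $\||Y|U_\lambda\|_2$ uniformly bounded, so this term is $O(\lambda^{-2})$.

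For the second term, $\partial_\lambda F_\lambda(s)=\frac{1}{(\beta-2)\lambda}\,\frac{as\,g'(as)-(\beta-1)g(as)}{a^{\beta-1}}$. The fraction tends to $0$ uniformly on bounded $s$ by \eqref{E:gprimelimits}, and it is exponentially dominated via \eqref{E:high-derivative-power-envelope}. Hence this term is $o(\lambda^{-1})$ in $L^2$.

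Combining the two terms gives $\lambda\|\partial_\lambda U_\lambda\|_{H^1}\to0$, which is \eqref{E:high-profile-derivative-small}.

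\emph{Mass slopes.} The exact scaling identities are
$$
\|u_\lambda\|_2^2=\mu^{\kappa_-}\|U_\mu\|_2^2,\qquad \|u_\lambda\|_2^2=\lambda^{\kappa_+}\|U_\lambda\|_2^2.
$$
Differentiating the first gives
$$
\frac{\mathrm d}{\mathrm d\lambda}\|u_\lambda\|_2^2=\mu^{\kappa_--1}\bigl(\kappa_-\|U_\mu\|_2^2+2\mu\langle U_\mu,\partial_\mu U_\mu\rangle\bigr),
$$
and similarly for the second. Using $\|U_\mu\|_2\to\|V_\alpha\|_2$ and $\|U_\lambda\|_2\to\|W_\beta\|_2$, together with \eqref{E:lower-profile-derivative-small} and \eqref{E:high-profile-derivative-small} to kill the cross terms, yields \eqref{E:main-lower-mass-slope} and \eqref{E:main-high-mass-slope}.
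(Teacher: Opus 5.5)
Your high-frequency argument and the differentiation of the exact mass identities are essentially the paper's. Bounding $2\lambda^{-3}|Y|^2U_\lambda$ in $\mathfrak X_\lambda'$ through $\lambda^{-1}|Y|h$, rather than in $L^2$, is a harmless variant. Your duality estimate $\|\mu^{-1}T_\mu L_yT_\mu f\|_2\le C\mu\|f\|_2$ is also valid: it follows from the symmetry of $T_\mu$ and $\|L_y^{1/2}T_\mu h\|_2\le C\mu\|h\|_2$, and it can replace the paper's Hermite--Fourier multiplier computation of $\mu\partial_\mu\mathcal A_\mu^{-1}$. Your display $\mu^{-2}T_\mu L_yT_\mu f=\mu^{-1}T_\mu L_yR_\mu$ drops a factor $\mu$ on the left, but the resulting bound $\|\mu\partial_\mu R_\mu(v)\|_2=O(\mu)$ is correct.

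The gap is in the reduced step at the lower edge. You say it suffices to prove $\mu\|\partial_\mu\mathcal G_\mu(v_\mu)\|_{\mathcal Y}\to0$, but you control $\mu\partial_\mu R_\mu(v)$ only in $L^2$. Write $\Pi h:=\int_{\mathbb R^d}\phi_0h\,\mathrm dy$, $w_{\mu,v}:=\phi_0v+R_\mu(v)$ and $B_{\mu,v}:=(\mathfrak g_\mu^+)'(w_{\mu,v})$. Then $\mu\partial_\mu\mathcal G_\mu(v)=-(-\Delta_\zeta+1)^{-1}\Pi\bigl[(\mu\partial_\mu\mathfrak g_\mu^+)(w_{\mu,v})+B_{\mu,v}\,\mu\partial_\mu R_\mu(v)\bigr]$. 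Since $(-\Delta_\zeta+1)^{-1}$ does not map $L^2(\mathbb R^m)$ into $L^\infty$ for $m\ge4$, your $L^2$ control does not give the $\mathcal Y$-bound. Nor do you have an $L^\infty$ estimate for $\mu\partial_\mu\mathcal A_\mu^{-1}$ that is uniform as $\mu\to0^+$; Lemma~\ref{L:sec7-parameter-regularity} is only local in $\mu>0$.

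Your fallback to an $L^2$-based reduced map is the right idea, but the uniform invertibility you invoked holds in $\mathcal Y$. A bound $\|\psi\|_{\mathcal Y}\le C\|D_v\mathcal G_\mu(v_\mu)\psi\|_{\mathcal Y}$ says nothing about $H^1$ norms. In fact $D_v\mathcal G_\mu(v_\mu)$ is not even defined on $H^1_{\rm rad}$ until $DR_\mu(v)$ is extended to $L^2$.

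Relatedly, your last step conflates the partial derivative $\partial_\mu R_\mu(v)$ with the total derivative of $R_\mu(v_\mu)$. In fact $\mu\partial_\mu U_\mu=\phi_0\,\mu\partial_\mu v_\mu+(\mu\partial_\mu R_\mu)(v_\mu)+DR_\mu(v_\mu)[\mu\partial_\mu v_\mu]$. The available bound $\|DR_\mu(v)\|_{\mathcal L(\mathcal Y,\mathcal Z)}\le C\mu^{1/2}$ does not handle the third term once $\mu\partial_\mu v_\mu$ is controlled only in $H^1$.

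Both holes are closed by one estimate, which is what the paper proves. Differentiating the $Q$-equation in $v$ gives $(I-\mathcal A_\mu^{-1}QB_{\mu,v})DR_\mu(v)\psi=\mathcal A_\mu^{-1}Q(B_{\mu,v}\phi_0\psi)$. Since $\|\mathcal A_\mu^{-1}Q\|_{L^2\to L^2}\le\mu/2$ and $B_{\mu,v}$ is uniformly bounded, a Neumann series yields $\|DR_\mu(v)\psi\|_2\le C\mu\|\psi\|_2$. This extends $DR_\mu(v)$ to $L^2_{\rm rad}(\mathbb R^m)$, and the same Neumann series also bounds your $(I-T_\mu(\mathfrak g_\mu^+)'(w_{\mu,v}))^{-1}$.

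Set $b_\mu:=\Pi(B_{\mu,v_\mu}\phi_0)$, so that $b_\mu\to b_0:=c_\alpha(\alpha-1)V_\alpha^{\alpha-2}$ uniformly. Then $\|D_v\mathcal G_\mu(v_\mu)-D\mathcal G_0(V_\alpha)\|_{\mathcal L(H^1_{\rm rad})}\le C(\|b_\mu-b_0\|_\infty+\mu)\to0$. Radial nondegeneracy of $V_\alpha$ then gives a uniformly bounded inverse on $H^1_{\rm rad}$. Your $L^2$ estimates give $\|\mu\partial_\mu\mathcal G_\mu(v_\mu)\|_{H^1}\to0$, hence $\mu\partial_\mu v_\mu\to0$ in $H^1$. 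Finally, the $L^2$ bound on $DR_\mu$ disposes of the third term in the expansion of $\mu\partial_\mu U_\mu$.
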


\begin{proof}
\smallskip
\noindent\emph{The lower spectral edge.}
Theorem~\ref{T:sec7-LS-existence} and
Lemma~\ref{lemma:20260622-1443} give the required $C^1$ dependence for
$\mu>0$.  We start from
$$U_\mu=\phi_0v_\mu+R_\mu(v_\mu).$$
Differentiating along the branch gives
\begin{align}
 \mu\partial_\mu U_\mu
 &=\phi_0\,\mu\partial_\mu v_\mu
   +(\mu\partial_\mu R_\mu)(v_\mu)
   +DR_\mu(v_\mu)[\mu\partial_\mu v_\mu].
 \label{E:P93-total-Umu-derivative}
\end{align}
Here $\partial_\mu R_\mu(v)$ keeps $v$ fixed, whereas $DR_\mu(v)$
denotes differentiation with respect to $v$.  Thus we need an estimate
for the reduced derivative $\mu\partial_\mu v_\mu$ and estimates for
these two derivatives of the complementary map.  We first control the
reduced derivative; the complementary estimates needed for this step
will also handle the last two terms in
\eqref{E:P93-total-Umu-derivative}.

\smallskip
\noindent\emph{Step 1: the reduced derivative $\mu\partial_\mu v_\mu$.}
Differentiating $\mathcal{G}_\mu(v_\mu)=0$ along the branch yields
\begin{equation}\label{E:P93-total-Gmu-derivative}
 D_v\mathcal{G}_\mu(v_\mu)[\mu\partial_\mu v_\mu]
 +\mu\partial_\mu\mathcal{G}_\mu(v_\mu)=0.
\end{equation}
It therefore suffices to prove that $D_v\mathcal{G}_\mu(v_\mu)$ has a
uniformly bounded inverse on $H^1_{\rm rad}(\mathbb{R}^m)$ and that
$\mu\partial_\mu\mathcal{G}_\mu(v_\mu)$ tends to zero in that space.

\smallskip
\noindent\emph{(a) Uniform invertibility of $D_v\mathcal{G}_\mu(v_\mu)$.}
For $v\in\mathcal V$, write
\begin{equation}\label{E:P93-wmu-v}
 w_{\mu,v}:=\phi_0v+R_\mu(v),
\end{equation}
and set
\begin{equation}\label{E:P93-Bmu-v}
 B_{\mu,v}:=(\mathfrak g_\mu^+)'(w_{\mu,v}).
\end{equation}
The bounded Lyapunov--Schmidt neighborhood and
\eqref{E:sec7-gmu-local-C1-bound} give
\begin{equation}\label{E:P93-Bmu-uniform}
 \sup_{0<\mu<\mu_{\mathrm d}}\sup_{v\in\mathcal V}
 \|B_{\mu,v}\|_\infty<+\infty
\end{equation}
for some sufficiently small $\mu_{\mathrm d}>0$.  Introduce
\begin{equation}\label{E:P93-Pi}
 (\Pi h)(\zeta):=\int_{\mathbb{R}^d}\phi_0(y)h(y,\zeta)\,\mathrm dy,
 \qquad
 \|\Pi h\|_{L^2(\mathbb{R}^m)}\le\|h\|_{L^2(\mathbb{R}^N)}.
\end{equation}
Then $Ph=\phi_0\Pi h$, and the reduced map is
$$
 \mathcal{G}_\mu(v)
 =v-(-\Delta_\zeta+1)^{-1}\Pi\mathfrak g_\mu^+(w_{\mu,v}).
$$
Its derivative, initially acting on $\psi\in\mathcal Y$, is
$$
 \begin{aligned}
 D_v\mathcal{G}_\mu(v_\mu)\psi
 &=\psi-(-\Delta_\zeta+1)^{-1}
   \bigl[b_\mu\psi+
   \Pi\bigl(B_{\mu,v_\mu}DR_\mu(v_\mu)\psi\bigr)\bigr],\\
 b_\mu&:=\Pi(B_{\mu,v_\mu}\phi_0).
 \end{aligned}
$$
We therefore need to control $DR_\mu(v_\mu)$ in $L^2$.

For this purpose, recall the complementary equation
\begin{equation}\label{E:lower-Q-fixed-equation-differentiated}
 \mathfrak{F}_\mu(v,R_\mu(v))=0,
 \qquad
 \mathfrak{F}_\mu(v,r):=
 r-\mathcal A_\mu^{-1}Q\mathfrak{g}_\mu^+(\phi_0v+r),
\end{equation}
where, on $\mathscr{H}_Q=QL^2(\mathbb{R}^N)$,
\begin{equation}\label{E:P93-Amu}
 \mathcal{A}_\mu
 =Q\{\mu^{-1}(H_y-d)-\Delta_\zeta+1\}Q.
\end{equation}
Differentiation with respect to $v$, with $\mu$ fixed, gives
$$
 (I-\mathcal{K}_{\mu,v})DR_\mu(v)\psi
 =\mathcal{A}_\mu^{-1}Q(B_{\mu,v}\phi_0\psi),
 \qquad
 \mathcal{K}_{\mu,v}:=\mathcal{A}_\mu^{-1}QB_{\mu,v}.
$$
The oscillator gap gives
$\|\mathcal{A}_\mu^{-1}Q\|_{L^2\to L^2}\le\mu/2$.
Together with \eqref{E:P93-Bmu-uniform}, this yields
\begin{equation}\label{E:P93-Kmu-small}
 \sup_{v\in\mathcal V}
 \|\mathcal{K}_{\mu,v}\|_{\mathcal L(L^2)}\le C\mu.
\end{equation}
After reducing $\mu_{\mathrm d}$, the Neumann series gives
\begin{equation}\label{E:P93-Kmu-inverse}
 \sup_{0<\mu<\mu_{\mathrm d}}\sup_{v\in\mathcal V}
 \|(I-\mathcal{K}_{\mu,v})^{-1}\|_{\mathcal L(L^2)}\le2.
\end{equation}
Hence
$$
 DR_\mu(v)\psi
 =(I-\mathcal{K}_{\mu,v})^{-1}\mathcal{A}_\mu^{-1}Q
       (B_{\mu,v}\phi_0\psi).
$$
Since $\|\phi_0\psi\|_2=\|\psi\|_2$, this formula extends uniquely
from $\mathcal Y$ to $L^2_{\rm rad}(\mathbb{R}^m)$, with values in
$QL^2(\mathbb{R}^N)$, and gives, uniformly for $v\in\mathcal V$,
\begin{equation}\label{E:lower-DR-L2-small}
 \|DR_\mu(v)[\psi]\|_2\le C\mu\|\psi\|_2,
 \qquad \psi\in L^2_{\rm rad}(\mathbb{R}^m).
\end{equation}
In particular, the displayed formula for $D_v\mathcal{G}_\mu(v_\mu)$
extends to $H^1_{\rm rad}$.

Now $B_{\mu,v_\mu}=a_\mu$, so
\eqref{E:amu-uniform-limit-Morse} gives
$b_\mu\to b_0:=c_\alpha(\alpha-1)V_\alpha^{\alpha-2}$ uniformly.
Using \eqref{E:lower-DR-L2-small} and
$(-\Delta_\zeta+1)^{-1}:L^2\to H^1$, we obtain
$$
 \|D_v\mathcal{G}_\mu(v_\mu)-D\mathcal{G}_0(V_\alpha)\|
 _{\mathcal L(H^1_{\rm rad})}
 \le C\bigl(\|b_\mu-b_0\|_\infty+\mu\bigr)\longrightarrow0.
$$
Since $D\mathcal{G}_0(V_\alpha)$ is invertible on $H^1_{\rm rad}$,
we conclude that
\begin{equation}\label{E:P93-uniform-reduced-inverse}
 \sup_{0<\mu<\mu_{\mathrm d}}
 \|D_v\mathcal{G}_\mu(v_\mu)^{-1}\|
 _{\mathcal L(H^1_{\rm rad},H^1_{\rm rad})}<+\infty,
\end{equation}
after decreasing $\mu_{\mathrm d}$ once more.

\smallskip
\noindent\emph{(b) Smallness of $\mu\partial_\mu\mathcal{G}_\mu(v_\mu)$.}
Keeping $v$ fixed in the reduced map gives
\begin{align}
 \mu\partial_\mu\mathcal{G}_\mu(v)
 =-(-\Delta_\zeta+1)^{-1}\Pi
 \Bigl[& (\mu\partial_\mu\mathfrak{g}_\mu^+)(w_{\mu,v}) \notag\\
       &+B_{\mu,v}\,\mu\partial_\mu R_\mu(v)\Bigr].
 \label{E:P93-partial-Gmu}
\end{align}
Thus we need an $L^2$ estimate for each term in brackets.  The second
term involves the same partial derivative that occurs in
\eqref{E:P93-total-Umu-derivative}; we estimate it here with $v$ fixed,
without using any bound for $\partial_\mu v_\mu$.

For the first term, put $a=\mu^{1/(\alpha-2)}$.  Direct scalar
differentiation gives
\begin{equation}\label{E:lower-scaled-parameter-derivative}
 \mu\partial_\mu\mathfrak{g}_\mu(s)
 =\frac{as g'(as)-(\alpha-1)g(as)}
        {(\alpha-2)a^{\alpha-1}}.
\end{equation}
The two limits at zero in
\eqref{E:gsmalllarge}--\eqref{E:gprimelimits} cancel the leading term
in the numerator.  Consequently, for every fixed $M>0$,
\begin{equation}\label{E:lower-parameter-cancellation}
 \sup_{0\le s\le M}
 |\mu\partial_\mu\mathfrak{g}_\mu(s)|\longrightarrow0,
\end{equation}
and, more precisely, there are $\varepsilon_\mu\to0$ such that
\begin{equation}\label{E:lower-parameter-relative-bound}
 |\mu\partial_\mu\mathfrak{g}_\mu(s)|
 \le\varepsilon_\mu |s|^{\alpha-1},\qquad 0\le s\le M.
\end{equation}
The same estimate holds for the positive-part extension on $[-M,M]$,
since both $\mathfrak{g}_\mu^+(s)$ and its partial $\mu$-derivative
vanish for $s\le0$.  The Lyapunov--Schmidt neighborhood satisfies
\begin{equation}\label{E:P93-w-bounds}
 \sup_{0<\mu<\mu_{\mathrm d}}\sup_{v\in\mathcal V}
 \bigl(\|w_{\mu,v}\|_2+\|w_{\mu,v}\|_\infty\bigr)<+\infty.
\end{equation}
Choosing $M$ to dominate this $L^\infty$ bound, we obtain
\begin{align}
 \sup_{v\in\mathcal V}
 \|(\mu\partial_\mu\mathfrak{g}_\mu^+)(w_{\mu,v})\|_2
 &\le\varepsilon_\mu\sup_{v\in\mathcal V}
   \|w_{\mu,v}\|_\infty^{\alpha-2}\|w_{\mu,v}\|_2 \notag\\
 &\le C\varepsilon_\mu=o(1).
 \label{E:P93-partial-gmu-L2}
\end{align}

To estimate the second term in \eqref{E:P93-partial-Gmu},
differentiate \eqref{E:lower-Q-fixed-equation-differentiated} in
$\mu$, again with $v$ fixed.  This yields
\begin{align}
 (I-\mathcal{K}_{\mu,v})\,\mu\partial_\mu R_\mu(v)
 &=(\mu\partial_\mu\mathcal{A}_\mu^{-1})Q
       \mathfrak{g}_\mu^+(w_{\mu,v}) \notag\\
 &\quad+\mathcal{A}_\mu^{-1}Q
       (\mu\partial_\mu\mathfrak{g}_\mu^+)(w_{\mu,v}).
 \label{E:P93-differentiated-Q-equation}
\end{align}
The only new operator estimate needed here is for
$\mu\partial_\mu\mathcal{A}_\mu^{-1}$.  On $\mathscr{H}_Q$, the joint
Hermite--Fourier representation is
\begin{equation}\label{E:P93-Amu-inverse-multiplier}
 \widehat{(\mathcal{A}_\mu^{-1}Qf)}_\nu(\eta)
 =\frac{\widehat f_\nu(\eta)}{\rho_\nu/\mu+|\eta|^2+1},
 \qquad \rho_\nu=2|\nu|\ge2.
\end{equation}
Lemma~\ref{L:sec7-parameter-regularity}(i) justifies parameter
differentiation in operator norm on compact subintervals of $\mu>0$.
Differentiating the multiplier gives
\begin{equation}\label{E:lower-resolvent-parameter-derivative}
 \mu\partial_\mu\mathcal{A}_\mu^{-1}
 =\mathcal{A}_\mu^{-1}\mu^{-1}(H_y-d)\mathcal{A}_\mu^{-1}.
\end{equation}
The multiplier on the right is
$(\rho_\nu/\mu)(\rho_\nu/\mu+|\eta|^2+1)^{-2}$;
both it and the original resolvent multiplier are bounded by
$\mu/\rho_\nu\le\mu/2$.  Thus Plancherel gives
\begin{equation}\label{E:P93-resolvent-and-derivative-L2}
 \|\mathcal{A}_\mu^{-1}Qf\|_2\le\frac\mu2\|f\|_2,
 \qquad
 \|\mu\partial_\mu\mathcal{A}_\mu^{-1}Qf\|_2
 \le\frac\mu2\|f\|_2.
\end{equation}
In particular, the composition in
\eqref{E:lower-resolvent-parameter-derivative} is bounded despite the
unbounded factor $H_y-d$.

By \eqref{E:sec7-gmu-local-C1-bound} and \eqref{E:P93-w-bounds},
$\|\mathfrak{g}_\mu^+(w_{\mu,v})\|_2\le C$ uniformly in $\mu,v$.
Hence the two terms on the right of
\eqref{E:P93-differentiated-Q-equation} are respectively $O(\mu)$
and $o(\mu)$ in $L^2$, uniformly for $v\in\mathcal V$.
Using \eqref{E:P93-Kmu-inverse}, we obtain
\begin{equation}\label{E:lower-partial-R-derivative}
 \sup_{v\in\mathcal V}\|\mu\partial_\mu R_\mu(v)\|_2
 \le C\mu+o(\mu)=O(\mu).
\end{equation}
Returning to \eqref{E:P93-partial-Gmu}, the bounds for $\Pi$,
$B_{\mu,v}$, and $(-\Delta_\zeta+1)^{-1}:L^2\to H^1$ now give
\begin{equation}\label{E:P93-partial-Gmu-small}
 \sup_{v\in\mathcal V}
 \|\mu\partial_\mu\mathcal{G}_\mu(v)\|_{H^1(\mathbb{R}^m)}
 \le C(\varepsilon_\mu+\mu)=o(1).
\end{equation}

Both estimates required by \eqref{E:P93-total-Gmu-derivative} are
now established.  Combining that identity with
\eqref{E:P93-uniform-reduced-inverse} and
\eqref{E:P93-partial-Gmu-small}, we obtain
\begin{equation}\label{E:lower-v-derivative-small}
 \mu\partial_\mu v_\mu
 =-[D_v\mathcal{G}_\mu(v_\mu)]^{-1}
       \mu\partial_\mu\mathcal{G}_\mu(v_\mu)
 \longrightarrow0
 \quad\hbox{in }H^1_{\rm rad}(\mathbb{R}^m).
\end{equation}
Since $\|\phi_0\|_2=1$, this controls the first term of
\eqref{E:P93-total-Umu-derivative}.

\smallskip
\noindent\emph{Step 2: the remaining two terms.}
The estimate \eqref{E:lower-partial-R-derivative}, evaluated at $v_\mu$,
already controls the second term.  For the third, use
\eqref{E:lower-DR-L2-small} and
\eqref{E:lower-v-derivative-small}.  Explicitly,
\begin{align*}
 \|(\mu\partial_\mu R_\mu)(v_\mu)\|_2&\le C\mu\longrightarrow0,\\
 \|DR_\mu(v_\mu)[\mu\partial_\mu v_\mu]\|_2
 &\le C\mu\|\mu\partial_\mu v_\mu\|_2\longrightarrow0.
\end{align*}
All three terms in \eqref{E:P93-total-Umu-derivative} therefore tend
to zero in $L^2$, proving \eqref{E:lower-profile-derivative-small}.

\smallskip
\noindent\emph{The high-frequency end.}
By Proposition~\ref{P:high-unique-branch}, $U_\lambda$ is $C^1$ in
$X_r$.  Differentiating \eqref{E:stability-high-equation} gives
\begin{equation}\label{E:high-differentiated-equation}
 \mathcal{A}_\lambda(\partial_\lambda U_\lambda)
 =2\lambda^{-3}|Y|^2U_\lambda
  +(\partial_\lambda F_\lambda)(U_\lambda),
\end{equation}
where the last derivative keeps the scalar argument fixed.
The uniform inverse estimate \eqref{E:high-uniform-inverse} and
$L^2\hookrightarrow\mathfrak{X}_\lambda'$ give
\begin{align*}
 \|\lambda\partial_\lambda U_\lambda\|_{\mathfrak{X}_\lambda}
 &\le C\bigl\|
 2\lambda^{-2}|Y|^2U_\lambda
 +(\lambda\partial_\lambda F_\lambda)(U_\lambda)
 \bigr\|_{\mathfrak{X}_\lambda'}\\
 &\le C\bigl\|
 2\lambda^{-2}|Y|^2U_\lambda
 +(\lambda\partial_\lambda F_\lambda)(U_\lambda)
 \bigr\|_2.
\end{align*}
The embedding constant is independent of $\lambda$.  We now verify
that each source term tends to zero in $L^2$.

For the potential term, the common exponential bound in
\eqref{E:high-branch-convergence-strong} gives
\begin{equation}\label{E:P93-high-moment}
 \sup_{\lambda\ge\Lambda_{\mathrm h}}
 \||Y|^2U_\lambda\|_2<+\infty,
\end{equation}
so $\|2\lambda^{-2}|Y|^2U_\lambda\|_2\to0$.

For the nonlinear term, put $a=a_\lambda$ as in
\eqref{E:stability-high-rescaling}.  Scalar differentiation gives
\begin{equation}\label{E:high-scaled-parameter-derivative}
 \lambda\partial_\lambda F_\lambda(s)
 =\frac{as g'(as)-(\beta-1)g(as)}
        {(\beta-2)a^{\beta-1}}.
\end{equation}
The paired limits at infinity in
\eqref{E:gsmalllarge}--\eqref{E:gprimelimits} cancel the leading term.
Splitting $as$ at a fixed large number, as in the proof of
Lemma~\ref{lemma:20260618-1129}, yields, for every $M>0$,
\begin{equation}\label{E:high-parameter-cancellation}
 \sup_{0\le s\le M}
 |\lambda\partial_\lambda F_\lambda(s)|\longrightarrow0.
\end{equation}
This splitting avoids imposing a large-argument estimate when $s$ is
near zero.  Moreover, the global bounds
$|g(t)|\le C(t+t^{\beta-1})$ and
$|g'(t)|\le C(1+t^{\beta-2})$ give
\begin{equation}\label{E:high-parameter-domination}
 |\lambda\partial_\lambda F_\lambda(s)|
 \le C_M(s+s^{\beta-1}),\qquad 0\le s\le M,\quad\lambda\ge1,
\end{equation}
since $a_\lambda^{2-\beta}\le1$.
Choose $M$ to dominate the uniform $L^\infty$ bound for $U_\lambda$.
Then \eqref{E:high-parameter-cancellation} gives pointwise convergence
of $(\lambda\partial_\lambda F_\lambda)(U_\lambda)$ to zero, while
\eqref{E:high-parameter-domination} and the common exponential tail give
$$
 |(\lambda\partial_\lambda F_\lambda)(U_\lambda(\xi))|
 \le C\bigl(e^{-c|\xi|}+e^{-c(\beta-1)|\xi|}\bigr).
$$
Dominated convergence, together with the potential estimate, proves
\begin{equation}\label{E:P93-high-right-small}
 \left\|2\lambda^{-2}|Y|^2U_\lambda
       +(\lambda\partial_\lambda F_\lambda)(U_\lambda)\right\|_2
 \longrightarrow0.
\end{equation}
The inverse estimate displayed above now gives
$\|\lambda\partial_\lambda U_\lambda\|_{\mathfrak{X}_\lambda}\to0$.
Since this norm dominates the $H^1$-norm,
\eqref{E:high-profile-derivative-small} follows.

\smallskip
\noindent\emph{Differentiation of the mass.}
For the lower rescaling,
$$
 u_\lambda(y,z)
 =\mu^{1/(\alpha-2)}U_\mu(y,\sqrt\mu z),
 \qquad \mu=\lambda+d.
$$
The substitution $\zeta=\sqrt\mu z$ gives the exact identity
\begin{equation}\label{E:mass-exact-scaled-identities}
 \|u_\lambda\|_2^2
 =\mu^{\kappa_-}\|U_\mu\|_2^2,
 \qquad
 \kappa_-:=\frac2{\alpha-2}-\frac m2.
\end{equation}
Since $\mathrm d/\mathrm d\lambda=\mathrm d/\mathrm d\mu$, differentiation
of \eqref{E:mass-exact-scaled-identities} gives
\begin{align}
 \frac{\mathrm d}{\mathrm d\lambda}\|u_\lambda\|_2^2
 &=\mu^{\kappa_--1}
 \left\{\kappa_-\|U_\mu\|_2^2
 +2\langle U_\mu,\mu\partial_\mu U_\mu\rangle_{L^2}\right\}.
 \label{E:P93-lower-mass-derivative}
\end{align}
Now $U_\mu\to\phi_0V_\alpha$ in $L^2$, $\|\phi_0\|_2=1$, and
$\mu\partial_\mu U_\mu\to0$ in $L^2$.  The expression in braces is
therefore $\kappa_-\|V_\alpha\|_2^2+o(1)$, proving
\eqref{E:main-lower-mass-slope}.

Likewise, the high-frequency scaling
$u_\lambda(x)=\lambda^{1/(\beta-2)}U_\lambda(\sqrt\lambda x)$ gives
\begin{equation}\label{E:P93-high-exact-mass}
 \|u_\lambda\|_2^2
 =\lambda^{\kappa_+}\|U_\lambda\|_2^2,
 \qquad
 \kappa_+:=\frac2{\beta-2}-\frac N2.
\end{equation}
Differentiating this equality yields
\begin{align}
 \frac{\mathrm d}{\mathrm d\lambda}\|u_\lambda\|_2^2
 &=\lambda^{\kappa_+-1}
 \left\{\kappa_+\|U_\lambda\|_2^2
 +2\langle U_\lambda,
          \lambda\partial_\lambda U_\lambda\rangle_{L^2}\right\}.
 \label{E:P93-high-mass-derivative}
\end{align}
Using $U_\lambda\to W_\beta$ in $L^2$ and
$\lambda\partial_\lambda U_\lambda\to0$ in $H^1$, the expression in
braces is $\kappa_+\|W_\beta\|_2^2+o(1)$.  This proves
\eqref{E:main-high-mass-slope} and completes the proof.
\end{proof}

\subsection{The full linearized spectrum}

For an endpoint solution write
\begin{equation}\label{E:Lplus-Lminus}
 L_{+,\lambda}:=H+\lambda-g'(u_\lambda),
 \qquad
 L_{-,\lambda}:=H+\lambda-\frac{g(u_\lambda)}{u_\lambda}.
\end{equation}
The quotient in $L_{-,\lambda}$ is defined pointwise because
$u_\lambda>0$; moreover it tends to zero as $|x|\to\infty$ because
$g(s)/s\to0$ as $s\to0^+$.

\begin{proposition}[Endpoint spectral structure in the full space]
\label{P:full-endpoint-spectrum}
For $\lambda+d>0$ sufficiently small and for $\lambda$ sufficiently large,
\begin{align}
 &\operatorname{ind}_{X}(L_{+,\lambda})=1,
 \qquad
 \ker L_{+,\lambda}
 =\operatorname{span}\{\partial_{z_j}u_\lambda:1\le j\le m\},
 \label{E:full-Lplus-data}\\
 &L_{-,\lambda}\ge0,
 \qquad
 \ker L_{-,\lambda}=\operatorname{span}\{u_\lambda\}.
 \label{E:full-Lminus-data}
\end{align}
All kernels are taken in the real energy space $X$.
\end{proposition}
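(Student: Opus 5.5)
The plan is to split the full energy space $X$ by the symmetry group of the problem and to read the full spectrum of $L_{\pm,\lambda}$ off the centered results already proved. Since $u_\lambda$ is block-radial, both operators commute with $O(d)\times O(m)$ acting in $(y,z)$. Accordingly we decompose $X=\bigoplus X^{(k,\ell)}$, where the summand $X^{(k,\ell)}$ carries spherical harmonics of degree $k$ in $y$ and degree $\ell$ in $z$. On $X^{(0,0)}=X_r$ the operator $L_{+,\lambda}$ has trivial kernel and Morse index one. At the lower edge this is Lemma~\ref{L:lower-edge-Morse}, and at high frequency it is Proposition~\ref{P:high-unique-branch}, after undoing the isotropic rescaling. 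The remaining task for $L_+$ is to show that the sectors with $(k,\ell)\ne(0,0)$ contribute no negative directions, and that they contribute kernel only in the sector $(0,1)$, spanned by $\partial_{z_j}u_\lambda$.

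For $L_{-,\lambda}$ the argument is the standard ground-state one. Since $L_{-,\lambda}u_\lambda=0$ and $u_\lambda>0$, the function $u_\lambda$ is a positive eigenfunction at the bottom of the spectrum: $H+\lambda$ minus a bounded potential is positivity improving, so its lowest eigenvalue is simple and has a positive eigenfunction. A positive kernel element is therefore the ground state, which gives $L_{-,\lambda}\ge0$ with one-dimensional kernel. To make this rigorous I would use the factorization $\langle L_-\,(u_\lambda w),u_\lambda w\rangle=\int u_\lambda^2|\nabla w|^2$. This is justified by the decay bounds of Proposition~\ref{prop:uniform-z-exponential-decay} together with a cutoff approximation, and it forces every kernel element to be of the form $cu_\lambda$.

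For $L_+$ in the nonradial sectors I would use the Sturm-type comparison $L_{+,\lambda}\ge L_{-,\lambda}$ on directions orthogonal to the radial ground mode. The sharper tool is the derivative identities
\[
L_{+,\lambda}\partial_{z_j}u_\lambda=0,\qquad
L_{+,\lambda}\partial_{y_i}u_\lambda=-2y_iu_\lambda .
\]
Take the sector $(0,1)$. Writing $h=\omega(z/|z|)f(|y|,|z|)$, the reduced operator on $f$ is the radial one plus $(m-1)/|z|^2$, and $\partial_{|z|}U\le0$ is a negative, hence one-signed, zero mode. By positivity-improvement in the reduced half-space problem this mode is the simple ground state of that sector, so the sector has no negative eigenvalues and its kernel is spanned by $\partial_{z_j}u_\lambda$. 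A sector with higher $\ell$ adds the strictly larger centrifugal term $\ell(\ell+m-2)/|z|^2$, so its reduced operator is strictly positive. For the sectors with $k\ge1$, the reduced ground mode $-\partial_{|y|}U>0$ satisfies $\mathcal{L}(-\partial_rU)=2|y|U>0$. It is therefore a positive strict supersolution, and by the Allegretto--Piepenbrink/Agmon criterion the reduced operator is strictly positive. The centrifugal monotonicity then covers all $k\ge1$ and every $\ell$. This is exactly the mechanism by which the $y$-translations move to the positive side.

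The main obstacle is to justify these positivity and ground-state arguments in the reduced sectors with the required strictness and in the form domain $X$. We need that $\partial_{|z|}U$ and $\partial_{|y|}U$ are strictly signed, which follows from the monotonicity of Theorem~A upgraded by the strong maximum principle applied to the equations they satisfy. We also need that they lie in the form domains with enough decay for the ground-state substitution $h=\psi w$, which follows from the exponential and Gaussian localization together with interior gradient estimates. I would carry out the ground-state substitution directly, writing
\[
\langle\mathcal{L}(\psi w),\psi w\rangle=\int\psi^2|\nabla w|^2+\int(\mathcal{L}\psi)\psi w^2,
\]
approximate by cutoffs, and conclude in each sector. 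Because this argument is uniform in the structure of $u_\lambda$, it applies at both endpoints without any asymptotic input beyond the centered index-one statement.
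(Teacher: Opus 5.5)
Your proposal is correct. For $L_{-,\lambda}$ it coincides with the paper's argument: ground-state factorization on $C_c^\infty$, extension by density in $X$, and local $H^1$ convergence of $h_n/u_\lambda$ to identify the kernel. For $L_{+,\lambda}$, however, you take a genuinely different route.

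The paper argues perturbatively at each end and never splits into $O(d)\times O(m)$ sectors.
\begin{itemize}
\item At the lower edge it performs the Schur reduction onto the lowest oscillator mode without radial symmetry in $\zeta$, and shows $S_\mu^{\rm full}=L_\alpha+E_\mu$ with $\|E_\mu\|_{L^2\to L^2}\to0$. It then uses Riesz projections to isolate one negative eigenvalue and an $m$-dimensional cluster near $0$. That cluster is filled by the exact modes coming from $\partial_{\zeta_j}U_\mu$.
\item At high frequency it uses a Feshbach--Schur reduction onto $\operatorname{span}\{\partial_{\xi_j}U_\lambda\}$. By compactness against $L_\beta$, the complementary block is uniformly invertible with index one. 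The $Y$-block of the reduced form is $\lambda^{-2}\|U_\lambda\|_2^2I_d+O(\lambda^{-4})>0$.
\end{itemize}
Both steps rest on the full nonradial nondegeneracy of $V_\alpha$ and $W_\beta$ from \cite{ChangGustafsonNakanishiTsai}.

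You replace all of this with the classical Sturm/ground-state mechanism.
\begin{itemize}
\item $-\partial_{|z|}U>0$ is a zero mode, hence the simple ground state of the $(0,1)$ sector.
\item The extra centrifugal term $(\ell-1)(\ell+m-1)/|z|^2$ makes every sector with $\ell\ge2$ strictly positive.
\item $-\partial_{|y|}U>0$ satisfies $\mathcal L(-\partial_{|y|}U)=2|y|U>0$, so it is a strict positive supersolution. This makes every sector with $k\ge1$ strictly positive.
\end{itemize}

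Your route is shorter, non-perturbative and structural. For every centered positive solution at every $\lambda>-d$ it gives
\[
\operatorname{ind}_XL_{+,\lambda}=\operatorname{ind}_{X_r}L_{+,\lambda},\qquad
\ker_XL_{+,\lambda}=\ker_{X_r}L_{+,\lambda}\oplus\operatorname{span}\{\partial_{z_j}u_\lambda\}.
\]
Hence the proposition holds wherever the centered statement is known. It also explains the positivity of the confined translations through the gradient of the trap, rather than through an $O(\varepsilon)$ versus $O(\varepsilon^2)$ comparison. The paper's route, in exchange, stays inside the reduction and spectral-compactness machinery already built. It also yields quantitative information on the near-zero cluster: at high frequency, that cluster consists of $m$ exact zeros and $d$ positive eigenvalues of order $\lambda^{-2}$.

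Two small corrections.
\begin{itemize}
\item Your aside ``$L_{+,\lambda}\ge L_{-,\lambda}$'' is false in general, since it means $g'(s)\le g(s)/s$. For pure powers the inequality is reversed, and for a general $C^1$ function $g$ it has no sign. Your argument never uses it, so simply drop it.
\item When $d=1$ (resp.\ $m=1$), the sector $k=1$ (resp.\ $\ell=1$) is a parity sector with a Dirichlet condition at $y=0$ (resp.\ $z=0$), not a centrifugal term, since $k(k+d-2)=0$. The ground-state substitution is then cleanest on the half-space $\{y_1>0\}$ (resp.\ $\{z_1>0\}$), and the monotonicity in $k$ (resp.\ $\ell$) is vacuous.
\end{itemize}
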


\begin{proof}
We first establish \eqref{E:full-Lminus-data} at both endpoints.
We then prove \eqref{E:full-Lplus-data} separately in the lower-edge
and high-frequency scalings.

\smallskip
\noindent\emph{The imaginary linearization.}
For
$h\in C_c^\infty(\mathbb{R}^N)$, the ground-state transform and the
equation for $u_\lambda$ give
\begin{equation}\label{E:ground-state-transform-Lminus}
 \langle L_{-,\lambda}h,h\rangle
 =\int_{\mathbb{R}^N}u_\lambda^2
 \left|\nabla\left(\frac h{u_\lambda}\right)\right|^2\,\mathrm dx.
\end{equation}
Since $g(u_\lambda)/u_\lambda\in L^\infty(\mathbb{R}^N)$ and
$C_c^\infty(\mathbb{R}^N)$ is dense in $X$, the quadratic form of
$L_{-,\lambda}$ is continuous on $X$.  Approximating an arbitrary
$h\in X$ in \eqref{E:ground-state-transform-Lminus} therefore gives
\begin{equation}\label{E:Lminus-nonnegative-form}
 \langle L_{-,\lambda}h,h\rangle\ge0,
 \qquad h\in X.
\end{equation}

It remains to identify the zero space.  The stationary equation gives
\begin{equation}\label{E:Lminus-phase-mode}
 L_{-,\lambda}u_\lambda=0.
\end{equation}
Conversely, let $h\in\ker L_{-,\lambda}$ and choose
$h_n\in C_c^\infty(\mathbb{R}^N)$ such that $h_n\to h$ in $X$.  Then
\begin{equation}\label{E:Lminus-form-approximation-zero}
 \langle L_{-,\lambda}h_n,h_n\rangle
 \longrightarrow
 \langle L_{-,\lambda}h,h\rangle=0.
\end{equation}
Fix $R>0$.  Since $u_\lambda>0$ and $u_\lambda$ is continuous, it is
bounded away from zero on $\overline{B_R}$.  Consequently,
\begin{equation}\label{E:Lminus-local-quotient-convergence}
 \frac{h_n}{u_\lambda}\longrightarrow\frac h{u_\lambda}
 \qquad\hbox{in }H^1(B_R).
\end{equation}
Using \eqref{E:ground-state-transform-Lminus} for $h_n$, we obtain
\begin{align*}
 \int_{B_R}u_\lambda^2
 \left|\nabla\left(\frac h{u_\lambda}\right)\right|^2\,\mathrm dx
 &=\lim_{n\to\infty}
 \int_{B_R}u_\lambda^2
 \left|\nabla\left(\frac{h_n}{u_\lambda}\right)\right|^2\,\mathrm dx\\
 &\le \lim_{n\to\infty}
 \int_{\mathbb{R}^N}u_\lambda^2
 \left|\nabla\left(\frac{h_n}{u_\lambda}\right)\right|^2\,\mathrm dx\\
 &=\lim_{n\to\infty}
 \langle L_{-,\lambda}h_n,h_n\rangle=0.
\end{align*}
Since $R>0$ is arbitrary and $\mathbb{R}^N$ is connected,
$h/u_\lambda$ is constant.  Hence $h\in\operatorname{span}\{u_\lambda\}$.
Together with \eqref{E:Lminus-phase-mode}, this proves
\eqref{E:full-Lminus-data}.

We turn to $L_{+,\lambda}$.  The spatial derivatives used below belong
to $X$ and to the operator domain of $L_{+,\lambda}$.  To see this, use
the fixed-solution Gaussian--exponential bound and interior estimates
on unit balls: $u_\lambda$, $\nabla u_\lambda$, and $D^2u_\lambda$ are
bounded by a polynomial in $|y|$ times a weaker product envelope.
Thus $\partial_{x_j}u_\lambda\in X$.  Differentiation of the equation
in distributions gives $L_{+,\lambda}\partial_{z_j}u_\lambda=0$ and
$L_{+,\lambda}\partial_{y_j}u_\lambda=-2y_ju_\lambda\in L^2$,
which establishes operator-domain membership through the form realization.
The same argument applies after either rescaling.  At high frequency
the needed bounds are uniform, since $F_\lambda'$ is uniformly bounded
on the profile range and $\lambda^{-2}|Y|^2\le |Y|^2$ for $\lambda\ge1$.

The first part of the kernel is visible
without taking any limit.  Since the potential $|y|^2$ is independent of
$z$, differentiating the stationary equation with respect to $z_j$ gives
\begin{equation}\label{E:z-exact-zero-modes}
 L_{+,\lambda}\partial_{z_j}u_\lambda=0,
 \qquad 1\le j\le m.
\end{equation}
Thus the $m$ free translations always produce zero modes.  The issue is to
show that they exhaust the kernel and that admitting nonradial
perturbations does not create another negative direction.  We do this
separately at the two ends.

\smallskip
\noindent\emph{The lower spectral edge.}
Here the oscillator complement is positive.  We eliminate it by a
Schur reduction and then use the exact free translations to exhaust
the limiting zero spectral cluster.
Put $\mu=\lambda+d$ and $\zeta=\sqrt\mu z$.  If
$h(y,z)=\Phi(y,\sqrt\mu z)$, then the identities used in
Lemma~\ref{L:lower-edge-Morse} give
\begin{equation}\label{E:full-lower-quadratic-rescaling}
 \langle L_{+,\lambda}h,h\rangle
 =\mu^{1-m/2}\langle\mathbb L_\mu\Phi,\Phi\rangle,
\end{equation}
where
\begin{equation}\label{E:full-lower-rescaled-Lplus}
 \mathbb L_\mu
 =\frac1\mu(H_y-d)-\Delta_\zeta+1-a_\mu(y,\zeta),
 \qquad
 a_\mu=(\mathfrak{g}_\mu^+)'(U_\mu).
\end{equation}
The factor in \eqref{E:full-lower-quadratic-rescaling} is positive, so the
rescaling preserves both the kernel and the number of negative
directions.

We now use the lowest-mode projection $P$ from
\eqref{E:sec7-PQ}, but we no longer impose radial symmetry in $\zeta$.
Write
\begin{equation}\label{E:full-lower-PQ-decomposition}
 \Phi(y,\zeta)=\phi_0(y)\xi(\zeta)+r(y,\zeta),
 \qquad Pr=0,
\end{equation}
with $\xi\in H^1(\mathbb{R}^m)$.  Relative to $P\oplus Q$, set
\begin{equation*}
 A_\mu=P\mathbb L_\mu P,\qquad
 B_\mu=P\mathbb L_\mu Q,\qquad
 C_\mu=Q\mathbb L_\mu P,\qquad
 D_\mu=Q\mathbb L_\mu Q.
\end{equation*}
Here $B_\mu=C_\mu^*$.  The oscillator gap
\eqref{E:sec7-gap} and the uniform boundedness of $a_\mu$ yield, for
$r\in QX$,
\begin{align}
 \langle D_\mu r,r\rangle
 &\ge \frac2\mu\|r\|_2^2
       +\|\nabla_\zeta r\|_2^2+\|r\|_2^2
       -\|a_\mu\|_\infty\|r\|_2^2 \notag\\
 &\ge \left(\frac2\mu-C\right)\|r\|_2^2
       +\|\nabla_\zeta r\|_2^2.
 \label{E:full-lower-D-positive}
\end{align}
Consequently $D_\mu$ is strictly positive for small $\mu$, and
$\|D_\mu^{-1}\|_{L^2\to L^2}=O(\mu)$.

This positivity allows us to eliminate $r$ without losing spectral
information.  Define the Schur complement on the $P$-space, identified
with $L^2(\mathbb{R}^m)$, by
\begin{equation}\label{E:full-lower-Schur-definition}
 S_\mu^{\rm full}:=A_\mu-B_\mu D_\mu^{-1}C_\mu.
\end{equation}
For $\Phi=\phi_0\xi+r$, a direct completion of the square gives
\begin{align}
 \langle\mathbb L_\mu\Phi,\Phi\rangle
 &=\left\langle D_\mu
   \bigl(r+D_\mu^{-1}C_\mu(\phi_0\xi)\bigr),
   r+D_\mu^{-1}C_\mu(\phi_0\xi)\right\rangle \notag\\
 &\quad+\langle S_\mu^{\rm full}\xi,\xi\rangle.
 \label{E:full-lower-square-completion}
\end{align}
Eliminating the $Q$-row of $\mathbb L_\mu(\phi_0\xi+r)=0$ gives the
exact kernel correspondence
\begin{equation}\label{E:full-lower-kernel-correspondence}
 \xi\in\ker S_\mu^{\rm full}
 \quad\Longleftrightarrow\quad
 \phi_0\xi-D_\mu^{-1}C_\mu(\phi_0\xi)
 \in\ker\mathbb L_\mu.
\end{equation}
Moreover, the first term in \eqref{E:full-lower-square-completion}
is nonnegative, and the triangular change of variables used there is
invertible on the form domain.  Thus the Morse indices of
$\mathbb L_\mu$ and $S_\mu^{\rm full}$ are equal, not merely bounded
in one direction.

It remains to identify the limiting Schur complement and, in particular, to
keep track of the multiplicity of the eigenvalue at the origin.  Set
\begin{equation}\label{E:full-lower-bmu-definition}
 b_\mu(\zeta):=
 \int_{\mathbb{R}^d}\phi_0(y)^2a_\mu(y,\zeta)\,\mathrm dy,
 \qquad
 b_0(\zeta):=c_\alpha(\alpha-1)V_\alpha(\zeta)^{\alpha-2}.
\end{equation}
Under the unitary identification $\phi_0\xi\leftrightarrow\xi$ of the
$P$-space with $L^2(\mathbb{R}^m)$, one has
\begin{equation*}
 A_\mu=-\Delta_\zeta+1-b_\mu.
\end{equation*}
Indeed, $U_\mu\to\phi_0V_\alpha$ and the $C^1$ convergence of the rescaled
nonlinearities give, by \eqref{E:amu-uniform-limit-Morse},
\begin{equation}\label{E:full-lower-amu-convergence}
 a_\mu\longrightarrow
 a_0:=\mu_0(\alpha-1)(\phi_0V_\alpha)^{\alpha-2}
 \qquad\hbox{in }L^\infty(\mathbb{R}^N).
\end{equation}
Consequently,
\begin{align*}
 \int_{\mathbb{R}^d}\phi_0(y)^2a_0(y,\zeta)\,\mathrm dy
 &=\mu_0(\alpha-1)V_\alpha(\zeta)^{\alpha-2}
   \int_{\mathbb{R}^d}\phi_0(y)^\alpha\,\mathrm dy\\
 &=b_0(\zeta),
\end{align*}
and hence
\begin{equation}\label{E:full-lower-bmu-convergence}
 \|b_\mu-b_0\|_{L^\infty(\mathbb{R}^m)}
 \le \|a_\mu-a_0\|_{L^\infty(\mathbb{R}^N)}
 \longrightarrow0.
\end{equation}
Equivalently, one may obtain \eqref{E:full-lower-bmu-convergence} directly
from local uniform convergence by first fixing a ball in the $\zeta$-space
and then using the common exponential tail of $a_\mu$ and $a_0$ on its
complement.  This explains precisely where the localization argument enters.

Let $M_{b_\mu-b_0}$ denote multiplication by $b_\mu-b_0$.  Since the
differential part of $\mathbb L_\mu$ preserves the decomposition $P\oplus Q$,
the off-diagonal blocks $B_\mu$ and $C_\mu$ come only from multiplication by
$-a_\mu$.  The uniform $L^\infty$ bound in
\eqref{E:full-lower-amu-convergence} therefore implies
\begin{equation}\label{E:full-lower-offdiagonal-bound}
 \|B_\mu\|_{L^2\to L^2}
 +\|C_\mu\|_{L^2\to L^2}\le C.
\end{equation}
Combining this with \eqref{E:full-lower-D-positive}, we obtain
\begin{equation}\label{E:full-lower-Schur-error-bound}
 \|B_\mu D_\mu^{-1}C_\mu\|_{L^2\to L^2}
 \le
 \|B_\mu\|\,\|D_\mu^{-1}\|\,\|C_\mu\|
 \le C\mu.
\end{equation}
Thus, if
\begin{equation}\label{E:full-lower-limit-operator}
 L_\alpha:=-\Delta_\zeta+1-b_0
 =-\Delta_\zeta+1-c_\alpha(\alpha-1)V_\alpha^{\alpha-2},
\end{equation}
then the definition of the Schur complement gives the exact decomposition
\begin{align}
 S_\mu^{\rm full}
 &=L_\alpha+E_\mu,
 \qquad
 E_\mu=-M_{b_\mu-b_0}-B_\mu D_\mu^{-1}C_\mu,
 \label{E:full-lower-Schur-convergence}\\
 \|E_\mu\|_{L^2\to L^2}
 &\le \|b_\mu-b_0\|_\infty+C\mu
 \longrightarrow0.                                      \label{E:full-lower-error-L2}
\end{align}
In particular, the convergence holds also in
$\mathcal L(H^1,H^{-1})$, but the stronger $L^2$-operator norm convergence
in \eqref{E:full-lower-error-L2} is useful for preserving spectral
multiplicities.

The spectral structure of $L_\alpha$ is classical.  It has exactly one
negative eigenvalue and
\begin{equation}\label{E:full-lower-limit-kernel}
 \ker L_\alpha
 =\operatorname{span}\{\partial_{\zeta_1}V_\alpha,\ldots,
                         \partial_{\zeta_m}V_\alpha\}
\end{equation}
\cite{ChangGustafsonNakanishiTsai}.  Moreover,
$\sigma_{\rm ess}(L_\alpha)=[1,+\infty)$ because
$V_\alpha^{\alpha-2}$ decays exponentially.  Hence zero is isolated with
multiplicity $m$.  We may choose $\rho>0$ so small that
\begin{equation}\label{E:full-lower-limit-spectral-gap}
 \sigma(L_\alpha)\cap[-2\rho,2\rho]=\{0\},
\end{equation}
and the unique negative eigenvalue of $L_\alpha$ lies to the left of
$-2\rho$.

For completeness, we now justify the stability of both relevant spectral
ranks.  Let $\Gamma_0$ be a positively oriented circle enclosing the origin
and contained in $\{z\in\mathbb C:|z|<2\rho\}$, and let $\Gamma_-$ be a
disjoint contour enclosing only the negative eigenvalue of $L_\alpha$.
For $z$ on either contour, \eqref{E:full-lower-error-L2} and the factorization
\begin{equation*}
 S_\mu^{\rm full}-z
 =\bigl[I+E_\mu(L_\alpha-z)^{-1}\bigr](L_\alpha-z)
\end{equation*}
show that $z\in\rho(S_\mu^{\rm full})$ for all sufficiently small $\mu$.
The resolvent identity then gives, uniformly on $\Gamma_0\cup\Gamma_-$,
\begin{equation}\label{E:full-lower-resolvent-convergence}
 (S_\mu^{\rm full}-z)^{-1}-(L_\alpha-z)^{-1}
 =-(S_\mu^{\rm full}-z)^{-1}E_\mu(L_\alpha-z)^{-1}
 \longrightarrow0
\end{equation}
in $\mathcal L(L^2,L^2)$.  Integrating
\eqref{E:full-lower-resolvent-convergence} over the two contours shows that
the corresponding Riesz projections converge in operator norm.  Two
projections at operator distance strictly less than one have the same rank;
therefore, for all sufficiently small $\mu$,
\begin{equation}\label{E:full-lower-spectral-ranks}
 \operatorname{rank}\mathbf 1_{(-\infty,-\rho)}
       (S_\mu^{\rm full})=1,
 \qquad
 \operatorname{rank}\mathbf 1_{(-\rho,\rho)}
       (S_\mu^{\rm full})=m.
\end{equation}
Here the first equality also uses the spectral inclusion for bounded
self-adjoint perturbations, which excludes any additional spectrum away from
the two contours when $\|E_\mu\|<\rho$.

It remains to determine whether the $m$ eigenvalues in the second cluster of
\eqref{E:full-lower-spectral-ranks} stay at zero or split to either side.
Translation invariance answers this question exactly.  From the scaling
formula for $u_\lambda$,
\begin{equation}\label{E:full-lower-translation-scaling}
 \partial_{z_j}u_\lambda(y,z)
 =\mu^{\frac1{\alpha-2}+\frac12}
   \partial_{\zeta_j}U_\mu(y,\sqrt\mu z),
\end{equation}
and \eqref{E:z-exact-zero-modes} therefore yields
\begin{equation}\label{E:full-lower-rescaled-exact-modes}
 \mathbb L_\mu\partial_{\zeta_j}U_\mu=0,
 \qquad 1\le j\le m.
\end{equation}
Write the $P$-component of this null vector as
\begin{equation}\label{E:full-lower-exact-Schur-modes}
 \xi_{\mu,j}(\zeta)
 :=\int_{\mathbb{R}^d}\phi_0(y)
       \partial_{\zeta_j}U_\mu(y,\zeta)\,\mathrm dy.
\end{equation}
The kernel correspondence \eqref{E:full-lower-kernel-correspondence} implies
$\xi_{\mu,j}\in\ker S_\mu^{\rm full}$.  Moreover, the strong $H^1$
convergence $U_\mu\to\phi_0V_\alpha$ gives
\begin{equation}\label{E:full-lower-exact-Schur-mode-limit}
 \xi_{\mu,j}\longrightarrow\partial_{\zeta_j}V_\alpha
 \qquad\hbox{in }L^2(\mathbb{R}^m).
\end{equation}
Since $V_\alpha$ is radial,
\begin{equation}\label{E:full-lower-translation-Gram-limit}
 \left\langle\partial_{\zeta_i}V_\alpha,
                    \partial_{\zeta_j}V_\alpha\right\rangle
 =\frac{\delta_{ij}}{m}\|\nabla V_\alpha\|_2^2.
\end{equation}
Thus the Gram matrix of
$\{\xi_{\mu,1},\ldots,\xi_{\mu,m}\}$ converges to a positive definite
matrix, and these $m$ vectors are linearly independent for all sufficiently
small $\mu$.  It follows that
\begin{equation*}
 m\le\dim\ker S_\mu^{\rm full}
 \le\operatorname{rank}\mathbf 1_{(-\rho,\rho)}
       (S_\mu^{\rm full})=m.
\end{equation*}
Consequently the entire near-zero spectral subspace is precisely the kernel:
\begin{equation}\label{E:full-lower-Schur-kernel-index}
 \ker S_\mu^{\rm full}
 =\operatorname{span}\{\xi_{\mu,1},\ldots,\xi_{\mu,m}\},
 \qquad
 \operatorname{ind}(S_\mu^{\rm full})=1.
\end{equation}
In particular, no member of the limiting translation cluster can drift to
the negative side: its total spectral multiplicity is $m$, and it is already
exhausted by the $m$ exact zero modes forced by translation invariance.

Finally, \eqref{E:full-lower-kernel-correspondence} and
\eqref{E:full-lower-square-completion} transfer both statements in
\eqref{E:full-lower-Schur-kernel-index} to $\mathbb L_\mu$; the corresponding
null vectors are exactly those in
\eqref{E:full-lower-rescaled-exact-modes}.  The positive rescaling factor in
\eqref{E:full-lower-quadratic-rescaling}, together with
\eqref{E:full-lower-translation-scaling}, then gives
\begin{equation*}
 \operatorname{ind}_{X}(L_{+,\lambda})=1,
 \qquad
 \ker L_{+,\lambda}
 =\operatorname{span}\{\partial_{z_j}u_\lambda:1\le j\le m\}
\end{equation*}
whenever $\lambda+d=\mu$ is sufficiently small.  This proves
\eqref{E:full-Lplus-data} at the lower spectral edge.

\smallskip
\noindent\emph{The high-frequency end.}
We work with the isotropic variables $\xi=\sqrt\lambda x$.  If
$h(x)=\Phi(\sqrt\lambda x)$, then
\begin{equation}\label{E:full-high-quadratic-rescaling}
 \langle L_{+,\lambda}h,h\rangle
 =\lambda^{1-N/2}\langle\mathcal{A}_\lambda\Phi,\Phi\rangle.
\end{equation}
Here $\mathcal{A}_\lambda$ denotes the full-space realization of
\eqref{E:stability-high-linearization}.  The prefactor is positive,
so our goal is
$$
 \operatorname{ind}(\mathcal{A}_\lambda)=1,\qquad
 \ker\mathcal{A}_\lambda
 =\operatorname{span}\{\partial_{Z_j}U_\lambda:1\le j\le m\}.
$$
The main issue is the effect of the confined translations: their span
is not invariant, so positivity on that span alone does not determine
the spectrum.  We first set up the Schur reduction and identify the
estimates it requires, then prove the complementary-block estimates,
and finally determine the reduced form on the translation space.

\smallskip
\noindent\emph{Step 1: the translation-space reduction.}
Put $\varepsilon=\lambda^{-2}$.  Translation invariance in $Z$ gives
\begin{equation}\label{E:rescaled-z-exact-zero-modes}
 \mathcal{A}_\lambda\partial_{Z_j}U_\lambda=0,
 \qquad 1\le j\le m.
\end{equation}
In contrast, differentiating the original equation in $y_j$ gives
\begin{equation}\label{E:y-translation-identity}
 L_{+,\lambda}\partial_{y_j}u_\lambda=-2y_ju_\lambda,
\end{equation}
and integration by parts yields
\begin{align}
 \left\langle L_{+,\lambda}\partial_{y_i}u_\lambda,
 \partial_{y_j}u_\lambda\right\rangle
 &=-2\int_{\mathbb{R}^N}y_i u_\lambda
       \partial_{y_j}u_\lambda\,\mathrm dx \notag\\
 &=-\int_{\mathbb{R}^N}y_i\partial_{y_j}(u_\lambda^2)\,\mathrm dx
 =\delta_{ij}\|u_\lambda\|_2^2.
 \label{E:y-translation-positive-matrix}
\end{align}
In the rescaled variables these identities become
\begin{equation}\label{E:rescaled-y-translation-residual}
 \mathcal{A}_\lambda\partial_{Y_j}U_\lambda
 =-2\varepsilon Y_jU_\lambda,
 \qquad
 \left\langle\mathcal{A}_\lambda\partial_{Y_i}U_\lambda,
 \partial_{Y_j}U_\lambda\right\rangle
 =\varepsilon\delta_{ij}\|U_\lambda\|_2^2.
\end{equation}
Thus the free translations are exact zero modes, while the confined
translations have an $O(\varepsilon)$ residual and a strictly positive
$O(\varepsilon)$ quadratic form.  To account for their coupling with
the remaining directions, introduce
\begin{align*}
 \mathcal{K}_{\lambda,Y}
 &:=\operatorname{span}\{\partial_{Y_1}U_\lambda,\ldots,
                           \partial_{Y_d}U_\lambda\},\\
 \mathcal{K}_{\lambda,Z}
 &:=\operatorname{span}\{\partial_{Z_1}U_\lambda,\ldots,
                           \partial_{Z_m}U_\lambda\},
 \qquad
 \mathcal{K}_\lambda
 :=\mathcal{K}_{\lambda,Y}\oplus\mathcal{K}_{\lambda,Z}.
\end{align*}
The sum is orthogonal by parity.  Moreover, the strong $H^1$ convergence
$U_\lambda\to W_\beta$ gives convergence of the spatial derivatives in
$L^2$, and hence
\begin{equation}\label{E:full-high-K-Gram}
 \left(
 \left\langle\partial_{\xi_i}U_\lambda,
                    \partial_{\xi_j}U_\lambda\right\rangle
 \right)_{1\le i,j\le N}
 \longrightarrow
 \left(
 \left\langle\partial_{\xi_i}W_\beta,
                    \partial_{\xi_j}W_\beta\right\rangle
 \right)_{1\le i,j\le N}
 =\frac{\|\nabla W_\beta\|_2^2}{N}I_N.
\end{equation}
Consequently $\dim\mathcal{K}_\lambda=N$ for large $\lambda$, and the
Euclidean norm of the coefficients in this derivative basis is
uniformly equivalent to the $L^2$ norm of the resulting vector.
Let $P_\lambda$ be the orthogonal projection onto $\mathcal{K}_\lambda$
and put $Q_\lambda=I-P_\lambda$.
Define the self-adjoint form compression on
$\mathcal{K}_\lambda^\perp\cap\mathfrak{X}_\lambda$ by
\begin{equation}\label{E:full-high-D-definition}
 D_\lambda:=Q_\lambda\mathcal{A}_\lambda Q_\lambda.
\end{equation}

For the moment, suppose that $D_\lambda$ is invertible; this will be
proved in Step~2.  The Feshbach--Schur operator at zero is then
\begin{equation}\label{E:full-high-Feshbach-operator}
 \mathcal{S}_\lambda
 :=P_\lambda\mathcal{A}_\lambda P_\lambda
 -P_\lambda\mathcal{A}_\lambda Q_\lambda
  D_\lambda^{-1}
  Q_\lambda\mathcal{A}_\lambda P_\lambda.
\end{equation}
Writing $\Phi=p+q$, with $p=P_\lambda\Phi$ and $q=Q_\lambda\Phi$,
the $Q_\lambda$-row of $\mathcal{A}_\lambda\Phi=0$, i.e., $Q_\lambda \mathcal{A}_\lambda P_\lambda p+D_\lambda q=0$,  gives
\begin{equation}\label{E:full-high-Q-elimination}
 q=-D_\lambda^{-1}Q_\lambda\mathcal{A}_\lambda P_\lambda p.
\end{equation}
Substitution into the $P_\lambda$-row, i.e., $P_\lambda\mathcal{A}_\lambda P_\lambda p +P_\lambda\mathcal{A}_\lambda Q_\lambda q=0$, gives
$$P_\lambda\mathcal{A}_\lambda P_\lambda p - P_\lambda\mathcal{A}_\lambda Q_\lambda D_\lambda^{-1}Q_\lambda\mathcal{A}_\lambda P_\lambda p=0.$$
That is,
$$\mathcal{S}_\lambda p=0.$$
Conversely,
\eqref{E:full-high-Q-elimination} lifts every element of
$\ker\mathcal{S}_\lambda$ to an element of
$\ker\mathcal{A}_\lambda$.  At the level of quadratic forms, the same
elimination gives
\begin{align}
 \langle\mathcal{A}_\lambda(p+q),p+q\rangle
 &=
 \left\langle
 D_\lambda\bigl(q+D_\lambda^{-1}
 Q_\lambda\mathcal{A}_\lambda P_\lambda p\bigr),
 q+D_\lambda^{-1}
 Q_\lambda\mathcal{A}_\lambda P_\lambda p
 \right\rangle \notag\\
 &\quad+\langle\mathcal{S}_\lambda p,p\rangle.
 \label{E:full-high-square-completion}
\end{align}
These are identities of closed quadratic forms: the translation derivatives belong
to the operator domain, so the off-diagonal blocks are bounded
finite-rank operators, and $D_\lambda^{-1}Q_\lambda\mathcal{A}_\lambda
P_\lambda p$ belongs to the form domain of $D_\lambda$.
The triangular change of variables in
\eqref{E:full-high-square-completion} is invertible on the form domain.
It therefore yields
\begin{equation}\label{E:full-high-inertia-splitting}
 \operatorname{ind}(\mathcal{A}_\lambda)
 =\operatorname{ind}(D_\lambda)
  +\operatorname{ind}(\mathcal{S}_\lambda).
\end{equation}

This reduction identifies the two remaining tasks.  In Step~2 we prove
that $D_\lambda$ has a uniformly bounded inverse and exactly one
negative direction.  In Step~3 we show that $\mathcal{S}_\lambda$ is
positive definite on $\mathcal{K}_{\lambda,Y}$ and vanishes on
$\mathcal{K}_{\lambda,Z}$.  The key comparison there is between the
$O(\varepsilon)$ positive direct term in
\eqref{E:rescaled-y-translation-residual} and an $O(\varepsilon^2)$
Schur correction.  Unlike the lower-edge $Q$-block, $D_\lambda$ is not
assumed positive; its negative direction must be retained in
\eqref{E:full-high-inertia-splitting}.

\smallskip
\noindent\emph{Step 2: the complementary block.}
The limiting operator needed for this step is
\begin{equation}\label{E:full-high-limit-operator}
 L_\beta=-\Delta+1-\mu_\infty(\beta-1)W_\beta^{\beta-2}.
\end{equation}
It has exactly one negative eigenvalue and
\begin{equation}\label{E:full-high-limit-kernel}
 \ker L_\beta
 =\operatorname{span}\{\partial_{\xi_1}W_\beta,\ldots,
                         \partial_{\xi_N}W_\beta\}.
\end{equation}
Write $b_\lambda=F_\lambda'(U_\lambda)$ and
$b=\mu_\infty(\beta-1)W_\beta^{\beta-2}$.  The convergence of the
profiles and rescaled nonlinearities gives local uniform convergence
$b_\lambda\to b$; the bound \eqref{E:high-derivative-power-envelope}
and the common exponential tail of $U_\lambda$ also give
\eqref{E:high-coefficient-tail}.  These are the compactness inputs
used in both parts of this step.

\smallskip
\noindent\emph{(a) Uniform invertibility.}
We claim that $D_\lambda$ is invertible for all sufficiently large
$\lambda$, with
\begin{equation}\label{E:full-high-D-inverse-bound}
 \|D_\lambda^{-1}\|_{L^2\to L^2}\le C.
\end{equation}
To prove this by a limiting-kernel argument, we first need to pass
from the compressed equation to the full one.  For
$q\in\mathcal D(D_\lambda)$, the compression gives
\begin{equation}\label{E:full-high-compression-action}
 D_\lambda q=Q_\lambda\mathcal{A}_\lambda q,
 \qquad
 \langle D_\lambda q,q\rangle
 =\langle\mathcal{A}_\lambda q,q\rangle,
\end{equation}
initially in the form sense.  Thus smallness of $D_\lambda q$ controls
only the $Q_\lambda$-component; the $P_\lambda$-component must also be
estimated.

The translation identities
\eqref{E:rescaled-z-exact-zero-modes} and
\eqref{E:rescaled-y-translation-residual}, the uniform bound
$\|Y_jU_\lambda\|_2\le C$, and \eqref{E:full-high-K-Gram} give
\begin{equation}\label{E:full-high-K-residual-bound}
 \|\mathcal{A}_\lambda p\|_2
 \le C\varepsilon\|p\|_2,
 \qquad p\in\mathcal{K}_\lambda.
\end{equation}
By self-adjointness, this also controls the missing component:
\begin{align}
 \|P_\lambda\mathcal{A}_\lambda q\|_2
 &=\sup_{\substack{p\in\mathcal{K}_\lambda\\ \|p\|_2=1}}
   |\langle\mathcal{A}_\lambda q,p\rangle| \notag\\
 &=\sup_{\substack{p\in\mathcal{K}_\lambda\\ \|p\|_2=1}}
   |\langle q,\mathcal{A}_\lambda p\rangle|
 \le C\varepsilon\|q\|_2.                 \label{E:full-high-missing-component}
\end{align}
For $q\in\mathcal D(D_\lambda)$, the left side is initially defined
by the finitely many form pairings with $p\in\mathcal{K}_\lambda$.
Together with $D_\lambda q\in L^2$, the estimate shows that
$\mathcal{A}_\lambda q$ is represented by an $L^2$ function.  Hence
$q\in\mathcal D(\mathcal{A}_\lambda)$ and
$$
 \mathcal{A}_\lambda q
 =D_\lambda q+P_\lambda\mathcal{A}_\lambda q.
$$
The off-diagonal blocks are bounded finite-rank operators.  Thus
$D_\lambda\oplus(P_\lambda\mathcal{A}_\lambda P_\lambda)$ is a
bounded finite-rank perturbation of $\mathcal{A}_\lambda$; in
particular, the negative spectrum of $D_\lambda$ is discrete.

Suppose now that the claimed uniform invertibility fails.  By the
spectral theorem for the self-adjoint compressions, there are
$\lambda_n\to\infty$ and $q_n\in\mathcal D(D_{\lambda_n})$ such that
\begin{equation}\label{E:full-high-D-approximate-kernel}
 q_n\in\mathcal{K}_{\lambda_n}^{\perp},
 \qquad
 \|q_n\|_2=1,
 \qquad
 \|D_{\lambda_n}q_n\|_2\longrightarrow0.
\end{equation}
Combining the two components just estimated gives
\begin{equation}\label{E:full-high-full-approximate-kernel}
 \|\mathcal{A}_{\lambda_n}q_n\|_2
 \le \|D_{\lambda_n}q_n\|_2+C\lambda_n^{-2}
 \longrightarrow0.
\end{equation}
Taking the scalar product with $q_n$ and using the uniform
$L^\infty$ bound for $b_{\lambda_n}$, we obtain
\begin{equation}\label{E:full-high-D-energy-bound}
 \|\nabla q_n\|_2^2+\|q_n\|_2^2
 +\lambda_n^{-2}\||Y|q_n\|_2^2
 =\int_{\mathbb{R}^N}b_{\lambda_n}q_n^2\,\mathrm d\xi+o(1)
 \le C.
\end{equation}
To retain a nonzero limit, we also need $L^2$ tightness.
Let $\chi_R$ be the exterior cutoff used in
Lemma~\ref{L:high-spectral-compactness}.  Testing the approximate
kernel equation by $\chi_R^2q_n$ and using the IMS identity yields
\begin{align}
 &\|\nabla(\chi_Rq_n)\|_2^2+\|\chi_Rq_n\|_2^2
 +\lambda_n^{-2}\||Y|\chi_Rq_n\|_2^2 \notag\\
 &\qquad\le
 \int_{\mathbb{R}^N}b_{\lambda_n}\chi_R^2q_n^2\,\mathrm d\xi
 +\frac{C}{R^2}+o_n(1).                    \label{E:full-high-D-tail-estimate}
\end{align}
For $R$ sufficiently large, the common tail estimate absorbs the first
term on the right into the left.  Consequently
\begin{equation}\label{E:full-high-D-tightness}
 \lim_{R\to\infty}\limsup_{n\to\infty}
 \int_{\mathbb{R}^N\setminus B_{2R}}|q_n|^2\,\mathrm d\xi=0.
\end{equation}
Local Rellich compactness and this tightness give, after extraction,
\begin{equation}\label{E:full-high-D-strong-limit}
 q_n\rightharpoonup q\quad\hbox{in }H^1,
 \qquad
 q_n\longrightarrow q\quad\hbox{in }L^2,
 \qquad
 \|q\|_2=1.
\end{equation}
Passing to the limit in
\eqref{E:full-high-full-approximate-kernel} against compactly
supported tests gives $L_\beta q=0$.  On the other hand,
$q_n\perp\mathcal{K}_{\lambda_n}$ and the spatial derivatives
converge strongly in $L^2$, so
$$
 \langle q,\partial_{\xi_j}W_\beta\rangle
 =\lim_{n\to\infty}
   \langle q_n,\partial_{\xi_j}U_{\lambda_n}\rangle=0,
 \qquad 1\le j\le N.
$$
Thus $q\in\ker L_\beta\cap(\ker L_\beta)^\perp=\{0\}$,
contradicting $\|q\|_2=1$.  This proves
\eqref{E:full-high-D-inverse-bound} and makes the Schur reduction in
Step~1 available for all sufficiently large $\lambda$.

\smallskip
\noindent\emph{(b) Exactly one negative direction.}
The inverse bound excludes spectrum near zero, but does not determine
the negative index of $D_\lambda$.  To identify a negative direction,
we will project the persistent negative eigenfunction of
$\mathcal{A}_\lambda$ onto $\mathcal{K}_\lambda^\perp$.  We first
locate that eigenfunction spectrally and compare the translation space
with the true near-zero spectral space.

Choose $\rho>0$ so small that
\begin{equation}\label{E:full-high-limit-spectral-gap}
 \sigma(L_\beta)\cap[-2\rho,2\rho]=\{0\},
\end{equation}
and the unique negative eigenvalue of $L_\beta$ lies to the left of
$-2\rho$.  Choose $M>0$ so large that all the negative spectra under
consideration lie in $(-M,0)$.  Let $\Pi_{\lambda,-}$ and
$\Pi_{\lambda,0}$ be the spectral projections of $\mathcal{A}_\lambda$
onto $(-M,-2\rho)$ and $(-\rho,\rho)$, respectively.
Lemma~\ref{L:high-spectral-compactness}, with the coefficients
$b_\lambda$ and $b$ above, gives
\begin{equation}\label{E:full-high-Riesz-ranks}
 \operatorname{rank}\Pi_{\lambda,-}=1,
 \qquad
 \operatorname{rank}\Pi_{\lambda,0}=N
\end{equation}
for all sufficiently large $\lambda$.  Applying the same lemma to
closed intervals contained in the limiting gaps rules out any other
spectrum between these clusters.  These ranks locate one persistent
negative eigenvalue, but do not yet determine the signs within the
near-zero cluster.

Set $\mathcal E_\lambda:=\operatorname{Ran}\Pi_{\lambda,0}$.
For any sequence $\lambda_n\to\infty$, choose an orthonormal
eigenbasis $\{e_{n,1},\ldots,e_{n,N}\}$ of
$\mathcal E_{\lambda_n}$.  Lemma~\ref{L:high-spectral-compactness}
and a finite diagonal extraction give
$$
 e_{n,j}\longrightarrow e_j\quad\hbox{strongly in }L^2,
 \qquad 1\le j\le N.
$$
By \eqref{E:full-high-limit-spectral-gap}, the limiting eigenvalues
are all zero.  Strong convergence preserves orthonormality, so the
$e_j$ form an orthonormal basis of $\ker L_\beta$.  The spatial
derivatives spanning $\mathcal{K}_{\lambda_n}$ converge to a basis
of the same kernel, with nonsingular limiting Gram matrix by
\eqref{E:full-high-K-Gram}.  Thus, first along the extracted
subsequence and then for the full family,
\begin{equation}\label{E:full-high-projection-gap}
 \|P_\lambda-\Pi_{\lambda,0}\|_{L^2\to L^2}\longrightarrow0.
\end{equation}

Let $e_{\lambda,-}$ be a normalized eigenfunction for the persistent
negative eigenvalue $\nu_{\lambda,-}\le-c_0<0$ of
$\mathcal{A}_\lambda$.  Since $e_{\lambda,-}\perp\mathcal E_\lambda$,
\eqref{E:full-high-projection-gap} gives
$\|P_\lambda e_{\lambda,-}\|_2\to0$.  Hence
$q_{\lambda,-}:=Q_\lambda e_{\lambda,-}$ satisfies
$\|q_{\lambda,-}\|_2\to1$, and
\eqref{E:full-high-K-residual-bound} yields
\begin{align}
 \langle D_\lambda q_{\lambda,-},q_{\lambda,-}\rangle
 &=\langle\mathcal{A}_\lambda q_{\lambda,-},
                         q_{\lambda,-}\rangle \notag\\
 &=\nu_{\lambda,-}\|q_{\lambda,-}\|_2^2
   -\langle\mathcal{A}_\lambda P_\lambda e_{\lambda,-},
                         q_{\lambda,-}\rangle
 <0.                                             \label{E:full-high-D-negative-direction}
\end{align}
Thus $\operatorname{ind}(D_\lambda)\ge1$.  In particular, the negative
direction survives the projection; no exact inclusion of
$e_{\lambda,-}$ in $\mathcal{K}_\lambda^\perp$ is needed.

For the reverse inequality, suppose that
$\operatorname{ind}(D_{\lambda_n})\ge2$ along some
$\lambda_n\to\infty$.  The negative spectrum is discrete, and the
uniform gap \eqref{E:full-high-D-inverse-bound} allows us to choose
two orthonormal eigenfunctions satisfying
\begin{equation}\label{E:full-high-D-two-negative-modes}
 D_{\lambda_n}q_{n,j}=\nu_{n,j}q_{n,j},
 \qquad
 -C\le\nu_{n,j}\le-c<0,
 \qquad j=1,2.
\end{equation}
Estimate \eqref{E:full-high-missing-component} gives the full
approximate eigenvalue equations
\begin{equation}\label{E:full-high-D-approximate-eigenpairs}
 \|\mathcal{A}_{\lambda_n}q_{n,j}
      -\nu_{n,j}q_{n,j}\|_2
 =\|P_{\lambda_n}\mathcal{A}_{\lambda_n}q_{n,j}\|_2
 \le C\lambda_n^{-2}.
\end{equation}
The energy and cutoff argument in part~(a) applies to these equations
as well: the coefficient $1$ in the tail estimate is replaced by
$1-\nu_{n,j}\ge1+c$.  After a finite diagonal extraction,
$$
 \nu_{n,j}\to\nu_j<0,
 \qquad
 q_{n,j}\to q_j\quad\hbox{strongly in }L^2,
 \qquad j=1,2.
$$
Passing to the limit gives $L_\beta q_j=\nu_jq_j$, and strong
$L^2$ convergence preserves orthogonality:
$$
 \langle q_i,q_j\rangle
 =\lim_{n\to\infty}\langle q_{n,i},q_{n,j}\rangle
 =\delta_{ij}.
$$
This contradicts $\operatorname{ind}(L_\beta)=1$.  We have proved
\begin{equation}\label{E:full-high-complement-index}
 \operatorname{ind}(D_\lambda)
 =\operatorname{ind}\bigl(
 L_\beta\big|_{(\ker L_\beta)^\perp}\bigr)=1.
\end{equation}
The two complementary-block conclusions are now established:
$D_\lambda$ is uniformly invertible and has one negative direction.
These are compatible because that negative spectrum stays uniformly
away from zero.

\smallskip
\noindent\emph{Step 3: the reduced translation block and the conclusion.}
We now return to \eqref{E:full-high-Feshbach-operator}.  Step~2 makes
its definition valid and supplies the uniform inverse bound needed
to estimate the correction.  On $\mathcal{K}_{\lambda,Y}$, the direct
term has, in the derivative basis, the matrix
\begin{equation}\label{E:full-high-direct-Y-block}
 \varepsilon\|U_\lambda\|_2^2 I_d
 =\varepsilon\bigl(\|W_\beta\|_2^2+o(1)\bigr)I_d.
\end{equation}
For $p\in\mathcal{K}_{\lambda,Y}$, the correction satisfies
\begin{align*}
 &\left|\left\langle
 D_\lambda^{-1}Q_\lambda\mathcal{A}_\lambda p,
 Q_\lambda\mathcal{A}_\lambda p\right\rangle\right|\\
 &\qquad\le
 \|D_\lambda^{-1}\|_{L^2\to L^2}
 \|Q_\lambda\mathcal{A}_\lambda p\|_2^2
 \le C\varepsilon^2\|p\|_2^2,
\end{align*}
by \eqref{E:full-high-D-inverse-bound} and
\eqref{E:full-high-K-residual-bound}.  No sign of $D_\lambda^{-1}$
is required.  Using the uniformly nonsingular Gram matrix
\eqref{E:full-high-K-Gram} to express this estimate in the derivative
basis, we obtain
\begin{equation}\label{E:full-high-positive-Y-Feshbach}
 \bigl(\langle\mathcal{S}_\lambda\partial_{Y_i}U_\lambda,
        \partial_{Y_j}U_\lambda\rangle\bigr)_{i,j=1}^d
 =\varepsilon\|U_\lambda\|_2^2 I_d+O(\varepsilon^2)>0.
\end{equation}
Indeed, $\|U_\lambda\|_2^2\to\|W_\beta\|_2^2>0$, so the positive
$O(\varepsilon)$ term dominates the $O(\varepsilon^2)$ correction.
This proves positivity of the reduced form on the confined
translation space, including the effect of coupling to its complement.

On $\mathcal{K}_{\lambda,Z}$, the exact identities
\eqref{E:rescaled-z-exact-zero-modes} make the corresponding columns
of both terms in \eqref{E:full-high-Feshbach-operator} vanish.
Self-adjointness makes the corresponding rows, including all
$Y$--$Z$ cross terms, vanish as well.  Hence
\begin{equation}\label{E:full-high-Feshbach-kernel-index}
 \ker\mathcal{S}_\lambda=\mathcal{K}_{\lambda,Z},
 \qquad
 \operatorname{ind}(\mathcal{S}_\lambda)=0.
\end{equation}
Thus the Schur complement is positive on the $Y$-block and zero on the
$Z$-block, rather than positive definite on all of $\mathcal{K}_\lambda$.

Finally, for $p\in\mathcal{K}_{\lambda,Z}$ we have
$\mathcal{A}_\lambda p=0$, so the complementary correction in
\eqref{E:full-high-Q-elimination} also vanishes.  The kernel
correspondence from Step~1 therefore gives
\begin{equation}\label{E:full-high-exact-kernel}
 \ker\mathcal{A}_\lambda
 =\mathcal{K}_{\lambda,Z}
 =\operatorname{span}\{\partial_{Z_j}U_\lambda:1\le j\le m\}.
\end{equation}
The index follows from \eqref{E:full-high-inertia-splitting},
\eqref{E:full-high-complement-index}, and
\eqref{E:full-high-Feshbach-kernel-index}:
$$
 \operatorname{ind}(\mathcal{A}_\lambda)=1+0=1.
$$
Together with \eqref{E:full-high-Riesz-ranks}, these conclusions show
that the near-zero cluster consists of the $m$ exact zero modes and
$d$ positive eigenvalues.  Returning to the original variables through
\eqref{E:full-high-quadratic-rescaling} proves
\eqref{E:full-Lplus-data} at high frequency and completes the proof.
\end{proof}

\subsection{Slope criterion and orbital stability}

Assume \ref{condition-D} throughout this subsection.  By definition, the flow
\eqref{E:main-dynamical-NLS} is locally well posed on $X_{\mathbb{C}}$, has
continuous dependence and the continuation property, and conserves the mass and energy in \eqref{E:mass-energy-dynamics}.
The separation of this dynamical input from the elliptic assumptions is
explained in Remark~\ref{R:dynamical-assumption}.  The orbit of a positive
state is
\begin{equation}\label{E:standing-wave-orbit}
 \mathcal O(u_\lambda)
 :=\{e^{i\vartheta}u_\lambda(y,z-z_0):
       \vartheta\in\mathbb{R},\ z_0\in\mathbb{R}^m\}.
\end{equation}
Orbital stability means that for every $\varepsilon>0$ there exists
$\delta>0$ such that
\begin{equation*}
 \operatorname{dist}_X(\psi(0),\mathcal O(u_\lambda))<\delta
 \quad\Longrightarrow\quad
 \psi\text{ is global and }\sup_{t\in\mathbb{R}}
 \operatorname{dist}_X(\psi(t),\mathcal O(u_\lambda))<\varepsilon.
\end{equation*}

\begin{lemma}[Slope criterion for the partially confined flow]
\label{L:GSS-Weinstein-translations}
Assume \ref{gcon1}--\ref{gcon2} and \ref{condition-D}.  Let
$I\ni\lambda\mapsto v_\lambda\in X_r$ be a $C^1$ family of positive
standing-wave profiles for \eqref{E:main-dynamical-NLS}, and set
$$
 S_\lambda:=E+\frac{\lambda}{2}M,
 \qquad d(\lambda):=S_\lambda(v_\lambda).
$$
At the frequency under consideration, assume
\begin{equation}\label{E:abstract-GSS-spectral-assumptions}
 \operatorname{ind}\bigl(S_\lambda''(v_\lambda)\bigr)=1,
 \qquad
 \ker S_\lambda''(v_\lambda)
 =\operatorname{span}_{\mathbb{R}}
 \{iv_\lambda,\partial_{z_1}v_\lambda,\ldots,
                    \partial_{z_m}v_\lambda\},
\end{equation}
on the realification of $X_{\mathbb C}$, with zero isolated from the
remaining spectrum.  If $d''(\lambda)>0$, the standing wave
$e^{i\lambda t}v_\lambda$ is orbitally stable modulo phase and
$z$-translations.  If $d''(\lambda)<0$, it is orbitally unstable modulo
the same symmetries.
\end{lemma}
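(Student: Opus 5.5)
This is a direct application of the Grillakis--Shatah--Strauss theory \cite{GrillakisShatahStraussI,GrillakisShatahStraussII}. Our task is to check that its abstract hypotheses hold for the symmetry group generated by the phase and by the $z$-translations, and then to identify the scalar that decides stability.

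\emph{Setting.} We work on the realification of $X_{\mathbb C}$. The symmetry group is $G=\mathbb{R}\times\mathbb{R}^m$, acting by $T(\vartheta,z_0)\psi=e^{i\vartheta}\psi(y,z-z_0)$. This action is unitary on $L^2$ and isometric on $X$, since $|y|^2$ does not depend on $z$. Mass and energy are invariant under it. The generators are $i$ and $-\partial_{z_j}$, and their infinitesimal action on $v_\lambda$ spans exactly the kernel listed in \eqref{E:abstract-GSS-spectral-assumptions}.

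\emph{Reduction of the translation charges.} The conserved quantities attached to the translations are the momenta $P_j(\psi)=\operatorname{Im}\int\bar\psi\,\partial_{z_j}\psi$. The profiles $v_\lambda$ are real and block-radial, so $P_j(v_\lambda)=0$. We therefore take the momentum multipliers to be zero for the whole family. As a result, the multiparameter GSS Hessian collapses to the single quantity $d''(\lambda)$. The reason is that $d'(\lambda)=\frac12M(v_\lambda)$: this follows from $S_\lambda'(v_\lambda)=0$ and the $C^1$ dependence of $v_\lambda$. Differentiating again gives $d''(\lambda)=\frac12\frac{\mathrm d}{\mathrm d\lambda}M(v_\lambda)$. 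The mixed entries between the phase and the translations vanish by parity. The remaining momentum entries also vanish, because any variation of the translation velocity decouples: varying $v_\lambda$ by a Galilean-type boost is not available in the presence of the $y$-potential, but the $z$-part of the flow is translation invariant, so the standard reduction of \cite{GrillakisShatahStraussII} modulo a degenerate symmetry direction applies once we restrict to $P_j=0$. I would present this cleanly as follows. Apply GSS with the symmetry group $G$. Observe that, in the hypothesis $\ker S''=\{T'(e)X\}$, the translation directions are exactly the neutral ones. Then the count $p(d'')$ of positive eigenvalues of the $1\times1$ Hessian is all that enters.

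\emph{Stability and instability.} \textbf{Stability case.} With $n(S'')=1$, the kernel equal to the symmetry directions, and zero isolated, GSS gives coercivity of $S_\lambda''(v_\lambda)$ on the $X$-orthogonal complement of the kernel intersected with $\{\langle v_\lambda,\cdot\rangle=0\}$ whenever $d''>0$. The usual Lyapunov argument, combined with conservation of $M$ and $E$ and the continuation criterion, then yields global existence and orbital stability; this is where we invoke \ref{condition-D}. \textbf{Instability case.} When $d''<0$, the instability theorem of \cite{GrillakisShatahStraussII} applies, using the flow regularity assumed in \ref{condition-D}.

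\emph{Main obstacle.} The hard step is to justify that the noncompact translation part of the symmetry group and the zero momentum charges fit the GSS hypotheses. This covers two points: that $T'(e)$ is defined on a dense domain containing $v_\lambda$, and that the kernel condition holds with isolation of zero, which is assumed. Concretely, the point to check is $\partial_{z_j}v_\lambda\in X$. This follows from the decay argument in the proof of Proposition~\ref{P:full-endpoint-spectrum}.
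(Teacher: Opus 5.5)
Your stability half is essentially the paper's argument: constrained coercivity of $S_\lambda''(v_\lambda)$ orthogonally to $v_\lambda$, $iv_\lambda$ and the $\partial_{z_j}v_\lambda$, modulation in phase and $z$, and the Lyapunov argument. The instability half, however, has a genuine gap.

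You invoke the instability theorem of \cite{GrillakisShatahStraussII} for the group $\mathbb{R}\times\mathbb{R}^m$. That theorem needs bound states over an open set of the full $(1+m)$-dimensional parameter space (frequency and velocity). It then compares $\operatorname{ind}(S_\lambda''(v_\lambda))$ with the number of positive eigenvalues of the full $(1+m)\times(1+m)$ Hessian of the action, and this Hessian must be nonsingular. Freezing the momentum multipliers at zero leaves the velocity entries undefined. If they vanished, as you assert, the Hessian would be singular and the theorem would not apply.

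Your justification is also incorrect. Since $|y|^2$ is independent of $z$, Galilean boosts in $z$ are exact symmetries of \eqref{E:main-dynamical-NLS}, and $e^{ic\cdot z/2}v_{\lambda-|c|^2/4}$ is a profile travelling with velocity $c$. The action along this family is $\widetilde d(\lambda,c)=d(\lambda-|c|^2/4)$. Its velocity block is therefore $-\tfrac12 d'(\lambda)I_m=-\tfrac14M(v_\lambda)I_m$, which is nonzero and negative definite. Your $1\times1$ count gives the right answer only because of this sign, which you never establish. Nor can you fall back on the one-parameter phase group in the full space: $\ker S_\lambda''(v_\lambda)$ then contains the directions $\partial_{z_j}v_\lambda$, which that group does not generate.

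The paper sidesteps the translation charges in the instability part by restricting the flow to $(X_r)_{\mathbb C}$. This subspace is invariant by rotational equivariance and uniqueness for the Cauchy problem. The ground-state transform gives $L_-\ge0$ with kernel $\operatorname{span}\{v_\lambda\}$, so the unique negative direction lies in $L_+$. Its eigenfunction is simple and can be taken positive, hence block-radial. Each $\partial_{z_j}v_\lambda$, by contrast, is odd in $z_j$ and lies outside $X_r$. The restricted Hessian therefore has index one and kernel $\operatorname{span}_{\mathbb R}\{iv_\lambda\}$, and the one-symmetry GSS instability theorem applies.

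A final step, with no counterpart in your proposal, upgrades instability modulo phase inside $X_r$ to instability modulo phase and $z$-translations. For block-radial $\psi$ near $\mathcal O(v_\lambda)$ one has $\inf_{\vartheta}\|\psi-e^{i\vartheta}v_\lambda\|_X\le C\operatorname{dist}_X(\psi,\mathcal O(v_\lambda))$, because reflection in $z$ forces the modulation translation parameter to vanish.

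Alternatively, you could repair your own route with the boosted family. The full Hessian is $\operatorname{diag}\bigl(d''(\lambda),-\tfrac14M(v_\lambda)I_m\bigr)$. Hence the index minus its number of positive eigenvalues equals $1$, an odd number, exactly when $d''(\lambda)<0$. You would then still have to verify the remaining hypotheses of the multi-parameter theorem for that family.
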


\begin{proof}
Write $v=v_\lambda$ and $(f,h)_{\mathbb{R}}=\operatorname{Re}\langle f,h\rangle$.
The action Hessian splits into the real and imaginary blocks $L_+$ and
$L_-$.  Since $v>0$ and $L_-v=0$, the ground-state transform gives
$L_-\ge0$ with kernel $\operatorname{span}\{v\}$.  Thus $L_+$ has exactly
one negative eigenvalue.  Differentiating the profile equation gives
$$
 L_+\partial_\lambda v_\lambda=-v,
 \qquad
 d''(\lambda)=-\langle L_+^{-1}v,v\rangle,
$$
where the inverse is taken on $(\ker L_+)^\perp$.

Suppose first that $d''(\lambda)>0$.  The one-constraint inertia formula
and the spectral gap give
$$
 \langle S_\lambda''(v)h,h\rangle\ge c\|h\|_X^2
 \quad\text{if}\quad
 (h,v)_{\mathbb{R}}=(h,iv)_{\mathbb{R}}
 =(h,\partial_{z_j}v)_{\mathbb{R}}=0\quad(1\le j\le m).
$$
Here coercivity in the form norm follows from the $L^2$ spectral gap and
the boundedness of the nonlinear potentials.  Local modulation in phase
and translation is available because the orbit tangent vectors have a
nonsingular Gram matrix.  The usual mass-constrained Lyapunov argument,
using conservation of $E$ and $M$, now proves stability in the full energy
space; nearby masses are accommodated by $\frac{\mathrm d}{\mathrm d\lambda}M(v_\lambda)=2d''(\lambda)\ne0$.
This is the constrained-coercivity argument of
\cite{GrillakisShatahStraussI,WeinsteinStability}.

For $d''(\lambda)<0$, we apply the instability theorem on the complex
block-radial subspace $(X_r)_{\mathbb C}$.  This subspace is invariant
under the flow, by rotational equivariance and uniqueness of the Cauchy
problem.  The unique negative eigenfunction of $L_+$ can be chosen
strictly positive and is simple, so invariance of the coefficients makes
it block-radial.  The translation kernels are odd in a free coordinate
and do not belong to $X_r$.  Consequently the restricted action Hessian
has Morse index one and kernel $\operatorname{span}_{\mathbb{R}}\{iv\}$.
The one-symmetry GSS instability theorem therefore applies, with odd
index $1$ when $d''(\lambda)<0$; the required flow regularity is part of
\ref{condition-D} \cite{GrillakisShatahStraussI,GrillakisShatahStraussII}.

It remains to compare the two notions of orbital distance.  For
block-radial $\psi$ sufficiently close to $\mathcal O(v)$,
\begin{equation}\label{E:GSS-radial-orbit-distance}
 \inf_{\vartheta\in\mathbb{R}}\|\psi-e^{i\vartheta}v\|_X
 \le C\operatorname{dist}_X(\psi,\mathcal O(v)).
\end{equation}
Indeed, if $\psi$ is close to $e^{i\vartheta}v(y,z-a)$, reflection in $z$
shows that $v(y,z-a)$ is close in $L^2$ to $v(y,z+a)$.  This forces $a$
to be small: equality is possible only for $a=0$, and the two translates
become asymptotically orthogonal as $|a|\to\infty$.  Local phase and
translation modulation is then unique, and reflection sends its
translation parameter to its negative while leaving $\psi$ unchanged.
That parameter is therefore zero, proving
\eqref{E:GSS-radial-orbit-distance}.  Stability modulo the full orbit
would thus imply phase-orbital stability within the invariant
block-radial subspace, contradicting the preceding GSS conclusion.
\end{proof}

\begin{proof}[Proof of the stability assertions in
Theorem~\ref{T:endpoint-stability}]
Introduce the action
\begin{equation}\label{E:action-functional}
 S_\lambda(\psi):=E(\psi)+\frac\lambda2M(\psi),
 \qquad
 d(\lambda):=S_\lambda(u_\lambda).
\end{equation}
The standing-wave equation is $S_\lambda'(u_\lambda)=0$.  Since the branch
is $C^1$,
\begin{equation}\label{E:dprime-dsecond}
 d'(\lambda)=\frac{1}{2}M(u_\lambda),
 \qquad
 d''(\lambda)=\frac{1}{2}\frac{\mathrm d}{\mathrm d\lambda}
                         \|u_\lambda\|_2^2.
\end{equation}
Writing a perturbation as $a+ib$, the Hessian separates as
\begin{equation}\label{E:action-hessian-splitting}
 \langle S_\lambda''(u_\lambda)(a+ib),a+ib\rangle
 =\langle L_{+,\lambda}a,a\rangle
  +\langle L_{-,\lambda}b,b\rangle.
\end{equation}
Assumption~\ref{condition-D} gives the flow hypotheses in
Lemma~\ref{L:GSS-Weinstein-translations}.  The endpoint branches are
$C^1$ by the lower-edge construction and
Propositions~\ref{P:high-unique-branch} and
\ref{P:differentiated-profiles}.  We verify the spectral hypothesis.
By
Proposition~\ref{P:full-endpoint-spectrum} and
\eqref{E:action-hessian-splitting},
\begin{equation}\label{E:full-action-Hessian-data}
 \operatorname{ind}\bigl(S_\lambda''(u_\lambda)\bigr)=1,
 \qquad
 \ker S_\lambda''(u_\lambda)
 =\operatorname{span}_{\mathbb{R}}
 \{iu_\lambda,\partial_{z_1}u_\lambda,\ldots,
                    \partial_{z_m}u_\lambda\}.
\end{equation}
The right-hand side is precisely the tangent space at $u_\lambda$ to
the orbit \eqref{E:standing-wave-orbit}.  Moreover,
$g'(u_\lambda)$ and $g(u_\lambda)/u_\lambda$ tend to zero at infinity.
Hence $L_{+,\lambda}$ and $L_{-,\lambda}$ are relatively compact
perturbations of $H+\lambda$, whose essential spectrum starts at
$d+\lambda>0$.  The zero eigenspaces in
\eqref{E:full-action-Hessian-data} are therefore isolated, and the
remaining positive spectrum is bounded away from zero.  All the spectral
assumptions in \eqref{E:abstract-GSS-spectral-assumptions} are verified.

By \eqref{E:main-lower-mass-slope}, the lower-edge slope has the sign of
\begin{equation*}
 \kappa_-=\frac2{\alpha-2}-\frac m2,
\end{equation*}
which is positive for $\alpha<2+4/m$ and negative for
$\alpha>2+4/m$.  By \eqref{E:main-high-mass-slope}, the high-frequency
slope has the sign of
\begin{equation*}
 \kappa_+=\frac2{\beta-2}-\frac N2,
\end{equation*}
which is positive for $\beta<2+4/N$ and negative for
$\beta>2+4/N$.  If either exponent is critical, the displayed leading
coefficient is zero and the sign depends on the next term; this is why the
critical cases are excluded from the assertion.  Applying
Lemma~\ref{L:GSS-Weinstein-translations} now gives orbital stability in
the two subcritical ranges and orbital instability in the corresponding
supercritical ranges, as claimed.
\end{proof}

Combining the preceding argument with Propositions
\ref{P:high-unique-branch} and \ref{P:differentiated-profiles} proves all
remaining statements of Theorem~\ref{T:endpoint-stability}.

\begin{corollary}[Cubic cigar-shaped trap]
\label{C:cubic-endpoint-stability}
Let $(N,d,m)=(3,2,1)$ and $g(s)=s^3$.  Under the standard pure-power Cauchy
theory, so that \ref{condition-D} holds, the endpoint positive profile is unique
up to translation in the free variable, and its standing-wave orbit is
orbitally stable for
$0<\lambda+2\ll1$ and orbitally unstable for
$\lambda\gg1$.  More generally, for the pure powers in the BBJV range
$10/3<p<6$, the lower-edge branch is stable and the high-frequency branch
is unstable.
\end{corollary}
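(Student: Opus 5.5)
The corollary is a direct specialization of Theorem~\ref{T:endpoint-stability}, so the plan is to verify its hypotheses for pure powers and then read off the signs of the two mass slopes. I would treat the general pure power $g(s)=s^{p-1}$ with $(N,d,m)=(3,2,1)$ and $10/3<p<6$ first. The cubic case is then $p=4$, that is $\alpha=\beta=4$.

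\emph{Hypotheses.} I would check the structural hypotheses one at a time.
\begin{enumerate}[label=(\alph*)]
\item For \ref{gcon1}: $g\in C^1([0,\infty))$, $g(0)=g'(0)=0$ because $p>2$, and $g>0$ on $(0,\infty)$.
\item For \ref{gcon2}: take $\alpha=\beta=p$ and $\mu_0=\mu_\infty=1$. The limits \eqref{E:gsmalllarge}--\eqref{E:gprimelimits} hold identically. The exponent conditions are $p<2_1^*=+\infty$ and $p<2_3^*=6$.
\item For \ref{gcon3}: since $m=1\le2$ and $g(s)=s^{q}$ with $q=p-1>1$, Remark~\ref{remark:20260616-1112} gives the zero-mass Liouville property.
\item For \ref{gcon4}: $\frac12 g(s)s-G(s)=(\frac12-\frac1p)s^p>0$ for $s>0$.
\end{enumerate}
Condition \ref{condition-D} is assumed via the standard pure-power partially confined Cauchy theory, as in Remark~\ref{R:dynamical-assumption}. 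It is energy-subcritical because $p<6$.

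\emph{Uniqueness up to translation.} Theorem~\ref{T:endpoint-stability} gives exactly one centered positive solution for $0<\lambda+d<\mu_{\mathrm s}$ (through Theorem~\ref{T:local-degree}) and for $\lambda>\Lambda_{\mathrm s}$ (through Proposition~\ref{P:high-unique-branch}). By Lemma~\ref{L:individual-decay} and Theorem~A, every positive finite-energy solution is a $z$-translate of a centered one. This yields uniqueness of the endpoint profile modulo translation in the free variable.

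\emph{Stability classification.} The lower-edge critical exponent is $2+4/m=6$, and $p<6$, so $\kappa_->0$. Theorem~\ref{T:endpoint-stability}(i) then gives a positive mass slope, hence orbital stability for $\lambda+2$ small. The high-frequency critical exponent is $2+4/N=10/3$, and $p>10/3$, so $\kappa_+<0$. The theorem then gives orbital instability for $\lambda$ large. Neither critical exponent is attained, so the excluded critical cases do not arise. Taking $p=4$ recovers the cubic statement.

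There is no genuine obstacle, since everything is already contained in Theorem~\ref{T:endpoint-stability}. The only points needing care are to check that \ref{gcon3} is covered by the $m\le2$ criterion and that $p<6$ keeps both \ref{gcon2} and \ref{condition-D} in the energy-subcritical range.
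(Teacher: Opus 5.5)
Your proposal is correct and follows the paper's own argument. The paper likewise specializes Theorem~\ref{T:endpoint-stability} to $\alpha=\beta=p$ and compares $p$ with the two mass-critical exponents $2+4/m=6$ and $2+4/N=10/3$. Your explicit checks of \ref{gcon1}--\ref{gcon4}, in particular \ref{gcon3} via the $m\le 2$ criterion of Remark~\ref{remark:20260616-1112}, and your reduction of uniqueness modulo $z$-translations to Theorem~A, spell out what the paper leaves implicit.
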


\begin{proof}
For a pure power, $\alpha=\beta=p$.  Since $m=1$ and $N=3$, the two
mass-critical powers are $2+4/m=6$ and $2+4/N=10/3$.  The conclusion is
therefore exactly Theorem~\ref{T:endpoint-stability}.
\end{proof}
\section{Prescribed mass as an application of the global continuum}
\label{sec:normalized-solution}

We now return to the normalized problem \eqref{E:normalized}.  The fixed-frequency
analysis of the preceding sections enters through the mass map
\begin{equation}\label{E:rho}
        \rho(\lambda,u):=\norm{u}_{2}^{2}.
\end{equation}
Existence and multiplicity will be read from the connected component $\mathcal{C}$,
whereas nonexistence requires the endpoint control on the full positive-solution set
$\mathcal{S}$.  This distinction is built into the following statement.

\begin{corollary}[Existence, nonexistence, and multiplicity for prescribed mass]
\label{C:normalized}
Assume \ref{gcon1}, \ref{gcon2}, \ref{gcon3}, and \ref{gcon4}.  Let
$\mathcal{C}$ be the global continuum from Theorem \ref{T:global}.  Then
$\rho(\mathcal{C})$ is an interval whose closure in the extended half-line
$[0,+\infty]$ contains the two endpoint limits
\begin{align}\label{E:endpoint-masses}
        L_-&:=\lim_{\substack{(\lambda,u)\in\cC\\ \lambda+d\to0^+}}\norm{u}_2^2,
        &
        L_+&:=\lim_{\substack{(\lambda,u)\in\cC\\ \lambda\to+\infty}}\norm{u}_2^2,
\end{align}
where $L_-,L_+\in[0,+\infty]$ are extended real numbers given by \eqref{E:lower-limit} and
\eqref{E:high-limit}.  Therefore, for every
\begin{equation}\label{E:a-in-interval}
        a\in \bigl(\min\{L_-,L_+\},\max\{L_-,L_+\}\bigr),
\end{equation}
with the usual convention when one endpoint is $+\infty$, the normalized problem
\eqref{E:normalized} has at least one positive solution.

The endpoint limits also control the masses of all positive solutions with
$\lambda>-d$, not only those on $\mathcal{C}$.  For the nonexistence assertions below, assume in addition the full zero-mass Liouville condition
$$
 \textup{(Z)}\qquad
 (H-d)u=g(u),\quad 0\le u\in C^2(\mathbb{R}^N)\cap X
 \quad\Longrightarrow\quad u\equiv0.
$$
Condition \textup{(Z)} is automatic when $m=N-d\le4$ by
\cite[Theorem~1.1(iii-1)]{CWZZ}; further sufficient conditions are given in
\cite[Theorem~1.1(ii)--(iii)]{CWZZ}.  Together with the exclusion of
$\lambda<-d$ in \cite[Theorem~1.1(i)]{CWZZ}, it reduces every positive
finite-energy solution to the coercive range $\lambda>-d$.

If $L_-<+\infty$ and $L_+<+\infty$, then
\begin{equation}\label{E:finite-global-upper-mass}
 \overline a:=\sup_{(\lambda,u)\in\mathcal{S}}\rho(\lambda,u)<+\infty,
\end{equation}
and \eqref{E:normalized} has no positive solution for $a>\overline a$.
If $L_->0$ and $L_+>0$, allowing either limit to be $+\infty$, then
\begin{equation}\label{E:positive-global-lower-mass}
 \underline a:=\inf_{(\lambda,u)\in\mathcal{S}}\rho(\lambda,u)>0,
\end{equation}
and \eqref{E:normalized} has no positive solution for
$0<a<\underline a$.

For a compact statement of all nine endpoint configurations, write
$$
 c_-:=\|V_\alpha\|_2^2\quad\hbox{when }\alpha=2+\frac4m,
 \qquad
 c_+:=\|W_\beta\|_2^2\quad\hbox{when }\beta=2+\frac4N.
$$
The row records $L_-$, the column records $L_+$, and an attained mass is
one for which \eqref{E:normalized} has a positive solution.  The existence and
multiplicity entries require only \ref{gcon1}--\ref{gcon4}; every
no-solution entry additionally uses \textup{(Z)}.

\begin{table}[ht]
\centering
\normalfont\small
\renewcommand{\arraystretch}{1.28}
\begin{tabular}{c|>{\raggedright\arraybackslash}p{0.245\textwidth}|>{\raggedright\arraybackslash}p{0.245\textwidth}|>{\raggedright\arraybackslash}p{0.245\textwidth}}
 $L_-\backslash L_+$
 & \centering $0$\arraybackslash
 & \centering $c_+\in(0,+\infty)$\arraybackslash
 & \centering $+\infty$\arraybackslash\\ \hline
 $0$
 & At least two solutions for every sufficiently small $a>0$;
   no solution for all sufficiently large $a$.
 & Every $0<a<c_+$ is attained; no solution for all sufficiently large $a$.
 & Every $a>0$ is attained.\\ \hline
 $c_-\in(0,+\infty)$
 & Every $0<a<c_-$ is attained; no solution for all sufficiently large $a$.
 & Every $a$ strictly between $c_-$ and $c_+$ is attained; no solution
   for all sufficiently small or sufficiently large $a$.
 & Every $a>c_-$ is attained; no solution for all sufficiently small $a>0$.\\ \hline
 $+\infty$
 & Every $a>0$ is attained.
 & Every $a>c_+$ is attained; no solution for all sufficiently small $a>0$.
 & At least two solutions for every sufficiently large $a$;
   no solution for all sufficiently small $a>0$.
\end{tabular}
\caption{Prescribed-mass conclusions determined by the two endpoint limits.}
\label{T:mass-endpoint-table}
\end{table}

In the central cell, if $c_-=c_+$, the argument still gives both
nonexistence conclusions but no open interval of attained masses.  In the two
diagonal multiplicity cells $(0,0)$ and $(+\infty,+\infty)$, the two
solutions may be chosen so that, as $a$ tends respectively to $0$ or
$+\infty$, one Lagrange multiplier tends to $-d$ and the other to
$+\infty$.  Remark~\ref{R:exact-multiplicity-diagonal} gives the exact
counts in these two cases and specifies the role of \textup{(Z)}.
\end{corollary}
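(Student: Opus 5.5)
The plan is to read existence off the continuum $\mathcal C$ and nonexistence off the full set $\mathcal S$, using the universal endpoint asymptotics of Theorem~\ref{T:asymptotics} and the compact-strip bounds of Section~\ref{sec:priori-estimates}.

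\emph{Existence.} The mass map $\rho$ is continuous on $(-d,+\infty)\times X_r$, and $\mathcal C$ is connected, so $\rho(\mathcal C)\subset(0,+\infty)$ is an interval. By Theorem~\ref{T:global}, $P_1(\mathcal C)=(-d,+\infty)$. Hence there are points of $\mathcal C$ with $\lambda+d\to0^+$ and with $\lambda\to+\infty$. Theorem~\ref{T:asymptotics} applies to every sequence in $\mathcal S$, and so gives the limits $L_\pm$ in \eqref{E:endpoint-masses}, with values determined by \eqref{E:lower-limit} and \eqref{E:high-limit}. In particular these limits exist along $\mathcal C$, not only along subsequences. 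It follows that the closure of $\rho(\mathcal C)$ contains $L_\pm$, and the interval property yields every $a$ in \eqref{E:a-in-interval}. Each such point $(\lambda,u)\in\mathcal C$ is a positive solution with Lagrange multiplier $\lambda$.

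\emph{Nonexistence.} By \textup{(Z)} and \cite[Theorem~1.1(i)]{CWZZ}, every positive finite-energy solution has $\lambda>-d$. After a $z$-translation (Theorem~A) it lies in $\mathcal S$, and translation does not change the mass, so it suffices to bound $\rho$ on $\mathcal S$.

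Suppose first that $L_\pm<+\infty$. I argue by contradiction with a sequence in $\mathcal S$ whose mass tends to $+\infty$. If its frequencies accumulate in a compact strip $[\Lambda_1,\Lambda_2]\subset(-d,+\infty)$, Corollary~\ref{cor:20260616-2057} bounds the mass, which is impossible. Otherwise $\lambda_n+d\to0^+$ or $\lambda_n\to+\infty$, and \eqref{E:lower-mass} or \eqref{E:high-mass} forces the mass to tend to the finite value $L_-$ or $L_+$, again a contradiction.

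Suppose next that $L_\pm>0$. The same trichotomy applies to a sequence with mass tending to $0$. On a compact strip, Proposition~\ref{prop:X-estimate} gives compactness of $\mathcal U_{\Lambda_1}^{\Lambda_2}$ in $X_r$, and Proposition~\ref{prop:20260622-0855} excludes the zero limit, so the mass stays bounded below. At the two ends the universal limits are positive. This gives \eqref{E:positive-global-lower-mass}.

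\emph{The table.} The off-diagonal and central cells follow directly from combining the existence interval with these two bounds. For instance, in the cell $(c_-,+\infty)$ every $a>c_-$ is attained and $\underline a>0$.

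\emph{Diagonal multiplicity.} The diagonal cells $(0,0)$ and $(+\infty,+\infty)$ need more care, and I expect them to be the main obstacle. Take the cell $(0,0)$. Here $\rho$ is continuous and positive on $\mathcal C$ and tends to $0$ at both ends, but $\mathcal C$ need not be a graph over $\lambda$.

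I would fix $\lambda_1$ near $-d$ and $\lambda_2$ large, and pick $(\lambda_i,u_i)\in\mathcal C$, so that for $a$ below $\min\{\rho(\lambda_1,u_1),\rho(\lambda_2,u_2)\}$ there are two solutions. One lies in the low-frequency part of the component and one in the high-frequency part. To obtain them I would apply the connectedness argument twice: to $\mathcal C\cap\{\lambda\le\lambda_1\}$, which contains the local branch $\mathcal B_*$, and to a connected subset reaching $+\infty$ inside $\{\lambda\ge\lambda_2\}$. The latter is available because for large $\lambda$ the solution is unique (Proposition~\ref{P:high-unique-branch}), so $\mathcal C\cap\{\lambda>\Lambda_{\mathrm h}\}$ is the $C^1$ curve.

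Since both $\mathcal B_*$ and the high branch are curves on which the mass tends to $0$, each yields a solution with mass $a$. These two solutions are distinct because their frequencies lie in the disjoint ranges $\lambda<\lambda_1$ and $\lambda>\lambda_2$. As $a\to0$ they must approach the two ends, giving multipliers tending to $-d$ and to $+\infty$. The strip bounds show the mass is bounded below on compact strips, so small $a$ cannot be attained at intermediate frequencies. The case $(+\infty,+\infty)$ is symmetric, using the upper bound on compact strips.
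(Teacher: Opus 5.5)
Your existence and nonexistence arguments are the paper's. You phrase them through sequences and the trichotomy $\lambda_n\to\lambda^*\in(-d,+\infty)$, $\lambda_n+d\to0^+$, $\lambda_n\to+\infty$. The paper instead first records endpoint mass estimates that are uniform over $\mathcal S$, and then takes the minimum and maximum of the mass over the compact middle strip $\mathcal U_{\lambda_-}^{\lambda_+}$. The two formulations are equivalent.

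The diagonal multiplicity is where you take a genuinely different route, and your route is correct. You identify $\mathcal C\cap\{\lambda>\Lambda_{\mathrm h}\}$ with the $C^1$ curve of Proposition~\ref{P:high-unique-branch}. This is legitimate because $\mathcal C$ meets every slice and the centered solution there is unique. You then apply the intermediate value theorem on two curves whose frequency ranges are disjoint.

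The paper argues as follows instead. It fixes $\lambda_0\in(-d,-d+\mu_*)$. It chooses $\Lambda$, using the uniform high-end mass estimate on $\mathcal S$, so that every solution at $\lambda=\Lambda$ has mass below $a$. The second solution then comes from connectedness of $\rho(\mathcal C_{\lambda_0,\Lambda})$ for the crossing continuum of Section~\ref{sec:global-conti}. It is separated from the lower one only because $u_{\lambda_0}$ is the unique solution at $\lambda_0$ and has mass $a_0\neq a$.

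The paper's route uses only lower-edge local uniqueness, the continuation, and the universal asymptotics. It needs none of the high-frequency machinery of Section~\ref{sec:endpoint-stability}, namely the nondegeneracy of $W_\beta$ and the uniform centered inverse. Your route is topologically simpler. It is also precisely the setup of Remark~\ref{R:exact-multiplicity-diagonal}, where adding the slope formulae upgrades ``at least two'' to ``exactly two.''

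One small precision: take $\lambda_1<-d+\mu_*$. Then local uniqueness makes $\mathcal C\cap\{\lambda\le\lambda_1\}$ exactly a piece of the curve $\mathcal B_*$. For a general $\lambda_1$ that intersection need not be connected, so the connectedness argument you mention should really be replaced by the curve structure you actually use.
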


\begin{proof}[Proof of Corollary~\ref{C:normalized}]
We first extract the existence information carried by the global component.  The mass
map
$$
 \rho(\lambda,u)=\norm{u}_2^2
$$
is continuous on $\mathcal{C}$.  Since $\mathcal{C}$ is connected,
$\rho(\mathcal{C})$ is an interval.  Moreover, Theorem~\ref{T:global} provides
sequences in $\mathcal{C}$ whose parameters tend respectively to $-d$ and to
$+\infty$, while Theorem~\ref{T:asymptotics} gives
$$
 \lim_{\substack{(\lambda,u)\in\mathcal{C}\\ \lambda+d\to0^+}}
 \rho(\lambda,u)=L_-,
 \qquad
 \lim_{\substack{(\lambda,u)\in\mathcal{C}\\ \lambda\to+\infty}}
 \rho(\lambda,u)=L_+.
$$
Thus the closure of $\rho(\mathcal{C})$ contains both endpoint values, and hence
$$
 \bigl(\min\{L_-,L_+\},\max\{L_-,L_+\}\bigr)
 \subset\rho(\mathcal{C}).
$$
This proves the first existence assertion.

For nonexistence we use the full solution set $\mathcal{S}$, as in the
global-branch approach of \cite[Theorem~1.2]{JZZ}.  Since
Theorem~\ref{T:asymptotics} applies to every sequence in $\mathcal{S}$,
its endpoint mass limits are uniform over that set: failure of uniformity
would produce a sequence contradicting the theorem.
More precisely, if $L_-<+\infty$, then
\begin{equation}\label{E:uniform-mass-lower-edge-finite}
 \lim_{\delta\to0^+}
 \sup_{\substack{(\lambda,u)\in\mathcal{S}\\0<\lambda+d<\delta}}
 \bigl|\rho(\lambda,u)-L_-\bigr|=0,
\end{equation}
whereas if $L_-=+\infty$, then
\begin{equation}\label{E:uniform-mass-lower-edge-infinite}
 \lim_{\delta\to0^+}
 \inf_{\substack{(\lambda,u)\in\mathcal{S}\\0<\lambda+d<\delta}}
 \rho(\lambda,u)=+\infty.
\end{equation}
Likewise, if $L_+<+\infty$, then
\begin{equation}\label{E:uniform-mass-upper-edge-finite}
 \lim_{\Lambda\to+\infty}
 \sup_{\substack{(\lambda,u)\in\mathcal{S}\\\lambda>\Lambda}}
 \bigl|\rho(\lambda,u)-L_+\bigr|=0,
\end{equation}
whereas if $L_+=+\infty$, then
\begin{equation}\label{E:uniform-mass-upper-edge-infinite}
 \lim_{\Lambda\to+\infty}
 \inf_{\substack{(\lambda,u)\in\mathcal{S}\\\lambda>\Lambda}}
 \rho(\lambda,u)=+\infty.
\end{equation}

We now combine these uniform endpoint estimates with compactness on an intermediate
frequency strip.  Choose $-d<\lambda_-<\lambda_+<+\infty$.  By
Theorem~\ref{T:compactness}, the previously defined set
$\mathcal U_{\lambda_-}^{\lambda_+}$ is compact in $X_r$.  It is nonempty by
Theorem~\ref{T:global}, and it does not contain $0$.  Since the $L^2$-norm is
continuous on $X_r$, there exist constants
\begin{equation}\label{E:middle-strip-mass-bounds}
 0<m_{\lambda_-,\lambda_+}
 :=\min_{u\in\mathcal U_{\lambda_-}^{\lambda_+}}\norm{u}_2^2
 \le
 \max_{u\in\mathcal U_{\lambda_-}^{\lambda_+}}\norm{u}_2^2
 =:M_{\lambda_-,\lambda_+}<+\infty.
\end{equation}

Suppose first that $L_-<+\infty$ and $L_+<+\infty$.  Fix
$$
 b>\max\{L_-,L_+\}.
$$
By \eqref{E:uniform-mass-lower-edge-finite} and
\eqref{E:uniform-mass-upper-edge-finite}, $\lambda_-$ may be chosen sufficiently
close to $-d$ and $\lambda_+$ sufficiently large so that
$$
 \rho(\lambda,u)<b
 \quad\hbox{whenever}\quad
 (\lambda,u)\in\mathcal{S},
 \quad \lambda\in(-d,\lambda_-)\cup(\lambda_+,+\infty).
$$
Together with \eqref{E:middle-strip-mass-bounds}, this gives
$$
 \sup_{(\lambda,u)\in\mathcal{S}}\rho(\lambda,u)
 \le\max\{b,M_{\lambda_-,\lambda_+}\}<+\infty,
$$
which proves \eqref{E:finite-global-upper-mass}.

Suppose next that $L_->0$ and $L_+>0$, allowing either limit to be
$+\infty$.  Choose $b>0$ smaller than every finite number among $L_-$ and
$L_+$; if both are infinite, take any $b>0$.  The corresponding finite or
infinite versions of the uniform endpoint estimates allow us to choose
$\lambda_-$ and $\lambda_+$ so that
$$
 \rho(\lambda,u)>b
 \quad\hbox{whenever}\quad
 (\lambda,u)\in\mathcal{S},
 \quad \lambda\in(-d,\lambda_-)\cup(\lambda_+,+\infty).
$$
Using \eqref{E:middle-strip-mass-bounds} in the remaining strip, we obtain
$$
 \inf_{(\lambda,u)\in\mathcal{S}}\rho(\lambda,u)
 \ge\min\{b,m_{\lambda_-,\lambda_+}\}>0,
$$
which proves \eqref{E:positive-global-lower-mass}.

These bounds exclude arbitrary positive normalized solutions, not merely already
centered ones.  Indeed, \cite[Theorem~1.1(i)]{CWZZ} rules out $\lambda<-d$, while
condition \textup{(Z)} rules out $\lambda=-d$.  Hence every positive
finite-energy solution lies in the range $\lambda>-d$.  By Theorem~A and
Lemma~\ref{L:individual-decay}, it can be translated in the free variables into
an element of $\mathcal{S}$, and this translation leaves its $L^2$-mass unchanged.

It remains to justify the multiplicity assertions in the two diagonal cells
$(L_-,L_+)=(0,0)$ and $(+\infty,+\infty)$ of
Table~\ref{T:mass-endpoint-table}.  Fix
$\lambda_0\in(-d,-d+\mu_*)$ and put
$$
 a_0:=\norm{u_{\lambda_0}}_2^2.
$$
If $L_-=L_+=0$, then for every $a\in(0,a_0)$ the continuity of the local branch
and its lower-edge mass limit produce a solution of mass $a$ with parameter in
$(-d,\lambda_0)$.  By \eqref{E:uniform-mass-upper-edge-finite}, we may choose
$\Lambda>\lambda_0$ so large that every solution in the slice $\lambda=\Lambda$
has mass smaller than $a$.  The crossing continuum
$\mathcal{C}_{\lambda_0,\Lambda}$ from \eqref{E:crossing-continuum} contains
$(\lambda_0,u_{\lambda_0})$ and meets that terminal slice.  Its continuous image
under $\rho$ is an interval containing $a_0$ and a number smaller than $a$;
hence it contains $a$.  Since $u_{\lambda_0}$ is the unique centered positive
solution at $\lambda_0$ and has mass $a_0\ne a$, the second solution has parameter
in $(\lambda_0,\Lambda]$.  Thus the two solutions are distinct.

If $L_-=L_+=+\infty$, the same argument applies with every $a>a_0$ and all
inequalities reversed.  The local branch produces one solution with parameter in
$(-d,\lambda_0)$, while \eqref{E:uniform-mass-upper-edge-infinite} and the crossing
continuum produce a second one with parameter greater than $\lambda_0$.

The asserted locations of the two Lagrange multipliers also follow from the
compact-strip bounds.  Indeed, if $L_-=L_+=0$ and $a_n\to0^+$, the corresponding parameters cannot remain in a fixed compact subinterval of $(-d,+\infty)$ because of the positive minimum in
\eqref{E:middle-strip-mass-bounds}.  Hence the parameters of the two solutions chosen
on opposite sides of $\lambda_0$ tend respectively to $-d$ and $+\infty$.  If
$L_-=L_+=+\infty$ and $a_n\to+\infty$, the finite maximum in
\eqref{E:middle-strip-mass-bounds} gives the analogous conclusion.

Finally, the endpoint formulae are
$$
 L_-=
 \begin{cases}
 0, & \alpha<2+4/m,\\
 \norm{V_\alpha}_{2}^{2}, & \alpha=2+4/m,\\
 +\infty, & \alpha>2+4/m,
 \end{cases}
 \qquad
 L_+=
 \begin{cases}
 +\infty, & \beta<2+4/N,\\
 \norm{W_\beta}_{2}^{2}, & \beta=2+4/N,\\
 0, & \beta>2+4/N.
 \end{cases}
$$
The nine entries of Table~\ref{T:mass-endpoint-table} now follow directly from
the existence interval, the two global bounds, and the multiplicity argument above.
In particular, in the double critical case both endpoint masses are finite and
strictly positive, so sufficiently small and sufficiently large masses are excluded,
while the open interval between the two endpoint masses is covered whenever they are
different.
\end{proof}

\begin{remark}[Exact multiplicity in the two diagonal cases]
\label{R:exact-multiplicity-diagonal}
Under \ref{gcon1}--\ref{gcon4}, the two diagonal multiplicity conclusions
in Corollary~\ref{C:normalized} can be strengthened: among solutions
with $\lambda>-d$, problem \eqref{E:normalized} has exactly two
centered positive solutions for every sufficiently small mass when
$L_-=L_+=0$, and for every sufficiently large mass when
$L_-=L_+=+\infty$.  Equivalently, positive solutions are counted modulo
translations in the free variables.

Indeed, by Theorem~\ref{T:local-degree},
Proposition~\ref{P:high-unique-branch}, and the slope formulae
\eqref{E:main-lower-mass-slope}--\eqref{E:main-high-mass-slope},
we may fix $-d<\lambda_-<\lambda_+$ such that every centered positive
solution with parameter in $(-d,\lambda_-]$ or $[\lambda_+,+\infty)$
belongs to the corresponding unique endpoint branch, and its mass
is strictly monotone in $\lambda$ on each of these intervals.
On the intermediate strip, \eqref{E:middle-strip-mass-bounds} gives
$$
 0<a_{\min}:=
 \min_{u\in\mathcal U_{\lambda_-}^{\lambda_+}}\|u\|_2^2
 \le
 \max_{u\in\mathcal U_{\lambda_-}^{\lambda_+}}\|u\|_2^2
 =:a_{\max}<+\infty.
$$

If $L_-=L_+=0$, then $\kappa_->0$ and $\kappa_+<0$.
Hence the mass increases strictly from zero along the lower-edge
branch and decreases strictly to zero along the high-frequency
branch.  Since the masses at $\lambda_-$ and $\lambda_+$ are both
at least $a_{\min}$, every $a\in(0,a_{\min})$ is attained exactly once
on each endpoint branch.  No solution of this mass can lie in the
intermediate strip, so there are exactly two centered positive
solutions.

If $L_-=L_+=+\infty$, then $\kappa_-<0$ and $\kappa_+>0$.
The mass decreases strictly from $+\infty$ along the lower-edge
branch and increases strictly to $+\infty$ along the high-frequency
branch.  Since the masses at $\lambda_-$ and $\lambda_+$ are both
at most $a_{\max}$, every $a>a_{\max}$ is attained exactly once on
each endpoint branch and nowhere in the intermediate strip.
This proves the second exact-multiplicity assertion.

If condition \textup{(Z)} in Corollary~\ref{C:normalized} also holds,
then the exclusion of $\lambda\le-d$ recalled there shows that these
are all positive normalized solutions, modulo translations in the
free variables, without any restriction on the Lagrange multiplier.
This refinement uses only the elliptic conclusions and does not
require the dynamical assumption \ref{condition-D}.
\end{remark}

\begin{remark}[The BBJV problem in the power case]\label{R:BBJV-open-problem}
For the model of \cite{BBJV,WeiWu,JeanjeanSong},
$$
 (N,d,m)=(3,2,1),\qquad g(s)=s^{p-1},
 \qquad \frac{10}{3}<p<6,
$$
we have $\alpha=\beta=p$ and $L_-=L_+=0$.  In particular, for the cubic case
$p=4$,
\begin{align*}
 \norm{u}_2^2&=(\lambda+2)^{1/2}
 \bigl(\norm{V_4}_2^2+o(1)\bigr)
 &&\text{as }\lambda+2\to0^+,\\
 \norm{u}_2^2&=\lambda^{-1/2}
 \bigl(\norm{W_4}_2^2+o(1)\bigr)
 &&\text{as }\lambda\to+\infty.
\end{align*}
By Remark~\ref{R:exact-multiplicity-diagonal}, there are exactly two
positive normalized solutions, modulo translations in the free variable,
for every sufficiently small mass, with multipliers tending respectively
to $-2$ and $+\infty$ as the mass tends to zero.
Here condition \textup{(Z)} follows from
\cite[Theorem~1.1(iii-1)]{CWZZ} because $m=1$.
The same condition also justifies the large-mass nonexistence conclusion
in Table~\ref{T:mass-endpoint-table}.

The existence of at least two small-mass positive solutions was obtained
perturbatively in \cite{WeiWu} and nonperturbatively in \cite{JeanjeanSong}.
Here the endpoint uniqueness and mass slopes give the exact count.
The global component covers the full frequency interval, connects the two
endpoint geometries, and is accompanied by endpoint asymptotics for every
centered positive-solution sequence.  The two small-mass states
therefore lie in the opposite geometries of Remark~\ref{R:physical-crossover}.
Under \ref{condition-D}, Theorem~\ref{T:endpoint-stability} also distinguishes
them dynamically:
the state with multiplier tending to $-2$ is orbitally stable, whereas the
state with multiplier tending to $+\infty$ is orbitally unstable.  Thus the
small-mass multiplicity is accompanied by a stability exchange between the
effective one-dimensional and three-dimensional regimes.
\end{remark}

The endpoint analysis does not determine the geometry or stability of
the intermediate part of the global component.
Further multiplicity and a global stability classification require
additional control of degeneracies and turning points of the mass map
along its smooth portions.

\subsection*{Declaration on the use of AI tools}
OpenAI's ChatGPT was used during the preparation of this manuscript as an editorial and typesetting aid, principally for editing the LaTeX source, polishing the English, and implementing author-specified revisions to the exposition and presentation of proof details. The research questions, the formulation of the lemmas and results, and the underlying proof strategies were conceived and determined by the author. Every AI-assisted modification was carefully reviewed and mathematically verified by the author, who takes full responsibility for the final content of the manuscript.


\begin{thebibliography}{99}

\bibitem{AmbrosettiArcoya}
A. Ambrosetti and D. Arcoya,
\emph{An Introduction to Nonlinear Functional Analysis and Elliptic Problems},
Birkh\"auser, Boston, 2011.

\bibitem{AntonelliCarlesDrumondSilva}
P. Antonelli, R. Carles and J. Drumond Silva,
\emph{Scattering for nonlinear Schr\"odinger equation under partial harmonic confinement},
Comm. Math. Phys. \textbf{334} (2015), 367--396.

\bibitem{BartschZhongZou}
T. Bartsch, X. Zhong and W. Zou,
\emph{Normalized solutions for a coupled Schr\"odinger system},
Math. Ann. \textbf{380} (2021), 1713--1740.

\bibitem{BBJV}
J. Bellazzini, N. Boussa\"id, L. Jeanjean and N. Visciglia,
\emph{Existence and stability of standing waves for supercritical NLS with a partial confinement},
Comm. Math. Phys. \textbf{353} (2017), 229--251.

\bibitem{BenAbdallahMehatsSchmeiserWeishaeupl}
N. Ben Abdallah, F. M\'ehats, C. Schmeiser and R. M. Weish\"aupl,
\emph{The nonlinear Schr\"odinger equation with a strongly anisotropic harmonic potential},
SIAM J. Math. Anal. \textbf{37} (2005), 189--199.

\bibitem{BerestyckiLions}
H. Berestycki and P.-L. Lions,
\emph{Nonlinear scalar field equations. I. Existence of a ground state},
Arch. Rational Mech. Anal. \textbf{82} (1983), 313--345.

\bibitem{BrezisLieb}
H. Brezis and E. Lieb,
\emph{Minimum action solutions of some vector field equations},
Comm. Math. Phys. \textbf{96} (1984), 97--113.

\bibitem{CazenaveLions}
T. Cazenave and P.-L. Lions,
\emph{Orbital stability of standing waves for some nonlinear Schr\"odinger equations},
Comm. Math. Phys. \textbf{85} (1982), 549--561.


\bibitem{CWZZ}
Z. Chen, Y. Wang, J. Zhang and X. Zhong,
\emph{Liouville theorems and symmetry of positive solutions for partially confined
nonlinear Schr\"odinger equations}, arXiv:2608.21724, 2026.

\bibitem{ChangGustafsonNakanishiTsai}
S.-M. Chang, S. Gustafson, K. Nakanishi and T.-P. Tsai,
\emph{Spectra of linearized operators for NLS solitary waves},
SIAM J. Math. Anal. \textbf{39} (2007), 1070--1111.

\bibitem{ChenWalshWheeler}
R. M. Chen, S. Walsh and M. H. Wheeler,
\emph{Global bifurcation for monotone fronts of elliptic equations},
J. Eur. Math. Soc. (2024), published online first,
doi:10.4171/JEMS/1561.

\bibitem{GidasNiNirenberg}
B. Gidas, W.-M. Ni and L. Nirenberg,
\emph{Symmetry and related properties via the maximum principle},
Comm. Math. Phys. \textbf{68} (1979), 209--243.

\bibitem{GidasSpruck}
B. Gidas and J. Spruck,
\emph{Global and local behavior of positive solutions of nonlinear elliptic equations},
Comm. Pure Appl. Math. \textbf{34} (1981), 525--598.

\bibitem{GrillakisShatahStraussI}
M. Grillakis, J. Shatah and W. Strauss,
\emph{Stability theory of solitary waves in the presence of symmetry. I},
J. Funct. Anal. \textbf{74} (1987), 160--197,
doi:10.1016/0022-1236(87)90044-9.

\bibitem{GrillakisShatahStraussII}
M. Grillakis, J. Shatah and W. Strauss,
\emph{Stability theory of solitary waves in the presence of symmetry. II},
J. Funct. Anal. \textbf{94} (1990), 308--348,
doi:10.1016/0022-1236(90)90016-E.

\bibitem{GilbargTrudinger}
D. Gilbarg and N. S. Trudinger,
\emph{Elliptic Partial Differential Equations of Second Order},
Springer-Verlag, Berlin, 2001.

\bibitem{GorlitzEtAl}
A. G\"orlitz, J. M. Vogels, A. E. Leanhardt, C. Raman, T. L. Gustavson,
J. R. Abo-Shaeer, A. P. Chikkatur, S. Gupta, S. Inouye, T. P. Rosenband,
D. E. Pritchard and W. Ketterle,
\emph{Realization of Bose--Einstein condensates in lower dimensions},
Phys. Rev. Lett. \textbf{87} (2001), 130402.

\bibitem{HaniThomann}
Z. Hani and L. Thomann,
\emph{Asymptotic behavior of the nonlinear Schr\"odinger equation with harmonic trapping},
Comm. Pure Appl. Math. \textbf{69} (2016), 1727--1776.

\bibitem{JinHong}
Y. Hong and S. Jin,
\emph{Uniqueness and orbital stability of standing waves for the nonlinear Schr\"odinger equation with a partial confinement},
SIAM J. Math. Anal. \textbf{56} (2024), 2714--2737.

\bibitem{JeanjeanSong}
L. Jeanjean and L. Song,
\emph{Multiplicity result for a mass supercritical NLS with a partial confinement},
SIAM J. Math. Anal. \textbf{57} (2025), 6709--6730,
doi:10.1137/25M1740048.

\bibitem{JZZ}
L. Jeanjean, J. Zhang and X. Zhong,
\emph{A global branch approach to normalized solutions for the Schr\"odinger equation},
J. Math. Pures Appl. \textbf{183} (2024), 44--75.

\bibitem{Jia}
H. Jia,
\emph{Stable standing waves for cubic nonlinear Schr\"odinger systems with partial confinement},
Discrete Contin. Dyn. Syst. \textbf{40} (2020), 5285--5309.

\bibitem{Kato1972}
T. Kato,
\emph{Schr\"odinger operators with singular potentials},
Israel J. Math. \textbf{13} (1972), 135--148.

\bibitem{Kwong}
M. K. Kwong,
\emph{Uniqueness of positive solutions of $\Delta u-u+u^p=0$ in $\mathbb{R}^n$},
Arch. Rational Mech. Anal. \textbf{105} (1989), 243--266.

\bibitem{LiebSeiringerYngvason}
E. H. Lieb, R. Seiringer and J. Yngvason,
\emph{One-dimensional behavior of dilute, trapped Bose gases},
Comm. Math. Phys. \textbf{244} (2004), 347--393.

\bibitem{Nirenberg}
L. Nirenberg,
\emph{Topics in Nonlinear Functional Analysis},
Courant Lecture Notes in Mathematics, Vol.~6, American Mathematical Society,
Providence, RI, 2001.

\bibitem{QuittnerSouplet}
P. Quittner and P. Souplet,
\emph{Superlinear Parabolic Problems: Blow-up, Global Existence and Steady States},
2nd ed., Birkh\"auser/Springer, Cham, 2019.

\bibitem{Rabinowitz}
P. H. Rabinowitz,
\emph{Some global results for nonlinear eigenvalue problems},
J. Funct. Anal. \textbf{7} (1971), 487--513.

\bibitem{Reed-Simon-1980}
M. Reed and B. Simon, Methods of Modern Mathematical Physics, Vol. I: Functional Analysis, Academic Press, 1980.


\bibitem{SalasnichParolaReatto}
L. Salasnich, A. Parola and L. Reatto,
\emph{Effective wave equations for the dynamics of cigar-shaped and disk-shaped Bose condensates},
Phys. Rev. A \textbf{65} (2002), 043614.

\bibitem{SchneeYngvason}
K. Schnee and J. Yngvason,
\emph{Bosons in disc-shaped traps: From 3D to 2D},
Comm. Math. Phys. \textbf{269} (2007), 659--691.

\bibitem{ShanShuaiWu}
L. Shan, W. Shuai and L. Wu,
\emph{Multiple solutions to the nonlinear Schr\"odinger equation with a partial confinement},
J. Anal. Math. \textbf{158} (2026), 283--316,
doi:10.1007/s11854-025-0409-9.


\bibitem{SongHajaiej}
L. Song and H. Hajaiej,
\emph{A new method to prove the existence, non-existence, multiplicity,
uniqueness, and orbital stability/instability of standing waves for NLS
with partial confinement},
arXiv:2211.10058v1, 2022.


\bibitem{SongYan}
L. Song and R. Yan,
\emph{Existence of normalized ground states for supercritical/critical NLS with a partial confinement},
Proc. Amer. Math. Soc. \textbf{154} (2026), 2421--2431,
doi:10.1090/proc/17270.

\bibitem{Stuart}
C. A. Stuart,
\emph{Uniqueness and stability of ground states for some nonlinear Schr\"odinger equations},
J. Eur. Math. Soc. \textbf{8} (2006), 399--414.

\bibitem{Weinstein}
M. I. Weinstein,
\emph{Nonlinear Schr\"odinger equations and sharp interpolation estimates},
Comm. Math. Phys. \textbf{87} (1983), 567--576.

\bibitem{WeinsteinStability}
M. I. Weinstein,
\emph{Lyapunov stability of ground states of nonlinear dispersive evolution equations},
Comm. Pure Appl. Math. \textbf{39} (1986), 51--68,
doi:10.1002/cpa.3160390103.

\bibitem{WeiWu}
J. Wei and Y. Wu,
\emph{On some nonlinear Schr\"odinger equations in $\mathbb{R}^N$},
Proc. Roy. Soc. Edinburgh Sect. A \textbf{153} (2023), 1503--1528.

\bibitem{Willem}
M. Willem,
\emph{Minimax Theorems},
Birkh\"auser Boston, MA, 1996.

\end{thebibliography}
\end{document}